\documentclass[11pt, oneside, reqno]{amsart}
\newcommand{\hkra}{\hookrightarrow}

\newcommand{\set}[1]{\left\{ #1 \right\}}

\newcommand{\cc}[1]{\overline{#1}}

\newcommand{\sub}{\subset}

\newcommand{\sm}{\ensuremath{\setminus}}

\newcommand{\s}[2]{\sum\limits_{#1}{#2} }
\newcommand{\p}[2]{\prod\limits_{#1}{#2} }
\newcommand{\g}{\circ}
\newcommand{\modulo}[2]{{\raisebox{.2em}{$#1$}\left/\raisebox{-.2em}{$#2$}\right.}}
\newcommand{\sign}{\normalfont\text{sign}}
\newcommand{\inv}{^{-1}}

\newcommand{\cl}{\colon}
\newcommand{\djun}[1]{\bigsqcup\limits_{#1}}

\newcommand{\lbr}[1]{\Bigl(#1\Bigr)}
\newcommand{\emst}{\emptyset}
\newcommand{\iI}{_{i \in I}}

\newcommand{\xra}{\xrightarrow}

\newcommand{\scA}{\mathscr{A}}

\newcommand{\scF}{\mathscr{F}}

\newcommand{\conn}{\nabla}
\newcommand{\eva}{ev}

\newcommand{\lspan}[1]{\langle {#1}\rangle}

\newcommand{\lc}{\underline}

\newcommand{\ide}{\normalfont\text{id}}

\newcommand{\PGL}{\normalfont\text{PGL}}

\newcommand{\coker}{\normalfont\text{coker}}
\newcommand{\pr}{\normalfont\text{pr}}

\newcommand{\Mbar}{\overline{\mathcal M}}

\newcommand{\delbar}{\bar\partial}
\newcommand{\del}{\partial}
\newcommand{\Hom}{\normalfont\text{Hom}}

\newcommand{\Aut}{\normalfont\text{Aut}}

\newcommand{\bC}{\mathbb{C}}
\newcommand{\bD}{\mathbb{D}}

\newcommand{\bL}{\mathbb{L}}

\newcommand{\bN}{\mathbb{N}}

\newcommand{\bP}{\mathbb{P}}

\newcommand{\bR}{\mathbb{R}}

\newcommand{\bZ}{\mathbb{Z}}
\newcommand{\cA}{\mathcal{A}}
\newcommand{\cB}{\mathcal{B}}
\newcommand{\cC}{\mathcal{C}}

\newcommand{\cE}{\mathcal{E}}

\newcommand{\cG}{\mathcal{G}}

\newcommand{\cJ}{\mathcal{J}}
\newcommand{\cK}{\mathcal{K}}

\newcommand{\cM}{\mathcal{M}}

\newcommand{\cO}{\mathcal{O}}

\newcommand{\cT}{\mathcal{T}}
\newcommand{\cU}{\mathcal{U}}

\newcommand{\cZ}{\mathcal{Z}}

\newcommand{\fB}{\mathfrak{B}}
\newcommand{\fC}{\mathfrak{C}}

\newcommand{\fb}{\mathfrak{b}}

\newcommand{\fii}{\mathfrak{i}}

\newcommand{\fm}{\mathfrak{m}}

\newcommand{\fp}{\mathfrak{p}}
\newcommand{\fq}{\mathfrak{q}}

\newcommand{\fs}{\mathfrak{s}}

\newcommand{\pcd}{\normalfont\text{PD}}
\newcommand{\wt}{\widetilde}

\newcommand{\wh}{\widehat}

\newcommand{\evai}{evi}
\newcommand{\evab}{evb}

\newcommand{\orl}{\normalfont\text{or}}

\newcommand{\al}{\alpha}
\newcommand{\ga}{\gamma}

\newcommand{\OC}{\mathcal{OC}}

\newcommand{\qb}{\mathfrak{q}^{b,\ga}}
\newcommand{\mb}{\mathfrak{m}^{b,\ga}}
\newcommand{\m}{\mathfrak{m}}
\newcommand{\q}{\mathfrak{q}}
\newcommand{\CC}{\mathbb{C}}

\newcommand{\RR}{\mathbb{R}}
\newcommand{\ZZ}{\mathbb{Z}}

\newcommand{\LL}{\mathbb{L}}
\newcommand{\mfa}{\mathsf{a}}
\newcommand{\obs}{s}

\newcommand{\ogw}{\normalfont\text{OGW}}
\newcommand{\bigplus}{%
	\DOTSB\mathop{\mathpalette\mattos@bigplus\relax}\slimits@
}
\newtheorem{intthm}{Theorem}

\newcommand{\dmc}{\overline{\mathscr{O}\mathscr{C}}}

\newcommand{\stb}{\normalfont\text{st}}

\newcommand{\ov}[1]{\overline{#1}}
\newcommand{\cKc}{\cK^{\scale{\square}{0.6}}}
\newcommand{\scale}[2]{\scaleobj{#2}{#1}}
\newcommand{\cBc}{\cB^{\scale{\square}{0.6}}}
\newcommand{\cEc}{\cE^{\scale{\square}{0.6}}}
\newcommand{\cTc}{\cT^{\scale{\square}{0.6}}}
\newcommand{\bdr}{o}

\newcommand{\cMc}{\Mbar^{\scale{\square}{0.6}}}

\usepackage[utf8]{inputenc}
\usepackage[english]{babel}
\usepackage{amsmath} 
\usepackage{amssymb}
\usepackage{amsfonts}
\usepackage{mathtools}
\usepackage{amsthm}
\usepackage{scalerel}[2016/12/29]
\usepackage[shortlabels]{enumitem}
\usepackage[dvipsnames]{xcolor}
\usepackage{tikz-cd}
\usepackage{subfiles}
\usetikzlibrary{tqft}
\usepackage[colorinlistoftodos]{todonotes}
\usepackage[mathscr]{euscript}
\usepackage{extarrows}
\usepackage[pagebackref=true]{hyperref}
\usepackage{comment}
\usepackage{appendix}
\usepackage{bm}
\hypersetup{
	colorlinks=true,
	linkcolor=Blue,
	filecolor=red,      
	urlcolor=teal,
	citecolor=NavyBlue,}

\DeclareFontFamily{U}{rcjhbltx}{}
\DeclareFontShape{U}{rcjhbltx}{m}{n}{<->rcjhbltx}{}
\DeclareSymbolFont{hebrewletters}{U}{rcjhbltx}{m}{n}
\DeclareMathSymbol{\mem}{\mathord}{hebrewletters}{109}
\DeclareMathSymbol{\nun}{\mathord}{hebrewletters}{110}

\newtheorem{theorem}{Theorem}[section]
\newtheorem{lemma}[theorem]{Lemma}
\newtheorem{corollary}[theorem]{Corollary}
\newtheorem{cor}[theorem]{Corollary}
\newtheorem{proposition}[theorem]{Proposition}

\theoremstyle{definition}
\newtheorem{definition}[theorem]{Definition}
\theoremstyle{remark}
\newtheorem{remark}[theorem]{Remark}

\newtheorem{example}[theorem]{Example}

\newtheorem*{notation*}{Notation}

\numberwithin{equation}{section}

\makeatletter
\@addtoreset{equation}{section}
\@addtoreset{theorem}{section}
\makeatother

\newcommand{\Addresses}{{
		\bigskip
		\footnotesize
		\textsc{A. Hirschi, Université Paris Cité, Sorbonne Université, CNRS, IMJ-PRG, F-75005 Paris, France}\par\nopagebreak
		\text{ORCID}: \texttt{0000-0002-2392-7875}\\
		
		\textsc{K. Hugtenburg, School of Mathematics and Statistics, University of St Andrews}\par\nopagebreak
		\text{ORCID}: \texttt{0000-0002-7823-7126}
}}
\makeatletter 
\def\l@subsection{\@tocline{2}{0pt}{2pc}{6pc}{}} \makeatother

\usepackage{microtype}  
\begin{document}
	
	\title[Open Gromov--Witten invariants and Lagrangian cobordisms]{Open Gromov--Witten invariants in genus zero and Lagrangian cobordisms}
	\author{Amanda Hirschi}\author{Kai Hugtenburg}
	\date{today}
	\begin{abstract} We construct open Gromov--Witten invariants in genus zero for arbitrary closed symplectic manifolds and embedded relatively spin Lagrangians which are weakly unobstructed by a bounding cochain. This uses the foundational work of \cite{HH25,HH26} and the algebraic framework of \cite{ST21}. We prove the open WDVV relations and show that these invariants are independent of the choice of almost complex structure and under Hamiltonian isotopy. We also prove a relation between open Gromov--Witten invariants of cobordant Lagrangians.
	\end{abstract}
	
	\maketitle
	\tableofcontents

	\section{Introduction}
\subsection{Context}
Let $(X,\omega)$ be a smooth closed symplectic manifold, with $L \subset X$ a closed embedded relatively spin, Lagrangian submanifold. The history of associating algebraic structures to holomorphic discs $(D^2, S^1) \rightarrow (X,L)$ goes back to \cite{Fuk93}. In \cite{FOOO18}, such holomorphic discs are used to construct the structure of an $A_\infty$ algebra on (singular) chains on $L$. The $A_\infty$ structure comes from the fact that the moduli spaces $\Mbar_{k,\ell}^{J,\beta}(X,L)$ of holomorphic discs have codimension $1$ boundary strata formed by disc bubbling. When one wants to define \emph{open Gromov-Witten invariants}, which would count holomorphic discs, it is this disc bubbling, together with the codimension $1$ boundary, where the boundary of the discs is collapsed to a point, thus forming a holomorphic sphere through $L$ (which only occurs when $k=0$), which ruins any kind of invariance of the count.
\par
Welschinger was the first to solve these issues and define such invariants in low dimensions, in the revolutionary work \cite{Wel05} and afterwards in \cite{Wel13,Wel15}. Liu \cite{Liu20} defines open Gromov-Witten invariants in arbitrary genus in the presence of an $S^1$ action on $X$, which leaves $L$ invariant. Moreover, \cite{Fuk11} and \cite{FOOO10,FOOO11} invent the notion of a bounding cochain, which is a solution for the Maurer-Cartan equation for the $A_\infty$ algebra associated to $L$, and use these to algebraically deal with disc bubbling. As shown in \cite{Fuk11} \cite{Joy08}, when $[L] = 0 \in H_n(X)$, one can additionally include counts of holomorphic spheres passing through a chain $C$ with $\partial C = L$ to obtain open Gromov-Witten invariants. 

In \cite{ST16}, \cite{ST21}, the theory of bounding cochains is used to define genus $0$ open Gromov-Witten invariants for weakly unobstructed Lagrangians under restrictive geometric assumptions. In particular, they assume that $\Mbar_{k,\ell}^{J,\beta}(X,L)$, the moduli space of $J$-holomorphic discs with boundary on $L$, is a smooth orbifold with corners. For a general symplectic manifold, this condition does not hold.  In this paper we use the presentation of $\Mbar_{k,\ell}^{J,\beta}(X,L)$ as a global Kuranishi chart, obtained in \cite{HH25}, together with the constructions of \cite{HH26} and \cite{ST21} to arrive at a general definition of genus $0$ open Gromov-Witten invariants.

\subsection{Main results}
Let $\Lambda$ be the Novikov ring \[
	\Lambda \coloneqq  \set{  \sum_{i = 0}^{\infty}a_iQ^{\beta_i}\,\big|\, a_i \in \mathbb{R},\; \beta_i \in H_2(X, L), \; \lim_{i\to \infty } \omega(\beta_i) = \infty }.
\]
Analogously to \cite{ST16}, we define $\fq$-operations associated to discs with $k$ incoming boundary marked points, $\ell$ incoming interior marked points, and a single outgoing boundary marked point.\begin{equation}\label{eq:q-operations-intro}
	\fq_{k,\ell}\cl  \Omega^*(L;\Lambda)^{\otimes k} \otimes \Omega^*(X;\Lambda)^{\otimes \ell} \rightarrow \Omega^*(L;\Lambda) 
	\end{equation}
	We show in \ref{sec:props-of-q-ops} that they satisfy the same properties and structure relations as in \cite{ST16}. In particular, we obtain

\begin{proposition}
		\label{int:a_infty-algebra}
	The operations $\{\fq_{k,0}\}_{k\geq 0}$ equip $\Omega^*(L;\Lambda)$ with a cyclic unital filtered $A_\infty$ structure. It is independent up to pseudoisotopy of the choice of
		\begin{itemize}[leftmargin=20pt]
			\item almost complex structure $J$
			\item auxiliary data used to define the global Kuranishi charts
			\item Thom system
			\item $L$ in its Hamiltonian isotopy class.
		\end{itemize}
	\end{proposition}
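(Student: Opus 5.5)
The plan is to define $\fq_{k,\ell}$ by pull-push along global Kuranishi charts and reduce everything to a Stokes theorem on them. By \cite{HH25}, $\Mbar_{k+1,\ell}^{J,\beta}(X,L)$ has a global Kuranishi chart $(G,\cT,\cE,\obs)$, and these charts are compatible with the boundary stratification given by disc bubbling. Using a compatible Thom system in the sense of \cite{HH26}, i.e.\ equivariant Thom forms $\theta$ on $\cE$ restricting correctly to the boundary strata, we set
\[
\fq_{k,\ell}^\beta(\alpha_1,\dots,\alpha_k;\gamma_1,\dots,\gamma_\ell) = (-1)^{\varepsilon}\,(evb_0)_*\Big(\theta \wedge \textstyle\bigwedge_j evb_j^*\alpha_j\wedge\bigwedge_i evi_i^*\gamma_i\Big),
\]
where $\varepsilon$ is the \cite{ST16} sign. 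We then put $\fq_{k,\ell}=\sum_\beta Q^\beta \fq^\beta_{k,\ell}$, with the classical terms $\fq_{1,0}^{0}=d$ and $\fq_{0,1}^0=\gamma|_L$ inserted by hand. The push-forward is integration along the fibre of $evb_0$. This makes sense because $evb_0$ can be arranged to be a $G$-invariant submersion on $\cT$, which is part of the \cite{HH26} package. Gromov compactness makes the sum converge in $\Lambda$.

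The $A_\infty$ relations come from Stokes' theorem for fibre integration on the chart. We use $d\theta=0$ and identify $\partial\cT$, together with its Thom form, with the union of fibre products $\cT_1\times_L\cT_2$ over all disc splittings. The boundary term then becomes $\sum \fq(\dots,\fq(\dots),\dots)$. The key input is that the Thom system restricts on a boundary face to the product of Thom forms on the fibre product, which lets the pushforward factor as a composition. This requires a projection formula and a fibre-product formula for fibre integration.

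Cyclicity follows because the marked points $0,\dots,k$ can be permuted cyclically on the chart. Here we want the chart and the Thom system chosen invariant under the $\bZ/(k+1)$-action, which is possible by averaging, since these actions are by chart isomorphisms. The identity $\langle \fq_k(\alpha_1,\dots,\alpha_k),\alpha_0\rangle=\pm\langle\fq_k(\alpha_0,\dots,\alpha_{k-1}),\alpha_k\rangle$ is then a change of variables. Unitality, i.e.\ $\fq_{k,0}(\dots,1,\dots)=0$ except for $d$ and the $\beta=0$ product, uses the forgetful map for a boundary marked point. The charts must be chosen pulled back along the forgetful map, so that inserting $1$ integrates over a positive-dimensional fibre and gives zero. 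Arranging this compatibility of charts and Thom forms with the forgetful maps is part of what we take from \cite{HH26}.

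For pseudoisotopy invariance we consider a path $(J_t,\text{data}_t,\theta_t)$, $t\in[0,1]$, and the parametrized moduli space over $[0,1]$. It admits a global Kuranishi chart with boundary at $t=0,1$ by the parametrized version of \cite{HH25}, and we extend the Thom systems to it. The same pull-push, now valued in $\Omega^*([0,1]\times L)$, defines a pseudoisotopy $\tilde\fq$ in the sense of \cite{ST21}, and Stokes gives its structure equations. Differing choices of global chart for the same data are handled by the fact that stabilizations and germ equivalences preserve the pushforwards.

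For a Hamiltonian isotopy $\phi_t$ we transport instead: pull back along $\phi_t$ to the fixed $L$ using $J_t=\phi_t^*J$, or equivalently use the moving-boundary moduli space. This reduces to a family of almost complex structures for the fixed $L$, and the previous step applies.

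The main obstacle is the boundary identification with Thom systems. One needs a system of charts and Thom forms that is simultaneously compatible with all fibre-product boundary strata and, for cyclicity and unitality, with cyclic and forgetful symmetries. I would take the existence of such coherent systems from \cite{HH26}, proving any missing symmetry by inductive construction on energy and averaging over finite groups.
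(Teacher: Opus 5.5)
\textbf{There is a genuine gap in your boundary identification.} The step you flag as the main obstacle does not hold, and \cite{HH26} does not supply it in the form you need.

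You propose to identify $\del\cT$ for the chart of $\Mbar_{k+1,\ell}(\beta)$, together with its Thom form, with the fibre products $\cT_1\times_L\cT_2$ carrying the product Thom form. For the charts of \cite{HH25} this identification fails.
\begin{itemize}
\item The disc-bubbling face of $\cK_\mfa$ is the pullback chart $\cK^{\beta_0,\beta_1}_{\varphi,i}$. Its group is $\text{PO}(d+1)$ with $d=d_0+d_1$, and its obstruction bundle is pulled back from $\cK_\mfa$.
\item The fibre product chart has group $\text{PO}(d_0+1)\times\text{PO}(d_1+1)$, a different thickening and a different obstruction bundle.
\item The two are only equivalent (cobordant) after stabilisation and group enlargement, with the sign \eqref{h3-sign}. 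The same holds for boundary collapse.
\end{itemize}
So the restriction of a Thom form on $\cK_\mfa$ lives on a different space than the product Thom form. Single-chart pull-push operations are therefore a priori unrelated to the compositions $\fq(\dots,\fq(\dots),\dots)$, and they do not satisfy the $A_\infty$ relations on the nose. The same defect recurs in your parametrised chart over $[0,1]$.

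\textbf{The missing idea is to build the interpolating cobordisms into the operations themselves.} For each stable map tree $\Gamma$ the paper uses a cubical cobordism $\cKc_\Gamma$ for $I^{E(\Gamma)}\times\Mbar_\Gamma(X,L)$.
\begin{itemize}
\item Its face $\{t_e=0\}$ maps, up to stabilisation and group enlargement and with degree $d_1(\Gamma,\Gamma_e)$, to the stratum $\del_\Gamma\cKc_{\Gamma_e}$.
\item Its face $\{t_e=1\}$ is $\cKc_{\Gamma_1}\times_L\cKc_{\Gamma_2}$, or $L\times_X\cKc_{\Gamma'}$ for boundary collapse.
\item It carries a Thom system satisfying \eqref{eq:compatible-0}--\eqref{eq:compatible-12}.
\end{itemize}
Then $\fq^\beta_{k,\ell}$ is defined as the sum, over all $\Gamma\in\del(\mfa)$ (all trees whose edges are codimension-one degenerations), of the pull-push over $\cKc_\Gamma$.

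Stokes' theorem now produces three kinds of boundary terms: the $t_e=0$ faces, the $t_e=1$ faces, and the genuine boundary strata of the $\cTc_\Gamma$. After summing over $\Gamma$, the first and third kinds cancel by \cite[Lemma~3.32]{HH26}. Only the fibre product faces survive, and they give the compositions with the signs of \cite[Proposition~2.4]{ST16}.

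\textbf{Pseudo-isotopies need the same treatment.} The paper obtains them from the same construction for the trivial cobordism $L\times[-1,1]\subset X\times\bC_1$, with an extended Thom system (Corollary~\ref{cor:thom-system-lag-cob}). For a change of auxiliary data you cannot take a path $\text{data}_t$, since $d$, $r$ and $\cU$ are discrete. The paper instead uses a chart with group $G\times G'$ and perturbation $(1-t)\lspan{\eta_0}\g d\varphi_0+t\lspan{\eta_1}\g d\varphi_1$.

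Once this is in place, your handling of unitality via forgetful maps is fine. Your reduction of Hamiltonian isotopies to the path $\phi_t^*J$ on a fixed $L$ is a valid alternative to the paper's suspension cobordism.
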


A similar result first appeared as \cite[Theorem~12.1]{Fu10}. In our setting, we obtain the full $A_\infty$ algebra directly, without having to truncate the energy of holomorphic discs. Moreover, our operations are compatible with interior forgetful maps.

The $\fq$-operations of~\eqref{eq:q-operations-intro} are part of the more general structure of an open-closed Deligne--Mumford Field Theory (DMFT) defined in \cite{HH26} based on the geometric foundations of \cite{HH25}. In this paper we are only concerned with the genus $0$ (operadic) part of this theory where curves have at most one boundary component and at most one output marked point. In particular, the open-closed DMFT of \cite{HH26} contains all the required operations to carry out the construction of open Gromov--Witten invariants in genus as given by \cite{ST21} and \cite{ST23} in the regular case. 

We recall the required additional inputs for their construction; for more detail refer to \textsection\ref{sec:OGW construction}. Throughout, $(X,\omega)$ is closed and $L \subset X$ is a nonempty embedded Lagrangian submanifold, which is weakly unobstructed by a point-like bounding cochain over some subspace $W \subset \hat{H}^*(X,L)$, cf Definition~\ref{def:weak-bounding-pair}. Let 
$$\hat{H}^*(X,L) = H^*\left(\ker\left( \int_L\cl \Omega^*(X) \rightarrow \RR \right) \right)$$ 
be a modification of the relative cohomology of $L$ in $X$. If $[L] = 0 \in H_n(X;\RR)$ choose a map 
\begin{equation}
	P\cl \hat{H}^*(X,L) \rightarrow \RR
\end{equation}
splitting the map $y\cl \RR \rightarrow \hat{H}^*(X,L)$ in the long exact sequence induced by the surjective map $\int_L \cl \Omega^*(X)\to\bR$. 

\begin{intthm}
	\label{intthm:OGW}
	Suppose $L$ is equipped with a weak bounding pair $(b,\gamma)$ (see Definition \ref{def:weak-bounding-pair}).
	\begin{itemize}[leftmargin=20pt]
		\item If $[L] \neq 0$, there exists invariants
		\begin{equation}\label{eq:ogw-intro}
			\normalfont\text{OGW}_{\beta,0,\ell}\cl W^{\otimes \ell} \rightarrow \RR.
		\end{equation}
		 whenever $\beta$ satisfies $\partial \beta \neq 0 \in H_1(L;\ZZ)$. Moreover, if $(b,\gamma)$ is point-like, there also exist invariants $\normalfont\text{OGW}_{\beta,k,\ell}$ for $k \ge 1$ and any $\beta\in H_2(X,L;\bZ)$.
		\item If $[L] = 0$, any choice of $P$ as above determines for all $\beta \in H_2(X,L;\ZZ)$ genus-zero open Gromov--Witten invariants $\normalfont\text{OGW}_{\beta,0,\ell}^P$ as in~\eqref{eq:ogw-intro}. Similarly, if $b$ is point-like, there exist invariants $	\normalfont\text{OGW}_{\beta,k,\ell}$ for $k \ge 1$ not depending on $P$.	
	\end{itemize} 
	In either case, the invariants are independent of the choice of
	\begin{itemize}
		\item almost complex structure $J\in \cJ_\tau(X,\omega)$,
		\item representative of the Hamiltonian isotopy class of $L$,
		\item the gauge equivalence class of the bounding pair.
	\end{itemize}
Moreover, they agree with the invariants of \cite{ST21} whenever the latter are defined.
\end{intthm}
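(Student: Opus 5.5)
The plan is to follow the algebraic construction of \cite{ST21} essentially verbatim, replacing the regularity assumptions there with the properties of the $\fq$-operations established in \textsection\ref{sec:props-of-q-ops} (structure relations, unitality, cyclicity, degree and energy properties, compatibility with forgetful maps) and with the pseudoisotopy statements of Proposition~\ref{int:a_infty-algebra}.

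For the construction, I will take a weak bounding pair $(b,\gamma)$ and consider the deformed operations $\fq^{b,\gamma}$. If $(b,\gamma)$ is point-like, so that the curvature $\fq^{b,\gamma}_{0,0}$ is a multiple of the unit plus a point-like correction, I define $\ogw_{\beta,k,\ell}$ by extracting the coefficient of $Q^\beta$ (and of the formal parameters dual to $W$) in the constant term of $\fq^{b,\gamma}$ with $k$ point-class insertions. For $k=0$ I will use the closed-boundary operation $\fq_{-1,\ell}$ (discs with no boundary output), corrected as in \cite{ST21}. If $[L]\neq 0$ and $\partial\beta\neq 0$, the sphere-bubbling contributions vanish in class $\beta$, so the invariant is well defined. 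If $[L]=0$, I add the term $P$ applied to the class of $\gamma$ paired with the closed Gromov--Witten contribution, following \cite[Section~4]{ST21}. This term absorbs the codimension-one boundary where the disc boundary collapses.

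Invariance will come from the relative versions of these constructions. By Proposition~\ref{int:a_infty-algebra}, a change of $J$, of the global Kuranishi data, of the Thom system, or a Hamiltonian isotopy of $L$ produces a pseudoisotopy $\tilde{\fq}$ on $\Omega^*([0,1]\times L)$. Using the results of \cite{ST21}, which are purely algebraic given the pseudoisotopy axioms, a bounding pair at one end extends to a bounding pair for $\tilde{\fq}$. The two end invariants then differ by the integral over the boundary of a total derivative, so they coincide. Gauge equivalence is treated in the same way: a gauge equivalence is itself a bounding pair for the trivial pseudoisotopy.

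Agreement with \cite{ST21} in the regular case should follow from the comparison of the global-Kuranishi-chart integration with ordinary pushforward when the moduli spaces are smooth orbifolds with corners. I expect this to be a lemma from \cite{HH26} stating that the Thom-form integration reduces to the usual pushforward in that setting.

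The main obstacle is verifying that the operations from \cite{HH26} satisfy exactly the axioms used in \cite{ST21}, including those for the pseudoisotopies. The most delicate are the boundary formula for $\fq_{-1,\ell}$ involving the sphere term (fibre products with $L$), and the unit/forgetful-map compatibility needed for the point-like case. My first attempt would be to deduce each of these from the boundary description of the global Kuranishi charts in \cite{HH25} together with Stokes' theorem for the Thom-system pushforward.
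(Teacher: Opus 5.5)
Your plan is the paper's route: establish the \cite{ST16}-type properties and structure equations for the Thom-system $\fq$-operations, run the algebra of \cite{ST21,ST23} verbatim (relative potential $\Psi(\gamma,b)\in H^*(C(\fii))$ and enhanced superpotential $\ov\Omega = P\Psi(\gamma,b)$), and get invariance from pseudo-isotopies built from global Kuranishi charts for the trivial cobordism $L\times[-1,1]$, where the extra vertical boundary in the $\wt\fq_{-1,\ell}$ structure equation produces exactly the end terms you describe. One correction: a weak bounding pair satisfies $\fm^{b,\gamma}_0 = c\cdot 1$ exactly, and point-like means $\int_L b = s$, so the invariants for every $k$ are the coefficients of $T^\beta s^k\prod_i t_i^{r_i}$ in $\ov\Omega$, not something extracted from a point-class correction to the curvature.
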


Intuitively, ignoring the bounding cochain, $\normalfont\text{OGW}^P_{\beta, k, \ell}(\eta)$ for $\eta \in \hat{H}^*(X,L)^{\otimes k}$ is the count of the number of discs with boundary on $L$ in homology class $\beta$ with $k$ boundary marked points constrained on $k$ generic points, and $\ell$ interior marked points constrained on cycles $\pcd(\eta_i) \in H_*(X \setminus L)$ together with the count of the number of spheres with $\ell + 1$ marked points, $\ell$ of which are constrained to the cycles $PD(\eta_i)$, and the remaining one by the cycle $C \in H_{n+1}(X,L)$ with $\partial C = [L]$ determined by the map $P$.

As this intuitive description suggests, even though our invariants are defined over the real numbers, one would expect them to be rational numbers (or algebraic numbers, when one is working with non-trivial local systems on $L$). Progress on this has been achieved in \cite{Han24}.

\begin{remark}\label{rem:choice of P}
	The choice of $P$ is equivalent to the choice of $C  \in H_{n+1}(X,L;\RR)$ such that $\partial C = [L]$ (\cite[Remark~4.12]{ST23}). The choice of $P$ only affects the invariants $\text{OGW}^P_{\beta,k,\ell}$ with $k = 0$ and $\partial \beta = 0 \in H_1(L;\ZZ)$. Moreover, any two choices $P$, $P'$ are related by a class $Q \in H_n(X;\RR)$, and $$\text{OGW}^{P'}_{\beta,0,\ell} - \text{OGW}^{P}_{\beta,0,\ell} = \text{GW}_{\beta,\ell+1}(PD(Q), \_)$$ 
	by \cite[Proposition~8.1]{Hollands_2025}.
\end{remark}

\begin{remark}
	Invariance under gauge equivalence of the bounding pair was proved in \cite{ST21} whenever their invariants are defined. Their proof caries over verbatim to our setting. \cite{ST21} only showed invariance under deformation of almost complex structures in the case where the moduli spaces $\Mbar_{k,\ell}^{J,\beta}(X,L)$ remain smooth.
\end{remark}

In general, computing open Gromov--Witten invariants is a non-trivial task. Solomon--Tukachinsky \cite[Theorem~3]{ST23} establish recursive relations for their invariants, called the open WDVV equations in analogy with the WDVV equation for usual closed Gromov--Witten invariants. These relations have been used for the computation of all open Gromov--Witten invariants for certain real loci in complete intersections in projective space, and the Chiang Lagrangian in $\mathbb{C}P^3$, \cite{ST23, HT2024examplesrelativequantumcohomology, Hollands_2025,BT26}. We establish the open WDVV equations in general, which will allow these computational techniques to be applied in a wider range of settings.

\begin{intthm}[Theorem~\ref{thm:owdvv-equation}]
	The open Gromov--Witten invariants of Theorem \ref{intthm:OGW} satisfy the open Gromov--Witten axioms of~\ref{prop:OGW-axioms} and the open WDVV equation, cf. \eqref{eq:owdvv}.
\end{intthm}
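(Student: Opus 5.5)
Since the WDVV relations of \cite{ST23} are derived purely algebraically from structural properties of the $\fq$-operations (and the closed operations $\fq_{\emptyset,\ell}$), my plan is to reduce the statement to verifying those properties in our setting and then quote the Solomon--Tukachinsky argument.

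First, I record the properties of the $\fq$-operations established in \textsection\ref{sec:props-of-q-ops}: the structure equations (the $A_\infty$-type relations with interior inputs), unit, cyclicity, degree, energy-zero, the fundamental class and divisor axioms for interior inputs (which rely on compatibility with interior forgetful maps), and the top-degree property. The open Gromov--Witten axioms of Proposition~\ref{prop:OGW-axioms} (degree, symmetry, fundamental class, zero, divisor, and the relevant energy-zero normalizations) then follow formally from the definition of $\ogw$ via the superpotential / $\fq_{-1}$ constructed from the bounding pair $(b,\gamma)$. I would follow \cite[\S\S4--5]{ST23} line by line, checking at each step that only the listed properties are used.

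Second, for the open WDVV equation I would follow the strategy of \cite{ST23}. One compares two expressions for a relative quantum product: one obtains a module structure of the relative quantum cohomology on $\hat H^*(X,L)$ or on the Lagrangian cohomology, built from $\fq$-operations with two interior inputs, and shows it is associative. The key geometric input is an identity among operations coming from the moduli space of discs with one boundary output, several boundary inputs, and at least two interior inputs (or two interior plus one boundary special point), where one uses a forgetful map to the moduli space of the underlying domain ($\Mbar_{0,1+2}$ or $\Mbar_{1,2}$ of discs, an interval). Pulling back the two endpoints of this interval gives two decompositions into products of sphere and disc operations glued along interior or boundary nodes; the difference is an exact term controlled by the structure equations. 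In our framework, this requires the open-closed DMFT of \cite{HH26}: the global Kuranishi charts come with forgetful maps to the Deligne--Mumford spaces of stable domains, and pushforward along these is compatible with the gluing decompositions of boundary strata. So I would state an ``interior-node splitting'' formula in $\Omega^*$ for the operations with a fixed cross-ratio constraint, deduced from the DMFT axioms (especially the compatibility with the closed sector $\fq_{\emptyset,\ell}$ and Gromov--Witten invariants via the diagonal).

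Third, I would plug in $b,\gamma$, use the Maurer--Cartan equation to kill the boundary terms, and pass to cohomology to get \eqref{eq:owdvv}. When $[L]=0$, the dependence on $P$ must be tracked as in \cite{ST23}.

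The main obstacle is the second step: establishing the splitting identity at the chain level for the virtual counts. One must ensure the fibre products over the interior diagonal of $X$ and over the boundary node agree with the Thom-system pushforwards, including orientations and signs, and that the exactness error term is of the form $d(\cdot)$ plus $\fq$-compositions. I would first try to obtain it directly from the gluing axiom of the DMFT of \cite{HH26} applied to the pullback of a point of the interval-valued cross-ratio, then check signs against \cite{ST23}.
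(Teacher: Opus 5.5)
Your proposal follows the paper's route: the axioms of Proposition~\ref{prop:OGW-axioms} are obtained verbatim from \cite{ST23,ST21} using only the properties established in \S\ref{sec:props-of-q-ops}, and the open WDVV equation comes from running the geodesic-constraint argument of \cite{ST23} through the $\dmc_*$-algebra structure of Theorem~\ref{thm:closed-open-algebra}, deformed by $(b,\gamma)$. The paper phrases your associativity step as Lemma~\ref{lem:tensor-potential-for-owdvv}, i.e.\ $\del_u\nun\circ\del_v\nun\simeq(-1)^{|u||v|}\del_v\nun\circ\del_u\nun$, and gives an explicit homotopy $H_{uv}$ built from four geodesic operations, including $\fq_{-1,3;0,3}$ and the closed $\fq_{\emptyset,3;\chi_0}$; each operation comes from an analytic chain obtained by resolving and triangulating the non-submersive fibre product $[0,1]\times_{\bP^1}\cMc_\Gamma$ of a cross-ratio map (the preimage of the whole interval, not of a point), as in \cite[\S A]{HH26}.
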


\subsubsection{Lagrangian branes} 
A \emph{generalised Lagrangian brane} $\LL$ is a union of (not necessarily transversely intersecting) Lagrangians. The construction of the open Gromov--Witten invariants of Theorem~\ref{intthm:OGW} can be extended to generalised Lagrangian branes, described in \S\ref{sec:OGW construction}. In this context, we can prove a stronger invariance statement than in Theorem~\ref{intthm:OGW}. It relies on the notion of Lagrangian cobordisms studied systematically by \cite{BC13,NT20}. Any Hamiltonian isotopy induces a Lagrangian cobordism, but there are also more interesting examples, \cite{Hau20,Hic23}, such as those coming from mutations.

Suppose $\wt L \subset X \times \CC$ is a Lagrangian cobordism as in~Definition \ref{def:Lag cob} between two generalised Lagrangian branes $\LL^+$ and $\LL^-$. Note that while we work with non-empty Lagrangians in $X$, a Lagrangian brane could be the empty sequence. Assume there exists a bounding cochain $\wt b$ for $\wt L$, such that $\wt b|_{\LL^{\pm}}$ is point-like. Let $\normalfont\text{Cone}(\LL) := \text{Cone}(\int_\LL\cl \Omega^*(X) \rightarrow \RR)$, and note that a map $P\cl \hat{H}^*(X,\LL) \rightarrow \RR$ is equivalent to a map $P\cl H^*(\normalfont\text{Cone}(\LL)) \rightarrow \RR$. Then define 
\begin{align*}
	\Phi\cl \normalfont\text{Cone}(\LL^-) &\;\rightarrow\; \normalfont\text{Cone}(\LL^+)\\ \notag
	(\alpha, a) &\;\mapsto\; (\alpha, a + \int_{\wt L} \pi_X^* \alpha),
\end{align*}
where $\pi_X\cl X \times \CC \rightarrow X$ is the projection.

\begin{intthm}
	\label{intthm:cobordism}
	Assume the invariants ${\normalfont\text{OGW}}^{\LL^+}$ are defined using the section $P^+$. Then, for each $\beta \in H_2(\wt X, \wt L)$ and $k \geq 0$, we have 
	\begin{equation}
		\sum_{f^+_*\beta_+ =\beta} {\normalfont\text{OGW}}^{\LL^+}_{\beta_+,k,\ell} = \sum_{f^-_*\beta_- =\beta}{\normalfont\text{OGW}}^{\LL^-}_{\beta_-,k,\ell}
	\end{equation}
	as maps $W^{\otimes \ell} \rightarrow \RR,$	where the invariants ${\normalfont\text{OGW}}^{\LL^-}$ are defined using $P^- := P^+ \circ \Phi$, and $f^{\pm}$ is the inclusion $(X, \LL_\pm) \hookrightarrow (X\times \CC, \wt L)$.
\end{intthm}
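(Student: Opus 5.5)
The plan is to realise the cobordism $\wt L \subset X\times\CC$ as a family over the real line and compare the two ends by a Stokes-type argument. Here $\wt X = X\times \CC$, and outside a compact set $\wt L$ is cylindrical, $\LL^\pm \times \gamma_\pm$, with $\gamma_\pm$ horizontal rays. First we compactify: apply a fibrewise symplectomorphism of $\CC$ that bends the ends of $\wt L$, and restrict to a disc. Alternatively we project via a holomorphic function. Either way, $\wt L$ becomes, over a real parameter interval $[-1,1]$, a family of generalised Lagrangian branes $\LL_t \subset X$ interpolating between $\LL^-$ and $\LL^+$.

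The cleanest route is the standard Biran--Cornea open-mapping trick. Choose an almost complex structure on $X\times\CC$ of split form $J_X\oplus j$ outside a compact set. Then, for a holomorphic disc $u$ with boundary on $\wt L$, the composite $\pi_\CC\circ u$ is holomorphic near the ends. Fix a point $z_0$ on a horizontal line $\{\operatorname{Im} = c\}$ far out in an end. The open mapping theorem shows that discs meeting the fibre over $z_0$ are exactly discs in $X\times\{z_0\}$ with boundary on $\LL^\pm\times\{z_0\}$. Combined with the $\fq$-operations on $\wt L$, this gives a cobordism-valued $A_\infty$ structure: pairing with the Poincaré dual of a point in $\CC$ pulls back to the $\fq$-operations of $\LL^\pm$ via $f^\pm$. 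In particular, $\wt b|_{\LL^\pm}$ are bounding cochains for $\LL^\pm$, and we may take $\wt\gamma=\pi_X^*\gamma$.

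The key steps, in order, are as follows.

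First, define the open Gromov--Witten invariants $\ogw^{\wt L}_{\beta,k,\ell}$ for $\wt L$ relative to a suitable compactly supported or asymptotic theory. We use the construction of \textsection\ref{sec:OGW construction}, with interior constraints $\pi_X^*\eta_i$ and boundary constraints given by points.

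Second, show that the restriction maps to the ends intertwine the relevant $\fq$-operations. This uses compatibility of the global Kuranishi charts of \cite{HH25,HH26} with the fibre product over $\pi_\CC$, and we choose the Thom systems pulled back from the ends.

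Third, prove a Stokes identity: $d/dt$ of the invariant, computed on the slice $\pi_\CC^{-1}(\gamma(t))$ of a path $\gamma$ from the negative to the positive end, is an exact term. The boundary contributions of that term vanish because of the Maurer--Cartan equation for $\wt b$ and the weak unobstructedness $\fq_{0}^{\wt b}=\wt c\cdot 1$. This mirrors the proof of invariance under pseudoisotopy in Theorem~\ref{intthm:OGW}. The sums over $f^\pm_*\beta_\pm=\beta$ appear precisely because a disc class on $\wt L$ restricts to several classes on the ends.

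Fourth, handle the sphere (closed) contribution for $k=0$, $\partial\beta=0$. The choice $P^-=P^+\circ\Phi$ is designed so that the bounding chain $C^-$ equals $C^+ + \pi_{X*}\wt L$, viewed as a relative cycle. The sphere terms then match because $\int_{\wt L}\pi_X^*\alpha$ is exactly the correction term in the long exact sequence. This is an algebraic check on the cone complexes.

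The main obstacle is step two/three: guaranteeing that the virtual fundamental chains on the noncompact $\wt L$ have compact support and restrict on the ends to the product charts for $\LL^\pm$. This requires the Kuranishi chart construction of \cite{HH25} to be carried out with perturbation data split near infinity, and transversality for the evaluation map to $\CC$ in the Thom system. I would first try to construct the global chart equivariantly with respect to translation along the ends. Then I would use the open mapping theorem to confine all discs to a compact region together with the fibres over the ends.
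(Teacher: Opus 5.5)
Your proposal has a genuine gap at its core, in the first and third steps. You treat $\wt L$ as a one-parameter family of generalised Lagrangian branes $\LL_t\subset X$, and you propose to differentiate an invariant computed on slices $\pi_\bC^{-1}(\gamma(t))$. For a general Lagrangian cobordism no such family exists. The slice $\wt L\cap\{\Re z=t\}$ projects to $X$ only as an \emph{immersed} Lagrangian, and only when $t$ is a regular value of $\Re\circ\pi_\bC|_{\wt L}$. At critical values its topology can change: the ends $\LL^\pm$ may have different topology and different numbers of components, e.g.\ for surgery cobordisms. The invariants of this paper are not defined for such slices, and you have no control over how they would jump across critical values. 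So ``the $t$-derivative of the invariant'' is simply not available. A genuine family exists only in special cases such as suspensions of Hamiltonian isotopies. You also do not need invariants $\ogw^{\wt L}$ of the cobordism itself.

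The missing idea is to apply Stokes once, on the moduli spaces of discs with boundary on $\wt L$ itself. Work in the compact model $X\times\bC_1$, where $\wt L$ is a compact Lagrangian with boundary $\LL^-\sqcup\LL^+$, and take $\wt J$ cylindrical. Then discs meeting the ends lie in a fibre, so the moduli spaces are compact (Lemma~\ref{lem:cobordism-moduli-compact}). The end moduli spaces appear as an extra vertical boundary of the charts, with Thom systems extending the given ones (Lemma~\ref{lem:pullback-intertwines}, Corollary~\ref{cor:thom-system-lag-cob}). This removes the non-compactness you single out as the main obstacle.

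The structure equation for $\wt\q^{\pi_X^*\gamma,\wt b}_{-1,0}$ (Proposition~\ref{prop: bulk deformed q tilde operation} with $\ell=0$) then has three kinds of boundary term:
\begin{itemize}
\item the two ends, giving $f_+\q^{\gamma,b_+}_{-1,0}-f_-\q^{\gamma,b_-}_{-1,0}$;
\item disc bubbling, giving $\tfrac12\langle\wt c\cdot 1,\wt c\cdot 1\rangle_{\wt L}=0$, since this integrates a function over the $(n+1)$-manifold $\wt L$;
\item boundary collapse, giving $\int_{\wt L}\pi_X^*\q^\gamma_{\emptyset,0}$, which does \emph{not} vanish.
\end{itemize}
So, contrary to your third step, not all boundary terms are killed by the Maurer--Cartan equation. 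The surviving identity
\[
f_+\q^{\gamma,b_+}_{-1,0}-f_-\q^{\gamma,b_-}_{-1,0}=\pm\int_{\wt L}\pi_X^*\q^\gamma_{\emptyset,0}
\]
is exactly $f_+\Psi^{\LL^+}(\gamma,b_+)=\Phi\bigl(f_-\Psi^{\LL^-}(\gamma,b_-)\bigr)$, i.e.\ Theorem~\ref{thm: relative potential is preserved}.

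Your fourth step rightly senses that $\Phi$ absorbs a sphere contribution through $\wt L$. However, that term is produced geometrically by the boundary-collapse stratum, not by an algebraic check on cones.

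Finally, apply $P^+$ and use $P^-=P^+\circ\Phi$ to get $f_+\ov\Omega^{\LL^+}=f_-\ov\Omega^{\LL^-}$. Comparing coefficients gives the theorem for all $k,\ell$ at once, with the boundary constraints encoded by the point-like $\wt b$ rather than treated separately.
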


In \cite{ST23}, these invariants are denoted $\overline{\normalfont\text{OGW}}$ to indicate that these definition requires the choice of a map $P_\bR$.

\begin{remark}\label{rem:bc-on-cobordism}
	The existence of a weak bounding cochain on $\wt L$ extending point-like bounding cochains on its ends is a strong constraint. It could also happen that a bounding cochain $\wt b$ only exists over a smaller $\wt W \subset W$, or that $\wt b|_{\LL^{\pm}}$ is a non-point-like bounding cochain. In these cases, only invariants with interior markings from $\wt W$, respectively invariants without boundary marked points, are preserved under the cobordism.
\end{remark}

Let $\normalfont\text{Cob}^{wbc}_0(X)$ denote the group of null-homologous generalised Lagrangian branes up to weakly unobstructed Lagrangian cobordism. We can use the above result to detect non-trivial Lagrangian cobordism classes, restricting to invariants without boundary marked points by the issues of Remark~\ref{rem:bc-on-cobordism}

\begin{cor}
	If $\bL$ is a Lagrangian brane with $[\LL] = 0 \in \normalfont\text{Cob}^{wbc}_0(X)$, then there exists a choice of $P$ such that $\normalfont\text{OGW}^{P}_{\beta,0,\ell} = 0$ for all $\beta, \ell$. In particular, by Remark \ref{rem:choice of P}, for any $\LL$, if there exists $OGW_{\beta,0,\ell} \neq 0$ for some $\ell$ and $\partial \beta \neq 0 \in H_1(L;\ZZ)$ then $[\LL] \neq 0 \in \normalfont\text{Cob}^{wbc}_0(X)$.
\end{cor}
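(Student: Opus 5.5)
The plan is to apply Theorem~\ref{intthm:cobordism} with the empty brane on one end. If $[\LL]=0$ in $\normalfont\text{Cob}^{wbc}_0(X)$, there is a weakly unobstructed Lagrangian cobordism $\wt L\subset X\times\CC$ between $\LL^+:=\LL$ and the empty brane $\LL^-:=\emptyset$, carrying a weak bounding cochain $\wt b$. Its restriction to $\LL$ gives the bounding pair used to define the invariants. Invariants without boundary marked points do not need point-likeness (Remark~\ref{rem:bc-on-cobordism}), so the $k=0$ case of Theorem~\ref{intthm:cobordism} applies whatever form $\wt b|_{\LL}$ takes.

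\textbf{Choosing $P$.} Since the cobordism relation is symmetric, we may instead put $\LL$ on the minus end and $\emptyset$ on the plus end. For the empty brane, $\normalfont\text{Cone}(\emptyset)\simeq\Omega^*(X)\oplus\RR$ with zero differential on the $\RR$-factor. Take $P^+$ to be the projection onto the $\RR$-factor, i.e.\ the natural splitting. Then set $P:=P^+\circ\Phi$, where $\Phi(\alpha,a)=(\alpha,a+\int_{\wt L}\pi_X^*\alpha)$. This $P$ is a legitimate section for $\LL$, because $\Phi$ is a chain map compatible with the maps $y$.

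\textbf{Vanishing.} Theorem~\ref{intthm:cobordism} with $k=0$ then gives, for each $\beta\in H_2(X\times\CC,\wt L)$,
$$\sum_{f^-_*\beta_-=\beta}\normalfont\text{OGW}^{\LL,P}_{\beta_-,0,\ell}=\sum_{f^+_*\beta_+=\beta}\normalfont\text{OGW}^{\emptyset}_{\beta_+,0,\ell}.$$
The invariants of the empty brane should be read off from the construction in \S\ref{sec:OGW construction}: there are no discs, and the sphere contribution is paired against $P^+$. With our choice of $P^+$ the chain $C$ is $0$, so I expect every such term to vanish.

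\textbf{Main obstacle.} The hard step is passing from the vanishing of the sums over classes with the same image $f^-_*\beta_-=\beta$ to the vanishing of each individual $\normalfont\text{OGW}^P_{\beta_-,0,\ell}$. The map $f^-_*$ need not be injective. I would first try to handle this by refining Theorem~\ref{intthm:cobordism} with Novikov variables in $H_2(X,\LL)$ tracked through the cobordism. Failing that, I would interpret the statement with classes taken modulo $\ker f^-_*$, as the theorem implicitly does.

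\textbf{The ``in particular''.} Suppose $[\LL]=0$. Then some $P$ kills all $k=0$ invariants. By Remark~\ref{rem:choice of P}, changing $P$ only affects classes with $\partial\beta=0$. Hence $\normalfont\text{OGW}_{\beta,0,\ell}=0$ whenever $\partial\beta\neq0$, for any choice of $P$. This is the contrapositive of the second claim.
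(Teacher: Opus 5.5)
Your route is the one the paper intends. The corollary gets no separate proof there; it is meant to follow from Theorem~\ref{intthm:cobordism} applied to a null-cobordism with the empty brane on the $+$ end, then Remark~\ref{rem:choice of P}. The individual steps you use are correct:
the projection splitting $P^+$ kills $\Psi^{\emptyset}=[\mathfrak{q}^{\gamma}_{\emptyset,0},0]$, since there are no discs; $P=P^+\circ\Phi$ is a valid splitting because $\Phi(0,a)=(0,a)$; and only $k=0$ is needed, so point-likeness plays no role.

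The obstacle you isolate is genuine, and the paper does not address it either. Theorem~\ref{intthm:cobordism} only gives $\sum_{f^-_*\beta'=\wt\beta}\text{OGW}^{P}_{\beta',0,\ell}=0$ for each $\wt\beta$. Neither of your fallbacks closes the gap.

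Your first fallback, refining the Novikov bookkeeping, cannot work. The identity comes from Stokes' theorem on the thickenings for $\wt L$-discs in class $\wt\beta$. Their boundary over the ends is the union of the moduli spaces for all classes in the fibre of $f^\pm_*$ at once. So the relation $f_+\Psi^{\LL^+}=\Phi(f_-\Psi^{\LL^-})$ only exists after pushing coefficients into $\Lambda_{\wt L}$.

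Your second fallback proves the first sentence of the corollary in the fibre-sum sense, but then your last paragraph no longer goes through. Suppose $\partial\beta\neq 0$ but $\partial\beta$ dies in $H_1(\wt L)$, e.g.\ it bounds a surface in $\wt L$. Then $f^-_*\beta$ lies in the image of $H_2(\wt X)\cong H_2(X)$. So the fibre through $\beta$ also contains a class $\beta'$ with $\partial\beta'=0$, namely the image of that absolute class, and its invariant does depend on $P$. Vanishing of the fibre sum for one particular $P$ then says nothing about $\text{OGW}_{\beta,0,\ell}$ alone. The $P$-independence of that single term does not let you isolate it.

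As written, your argument (like the paper's) yields the per-class vanishing, and hence the ``in particular'', only when the fibre of $f^-_*$ through $\beta$ is a singleton, e.g.\ when $f^-_*$ is injective. Since the obstruction must rule out every possible null-cobordism $\wt L$, you need either such a hypothesis or a weaker fibre-sum formulation. The fibre-sum version, however, no longer gives an intrinsic obstruction.
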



\begin{example}
	Let $X$ be the Fermat quintic threefold. The real locus $X_\RR$ is a Lagrangian $\mathbb{RP}^3$ and can thus be equipped with two distinct local systems. Let $\LL = (L^+,L^-)$ be the Lagrangian brane obtained by taking $L^+$ to be $X_\RR$ with some choice of orientation and local system and $L^-$ having the opposite orientation and the other local system. As computed in \cite{PSW}, $\LL$ has non-trivial open Gromov--Witten invariants without boundary marked points and $\beta$ with $\partial \beta \neq 0 \in H_1(L;\ZZ)$. We can thus conclude that $[\LL] \neq 0 \in \normalfont\text{Cob}^{wbc}_0(X)$. This result also follows from elementary topological methods.
\end{example}

\begin{remark}\label{} 
	There are also exist cylindrical Lagrangian cobordisms (\cite{SS21}) and algebraic Lagrangian cobordisms (\cite{Cor25}). We don't study relations between induced by these kinds of cobordisms.
\end{remark}

In \cite[\S1.3.6]{ST23}, Solomon-Tukachinsky define the relative quantum connection, which extends the quantum connection on $\text{QH}^*(X,\omega)$ by including counts of discs with boundary on a Lagrangian. To make this precise, let $R$ be a graded $\Lambda$-algebra, which is used to keep track of bulk-deformations. Let $u$ be a variable of degree $2$. For a generalised Lagrangian brane $\LL$, the \emph{relative quantum connection} is a morphism
\[
\nabla^L\cl Der_\Lambda R((u)) \otimes H^*\left(Cone(\LL);R((u))\right) \rightarrow H^*(Cone(\LL);R((u))).
\]	
The results of \cite{ST23} and \cite{Hug24b} generalise directly to show that $\nabla^L$ is flat for any weakly unobstructed relatively spin Lagrangian. Our results imply moreover that $\nabla^L$ is a Lagrangian cobordism invariant.

\begin{intthm}
	The map $\Phi$ induces an isomorphism of connections 
	\begin{equation*}
		\left(H^*(Cone(\LL^-);R((u))), \nabla^{\LL^-}\right) \;\cong\; \left(H^*(Cone(\LL^+);R((u))), \nabla^{\LL^+}\right).
	\end{equation*}
\end{intthm}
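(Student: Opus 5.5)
We will deduce this from Theorem~\ref{intthm:cobordism}, or more precisely from the chain-level statement that underlies it. Recall from \cite[\S1.3.6]{ST23} how $\nabla^{\LL}$ is built. It is the Dubrovin-type connection on $H^*(\mathrm{Cone}(\LL);R((u)))$. Its structure constants come from the derivatives of the open-closed potential, namely the bulk-deformed $\fq_{-1,\ell}$ and $\fq_{\emptyset,\ell}$ operations evaluated at the bounding pair $(b,\gamma)$. Equivalently, they come from the invariants $\ogw_{\beta,0,\ell}$ together with the closed $\mathrm{GW}$ invariants. So we write $\nabla^{\LL}_\partial = \partial + u^{-1}\,\partial\gamma \star_{\LL}$, where $\star_{\LL}$ is the relative quantum product on the cone:
\begin{itemize}
\item on the $\Omega^*(X)$ component, $\star_{\LL}$ is the bulk-deformed quantum product;
\item the $\RR$ component records $\int_{\LL}$ of the open correction, i.e.\ the derivative of the disc potential.
\end{itemize}
The claim then reduces to showing that $\Phi$ intertwines $\star_{\LL^-}$ with $\star_{\LL^+}$, and commutes with $\partial$ and with the $u$-linear structure. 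The last two properties are clear, because $\Phi$ is $R((u))$-linear and defined over $\RR$.

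First we check that $\Phi$ is a quasi-isomorphism of cones. We have $\partial\wt L = \LL^+ - \LL^-$ after restricting over a far-out vertical line. Stokes on $\wt L$ for a closed $\alpha$ then gives
\[
\int_{\LL^+}\alpha - \int_{\LL^-}\alpha = \int_{\wt L} d\pi_X^*\alpha = 0 .
\]
Here we take $\int_{\wt L}$ to be a suitably cut-off, or compactly supported in $\CC$, version. With the cone differential $d(\alpha,a)=(d\alpha,\int_{\LL}\alpha)$ (up to sign), a direct check using $\partial \wt L$ shows that $\Phi$ is a chain map. It is an isomorphism with inverse $(\alpha,a)\mapsto(\alpha,a-\int_{\wt L}\pi_X^*\alpha)$.

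Second, we compare the quantum products. The closed part is literally the same on both sides, so only the $\RR$-component needs comparison. That component is governed by the generating function $\Omega^{\LL}=\sum_{\beta,\ell} Q^\beta\,\ogw^{\LL}_{\beta,0,\ell}(\gamma^{\otimes \ell})/\ell!$ and its derivatives in the bulk directions. Theorem~\ref{intthm:cobordism} with $k=0$ gives $\Omega^{\LL^+}_{P^+}=\Omega^{\LL^-}_{P^-}$ after pushing forward classes via $f^\pm$. Since $P^-=P^+\circ\Phi$, the pairing used to identify the $\RR$-component with the sphere-through-$C$ contribution is transported by $\Phi$. Here we use that the shift $a\mapsto a+\int_{\wt L}\pi_X^*\alpha$ is exactly the change of bounding chain, $C^+ = C^- + \pi_X(\wt L)$, in Remark~\ref{rem:choice of P}. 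Differentiating the identity of potentials in bulk parameters gives the equality of structure constants, hence $\Phi\circ\nabla^{\LL^-}=\nabla^{\LL^+}\circ\Phi$.

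The main obstacle is that $\nabla^{\LL}$ is not obviously determined by the numbers $\ogw_{\beta,0,\ell}$ alone. The $u^{-1}$ term also involves the chain-level pairing of the open part with the unit/cone element. Here we would instead run the argument at chain level. We use the cobordism $\wt L$ with bounding cochain $\wt b$ to construct, via the $\fq$-operations of $X\times\CC$ restricted over a path in $\CC$, a pseudoisotopy between the relative quantum products of $\LL^-$ and $\LL^+$. This is the same mechanism as in the proof of Theorem~\ref{intthm:cobordism}. We then invoke the general fact from \cite{ST23,Hug24b} that a pseudoisotopy of bounding pairs induces a gauge transformation of $\nabla$ whose $H^*$-level effect is the cone map $\Phi$.
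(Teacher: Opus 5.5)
Your proposal has a genuine gap at the decisive step.

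\textbf{The first route.} It does not work as stated. $\ov\Omega=P\Psi$ is a scalar generating function. The connection, however, is $\nabla_u=\partial_u+\partial_u\nun$, and this requires the whole operator $\nun$ on $H^*(C(\fii))$, including the term $-c\cdot a$ on the second summand of the cone. Equality of the scalars $P^\pm\Psi^\pm$ does not determine this operator. You would need an identity expressing $\nun$ through bulk derivatives of $\Psi$, and even then you would only reach the classes obtained by differentiating the bounding pair.

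\textbf{The fallback.} You notice this, but the fallback does not close the gap, because the ``general fact'' you invoke is not available in the form you need. The invariance results of \cite{ST23} (cf.\ Theorem~\ref{thm:invariance-potentials}) concern gauge equivalences over a fixed Lagrangian, i.e.\ pseudo-isotopies on $L\times[-1,1]$. There both cones coincide and no map like $\Phi$ appears. A cobordism between different branes $\LL^\pm$ is not a pseudo-isotopy in the sense of Definition~\ref{def:pseudoisotopy}. It therefore gives no pseudo-isotopy of relative quantum products, and producing $\Phi$ is precisely what has to be proved. Likewise, restricting operations over a path in $\bC$ is not the mechanism. One uses the $\wt\fq$-operations of the whole cobordism, with bulk insertions pulled back by $\pi_X$.

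\textbf{The missing idea.} What is needed is an explicit chain homotopy for the tensor potential. By Lemma~\ref{lem:pullback-intertwines}, $c^+=\wt c=c^-=:c$, so
\[
\nun^+\Phi(\eta,a)-\Phi\nun^-(\eta,a)=\Bigl(0,\;(-1)^{n+1}\bigl(\fq^{\gamma,b^+}_{-1,1}-\fq^{\gamma,b^-}_{-1,1}\bigr)(\eta)-c\int_{\wt L}\pi_X^*\eta-\int_{\wt L}\pi_X^*\fq^{\gamma}_{\emptyset,1}(\eta)\Bigr).
\]
Each term on the right appears in the structure equation for $\wt\fq^{\gamma,\wt b}_{-1,1}(d\pi_X^*\eta)$ on the cobordism (Proposition~\ref{prop: bulk deformed q tilde operation}):
\begin{itemize}
\item the ends $\LL^\pm$ enter as the additional vertical boundary of $\wt\cT_{0,1}$;
\item sphere bubbling gives the integral of the closed operation over $\wt L$;
\item the quadratic disc-bubbling terms collapse to $-c\int_{\wt L}\pi_X^*\eta$. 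This uses the Maurer--Cartan equation $\wt\fq^{\gamma,\wt b}_{0,0}=\wt c\cdot 1$, cyclic symmetry, the degree property, and the energy-zero formula $\fq^{\beta_0}_{0,1}(\eta)=-\eta|_{L}$.
\end{itemize}
Hence
\[
H(\eta,a)=\bigl(0,(-1)^{|\eta|+1}\wt\fq^{\gamma,\wt b}_{-1,1}(\pi_X^*\eta)\bigr)
\]
satisfies $\nun^+\Phi-\Phi\nun^-=[d,H]$, so $\Phi$ intertwines the tensor potentials on cohomology. Since $\Phi$ does not involve the bulk parameters, it commutes with $\partial_u$, and therefore intertwines $\partial_u+\partial_u\nun^\pm$. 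This is the paper's argument, and it never passes through the numerical invariants.
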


In \cite{Hug24, Hug24b}, the \emph{relative cyclic open-closed map} was introduced in order to prove that the Fukaya category determines certain open Gromov--Witten invariants. These works relied on the same regularity assumptions as the papers by Solomon--Tukachinsky. Using our work, these technical assumptions can be removed.

\begin{corollary}
	For a single Lagrangian brane $L$, there exist cyclic open-closed maps \begin{equation}
		\mathcal{OC}^{\infty}\cl\, \left(HC^{\infty}(CF^*(L;R)), \nabla^{GGM} \right)\; \rightarrow\; \left(QH^{*}(X;R((u))),\nabla \right),
	\end{equation}
	and \begin{equation}
		\OC^{\infty}_L\cl\, \left(HC^\infty_*\left(CF^*(L;R), L\right),\nabla^{GGM,L}\right) \;\rightarrow\; \left(H^\star(Cone(L;R((u))))^\vee,(\nabla^{L})^\vee \right).
	\end{equation}
\end{corollary}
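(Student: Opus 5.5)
The corollary asserts two existence statements. The plan is to transport the constructions of \cite{Hug24, Hug24b} from the regular setting to the present one, replacing each use of smooth moduli spaces by the $\fq$-operations and their structure relations established in \S\ref{sec:props-of-q-ops}, together with the open-closed DMFT of \cite{HH26}. The proofs in \cite{Hug24,Hug24b} are almost entirely algebraic. They use:
\begin{enumerate}[(i)]
\item a cyclic unital filtered $A_\infty$ algebra $CF^*(L;R)$ (here $\Omega^*(L;R)$ with $\fq_{k,0}$ deformed by a bulk $\gamma$ and bounding cochain $b$);
\item closed-open and open-closed operations $\fq_{k,\ell}$ and $\fq_{-1,\ell}$ (no outgoing boundary point, one outgoing interior point), satisfying the $A_\infty$-type structure relations;
\item compatibility with forgetful maps of interior and boundary marked points;
\item a cyclic symmetry of the $\fq$-operations with respect to the Poincaré pairing, together with the interaction of the output marked point with a $u$-weighted $\psi$-class (equivalently, the $S^1$-equivariant structure needed for $\mathcal{OC}^\infty$).
\end{enumerate}

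First, I will invoke Proposition~\ref{int:a_infty-algebra} and the properties of \S\ref{sec:props-of-q-ops} to obtain (i)--(iii) verbatim in the form used by \cite{ST16}.

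Second, I will define $\mathcal{OC}^\infty$ on cyclic chains $\sum u^j\,(a_0\otimes\cdots\otimes a_k)$ by the formula of \cite{Hug24}. It is a sum of pushforwards along the interior output evaluation of the global Kuranishi charts of discs with $k+1$ cyclically ordered boundary inputs, with $\psi$-class insertions at the output realised by the DMFT operations of \cite{HH26}. The chain-map property for the cyclic differential (Hochschild $b$ plus $uB$) then follows from Stokes' theorem on these charts. The boundary strata are disc bubbling, which produces $b$ terms, and the strata where boundary points collide with the forgotten point, which produce $B$. For the relative version $\OC^\infty_L$, I will add the extra term pairing against $\int_L$ to land in the dual of $H^*(\mathrm{Cone}(L))$, exactly as in \cite{Hug24b}.

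Third, I will check compatibility with connections: $\mathcal{OC}^\infty\circ\nabla^{GGM}=\nabla\circ\mathcal{OC}^\infty$, and the analogous identity for $\nabla^L$. Following \cite{Hug24b}, differentiating in a bulk direction inserts an extra interior marked point. Comparing the Getzler--Gauss--Manin connection with the quantum connection then reduces to a homotopy built from moduli of discs with two interior points whose cross-ratio-type parameter varies. Its boundary gives the divisor/$\psi$-class identity, which in turn requires a comparison of $\psi$-classes under forgetful maps inside the Kuranishi framework.

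I expect the main obstacle to be (iv), and specifically producing chain-level $\psi$-class/equivariant operations on global Kuranishi charts that are strictly compatible with the forgetful maps and with the boundary decompositions. The regular-case argument uses pullback of $\psi$ along forgetful maps and a comparison cobordism. My first attempt would be to represent $\psi$ by the Euler form of the tangent line bundle, pulled back from the Deligne--Mumford base of the global chart. I would then use that the Thom systems of \cite{HH26} are chosen compatibly with the base maps, so that the correction terms appear as boundary faces of the base space alone. If that compatibility fails, I would instead build the required homotopies as pseudoisotopies, using the independence part of Proposition~\ref{int:a_infty-algebra}, and pass to cohomology.
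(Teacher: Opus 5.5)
Your top-level strategy, treating \cite{Hug24,Hug24b} as formal consequences of properties of the operations and transporting them, is the paper's own. The paper offers nothing beyond the remark that the regularity hypotheses of those works can now be dropped. However, the ingredients you propose to transport are not the ones those constructions need, and the one genuinely new input is left unaddressed.

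\emph{The $uB$-part needs neither $\psi$-classes nor any boundary analysis.} Each term of $B(a_0\otimes\cdots\otimes a_k)$ has the unit in a boundary slot. Forgetting that boundary point from a disc with at least two boundary points and an interior marked point leaves a stable configuration. By Proposition~\ref{prop:cubical-cobordisms}\eqref{forgetful-cobordisms} and Lemma~\ref{lem:thom-form-and-forgetful}, the integrand is then pulled back along a map with one-dimensional fibres, so the term vanishes exactly as in Proposition~\ref{unit property}. Hence $\OC\circ B=0$, also after deforming by $(b,\gamma)$, and the cyclic map is just the $u$-linear extension of $\OC$.

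There is no codimension-one stratum that produces $B$: boundary points colliding is disc bubbling, which feeds only the $b$-term. So your item (iv), the ``main obstacle'', is a red herring. Your pseudoisotopy fallback could not help either, since a pseudoisotopy compares different auxiliary choices rather than giving a homotopy between $\OC\circ\nabla^{GGM}$ and $\nabla\circ\OC$ for fixed data.

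Relatedly, $\fq_{-1,\ell}$ has no output at all, and pushing forward along an interior output is not available here. Only $evb_0$ is made a submersion, and already $\Mbar_{1,1}(\beta_0)\cong L$ with $evi_0$ the inclusion. So $\OC$ should be defined by pairing against an extra interior input, i.e.\ with values in a dual.

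\emph{What does need work is the intertwining of connections.} The homotopies there are push--pull operations along moduli spaces in which three points are constrained to lie on a geodesic: the interior point carrying $\partial_t\gamma$ (or the Chern/Maslov class in the $u\partial_u$-direction), the interior point that is output or paired against, and a boundary point. Equivalently, a cross ratio $\chi_w$ is constrained to $[0,1]$. These are operations of the same kind as $\fq_{0,2;1_0,2}$ and $\fq_{1,1;2_{0,1},1}$ in $H_{uv}$. The $u$-dependent terms come from $H\circ uB$: a unit at the constrained boundary point, once forgotten, leaves $\partial_t\gamma$ unconstrained and yields the $t$-derivative of $\OC$.

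You do gesture at such a cross-ratio homotopy, but you attribute its boundary to a $\psi$-class identity and miss the real difficulty. The fibre product $[0,1]\times_{\bP^1}\cTc_\Gamma$ is not transverse, since the stabilisation map is not a submersion. So it is not a global Kuranishi chart, and Stokes cannot be applied to it directly.

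The paper's input, and the step your plan has to supply, has three parts:
\begin{enumerate}
\item resolve these fibre products as in \cite[\S A]{HH26};
\item take the resulting analytic chains $\sigma_{\Gamma,\chi}$;
\item use the $\dmc_*$-algebra structure of Theorem~\ref{thm:closed-open-algebra}, whose compatibility with operadic composition turns the boundary of these chains into the required identities.
\end{enumerate}
This is exactly how the open WDVV homotopy is obtained in the proof of Theorem~\ref{thm:owdvv-equation}.
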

In order to prove that the Fukaya category determines open Gromov--Witten invariants, one would need to extend these results to the entire Fukaya category rather than just the subcategory generated by a single Lagrangian.

\subsection*{Acknowledgements}
We thank Alexia Corradini, Jeff Hicks, Jake Solomon and Sara Tukachinsky for helpful conversations. KH is grateful to Nick Sheridan for suggesting open Gromov--Witten invariants and Lagrangian cobordisms as a subject for his PhD thesis. Both authors were in residence at the Simons Center for Geometry and Physics while working on this project, and we thank the institute for its hospitality.
A.H. is supported by ERC Grant No.864919 and K.H. was partially supported by EPSRC Grant EP/W015749/1.

	\section{Review of the global Kuranishi chart construction}

Fix an embedded Lagrangian $L$ in a closed symplectic manifold $(X^{2n},\omega)$. Let $(V,\fs)$ be a relative spin structure and fix an $\omega$-tame almost complex structure $J$. Given $\beta\in H_2(X,L;\bZ)$ and $k,\ell\geq 0$, denote by $\Mbar_{k,\ell}(\beta)$ the moduli space of stable discs with boundary on $L$, $k$ boundary marked points and $\ell$ interior marked points.

\subsection{Global Kuranishi chart for moduli spaces of holomorphic discs}\label{subsec:construction-review} We briefly summarise the construction of global Kuranishi charts in \cite{HH25} for curves of genus zero with at most one boundary component.

\begin{theorem}[{\cite[Theorem~A]{HH25}}]\label{thm:gkc-equivariant-existence} The moduli space $\Mbar_{k+1,\ell}(\beta)$ admits a global Kuranishi charts $\cK = (G,\cT,\cE,\obs)$ with corners of the expected virtual dimension with the following properties.
	\begin{enumerate}[\normalfont 1),leftmargin=20pt,ref=\arabic*]
		\item\label{gkc-orientation} The orientation sheaf $\orl(\cK)$ is canonically isomorphic to the orientation sheaf of $\Mbar_{k+1,\ell}(\beta)$.
		\item\label{gkc-uniqueness} $\cK$ is unique up to the notion of equivalence.
		\item\label{gkc-cobordant} If $J'$ is another $\omega$-tame almost complex structure, then we can choose the associated global Kuranishi charts to be cobordant.
		\item \label{gkc-forgetful} $\cK$ is compatible with forgetful maps
		\item \label{gkc-symmetry} $\cK$ is invariant under cyclic permutation of the boundary marked points, and any permutation of the interior marked points
		\item \label{gkc-submersive} The evaluation map $evb_0$ is a submersion.
	\end{enumerate}
\end{theorem}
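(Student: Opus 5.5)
This theorem is quoted from \cite[Theorem~A]{HH25}, so the plan is to recall the construction there and check each listed property in turn.

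\textbf{Step 1: the unobstructed auxiliary moduli space.} Fix a polarisation, that is, a Hermitian line bundle $\cO_X(1)$ with curvature close to $\omega$ that is trivialised over $L$. Any stable disc $u$ is doubled to a real genus-zero curve with an anti-holomorphic involution. Pulling $\cO_X(1)$ back along the double, a large power of it determines a real framing, i.e.\ a real embedding of the domain into $\bC P^N$. The domain is then recorded as a point of the space $\cM$ of real stable maps to $\bC P^N$ of the appropriate degree, with $k+1$ real and $\ell$ conjugate-pair markings. The space $\cM$ is a smooth manifold with corners, because the boundary nodes of discs give the corner strata. It carries an action of the compact group $G=PO(N+1)$, or $PU(N+1)$ twisted by the real structure, with finite stabilisers.

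\textbf{Step 2: the thickening and obstruction bundle.} Choose a finite-rank $G$-equivariant perturbation space $E$. For a positive line bundle one can take sections of the form $H^0(\cO(m))\otimes$ (finite-dimensional space of $X$-valued $(0,1)$-forms), as in Abouzaid--McLean--Smith. Let $\cT$ be the set of triples $(u,\iota,e)$ such that
\begin{itemize}
\item $\delbar_J u+\langle e\rangle\circ(\iota,u)=0$,
\item the framing $\iota$ is determined by $u$ up to a Hermitian-metric condition, i.e.\ the framing bundle is isomorphic to $\cO_X(1)$ pulled back.
\end{itemize}
The obstruction bundle is $\cE=E\oplus\mathfrak{h}$, where $\mathfrak{h}$ measures the failure of the framing condition. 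The section is $\obs(u,\iota,e)=(e,\text{framing defect})$. Then $\obs^{-1}(0)/G\cong\Mbar_{k+1,\ell}(\beta)$.

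Transversality of the linearised operator for large $m$, taken relative to $\cM$ so that there is no dependence on the base, follows from Serre vanishing. Gluing in the relative setting then gives a $G$-manifold with corners $\cT$ near the zero set. The virtual dimension is $\dim\cT-\operatorname{rk}\cE-\dim G$, which equals the index.

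\textbf{Step 3: the listed properties.}
\begin{enumerate}[1),leftmargin=20pt]
\item \emph{Orientation.} The relative spin structure $(V,\fs)$ orients the determinant line of the real Cauchy--Riemann operator. Combined with the complex orientation of $\mathfrak{h}$ and of the base, this identifies $\orl(\cK)$ with the orientation sheaf of $\Mbar_{k+1,\ell}(\beta)$.
\item \emph{Uniqueness.} Two choices (of $m$, $E$, polarisation) are compared by a doubly-framed chart. Stabilisation and germ equivalence reduce both to this common chart.
\item \emph{Cobordism.} Do the construction over a path $J_t$. This gives a chart with an extra $[0,1]$ boundary factor.
\item \emph{Forgetful maps.} Build the charts for all $\ell$ simultaneously by pulling back from the chart with fewer markings along the forgetful map of the base $\cM$. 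This works because the framing depends only on $u$ and not on the extra markings.
\item \emph{Symmetry.} The framing construction is insensitive to the labelling of markings. So $\mathbb{Z}/(k+1)$ acting cyclically on boundary points and $S_\ell$ acting on interior points act on $\cK$ equivariantly.
\item \emph{Submersivity of $evb_0$.} Enlarge $E$ by finitely many $(0,1)$-forms supported near the zeroth marking, so that the linearisation surjects onto $T_{u(z_0)}L$.
\end{enumerate}

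\textbf{Main obstacle.} I expect the main difficulty to be the corner structure in Step 2, together with the compatibility of points 4) and 6). The gluing must be carried out relatively over a base with corners, and the perturbation space must be chosen so that it is both pulled back under forgetful maps and large enough to make $evb_0$ submersive. I would handle this by fixing the perturbation data on the minimal moduli space, where every interior marking is forgotten. There one adds the evaluation-submersive terms as $G$-equivariant forms parametrised by the position of the marking, and then everything is pulled back along the forgetful maps.
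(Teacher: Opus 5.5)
You should not get 6) by adding marking-dependent perturbations; as proposed, that step fails. Otherwise your construction is essentially the one the paper recalls: a framed base $\cB$ of discs in $(\bC P^d,\bR P^d)$, perturbations built from $\cO(r)$, the group $\text{PO}(d+1)$, a framing-defect summand of $\cE$, and marked charts pulled back along forgetful maps.

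\textbf{The gap in 6).} If you enlarge $E$ by $(0,1)$-forms localised near $z_0$, the outgoing boundary marking becomes special in the thickening. The chart for $\Mbar_{k+1,\ell}(\beta)$ is then no longer $\cK\times_\cB\cB_{k+1,\ell}$.
\begin{itemize}
\item A cyclic rotation of the boundary markings carries the chart to a different one, with the extra terms now at $z_1$. So 5) fails.
\item If you symmetrise by adding such terms at every boundary marking, forgetting a boundary marking is no longer covered by a pullback. So 4) fails.
\end{itemize}
Your remedy, fixing the data after forgetting only the interior markings, keeps $z_0$ distinguished and does not remove this conflict. The paper genuinely needs both properties: compatibility with forgetting boundary markings for the unit and top-degree properties, and cyclic invariance for the cyclic symmetry of $\fq$.

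\textbf{The missing observation.} No marking-dependent perturbation is needed. In the paper all data live on the unmarked chart and $\cK_{k+1,\ell}=\cK\times_\cB\cB_{k+1,\ell}$, so 4) and 5) are automatic. Submersivity of $\evab_0$ is then a property of the unmarked thickening, arranged through the choice of auxiliary datum. For $r\gg1$, the marking-independent space $H^0(C,(\varphi,u)^*(E_r,E'_r))$ already surjects onto the finite-dimensional cokernel of $D_u$ restricted to sections vanishing at an arbitrary boundary point. This follows from the same large-$r$ argument that gives transversality, uniformly over the compact zero locus. Since it is an open condition, you can then shrink $\cT$ to a neighbourhood of that locus. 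This makes every $\evab_j$ a submersion at once, consistent with 4) and 5).

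\textbf{Two smaller points.}
\begin{itemize}
\item For 1), nothing is complex in the disc setting: $\fp(d+1)$ and $\cB$ are real, so there is no complex orientation to invoke. The identification comes from the fact that, along framed maps, the fibres of $\cB$ over the domain moduli space are modelled on $\mathfrak{pgl}_\bR(d+1)=\mathfrak{so}(d+1)\oplus\fp(d+1)$. This cancels against the Lie algebra of $G=\text{PO}(d+1)$ and the $\fp(d+1)$-summand of $\cE$. The space $H^0(E_r)$ cancels against the corresponding part of the kernel, and the relative spin structure orients the remaining $\det D_u$.
\item Relative gluing only gives a rel-$C^\infty$ structure on $\cT\to\cB$. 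A genuine smooth structure with corners, for which $\evab_0$ and $\cT\to\cB$ are smooth, needs the separate smoothing step \cite[Theorem~C.29]{HH25}.
\end{itemize}
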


We will us the proof to recapitulate the global Kuranishi chart construction of \cite{HH25}.

\begin{proof} We first discuss the case of discs with no marked points. The input needed for the construction is an \emph{auxiliary datum} 
	\begin{equation}\label{eq:aux-dat} \alpha = (\conn^X,\cO_X(1),\cU,\lambda,r)\end{equation}
	where
	\begin{itemize}[leftmargin=20pt]
		\item $\conn^X$ is a smooth $J$-invariant connection on $TX$ that preserves $TL$,
		\item $\cO_X(1)\to X$ is an $L$-adapted polarisation, that is, a Hermitian line bundle with a Hermitian connection $\conn$ so that $\Omega := -\frac{1}{2\pi \text{i}}F^\conn$ is a $J$-invariant symplectic form and $\Omega|_L$ is exact, with $d := \Omega(\beta)$;
		\item $\cU = (\{U_i\}_{i\in I},\{\chi_i\}_{i\in I})$ is a finite open cover of (a subset of) the space of smooth stable discs to $(\bC P^d \times X,\bR P^d\times L)$ together with $\PGL_\bR(d+1)$-invariant bump functions $\chi_i$,
		\item $\lambda \cl \Mbar^{*,st}_{d}(\bC P^d,d)/S_d \to \PGL_\bR(d+1)/\normalfont\text{PO}(d+1)$ is a smooth equivariant map,
		\item $r \gg 1$ is an integer.
	\end{itemize}
	Given this, we define the complex vector bundle $E_r \to \bC P^d \times X$ by 
	$$E_r := \cc{\Hom}_\bC(T\bC^d,TX)\otimes \cO_{\bC P^d}(r)\otimes \cc{H^0(\bC P^d,\cO_{\bC P^d}(r))}$$
	with totally real subbundle
	$$E'_r := \Hom(T\bR^d,TL)\otimes \cO_{\bR P^d}(r)\otimes \cc{H^0(\bR P^d,\cO_{\bR P^d}(r))}.$$
	We can now construct the following spaces.
	\begin{enumerate}[\normalfont (1),leftmargin=20pt]
		\item A \emph{base space} $\cB \sub \Mbar_{0,0}^{\,J_0,\,d}(\bC P^d,\bR P^d)$ given by automorphism-free holomorphic maps $\varphi\cl (C,\del C)\to (\bC P^d,\bR P^d)$ representing $d[\bD]$ so that $H^1(C,\varphi^*\cO(1)) = 0$ for any irreducible component $C'$ of $C$. In particular, $\cB$ is contained in the regular locus of $\Mbar_{0,0}^{\,J_0,\,d}(\bC P^d,\bR P^d)$ and is hence a $\PGL_\bR(d+1)$-invariant smooth manifold by \cite[Proposition~2.3]{HH25}. Let $\cC\to \cB$ be its universal family, equipped with the map $\eva\cl \cC\to \bC P^d$.
		\item The family $\cZ \to \cB$ of smooth stable maps is given by 
		\begin{equation}\label{eq:family} 
			\cZ\; =\; \set{(\varphi,u)\mid \varphi\in \cB,\, u \cl (\cC|_{\varphi},\del\cC|_{\varphi})\xra{C^\infty} (X,L), u_*[\cC|_{\varphi}] =\beta,\, u^*\cO_X(1) \cong \varphi^*\cO_{\bC P^d}(1)}.
		\end{equation}
		Thus, $u$ with $(\varphi,u)\in \cZ$ is stable. Moreover, $\cZ$ admits a continuous $\PGL_\bR(d+1)$-action. 
		\item The \emph{thickening} is the open subset
		$$\cT \sub \set{(\varphi,u,\eta)\mid (\varphi,u)\in \cZ,\, \eta \in H^0(C,(\varphi,u)^*(E_r,E'_r))}$$
		where
		\begin{itemize}
			\item on the normalisation $\wt C$ of $\cC|_{\varphi}$ we have
			\begin{equation}\label{eq:perturbed-cr-equation}\delbar_J\wt u +\lspan{\eta}\g d\wt\varphi = 0 \end{equation}
			where $\wt u$ and $\wt \varphi$ are the pullbacks to $\wt C$;
			\item the perturbed Cauchy-Riemann operator 
			\begin{equation*}\label{}D_u \oplus \lspan{\cdot}\g d\wt\varphi\cl C^\infty(C,u^*(TX,TL))\oplus H^0(C,(\varphi,u)^*(E_r,E'_r))\to \Omega^{0,1}(\wt C,\wt u^*(TX,TL)) \end{equation*}
			is surjective;
			\item $H^1(C,(\varphi,u)^*(E_r,E'_r)) = 0$.
		\end{itemize}
		\item The obstruction bundle $\cE\to \cT$ is the vector bundle with fibre 
		\begin{equation}\label{}\cE_{(\varphi,u,\eta)} = H^0(C,(\varphi,u)^*(E_r,E'_r))\oplus \fp(d+1)\end{equation}
		where $\fp(d+1)$ is the Lie algebra of positive definite symmetric matrices of determinant $1$.
	\end{enumerate}
	The obstruction section $\obs \cl \cT\to \cE$ is given by 
	\begin{equation}\label{eq:obstruction-section}\obs(\varphi,u,\eta) = (\eta,\lambda_\cU(\varphi,u)), \end{equation} where $\lambda_\cU$, constructed in \cite[\textsection2.6]{HH25}, is a $\text{PO}(d+1)$-equivariant map associated to $\cU$ and $\lambda$.\par 
	We call $\alpha$ \emph{unobstructed} if the canonical map $\obs\inv(0)\to \Mbar_{0,0}(\beta)$ is surjective. The existence of unobstructed auxiliary data was proven in \cite[\textsection2]{HH25}, where it was also shown that $(\text{PO}(d+1),\cT/\cB,\cE,\obs)$ is a rel--$C^\infty$ global Kuranishi chart for $\Mbar_{0,0}(\beta)$. The existence of an induced smooth structures follows from \cite[Theorem~C.29]{HH25}.
	 
	To recover the claim with marked points, let $\cB_{k,\ell}\sub \Mbar_{k,\ell}^{\,J_0,d}(\bC P^d,\bR P^d)$ be the preimage of $\cB$ under the map forgetting all marked points. Writing $\cK$ for the above constructed global Kuranishi chart, the pullback 
	$$\cK_{k,\ell} = \cK\times_\cB\cB_{k,\ell} := (\text{PO}(d+1),\cT\times_\cB\cB_{k,\ell},\cT\times_\cB\cB_{k,\ell},\obs\times\ide)$$
	is a global Kuranishi chart for $\Mbar_{k,\ell}(\beta)$.
\end{proof}

\subsection{Recursive structure of the boundary}\label{subsec:boundary-strata}
We restate \cite[Theorem~3.5]{HH25} for the special case discs. Thus, there are two types of boundary degeneration: disc bubbling and boundary collapse. The latter happens exactly if the boundary carries no marked points. In either case, we can obtain a global Kuranishi chart for the (normalised) boundary stratum by pulling back the global Kuranishi chart $\cK_\alpha$ constructed in the previous subsection. To make this concrete, suppose $k = k_0+k_1$ and $1 \leq i \leq k_1$ and let $I \sqcup J = \{1,\dots,\ell\}$ be a decomposition. The associated clutching map is
\begin{equation}\vartheta_{L,i}\cl \Mbar_{k_0+1,I}(\beta_0)\times_L \Mbar_{k_1+1,J}(\beta_0)\to \Mbar_{k+1,\ell}(\beta)\end{equation}
and its image is a virtual boundary stratum of $\Mbar_{k,\ell}(\beta)$. Note that in \cite[\S 3]{HH25} we denote this map by $\varphi$ instead. Set $d_i := \Omega(\beta_i)$ and let
\begin{equation}\label{eq:stratum-base}\cB_{\varphi,i} := \varphi_{\bR P^d}\inv(\cB_{k,\ell}).\end{equation}
It lies in the automorphism-free regular locus of $\Mbar_{k_0+1,I}^{\,J_0,d_0}(\bC P^d,\bR P^d)\times_{\bR P^d} \Mbar_{k_1+1,J}^{\,J_1,\,d_1}(\bC P^d,\bR P^d)$ and is thus a smooth manifold with corners admitting a smooth $\cG$ action. 
We recall the convention that the boundary marked points are ordered anti-clockwise. Thus, we have to reorder marked points as shown in \cite[Assumption~3.3]{HH25}. We can now let 
$$\cT_{\varphi,i}^{\beta_0,\beta_1}\sub \cT_\alpha\times_{\cB_{k,\ell}} \cB_{\varphi,i}$$
be the open (and closed) subset of maps $u \cl C_0 \vee C_1 \to X$ so that $u_*[C_i] = \beta_i$. Writing 
$\wt \varphi_i \cl \cT_{\varphi,i}^{\beta_0,\beta_1}\to \cT_\alpha$ for the canonical projection, we define the global Kuranishi chart 
$$\cK_{\varphi,i}^{\beta_0,\beta_1} :=  (G,\cT^{\beta_0,\beta_1}_{\varphi,i},\wt\varphi_i^*\cE_\alpha,\wt\varphi_i^*\obs).$$
By \cite[Theorem~3.5(a)]{HH25}, this is a global Kuranishi chart with corners for the fibre product $\Mbar_{k_0+1,I}(\beta_0)\times_L \Mbar_{k_1+1,I}(\beta_1)$, which is equivalent to the fibre product global Kuranishi chart with orientation sign 
\begin{equation}\label{h3-sign} (-1)^{n+\ell_1(\ell_0-i+1) + i-1}.\end{equation}
Similarly, by \cite[Theorem~3.5(c)]{HH25}, we can construct a global Kuranishi chart $\cK_{\rho}$ from $\cK^\beta$ for $L\times_X\Mbar_{\emst,\ell+1}(\beta)$, which is equivalent to $L\times_X\cK_{0,\ell+1,\alpha}^\beta$ with orientation sign
\begin{equation}\label{e-sign} (-1)^{n+w_\fs(\beta)}.\end{equation}

\subsection{Cubical cobordisms}\label{subsec:cubical-cobordisms}
From the proof of the equivalence statement in \cite[Theorem~3.5]{HH25}, we obtain in particular that (a stabilisation) of the fibre product global Kuranishi chart of $\Mbar_{k_0+1,I}(\beta_0)\times_L \Mbar_{k_1+1,I}(\beta_1)$ (and $L\times_X\cK_{\emst,\ell+1,\alpha}^\beta$) is cobordant to (a stabilisation) of the `pullback chart'. We will use this fact to `interpolate' between different choices of auxiliary data (leading to algebraic operations that are per se not related) and insert these interpolations into the operations we define in \textsection\ref{subsec:q-operations}. The details can be found in \cite[\textsection 4]{HH25}, especially Theorem~4.1; we just describe the idea behind the construction as it might be more enlightening. 
\noindent Define the sets
\begin{align*}
	\cA_o &\coloneqq \set{(\beta,k,\ell)\mid\beta \in H_2(X,L;\bZ),\,\omega(\beta),k,\ell \geq 0}\\
	\cA_c &\coloneqq \set{(\beta,\ell)\mid\beta \in H_2(X;\bZ),\,\omega(\beta),\ell \geq 0},
\end{align*}
and set $\cA := \cA_o \sqcup \cA_c$. For each $\mfa \in \cA$ let $\Mbar_{\mathsf{a}}^{\,J}(X,L)$ denote the moduli space of $J$ holomorphic stable maps (from either a sphere or disc) of type $\mfa$. By a \emph{stable map tree} $\Gamma$ we mean the combinatorial data underlying the domain of a (possibly open) stable map; see \cite[Definition~4.16]{HH25} for the precise definition. In contrast with \cite{HH25}, we do not explicitly divide marked points into incoming or outgoing ones; the (boundary or interior) marked point labelled by $0$ should be thought of as outgoing. Each vertex $v \in V(\Gamma)$ comes with the data $\mfa_{v} \in \cA$. Denote by 
$$\Mbar_{\Gamma}^{\,J}(X,L)\sub \p{v \in V(\Gamma)}{\Mbar_{\mathsf{a}_v}^{\,J}(X,L)}$$ \noindent
the associated fibre product, where each boundary edge gives a fibre product over $L$, and an interior edge a fibre product over $X$. It admits an embedding 
\begin{equation}
	\label{eq:embedding-of-fibre-product-0}\Mbar_{\Gamma}^{\,J}(X,L)/\Aut(\Gamma)\to \Mbar_{\mathsf{a}}^{\,J}(X,L)
\end{equation}
onto a (virtual) boundary or divisor stratum $\del_\Gamma\Mbar_{\mathsf{a}}^{\,J}(X,L)$ of $\Mbar_{\mathsf{a}}^{\,J}(X,L)$. We consider only the morphisms $\Gamma\to \Gamma'$ given by contraction of edges (and corresponding merging of decorations) and the contraction of the boundary of a disc to a point. Such a morphism is \emph{elementary} if it contracts a single edge or only turns a single interior marked point into a boundary node.\par 
If we choose an unobstructed auxiliary datum $\alpha_\mathsf{a}$ for each $\mathsf{a} \in \cA$ with $k_{\mathsf{a}} = \ell_{\mathsf{a}} = 0$ and the trivial auxiliary datum for $\mathsf{a} = (0,k,\ell)$, Theorem~\ref{thm:gkc-equivariant-existence} yields a global Kuranishi chart $\cK_{\mathsf{a}} \coloneqq \cK_{\alpha_{\mathsf{a}}}$ for each $\mathsf{a} \in \cA$. Note that we can use the same unobstructed auxiliary data for $(\beta,k,\ell)$ and $(\beta,0,0)$, respectively $(\beta,\ell)$ and $(\beta,0)$. By taking the fibre product chart, we obtain then a global Kuranishi chart $\cK_\Gamma$ for each stable map tree $\Gamma$. 

\begin{notation*} If $\cK'$ is obtained from $\cK$ by stabilisation and group enlargement, we write $\cK' \stackrel{\sim\,}{\longrightarrow} \cK$, possibly with a sign in front to indicate the difference in orientation if the charts are oriented.
\end{notation*}

\begin{proposition}[{\cite[Theorem~4.1]{HH25}}]\label{prop:cubical-cobordisms}
	For each stable map tree $\Gamma$, there exists an orientable global Kuranishi chart $\cKc_{\Gamma}$ for $I^{E(\Gamma)} \times \Mbar_\Gamma(X,L)$ so that following holds.
	\begin{enumerate}[\normalfont i),leftmargin=20pt,ref=\roman*]
		\item If $\Gamma = \mathsf{a}$, i.e., it has no edges, then $\cKc_\Gamma = \cK_{\mfa}$.
		\item\label{cubical-0-boundary} for any edge $e$ of $\Gamma$ we have a local embedding (up to stabilisation and group enlargement)
		\begin{equation}\label{eq:restricted-e-0}
			\cKc_{\Gamma}|_{\{t_e = 0\}} \;\stackrel{\sim\,}{\longrightarrow}\; (\pm 1)\,\del_\Gamma\cKc_{\Gamma_e}
		\end{equation}
		where $\Gamma_e$ is the graph obtained from $\Gamma$ by contracting the edge $e$. This local embedding has degree \[d_1(\Gamma,\Gamma_e) = |\{\phi\in \Aut(\Gamma)\mid f\g\phi = f\},\] where $f\cl \Gamma\to \Gamma_e$ is the underlying contraction of graphs.
		\item\label{cubical-1-boundary}
		The restriction to the other boundary face satisfies
		\begin{equation}
			\label{eq:restricted-e-1} \cKc_{\Gamma}|_{\{t_e = 1\}} \;\stackrel{\sim\,}{\longrightarrow}\; (\pm 1)\,\cKc_{\Gamma_1}\times_{Y}\cKc_{\Gamma_2},
		\end{equation}
		where $\Gamma_1*_e \Gamma_2 = \Gamma$ and $Y = X$ or $L$, depending on whether $e$ is an interior or a boundary edge, respectively
		\begin{equation}
			\label{eq:restricted-e-2} \cKc_{\Gamma}|_{\{t_e = 1\}} \;\stackrel{\sim\,}{\longrightarrow}\; (\pm 1)\,L\times_X\cKc_{\Gamma'}
		\end{equation}
	in the case of boundary collapse.
	\item\label{forgetful-cobordisms} If $\Gamma$ is obtained from $\Gamma'$ by forgetting marked points, the forgetful map $\Mbar_{\Gamma'}(X,L) \to \Mbar_{\Gamma}(X,L)$ lifts to a smooth map $\cKc_{\Gamma'}\to \cKc_\Gamma$.
	\end{enumerate}
\end{proposition}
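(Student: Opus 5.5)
The construction will be by induction on the complexity of $\Gamma$, measured by the pair (total energy $\omega(\beta_\Gamma)$, number of edges $|E(\Gamma)|$). Energies take discrete values below any bound by Gromov compactness, so this ordering is well-founded on the relevant trees. When $\Gamma=\mfa$ has no edges we set $\cKc_\Gamma := \cK_\mfa$, which is (i). Suppose $\cKc_{\Gamma'}$ has been built for every $\Gamma'$ of smaller complexity. We must produce a chart over $I^{E(\Gamma)}\times\Mbar_\Gamma(X,L)$ whose boundary faces are prescribed.

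\textbf{The face $\{t_e=1\}$.} Here the prescription is the fibre product $\cKc_{\Gamma_1}\times_Y\cKc_{\Gamma_2}$, or $L\times_X\cKc_{\Gamma'}$ in the case of boundary collapse. Both factors have lower complexity, so this face is already defined.

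\textbf{The face $\{t_e=0\}$.} Here the prescription is the restriction of $\cKc_{\Gamma_e}$ to the stratum $\del_\Gamma$, which is also already defined. The key input is the equivalence statement of \cite[Theorem~3.5]{HH25}, recalled in \textsection\ref{subsec:boundary-strata}. It says that the pullback chart $\cK^{\beta_0,\beta_1}_{\varphi,i}$ and the fibre-product chart become equivalent after stabilisation and group enlargement, with signs \eqref{h3-sign} and \eqref{e-sign}. Equivalence of global Kuranishi charts is realised by a cobordism; concretely, one uses the product group $G_0\times G_1\times G$ and the direct sum of the obstruction bundles together with an interpolating family of obstruction sections. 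I would use this cobordism to fill the $t_e$-direction for each single edge $e$.

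\textbf{Several edges.} For several edges, the faces $\{t_e=\epsilon_e\}$ of the cube $I^{E(\Gamma)}$ have to agree on their intersections. So at step $\Gamma$, the boundary of the cube is already specified compatibly by induction, and I need to extend over the interior. I would do this by taking a common stabilisation by all the obstruction spaces appearing on the faces, enlarging the group to the product of all groups involved, and interpolating the obstruction sections with a partition of unity in the cube coordinates. Near each face I would use collars so that the restriction is literally the prescribed chart up to stabilisation and group enlargement.

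\textbf{Remaining properties.} The degree $d_1(\Gamma,\Gamma_e)$ in (ii) arises because the fibre product over $\Gamma$ maps to the stratum of $\Gamma_e$ modulo only those automorphisms of $\Gamma$ compatible with the contraction. Orientability follows because all the pieces are oriented via the relative spin structure, and the signs are tracked from \eqref{h3-sign} and \eqref{e-sign}. Property (iv) is obtained by choosing the data for trees with extra marked points as pullbacks along $\cB_{k,\ell}\to\cB$, exactly as in the proof of Theorem~\ref{thm:gkc-equivariant-existence}; the forgetful map then lifts tautologically.

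\textbf{Main obstacle.} The hardest step is the coherence of the extension over the cube. The interpolating data on the corners must be chosen consistently, so that the obstruction sections remain transverse and the zero locus stays equal to $I^{E(\Gamma)}\times\Mbar_\Gamma$. The latter holds if the stabilising components are added with sections whose only zeros come from the zero sections, while the $\fp(d+1)$-type components $\lambda_\cU$ are interpolated convexly, which keeps $\lambda_\cU$ equivariant and preserves their zero sets. Smoothness at the corners would be handled with the rel--$C^\infty$ machinery of \cite[Theorem~C.29]{HH25}.
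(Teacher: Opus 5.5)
The paper does not prove this statement. It cites \cite[Theorem~4.1]{HH25} and only sketches the idea: the cobordism implicit in the proof of \cite[Theorem~3.5]{HH25}, between stabilisations of the pullback chart and the fibre-product chart, is used to interpolate between the auxiliary data of $\Gamma$ and of its contractions. Your outline (cube coordinates, product groups, common stabilisation, and (iv) via pullback along $\cB_{k,\ell}\to\cB$) follows that idea. However, the step that builds the chart over the interior of $I^{E(\Gamma)}$ would not work as you describe it.

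The charts prescribed on opposite faces do not live on a common thickening. On $\{t_e=0\}$ the thickening is cut out by $\delbar_J u+\lspan{\eta}\g d\varphi=0$, for a single framing of the nodal curve into $\bC P^{d_0+d_1}$. On $\{t_e=1\}$ it is cut out by two separate perturbed equations, with framings into $\bC P^{d_0}$ and $\bC P^{d_1}$. Interpolating obstruction sections by a partition of unity on a fixed space, even with collars, therefore cannot make each face literally a stabilisation and group enlargement of the prescribed chart. The best you get are germ equivalences from the implicit function theorem. These are non-canonical, so there is no reason for them to agree over the corners of the cube; this is exactly the coherence problem you flag but do not resolve. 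Nor do they give the strict bundle maps $\Pi_0,\Pi_1$ along which the Thom forms must push forward in \eqref{eq:compatible-0}--\eqref{eq:compatible-12}. Similarly, there is nothing to interpolate convexly among the $\lambda_\cU$-components. They take values in different spaces, $\fp(d_0+d_1+1)$ versus $\fp(d_0+1)\oplus\fp(d_1+1)$, and constrain different framings. In any case, a convex combination of two sections with the same zero set need not have that zero set.

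The missing idea is that the cube coordinates should enter the perturbed Cauchy--Riemann equation defining the thickening, not the obstruction section. This is the same mechanism as the one-parameter formula $\delbar_J u+(1-t)\lspan{\eta_0}\g d\varphi_0+t\lspan{\eta_1}\g d\varphi_1=0$ recalled in the proof of Proposition~\ref{thm:trivialisable cobordism induces quasi-equivalence}. One way to do this:
\begin{itemize}
\item Keep simultaneously a framing $\varphi_S$ and a perturbation $\eta_S$ for every connected subtree $S\subseteq\Gamma$; these are exactly the vertices of the contractions of $\Gamma$.
\item Take the product of all the corresponding groups.
\item Let the obstruction bundle be the sum of all $H^0$- and $\fp$-summands, with section $(\eta_S,\lambda_{\cU_S})_S$ as in \eqref{eq:obstruction-section}, with no interpolation.
\item Weight the term $\lspan{\eta_S}\g d\varphi_S$ by $\prod_{e\in E(S)}(1-t_e)\prod_{e\in\partial S}t_e$, where $\partial S$ is the set of edges adjacent to but not in $S$.
\end{itemize}
On $\{t_e=0\}$ the subtrees with $e\in\partial S$ drop out of the equation, and on $\{t_e=1\}$ those with $e\in E(S)$ do. So each face is literally the prescribed chart, stabilised by the inactive summands and enlarged by the corresponding groups. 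Consequently:
\begin{itemize}
\item Compatibility over the corners is automatic.
\item The zero locus is $I^{E(\Gamma)}\times\Mbar_\Gamma$, because all $\eta_S$ vanish there.
\item Transversality along the zero locus holds, because at each $t$ every vertex of $\Gamma_F$, with $F=\{e\mid t_e=0\}$, carries positive weight.
\end{itemize}
In particular, no induction on complexity and no extension from the boundary of the cube is needed.
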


\noindent
	We call each $\cKc_\Gamma$ a \emph{cubical cobordism} and the whole collection a \emph{system of cubical cobordisms}.

In particular, by considering the thickening as an orbifold, we obtain maps of orbibundles 
\begin{equation*}\label{} \Pi_0\cl \cEc_\Gamma|_{\{t_e = 0\}}\to \cEc_{\Gamma_e} \qquad \qquad\Pi_1\cl \cEc_\Gamma|_{\{t_e = 1\}}\to \cEc_{\Gamma_1}\times \cEc_{\Gamma_2}|_{\cTc_{\Gamma_1}\times_Y \cTc_{\Gamma_2}} \end{equation*} 
and analogously for the case of boundary collapse. Given a choice of Thom form $\eta'_{\mfa}$ for each $\mfa$, their wedge products (and restrictions) yield Thom form $\eta'_\Gamma$ of $\cK_\Gamma$ for each stable map tree $\Gamma$. By \cite{HH25}, we can extend this collection $\{\eta'_\Gamma\}_\Gamma$ to a \emph{Thom system} for $\{\cKc_\Gamma\}_\Gamma$, that is a Thom form $\eta_\Gamma$ for each $\cKc_\Gamma$ so that 
\begin{equation}\label{eq:compatible-0} 
	(\Pi_0)_*(\eta_\Gamma|_{\{t_e = 0\}}) = \eta_{\Gamma_e}|_{\del_\Gamma}\end{equation}
for each edge $e$ and 
\begin{equation}\label{eq:compatible-1} 
	(\Pi_1)_*(\eta_\Gamma|_{\{t_e = 1\}}) = \eta_{\Gamma_1}\times_Y \eta_{\Gamma_2}\end{equation}
if $e$ is an edge and 
\begin{equation}\label{eq:compatible-12} 
	(\Pi_1)_*(\eta_\Gamma|_{\{t_e = 1\}}) = \eta_{\Gamma'}|_{L\times_X\cTc_{\Gamma'}}\end{equation}
if $e$ indicates a boundary collapse. Such Thom systems exists by \cite[Theorem~4.32]{HH25}; moreover since we have at most one outgoing marked point, said theorem together with \cite[Proposition~4.12]{HH25} also yields the following property.

\begin{lemma}\label{lem:thom-form-and-forgetful}
	We can choose $\eta_\Gamma$ so that for $\Gamma$ obtained from $\Gamma'$ by forgetting marked points, the Thom form $\eta_{\Gamma'}$ is the pullback of $\eta_\Gamma$. In particular, each $\eta_\Gamma$ is invariant under permutations of marked points.
\end{lemma}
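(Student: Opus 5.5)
The plan is to build the Thom system inductively, so that forgetful compatibility holds at every stage, rather than fixing an arbitrary system and then correcting it.

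\textbf{Reduction to reduced trees.} By Proposition~\ref{prop:cubical-cobordisms}(\ref{forgetful-cobordisms}), every cubical cobordism $\cKc_{\Gamma'}$ receives a smooth forgetful map $\cKc_{\Gamma'}\to\cKc_{\Gamma}$. The proof of Theorem~\ref{thm:gkc-equivariant-existence} shows more: the chart with marked points is the pullback $\cK\times_\cB\cB_{k,\ell}$, with the same obstruction bundle pulled back. Hence the obstruction bundle of $\cKc_{\Gamma'}$ is the pullback of that of $\cKc_\Gamma$. The pullback of a Thom form for $\cKc_\Gamma$ under this map is then again a Thom form for $\cKc_{\Gamma'}$. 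It is closed, it has fibrewise integral $1$ because the map is a bundle isomorphism on fibres, and it has the required support properties because the map is proper over the zero locus.

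So I would first choose Thom forms only for the \emph{reduced} trees, those in which no marked point can be forgotten without destabilising. By Theorem~\ref{thm:gkc-equivariant-existence}, for the vertices themselves this means $k_\mfa=\ell_\mfa=0$ away from constant maps. I would then \emph{define} $\eta_{\Gamma'}$ for every other tree as the pullback along the forgetful map to its reduced tree. This is well-defined because forgetful maps compose and the reduction of $\Gamma'$ is canonical.

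\textbf{Compatibility with the boundary relations.} Next I check that the pulled-back collection still satisfies \eqref{eq:compatible-0}, \eqref{eq:compatible-1} and \eqref{eq:compatible-12}.

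\begin{itemize}
\item For \eqref{eq:compatible-0}: contracting an edge commutes with forgetting marked points. The maps $\Pi_0$ are compatible with the forgetful maps because they are induced from the same pullback construction.
\item For \eqref{eq:compatible-1} and \eqref{eq:compatible-12}: forgetting marked points on $\Gamma$ forgets them on the pieces $\Gamma_1$ and $\Gamma_2$ separately, and the fibre product over $Y$ pulls back accordingly.
\end{itemize}

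\textbf{Main obstacle.} The hard step is that reducing $\Gamma_1 *_e \Gamma_2$ may not be the same as reducing each piece. A piece can become unstable after forgetting, for example a constant disc component carrying only the node and one marked point. In that case forgetting contracts the component, and the face $\{t_e=1\}$ maps into an interior or degenerate locus instead of a boundary face.

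This is exactly where the hypothesis of at most one outgoing marked point should enter, via \cite[Proposition~4.12]{HH25}. There the trivial auxiliary datum on $(0,k,\ell)$ makes the contracted constant components carry the tautological Thom form, namely $1$ on the trivial bundle. The compatibility relations then collapse to the relations on the reduced tree.

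So I would run the induction of \cite[Theorem~4.32]{HH25} over reduced trees ordered by energy and number of edges. At each step I extend the Thom form from the boundary faces already fixed, and I verify that the boundary data prescribed by pullback agree on overlapping faces. If this check fails, the fallback is to carry out the induction over all trees, choosing each $\eta_\Gamma$ as a pullback whenever $\Gamma$ is non-reduced; the argument above shows such a choice satisfies the boundary conditions.

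\textbf{Permutation invariance.} For the last assertion: permutations of marked points commute with the forgetful map to the reduced tree. For genuinely reduced trees, I average the chosen Thom form over the finite group of permutations, using Theorem~\ref{thm:gkc-equivariant-existence}(\ref{gkc-symmetry}). Averaging preserves both the Thom property and the compatibility relations, since these are linear and permutation-equivariant.
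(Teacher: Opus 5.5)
Your strategy is the mechanism the paper relies on: fix Thom forms on the charts with nothing left to forget, define all others by pullback along the lifted forgetful maps, and run the induction of \cite[Theorem~4.32]{HH25}. The paper gives no argument beyond citing \cite[Theorem~4.32]{HH25} together with \cite[Proposition~4.12]{HH25}. As a proof, however, yours leaves the decisive step open.

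Because you \emph{define} $\eta_{\Gamma'}\coloneqq F^*\eta_{\Gamma^{\mathrm{red}}}$, you have no freedom left. So \eqref{eq:compatible-0}--\eqref{eq:compatible-12} for $\Gamma'$ must follow from the same relations for $\Gamma^{\mathrm{red}}$. This needs the lift $F\colon\cKc_{\Gamma'}\to\cKc_{\Gamma^{\mathrm{red}}}$ to map each face $\{t_e=0\}$, $\{t_e=1\}$ into the corresponding face. It must do so compatibly with \eqref{eq:restricted-e-0}, \eqref{eq:restricted-e-1} and the stabilisations and group enlargements there, so that $(\Pi_0)_*$ and $(\Pi_1)_*$ commute with $F^*$. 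Proposition~\ref{prop:cubical-cobordisms}(\ref{forgetful-cobordisms}) only gives a smooth lift. Your sentences ``$\Pi_0$ is compatible because it comes from the same pullback construction'' and ``the fibre product pulls back accordingly'' assert exactly what has to be shown.

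This is not a formality in the case you flag yourself. Suppose forgetting destabilises a constant vertex carrying two nodes $e_1,e_2$. These fuse into one edge, and $F$ maps $I^{E(\Gamma')}$ onto a lower-dimensional cube. The faces $\{t_{e_i}=0\}$, $\{t_{e_i}=1\}$ land on faces of $\cKc_{\Gamma^{\mathrm{red}}}$ only if $F$ has a specific form in the cube coordinates. Otherwise nothing you imposed on $\eta_{\Gamma^{\mathrm{red}}}$ controls $F^*\eta_{\Gamma^{\mathrm{red}}}$ there. Your fallback, choosing every non-reduced $\eta_\Gamma$ as a pullback, is the same construction and inherits the same unverified step.

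Your account of where the single-output hypothesis enters does not fill this gap. The Thom form on an energy-zero vertex is $1$ no matter how many outputs there are. That observation does handle a $\{t_e=1\}$ face one of whose sides is contracted entirely, provided $F$ factors through the projection to the other side. But it cannot be what the hypothesis is for.

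The paper ties the hypothesis to \cite[Proposition~4.12]{HH25}, i.e.\ to the behaviour of the forgetful lifts on the cubical cobordisms themselves. With one output every half-edge points towards the root (cf.\ Remark~\ref{rem:right-trees}), so a destabilised constant vertex always fuses an incoming node with an outgoing one. That face-compatibility of the lifts is what you must prove or cite.

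Finally, the averaging step is unnecessary. Once every $\eta_\Gamma$ is pulled back from the tree with all marked points forgotten, permutation invariance is automatic, since that forgetful map is permutation-invariant. This is why the paper states it as an ``in particular''.
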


	\section{Open Gromov--Witten invariants in genus zero}\label{sec:q-operations}
For any smooth manifold $Y$, let $\Omega^*(Y;\Lambda) \coloneqq \Omega^*(Y)\otimes \Lambda$ denote the completed tensor product. Throughout this section, $(X,\omega)$ is a closed symplectic manifold of real dimension $2n$ and $L\sub X$ is a nonempty embedded Lagrangian equipped with a relative spin structure $(V,\fs)$.\par
\noindent Consider the Novikov rings
\begin{align*}
	\Lambda \,&\coloneqq  \set{  \sum_{i = 0}^{\infty} a_iQ^{\beta_i} \,\big|\, a_i \in \mathbb{R}, \; \beta_i \in H_2(X, L), \; \lim_{i\to \infty } \omega(\beta_i) = \infty }\\
	\Lambda_c &\coloneqq  \set{  \sum_{i = 0}^{\infty} a_iQ^{\beta_i} \,\big|\,  a_i \in \mathbb{R}, \; \beta_i \in H_2(X), \; \lim_{i\to \infty } \omega(\beta_i) = \infty}
\end{align*}
These are graded rings with $|Q^\beta| = \mu(\beta)$ and $|Q^{\beta}| = 2c_1(\beta)$ respectively.
We write $|\al|$ for the degree of a differential form $\alpha$ and set $|\alpha|' := |\alpha|-1$.

\subsection{Construction of the $\fq$-operations}\label{subsec:q-operations} Suppose now that we have the data of a system $\{\cKc_\Gamma\}_\Gamma$ of cubical cobordisms as defined in the previous subsection and an associated Thom system $\{\eta_\Gamma\}$.  We assume that if $\Gamma$ has energy zero, then $\cKc_\Gamma$ is the trivial global Kuranishi chart $I^{E(\Gamma)}\times \Mbar_\Gamma$.
In what follows, it might be helpful to think of the `total chart' for $\Mbar_\mfa(X,L)$ as the \emph{spiky chart} $\cK_{\mathsf{a}}^+$ given by
	\begin{equation*}
		\label{}\cK_{\mathsf{a}}^+ \coloneqq \djun{\substack{\Gamma \rightarrow \mfa}}\cKc_{\Gamma}.
	\end{equation*}
For any $\mfa$, let $\del(\mfa)$ denote the set of stable map trees $\Gamma$ which contract to $\mfa$ and are such that $\cKc_\Gamma$ has the same virtual dimension as $\cK_\mfa$. Then $\Gamma \in \del(\mfa)$ if and only if all edges of $\Gamma$ correspond to codimension $1$ phenomena, that is, disc bubbling, or boundary contracting.
For $k,\ell \geq 0$ and $\beta \in H_2(X,L)$ with $(k,\ell,\beta) \notin \lbrace (1,0, \beta_0), (0,0,\beta_0) \rbrace$, 
define the operations
\begin{equation*}
	\fq_{k,\ell}^\beta\cl \Omega^*(L)^{\otimes k} \otimes \Omega^*(X)^{\otimes \ell} \rightarrow \Omega^*(L)
\end{equation*} 
as follows. Let $\mfa = (k+1,\ell,\beta)$ and set
\begin{equation}
	\label{eq: q operation defi}
	\mathfrak{q}_{k,\ell}^{\beta}(\alpha_1, \dots,\alpha_k; \gamma_1,\dots,\gamma_\ell) \coloneqq  (-1)^{\epsilon(\alpha)} \sum_{\Gamma \in\del\mfa}(evb_0^\Gamma)_* \bigg{(}  \bigwedge_{j=1}^\ell (evi^\Gamma_j)^* \gamma_j \wedge \bigwedge_{i=1}^k (evb_i^\Gamma)^* \alpha_j \wedge \obs_\Gamma^*\eta_{\Gamma} \bigg{)}, 
\end{equation}
where $\epsilon(\alpha) \coloneqq  1 + \sum_{j=1}^{k} j|\alpha_j|'$. The special (unstable) cases are 
\begin{equation*}
	\mathfrak{q}_{1,0}^{\beta_0} \coloneqq  d\alpha\qquad\quad \text{and}\quad \qquad 
	\mathfrak{q}_{0,0}^{\beta_0} \coloneqq  0.
\end{equation*}
Similarly, we define disc counts with only interior marked points
\begin{equation*}
	\mathfrak{q}_{-1,\ell}^{\beta}\cl \Omega^*(X)^{\otimes \ell } \rightarrow \mathbb{R}.
\end{equation*}
by setting  
\begin{equation}\label{eq:q-with-no-boundary-markings}
	\mathfrak{q}_{-1,\ell}^{\beta}(\gamma_1, \dots,\gamma_\ell) \coloneqq  \sum_{\Gamma \in \del (0,\ell,\beta)}c_\Gamma \int_{\cTc_\Gamma}  \bigwedge_{j=1}^\ell (evi^\Gamma_j)^* \gamma_j \wedge \obs_\Gamma^*\eta_{\Gamma}
\end{equation}
for $(\ell,\beta) \notin \lbrace (0, \beta_0), (0,\beta_0) \rbrace$, $c_\Gamma = \frac{|\Aut(\Gamma)|}{|\Aut(\Gamma^{ps})|}$ with $\Gamma^{ps}$ being the prestable graph obtained from $\Gamma$ by forgetting the degrees. This coefficient is $1$ for the graphs appearing in the definition of $\fq_{k,\ell}$ with $k \ge 0$. We have the special cases
\begin{equation*}
	\mathfrak{q}_{-1,1}^{\beta_0} \coloneqq  0 \qquad\quad \text{and}\quad \qquad  \mathfrak{q}_{-1,0}^{\beta_0} \coloneqq  0.
\end{equation*}

\noindent
For $\beta \in H_2(X)$, the operations associated to closed curves are 
\begin{equation*}
	\mathfrak{q}_{\emptyset, \ell}^{\beta}\cl \Omega^*(X)^{\otimes \ell} \rightarrow \Omega^*(X)
\end{equation*}
defined by 
\begin{equation*}
	\label{eq: closed q operation}
	\mathfrak{q}_{\emptyset, \ell}^{\beta}(\gamma_1, \dots, \gamma_\ell) \coloneqq  (-1)^{w_{\mathfrak{s}}(\beta)} \sum_{\Gamma \in \del(\ell, \beta)} (\eva_0^\Gamma)_*\lbr{ \bigwedge_{j=1}^\ell (ev_j^\Gamma)^* \gamma_j\wedge \obs_\Gamma^*\eta_{\Gamma}},
\end{equation*}
with the two special special cases \begin{equation*}
	\mathfrak{q}_{\emptyset, 1}^{0} \coloneqq  0 \qquad\quad \text{and}\quad \qquad \mathfrak{q}_{\emptyset, 0}^0 \coloneqq  0.
\end{equation*}
For all of the above $\mathfrak{q}_{*}^\beta$ operations, set $
	\mathfrak{q}_{*} = \sum_\beta T^{\beta} \mathfrak{q}^\beta_{*}$.
	
\begin{remark}\label{} The extra coefficient in~\eqref{eq: q operation defi} and~\eqref{eq:q-with-no-boundary-markings} is due to the fact that the clutching map from the fibre product to the boundary stratum can have degree greater than $1$. It does not appear in~\eqref{eq: closed q operation} since the only unstable vertices on the graphs $\Gamma$ are discs, and these don't appear there. The reason underlying this exact choice of coefficient is discussed in \cite[\textsection4.3]{HH25}.
\end{remark}

\subsection{Properties}\label{sec:props-of-q-ops}
In this subsection we show that these $\q$-operations satisfy the same properties as those defined in \cite{ST16}. Let 
$$\langle \alpha_1, \alpha_2 \rangle_L \coloneqq  (-1)^{|\alpha_2|}\int_L \alpha_1 \wedge \alpha_2$$ 
be the signed Poincar\'e pairing on $L$. Furthermore, let $R$ and $Q$ be unital, graded $\Lambda_L$ and $\Lambda_c$-algebras (these will keep track of bulk-deformations, see \S\ref{section: bulk-deformations and bounding cochains}) and extend the $\q$-operations (graded) linearly to $\Omega^*(L;R) \coloneqq  \Omega^*(L) \otimes R$ and $\Omega^*(X;Q) \coloneqq  \Omega^*(X)\otimes Q$.

\begin{proposition}[Unit]
	\label{unit property}
	For $f \in \Omega^0(L;R)$, $\alpha_1, \dots, \alpha_{k-1} \in \Omega^*(L;R)$ and $\gamma \in \Omega^*(X;Q)^{\otimes \ell}$, we have
	\begin{equation*}
		\mathfrak{q}^\beta_{k,\ell}(\alpha_1, \dots, \alpha_{i-1},f,\alpha_i, \dots, \alpha_{k-1}; \gamma) =
		\begin{cases}
			df  &(k,\ell,\beta) = (1,0,\beta_0)\\
			(-1)^{|f|}f\alpha_1 &(k,\ell,\beta) = (2,0,\beta_0), \; i=1\\
			(-1)^{|\alpha_1||f|'}f\alpha_1 & (k,\ell,\beta) = (2,0,\beta_0), \; i=2\\
			0 & \text{otherwise}
	\end{cases}\end{equation*}
\end{proposition}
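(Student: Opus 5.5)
The plan is to split into the unstable cases, where the statement is a direct computation from the definitions, and the stable cases, where the vanishing follows from compatibility with forgetful maps.

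\textbf{Unstable cases.} For $(k,\ell,\beta)=(1,0,\beta_0)$ the operation is defined as $d$, so the claim holds by definition. For $(2,0,\beta_0)$, I would use the standing assumption that energy-zero trees carry the trivial chart $I^{E(\Gamma)}\times\Mbar_\Gamma$. Since $\Mbar_{3,0}(\beta_0)$ consists of constant discs with three boundary points, no disc bubbling of positive codimension occurs, so $\del\mfa$ contains only the trivial tree. The chart is then $\Mbar_{3,0}(\beta_0)\cong L\times \Mbar_{3,0}$. The space $\Mbar_{3,0}$ is two points, corresponding to the two cyclic orders of the boundary marked points, and only the anticlockwise one appears. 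Hence $evb_0$ is an orientation-preserving diffeomorphism onto $L$ up to a fixed sign, and $(evb_0)_*(evb_1^*\alpha_1\wedge evb_2^*\alpha_2)=\pm\alpha_1\wedge\alpha_2$. It then remains to insert the sign $(-1)^{\epsilon(\alpha)}$ with $\epsilon=1+|\alpha_1|'+2|\alpha_2|'$, together with the orientation convention for $\Mbar_{3,0}$ fixed in \cite{HH25}. With $f$ in slot $1$ or slot $2$ and $|f|=0$, this should give $(-1)^{|f|}f\alpha_1$ and $(-1)^{|\alpha_1||f|'}f\alpha_1$ respectively, the graded commutation of $f$ past $\alpha_1$ producing the second sign. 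I would check this only against the conventions of \cite{ST16}, since the formula is identical there.

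\textbf{Stable cases.} Every other $(k,\ell,\beta)$ with $k\ge1$ is stable after forgetting the $i$-th boundary point. Let $\mfa=(k+1,\ell,\beta)$ and $\mfa'=(k,\ell,\beta)$. By Proposition~\ref{prop:cubical-cobordisms}(\ref{forgetful-cobordisms}), for each $\Gamma\in\del\mfa$ forgetting the marked point gives a smooth map $\pi\colon\cKc_\Gamma\to\cKc_{\Gamma'}$. Here $\Gamma'$ is obtained by forgetting the point and stabilising. By Lemma~\ref{lem:thom-form-and-forgetful}, $\eta_\Gamma=\pi^*\eta_{\Gamma'}$. The forms $evb_j^*\alpha_j$ for $j\neq i$, the forms $evi_j^*\gamma_j$ and $evb_0$ all factor through $\pi$, because these evaluation maps are compatible with forgetting. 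The function $f$ enters only through $evb_i^*f$, and since $f$ has degree $0$ this is $\pi^*$ of nothing in particular. So the integrand is $\pi^*\Theta\wedge evb_i^*f$ with $\Theta$ a form on $\cTc_{\Gamma'}$. Pushing forward along $\pi$ first, I get $(\pi)_*(\pi^*\Theta\wedge evb_i^*f)=\Theta\wedge\pi_*(evb_i^*f)$. Since $\pi$ has one-dimensional fibres, the pushforward of a $0$-form along it vanishes, and so each summand is zero.

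\textbf{Main obstacle.} The step I expect to be hardest is the bookkeeping on the sum over $\del\mfa$. Some trees $\Gamma$ contain a ghost disc vertex carrying the $i$-th point that becomes unstable after forgetting. Such a tree maps to a $\Gamma'$ of lower dimension, or $\pi$ fails to be a submersion on it. For these trees I would argue that the image has dimension smaller than the source, so the pushforward of the degree-$0$ function $evb_i^*f$ again vanishes. This uses that energy-zero pieces are trivial charts and that $\pi$ is compatible with the boundary identifications~\eqref{eq:compatible-0}--\eqref{eq:compatible-12}.
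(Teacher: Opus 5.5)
Your proposal is correct and takes essentially the same route as the paper. The case $(1,0,\beta_0)$ holds by definition. The case $(2,0,\beta_0)$ reduces to \cite[Proposition~3.2]{ST16} because the chart is trivial, $\cT_{3,0}(\beta_0)=\Mbar_{3,0}(\beta_0)$. All other cases follow from compatibility of the cubical cobordisms and Thom forms with forgetting boundary marked points, combined with the degree argument of \cite{ST16}: a $0$-form pushes forward to zero along one-dimensional fibres. The paper does not treat your ghost-disc bookkeeping separately; it absorbs it into Proposition~\ref{prop:cubical-cobordisms}\eqref{forgetful-cobordisms} and Lemma~\ref{lem:thom-form-and-forgetful}.
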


\begin{proof}
	As our $\q$ operations are compatible with forgetting boundary marked points, the same reasoning as in the proof of \cite[Proposition~3.2]{ST16} shows the result for $(k,\ell,\beta) \neq (2,0,\beta_0)$. In the case of $(2,0,\beta_0)$, by construction we have $\cT_{3,0}(\beta) = \Mbar_{3,0}(\beta)$, whence the result follows by the proof of \cite[Proposition~3.2]{ST16}.
\end{proof}

\begin{proposition}[Cyclic symmetry]
	\label{cyclic symmetry}
	For any $\alpha_1, \dots, \alpha_{k+1}\in \Omega^*(L;R)$ and $\gamma \in \Omega^*(X;Q)^{\otimes \ell}$ we have
	\begin{equation*}
		\langle \mathfrak{q}_{k,\ell}(\alpha_1, \dots, \alpha_k; \gamma),\alpha_{k+1} \rangle_L = (-1)^{|\alpha_{k+1}|'(|\alpha_1|' + \dots |\alpha_k|')} \langle  \mathfrak{q}_{k,\ell}(\alpha_{k+1}, \alpha_1, \dots, \alpha_{k-1}; \gamma),\alpha_{k} \rangle_L
	\end{equation*}
\end{proposition}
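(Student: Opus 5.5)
The cyclic symmetry will follow from the cyclic invariance of the global Kuranishi charts (Theorem~\ref{thm:gkc-equivariant-existence}(\ref{gkc-symmetry})) and of the Thom system (Lemma~\ref{lem:thom-form-and-forgetful}). The plan is to reduce the identity, as in \cite[Proposition~3.3]{ST16}, to a statement about integrals over the thickenings.

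\emph{Step 1: rewrite the pairing as an integral.} I will use the projection formula for the pushforward $(evb_0^\Gamma)_*$ to write $\langle \fq_{k,\ell}(\alpha_1,\dots,\alpha_k;\gamma),\alpha_{k+1}\rangle_L$ as
\[
\pm \sum_{\Gamma\in\del(k+1,\ell,\beta)} \int_{\cTc_\Gamma} \bigwedge_j (evi_j^\Gamma)^*\gamma_j\wedge\bigwedge_{i=1}^k(evb_i^\Gamma)^*\alpha_i\wedge \obs_\Gamma^*\eta_\Gamma\wedge (evb_0^\Gamma)^*\alpha_{k+1}.
\]
This is a symmetric integral with $k+1$ boundary inputs, and the sign is explicit. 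The remaining exceptional cases $(1,0,\beta_0)$ and $(2,0,\beta_0)$ are handled directly by Stokes' theorem and graded commutativity, as in \cite{ST16}.

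\emph{Step 2: apply the cyclic relabelling.} Let $\sigma$ be the cyclic permutation of boundary labels $0\mapsto 1\mapsto\cdots\mapsto k\mapsto 0$. It is a bijection of $\del(k+1,\ell,\beta)$, since cyclically relabelling the marked points of a tree of discs preserves the property that all edges are codimension one. It also preserves energies and the coefficients $c_\Gamma$, which equal $1$ here. By Theorem~\ref{thm:gkc-equivariant-existence}(\ref{gkc-symmetry}) and its extension to the cubical charts (pulled back along the forgetful maps of Proposition~\ref{prop:cubical-cobordisms}(\ref{forgetful-cobordisms})), $\sigma$ lifts to diffeomorphisms $\cTc_\Gamma\to\cTc_{\sigma\Gamma}$. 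These intertwine $evb_i$ with $evb_{\sigma(i)}$, fix the interior evaluations, and preserve the obstruction bundles and sections. By Lemma~\ref{lem:thom-form-and-forgetful} the Thom forms satisfy $\sigma^*\eta_{\sigma\Gamma}=\eta_\Gamma$. Changing variables then turns the integral for $(\alpha_1,\dots,\alpha_{k+1})$ into the integral for $(\alpha_{k+1},\alpha_1,\dots,\alpha_k)$, with $\alpha_k$ now at the output. Running Step 1 backwards identifies this with $\langle\fq_{k,\ell}(\alpha_{k+1},\alpha_1,\dots,\alpha_{k-1};\gamma),\alpha_k\rangle_L$.

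\emph{Step 3: sign bookkeeping.} This is the main obstacle. The sign of the cyclic relabelling on $\cTc_\Gamma$ is the sign by which it acts on the orientation of $\Mbar_{k+1,\ell}(\beta)$ via Theorem~\ref{thm:gkc-equivariant-existence}(\ref{gkc-orientation}). The Thom form has even degree modulo the fibre dimension and is invariant, so it contributes no further sign, and the orientation sign is therefore the same one as in the regular setting. For the relative spin orientation convention, \cite{ST16} computes this sign as $(-1)^{k}$ up to the dimension-dependent terms. I will combine it with the Koszul signs from moving $(evb_0)^*\alpha_{k+1}$ past the other forms, the signs $\epsilon(\alpha)$, and the sign in $\langle\cdot,\cdot\rangle_L$. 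Since every ingredient agrees with \cite{ST16}, I will import that computation verbatim rather than redo it, and it yields the exponent $|\alpha_{k+1}|'\sum_{i\le k}|\alpha_i|'$.
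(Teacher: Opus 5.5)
Your proposal takes essentially the same route as the paper, which runs the proof of \cite[Proposition~3.3]{ST16} on each $\cKc_\Gamma$ separately, with the permutation invariance of the Thom forms from Lemma~\ref{lem:thom-form-and-forgetful} as the only new input; your Steps 1--3 spell this out. One small correction: your reason why the Thom form contributes no sign is garbled, since its degree is the rank of the obstruction bundle and need not be even. The clean fix is to use the projection formula to place $(evb_0^\Gamma)^*\alpha_{k+1}$ among the other pulled-back forms and keep $\obs_\Gamma^*\eta_\Gamma$ as the last factor on both sides, so that only the degrees of the $\alpha_i$ and $\gamma_j$ enter the Koszul signs and the sign computation of \cite{ST16} applies verbatim.
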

\begin{proof}
	The proof is analogous to the proof of \cite[Proposition~3.3]{ST16} applied to each $\cKc_\Gamma$ separately. The extra property we need is the last assertion of Lemma~\ref{lem:thom-form-and-forgetful}, ie, that we can choose Thom forms to be invariant under permuting the boundary marked points. 
\end{proof}

\begin{proposition}[Degree]
	\label{degree property}
	For any $\alpha \in \Omega^*(L;R)^{\otimes k}$ and $\gamma \in \Omega^*(X;Q)^{\otimes\ell}$, 
	\begin{align*}
		|\mathfrak{q}^\beta_{k,\ell}(\alpha; \gamma)| &= 2 + |\alpha|' -\mu(\beta) + \sum_{j=1}^\ell (|\gamma_j| - 2)  \\
		&\equiv |\alpha|' + \sum_{j=1}^\ell |\gamma_j| \; (\normalfont\text{mod } 2),
	\end{align*}
	where $|\alpha|' = \sum_{i =1}^{k}(|\alpha_i|-1)$.
\end{proposition}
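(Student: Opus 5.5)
The plan is a direct dimension count, done chart by chart. By \eqref{eq: q operation defi}, $\fq^\beta_{k,\ell}$ is a finite sum over $\Gamma\in\del\mfa$ with $\mfa=(k+1,\ell,\beta)$. Each summand is a pushforward along $evb_0^\Gamma$ of a wedge product on the thickening $\cTc_\Gamma$. So it suffices to compute the degree of a single summand and to check that this degree does not depend on $\Gamma$.

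For the degree count, recall that for a proper submersion $f\cl M\to N$ of (orbifolds with corners), fibre integration lowers degree by $\dim M-\dim N$.

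\begin{enumerate}
\item The integrand has degree $\sum_j|\gamma_j|+\sum_i|\alpha_i|+\operatorname{rk}\cEc_\Gamma$, since $\obs_\Gamma^*\eta_\Gamma$ has degree equal to the rank of the obstruction bundle.
\item The map $evb_0^\Gamma$ is a submersion by Theorem~\ref{thm:gkc-equivariant-existence}(\ref{gkc-submersive}). For the fibre products this holds because the outgoing point lies on a root vertex whose chart has submersive $evb_0$, and for the cubical cobordisms it is part of the construction in \cite{HH25}. The pushforward therefore lowers degree by $\dim\cTc_\Gamma-\operatorname{rk}\cEc_\Gamma-n$, after passing to the quotient by the group action.
\item By definition of $\del(\mfa)$, $\dim\cTc_\Gamma-\operatorname{rk}\cEc_\Gamma$ equals the virtual dimension of $\cK_\mfa$. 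The cube factor $I^{E(\Gamma)}$ contributes exactly one dimension per codimension-one edge, compensating the codimension of the stratum. This virtual dimension is
\[
\operatorname{vdim}\Mbar_{k+1,\ell}(\beta)=n-3+\mu(\beta)+(k+1)+2\ell.
\]
\end{enumerate}

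Combining these, the output degree is
\[
\sum_j|\gamma_j|+\sum_i|\alpha_i| - \bigl(\mu(\beta)+k-2+2\ell\bigr)=2+|\alpha|'-\mu(\beta)+\sum_j(|\gamma_j|-2),
\]
which is the claimed formula. The rank of $\cEc_\Gamma$ cancels, so the count is the same for every $\Gamma$ and all summands have equal degree. The sign $(-1)^{\epsilon(\alpha)}$ does not affect degree.

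The Novikov coefficients are handled by the grading $|Q^\beta|=\mu(\beta)$. This makes the statement with $R$, $Q$-coefficients follow by graded linearity. The unstable cases are checked by hand: $\fq^{\beta_0}_{1,0}=d$ raises degree by $1=2+(|\alpha|-1)-|\alpha|$, and $\fq_{0,0}^{\beta_0}=0$ is trivial.

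The mod~2 statement uses that $\mu(\beta)$ is even, because $L$ is relatively spin and hence orientable. This gives $2-\mu(\beta)-2\ell\equiv0$.

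The only point needing care is step (3): confirming that for the cubical charts $\cKc_\Gamma$ with $\Gamma\in\del\mfa$, the virtual dimension of $I^{E(\Gamma)}\times\Mbar_\Gamma$ equals that of $\Mbar_\mfa$. For disc bubbling, the fibre product over $L$ loses $n$ and gains one from the marked points, which together with the edge interval matches the vdim. For boundary collapse, $L\times_X\Mbar_{\emptyset,\ell+1}$ is checked analogously.
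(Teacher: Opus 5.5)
Your proposal is correct and is essentially the paper's argument: the paper simply invokes the dimension count of \cite[Proposition~3.5]{ST16}, adding one observation, namely that $\obs_\Gamma^*\eta_\Gamma$ has degree $\operatorname{rk}\cEc_\Gamma$, so the net degree shift of each summand is governed by the virtual dimension, exactly as in your steps (1)--(3). One bookkeeping slip: the fibre integration in step (2) lowers degree by $\dim(\cTc_\Gamma/G)-n$, and only after adding the Thom form's degree $\operatorname{rk}\cEc_\Gamma$ from step (1) is the net shift $\operatorname{vdim}-n$; as literally written you count the rank twice, although your final formula is right.
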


\begin{proof}
	This follows from the same proof as \cite[Proposition~3.5]{ST16}, noting that the Thom form has the same rank as the obstruction bundle.
\end{proof}


\begin{proposition}[Symmetry]\label{symmetry}
	If $k \geq -1$ or  $k = \emst$, then for any permutation $\sigma \in S_\ell$ we have
	\begin{equation*}
		\fq_{k, \ell}(\al_1, \dots, \al_k;\ga_1, \dots, \ga_\ell) = (-1)^{\epsilon_\sigma(\gamma)}\fq_{k, \ell}(\al_1, \dots, \al_k;\ga_{\sigma(1)}, \dots, \ga_{\sigma(\ell)}),
	\end{equation*}
	where $$\epsilon_\sigma(\ga) \coloneqq  \s{\substack{i<j\\\sigma^{-1}(i) > \sigma^{-1}(j)}}{|\gamma_i||\gamma_j|}.$$
\end{proposition}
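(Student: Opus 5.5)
The plan is to reduce the statement to the invariance of each summand in the definitions of $\fq_{k,\ell}$ (and of $\fq_{-1,\ell}$ and $\fq_{\emst,\ell}$) under relabelling of the interior marked points. Fix $\sigma\in S_\ell$. It acts on $\Mbar_{k+1,\ell}(\beta)$ by permuting interior marked points. By Theorem~\ref{thm:gkc-equivariant-existence}(\ref{gkc-symmetry}), this action lifts to the global Kuranishi chart $\cK_\mfa$: the base $\cB_{k,\ell}$ carries the evident $S_\ell$-action, and so does the pullback $\cK_{k,\ell}=\cK\times_\cB\cB_{k,\ell}$. The action also permutes the set $\del(\mfa)$ of stable map trees. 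For each $\Gamma\in\del\mfa$, relabelling the interior legs gives a tree $\sigma\Gamma\in\del\mfa$ together with an identification $\cKc_\Gamma\cong\cKc_{\sigma\Gamma}$. I would check that this identification is compatible with the cubical structure of Proposition~\ref{prop:cubical-cobordisms}; this should hold because the construction of $\cKc_\Gamma$ depends only on the underlying unlabelled data and the auxiliary data for $(\beta,0,0)$. The map intertwines $evi_j^\Gamma$ with $evi_{\sigma(j)}^{\sigma\Gamma}$, and it fixes $evb_i$ and $evb_0$. By Lemma~\ref{lem:thom-form-and-forgetful}, the Thom forms satisfy $\eta_{\sigma\Gamma}=\eta_\Gamma$ under this identification. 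Hence $\obs^*\eta$ is invariant, and reindexing the sum over $\Gamma\in\del\mfa$ is a bijection. Since the coefficients $c_\Gamma$ depend only on the isomorphism type, they are preserved as well.

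Next comes the orientation check. The relabelling diffeomorphism of the thickening should be orientation preserving. Interior marked points contribute complex (even-dimensional) factors to the base $\cB_{k,\ell}$: each is a point of the disc interior, i.e.\ a two-dimensional oriented coordinate. Swapping two even-dimensional factors therefore preserves the orientation. Together with Theorem~\ref{thm:gkc-equivariant-existence}(\ref{gkc-orientation}), this gives that the isomorphism of global Kuranishi charts is orientation preserving. The same reasoning applies to the sphere moduli spaces for $k=\emst$, and the sign $(-1)^{w_\fs(\beta)}$ does not depend on the labelling.

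The final step is the computation. Because the pushforward $(evb_0)_*$, and likewise the integral, is invariant under orientation preserving diffeomorphisms commuting with $evb_0$, we get
\[(evb_0)_*\Bigl(\textstyle\bigwedge_j (evi_j)^*\ga_{\sigma(j)}\wedge\cdots\Bigr)=(evb_0)_*\Bigl(\textstyle\bigwedge_j (evi_{\sigma^{-1}(j)})^*\ga_{j}\wedge\cdots\Bigr).\]
Reordering the wedge product $\bigwedge_j$ back into increasing index order produces exactly the Koszul sign $(-1)^{\epsilon_\sigma(\ga)}$. The forms $evb_i^*\alpha_i$ and $\obs^*\eta$ are untouched, so no further signs arise, and the prefactor $(-1)^{\epsilon(\alpha)}$ is independent of $\ga$. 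The unstable cases, such as $\fq^{\beta_0}_{-1,1}$, are zero or have $\ell\le 1$, so they are trivial.

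I expect the main obstacle to be the first step: making sure the symmetry holds simultaneously for the whole system of cubical cobordisms and the Thom system, not just for $\cK_\mfa$. The idea is to rely on Lemma~\ref{lem:thom-form-and-forgetful}, which asserts permutation invariance of the $\eta_\Gamma$ and underlies it. If that lemma were only available at the level of $\cK_\mfa$, I would instead symmetrise by averaging the Thom system over $S_\ell$, using that the compatibility conditions \eqref{eq:compatible-0}--\eqref{eq:compatible-12} are linear and equivariant.
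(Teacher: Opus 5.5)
Your proposal is correct and is essentially the paper's proof. The paper cites the argument of \cite[Proposition~3.6]{ST16}: relabelling the interior marked points is an orientation-preserving identification because interior points are even-dimensional, and reordering the pulled-back forms produces the Koszul sign $(-1)^{\epsilon_\sigma(\ga)}$. It combines this with the permutation invariance of the whole Thom system $\{\eta_\Gamma\}$ from the last assertion of Lemma~\ref{lem:thom-form-and-forgetful}, so your averaging fallback is not needed.
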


\begin{proof}
	This follows by the same arguments as in the proof of \cite[Proposition~3.6]{ST16} combined with the last assertion of Lemma~\ref{lem:thom-form-and-forgetful}.
\end{proof}

\begin{proposition}[Fundamental class]
	\label{fundamental class property}
	For $k \geq -1$ and $\ell \geq 0$, we have
	\begin{equation*}
		\mathfrak{q}_{k,\ell}^{\beta}(\alpha; 1, \ga') = 
		\begin{cases}
			-1,\quad &(k,\ell,\beta) = (0,1,\beta_0),\\
			0, \quad& \;\text{otherwise}
	\end{cases}\end{equation*}
	for any $\alpha\in \Omega^*(L;R)^{\otimes k}$ and $\ga'\in \Omega^*(X;Q)^{\otimes \ell-1}$.
\end{proposition}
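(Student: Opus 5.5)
We follow the proof of \cite[Proposition~3.7]{ST16}, replacing the moduli spaces by the charts $\cKc_\Gamma$. The key input is property~(\ref{forgetful-cobordisms}) of Proposition~\ref{prop:cubical-cobordisms} together with Lemma~\ref{lem:thom-form-and-forgetful}. For the graph type $\mfa = (k+1,\ell,\beta)$ (or $(0,\ell,\beta)$ when $k=-1$), let $\mfa'$ denote the type obtained by forgetting the first interior marked point. Each $\Gamma\in\del(\mfa)$ with $\mfa$ stable after forgetting (i.e.\ $\mfa'$ not one of the unstable constant types) maps under forgetting to a graph $\Gamma'\in\del(\mfa')$, after stabilising a possibly unstable constant vertex. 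Conversely, each $\Gamma'\in \del(\mfa')$ has finitely many lifts, according to which vertex carries the new marking. Since the forgetful map $\cKc_{\Gamma}\to\cKc_{\Gamma'}$ is smooth and $\eta_\Gamma$ is pulled back from $\eta_{\Gamma'}$, the integrand in~\eqref{eq: q operation defi} (resp.~\eqref{eq:q-with-no-boundary-markings}) with $\gamma_1 = 1$ is the pullback of a form along this map. Moreover $evb_0^\Gamma$ and the other evaluation maps factor through $\cKc_{\Gamma'}$.

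The pushforward/integration therefore factors as pushforward along the forgetful map followed by pushforward along $\cKc_{\Gamma'}$. The forgetful map $\cTc_\Gamma\to\cTc_{\Gamma'}$ (taking the disjoint union over the lifts of $\Gamma'$, or equivalently working with the relevant union of strata) has fibres of positive dimension $2$: it is a fibration by the universal curve. Pushing forward a form pulled back from the base along a map with positive-dimensional fibres gives zero. The combinatorial coefficients $c_\Gamma$ play no role here, since each summand vanishes individually. Hence $\fq^\beta_{k,\ell}(\alpha;1,\gamma')=0$ whenever $\mfa'$ is stable.

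The remaining cases are those where $\mfa'$ is unstable with $\beta=\beta_0$: namely $(k,\ell)\in\{(1,1),(0,1),(-1,1),(-1,2)\}$, and $(0,2)$ is stable. These are handled directly from the trivial chart $\Mbar_{k+1,\ell}(\beta_0)$. For $(k,\ell)=(-1,1),(-1,2)$, the operation $\fq_{-1,1}^{\beta_0}$ is defined to be $0$, and for $(-1,2)$ the space $\Mbar_{0,2}(\beta_0)\cong L\times \Mbar_{0,2}$ has evaluation factoring through $L$ with a one-dimensional fibre, so the integral vanishes. For $(1,1,\beta_0)$ the map $evb_0$ from $\Mbar_{2,1}(\beta_0)\cong L\times\Mbar_{2,1}$ has one-dimensional fibres and the integrand is pulled back from $L$, giving $0$. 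For $(0,1,\beta_0)$ we have $\Mbar_{1,1}(\beta_0)\cong L$ with $evb_0=\ide$, so $\fq^{\beta_0}_{0,1}(1) = (-1)^{\epsilon}\cdot 1 = -1$, since $\epsilon(\alpha)=1$ for $k=0$.

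The main obstacle is the second step: justifying that the forgetful map on the cubical cobordism thickenings is a submersion with positive-dimensional fibres compatible with the orbifold structure and the boundary faces, so that the fibre-integration argument applies. This requires checking it across graphs where forgetting the point destabilises a ghost vertex. We would handle this by invoking \cite[Proposition~4.12]{HH25}, which underlies Lemma~\ref{lem:thom-form-and-forgetful}, and matching the sum over $\del(\mfa)$ with the sum over $\del(\mfa')$ through the resulting forgetful maps.
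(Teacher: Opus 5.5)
Your proposal is correct and is essentially the paper's argument. The paper likewise transfers the proof of \cite[Proposition~3.7]{ST16} to each $\cKc_\Gamma$ via Proposition~\ref{prop:cubical-cobordisms}\eqref{forgetful-cobordisms} and Lemma~\ref{lem:thom-form-and-forgetful}, and your treatment of the unstable energy-zero cases matches Proposition~\ref{energy zero property}. The obstacle you flag is milder than you expect: testing against $\tau\in\Omega^*(L)$ gives $\int_L (evb_0^\Gamma)_*(\pi^*\xi)\wedge\tau=\pm\int_{\cTc_\Gamma}\pi^*\bigl(\xi\wedge (evb_0^{\Gamma'})^*\tau\bigr)$, which vanishes because $\cTc_\Gamma$ has larger dimension than $\cTc_{\Gamma'}$, so you only need the forgetful map $\pi$ to be smooth, not a submersion.
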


\begin{proof}
	The proof is to similar the proof in \cite[Proposition~3.7]{ST16}, using the compatibility of the cubical cobordisms and the Thom forms with forgetting interior marked points, that is, Proposition~\ref{prop:cubical-cobordisms}\eqref{forgetful-cobordisms} and Lemma~\ref{lem:thom-form-and-forgetful}.
\end{proof}

\begin{proposition}[Energy zero]\label{energy zero property}
	For $k \geq -1$, we have
	\begin{equation*}
		\mathfrak{q}_{k,\ell}^{\beta_0}(\alpha; \ga) = \begin{cases}
			d\alpha_1, \quad&(k,\ell) = (1,0)\\
			(-1)^{|\alpha_1|}\alpha_1 \wedge \alpha_2, \quad&(k,\ell) = (2,0),\\
			-\gamma_1|_L,\quad &(k,\ell) = (0,1),\\
			0, &\text{otherwise}.
		\end{cases}
	\end{equation*}\qed
\end{proposition}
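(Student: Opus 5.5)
The idea is to reduce everything to the classical computation of \cite[Proposition~3.8]{ST16}. This works because, in energy zero, the construction involves no virtual data at all.

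First I will make the energy-zero charts explicit. By the standing assumption at the start of \textsection\ref{subsec:q-operations}, if $\Gamma$ has energy zero then $\cKc_\Gamma$ is the trivial chart $I^{E(\Gamma)}\times\Mbar_\Gamma$. Its obstruction bundle has rank zero, and its Thom form is the constant function $1$. Every vertex of a tree $\Gamma\in\del(\mfa)$ with $\mfa=(k+1,\ell,\beta_0)$ carries class $\beta_0$, because $\omega$ is nonnegative on $J$-holomorphic classes. So no sphere bubbles or boundary collapses occur, since the latter would require a nonconstant sphere.

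Second, every map is constant, so $\Mbar_{k+1,\ell}(\beta_0)\cong L\times\Mbar_{k+1,\ell}$, where the second factor is the Deligne--Mumford space of discs. The same holds for each $\Mbar_\Gamma$ with $\Gamma\in\del(\mfa)$: it is $L\times\prod_v\Mbar_{\mfa_v}$, fibred over $L$ along the boundary edges. All evaluation maps factor through the projection to $L$, and $evi_j$ is $L\hookrightarrow X$ composed with that projection. Hence the integrand in \eqref{eq: q operation defi} is $\pr_L^*(\alpha_1\wedge\dots\wedge\alpha_k\wedge\gamma_1|_L\wedge\dots\wedge\gamma_\ell|_L)$. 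Pushing forward along $\pr_L$ gives zero whenever the fibre $I^{E(\Gamma)}\times\prod_v\Mbar_{\mfa_v}$ has positive dimension. That dimension is $|E(\Gamma)|+\sum_v(k_v+2\ell_v-2)$, and one checks it equals $k+2\ell-2$. So $\fq^{\beta_0}_{k,\ell}$ can be nonzero only when $k+2\ell-2\le 0$.

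Third, this leaves three stable cases: $(k,\ell)=(2,0)$, $(0,1)$, and $(k,\ell)=(-1,1)$ for $\fq_{-1,\ell}$. The last is set to $0$ by definition. Unstable cases such as $(1,0)$ and $(0,0)$ are also given by definition, which yields $d\alpha_1$ and $0$. For $(2,0)$ and $(0,1)$ the moduli space is a single point times $L$, $\del(\mfa)$ consists only of $\mfa$ itself, and $evb_0$ is the identity of $L$. The value is then $\pm\alpha_1\wedge\alpha_2$, respectively $\pm\gamma_1|_L$. Also, $\fq_{-1,\ell}^{\beta_0}$ with $\ell\ge2$ integrates over a space of positive fibre dimension and so vanishes.

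The main obstacle is fixing the signs $(-1)^{|\alpha_1|}$ and $-1$. These come from the sign $(-1)^{\epsilon(\alpha)}$ and from the orientation convention on $\Mbar_{k+1,\ell}(\beta_0)$ inherited via Theorem~\ref{thm:gkc-equivariant-existence}\eqref{gkc-orientation}. Because the charts here are literally the moduli spaces with their canonical orientations, I would import the orientation computation of \cite[Proposition~3.8]{ST16} verbatim rather than redo it. The $(2,0)$ case is also consistent with Proposition~\ref{unit property}.
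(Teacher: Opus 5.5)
Your proposal is correct and follows the paper's argument. The energy-zero charts are trivial, so the no-edge terms reduce to \cite[Proposition~3.8]{ST16}, and every other contribution vanishes because the integrand is pulled back from $L$ while the fibre of $\cTc_\Gamma\to L$ is positive-dimensional (your uniform count $k+2\ell-2$ just makes the paper's ``trees with edges'' remark explicit). One small slip: for $k=-1$ and $\ell\ge 2$, boundary collapse onto a \emph{constant} stable sphere with $\ell+1$ special points does occur in energy zero, but those terms are killed by the same pulled-back-from-$L$ argument, so nothing changes.
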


\begin{proof}
	Since we use the trivial global Kuranishi chart for the relevant moduli spaces, this is almost exactly \cite[Proposition~3.8]{ST16}. However, we are summing over operations associated to moduli space of stable trees, so we have to show these pushforwards vanish whenever the stable tree $\Gamma$ has edges. This is due to the property of the pushforward: if $\Gamma$ has edges, the pushforward map $L\times\Mbar_\Gamma\times I^{E(\Gamma)}\to L$ vanishes on forms pulled back from $L$ because it amounts to integrating a differential form of degree $0$ over the positive-dimensional manifold $\Mbar_\Gamma\times I^{E(\Gamma)}$.
\end{proof}

\begin{proposition}[Divisor]
	\label{divisor property}
	If $\gamma_1 \in \Omega^2(X,L)$ is closed, then, for $k \geq -1$ and $\ell \geq 1$, we have \begin{equation*}
		\mathfrak{q}^\beta_{k,\ell}(\alpha;\gamma) = \left(\int_\beta \gamma_1\right)\cdot \mathfrak{q}^\beta_{k,\ell-1}(\alpha;\ga_2, \dots, \ga_\ell).
	\end{equation*}
\end{proposition}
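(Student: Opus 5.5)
The plan is to reduce the statement to a fibrewise integration along the forgetful map that forgets the first interior marked point, applied to each summand $\Gamma\in\del(k+1,\ell,\beta)$ separately, as in \cite[Proposition~3.9]{ST16}. For each stable map tree $\Gamma$ contributing to $\fq^\beta_{k,\ell}$, let $\Gamma'$ be the tree obtained by forgetting interior marking $1$ and stabilising. This gives a map $\del(k+1,\ell,\beta)\to\del(k+1,\ell-1,\beta)$. Its fibres consist of the ways to place marking $1$ on one of the vertices of $\Gamma'$, together with trees in which marking $1$ sits on a new unstable ghost component. The latter have strictly larger codimension and do not lie in $\del(\mfa)$, so they do not contribute.

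By Proposition~\ref{prop:cubical-cobordisms}\eqref{forgetful-cobordisms} and Lemma~\ref{lem:thom-form-and-forgetful}, the forgetful map lifts to a smooth map $\pi\cl\cKc_\Gamma\to\cKc_{\Gamma'}$ with $\eta_\Gamma=\pi^*\eta_{\Gamma'}$. All evaluation maps other than $evi_1$ factor through $\pi$. Using the projection formula and the composition of pushforwards, $(evb_0^\Gamma)_* = (evb_0^{\Gamma'})_*\circ\pi_*$, so each summand becomes
\[
(evb_0^{\Gamma'})_*\Bigl(\pi_*\bigl((evi_1^\Gamma)^*\gamma_1\bigr)\wedge \textstyle\bigwedge_{j\ge2}(evi_j^{\Gamma'})^*\gamma_j\wedge\bigwedge_i (evb_i^{\Gamma'})^*\alpha_i\wedge \obs_{\Gamma'}^*\eta_{\Gamma'}\Bigr),
\]
up to a Koszul sign. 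That sign is trivial because $\gamma_1$ has even degree and the fibre of $\pi$ is two-dimensional.

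The central step is to compute $\pi_*(evi_1^*\gamma_1)$ after summing over all $\Gamma$ lying over a fixed $\Gamma'$. Fibrewise, the union over these $\Gamma$ is the universal curve of the family $\Mbar_{\Gamma'}$: a union of disc and sphere components, glued along the components' boundary edges or the interior points where the additional marking sits. The map $evi_1$ restricted to a fibre is the stable map $u$ itself, with image in class $\beta$ relative to $L$. Since $\gamma_1$ is closed and lies in $\Omega^2(X,L)$, so that it vanishes on $L$, the fibre integral is the locally constant function $\int_\beta\gamma_1$. The boundary of each disc component maps to $L$, so no boundary contribution arises, and nodes have measure zero. The thickening only adds the perturbation $\eta$ and the base, which do not affect the homology class of $u$. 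Pulling this constant out and summing over $\Gamma'$ then yields $\bigl(\int_\beta\gamma_1\bigr)\fq^\beta_{k,\ell-1}$.

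I expect the main obstacle to be bookkeeping. The pushforward along $\pi$ on the cubical cobordisms $\cKc_\Gamma$ must be shown to be well defined and additive over the strata. The decomposition of the universal curve into the $\Gamma$ with $\Gamma\to\Gamma'$ has to match the coefficients $c_\Gamma$; in the case $k=-1$, the automorphism factors must cancel correctly. The $\beta_0$ and unstable cases should be handled separately using Proposition~\ref{energy zero property}, where $\int_{\beta_0}\gamma_1=0$ and the right-hand side also vanishes.
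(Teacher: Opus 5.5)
Your strategy is the paper's. You lift the forgetful map to $\pi\colon\cTc_\Gamma\to\cTc_{\Gamma'}$, use Lemma~\ref{lem:thom-form-and-forgetful} to write $\obs_\Gamma^*\eta_\Gamma=\pi^*\obs_{\Gamma'}^*\eta_{\Gamma'}$, apply the projection formula, and reduce to the degree-zero current $\pi_*\bigl((evi_1^\Gamma)^*\gamma_1\bigr)$. The paper shows this current is constant via the proofs of \cite[Lemmas~3.10 and 3.11]{ST16} and evaluates it at a single regular point of $\pi$. Your direct evaluation over all regular values, with nodes of measure zero, does the same job. You also note that a single tree only sees the domain of the vertex $v$ carrying marking $1$, so it contributes $\int_{\beta_v}\gamma_1$, and one must sum over the vertices of $\Gamma'$ to obtain $\int_\beta\gamma_1$. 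This is correct, and the paper leaves it implicit by stating the identity tree by tree.

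There is, however, one false step. Trees in which marking $1$ sits on a ghost disc that becomes unstable after forgetting it do lie in $\del(\mfa)$.
\begin{itemize}
\item The basic example is an energy-zero disc carrying only marking $1$ and one boundary node. This is the codimension-one stratum where $z_1$ reaches $\partial D$; it produces the term $\fq^{\beta_0}_{0,1}(\gamma_1)=-\gamma_1|_L$ in the structure equation for $\fq_{k,\ell}$. The extra cube coordinate $t_e$ gives $\cKc_\Gamma$ exactly the virtual dimension of $\cK_{\mfa}$.
\item Likewise, a ghost disc carrying marking $1$ and two boundary special points is contracted on forgetting. It is still in $\del(\mfa)$, since all edges of $\Gamma$ are boundary edges and each contributes its own cube coordinate.
\end{itemize}
These summands are genuinely present in $\fq^\beta_{k,\ell}$, so \emph{dimension} cannot be why they drop out.

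The actual reason is the hypothesis on $\gamma_1$. On an energy-zero component the base map $\varphi$ has degree zero, so $d\varphi=0$ there and the perturbation term in the thickening (or its cubical interpolation) vanishes. Hence $u$ is $J$-holomorphic of zero energy on that component, i.e.\ constant with boundary on $L$. It follows that $evi_1^\Gamma$ factors through $L$, and $(evi_1^\Gamma)^*\gamma_1=0$ because $\gamma_1|_L=0$. The same observation shows that placing marking $1$ on a ghost vertex that survives in $\Gamma'$ contributes $0=\int_{\beta_0}\gamma_1$, consistent with your formula. With this replacement for the dimension argument, your proof goes through.
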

\begin{proof}
	The proof is similar to that of \cite[Proposition~3.9]{ST16}. Let $\cT$ be the thickening for $\Mbar_{0,0}(\beta)$. By Theorem \ref{thm:gkc-equivariant-existence}, respectively Proposition~\ref{prop:cubical-cobordisms}\eqref{forgetful-cobordisms}, 
	$$\cTc_{\Gamma} = \cTc_{\Gamma_0} \times_{\cBc_{\Gamma_0}} \cBc_{\Gamma},$$
	where $\Gamma_0$ is the stable map tree obtained from $\Gamma$ by forgetting all marked points. In particular, by Lemma~\ref{lem:thom-form-and-forgetful}, we have  $\pi^*\obs^*\eta_{\Gamma_0} = \obs^*\eta_{\Gamma}$, where $\Gamma_{-1}$ is the graph obtained from $\Gamma$ by forgetting an interior marked point and and $\pi \cl \cTc_{\Gamma} \rightarrow \cTc_{\Gamma_{-1}}$ is the forgetful map. As the forgetful maps intertwine the evaluation maps, we have the following equality of currents 
	\begin{align*}
		evi^{\ell}_j = evi^{\ell-1}_j \circ \pi \text{ for } j \geq 2.
	\end{align*}
	on the thickenings. Thus, 
	\begin{equation*}
		(evb^{\Gamma}_0)_*\lbr{(evi_1^{\Gamma})^*\gamma_1 \wedge \pi^*\xi \wedge \obs_\Gamma^*\eta_{\Gamma}} = (evb^{\Gamma_{-1}}_0)_*\lbr{\pi_*(evi_1^\Gamma)^*\gamma_1 \wedge \xi \wedge \obs_{\Gamma_{-1}}^*\eta_{\Gamma_{-1}}}.
	\end{equation*}
	This shows that we need to compute $\pi_*(evi_1^\ell)^*\gamma_1$. By applying (the proofs of) Lemmas~3.10 and 3.11 in \cite{ST16}, we find that it suffices to compute the value of this on a single point $x$ which is regular for the forgetful map $\pi$. This forgetful map is the pullback of the forgetful map $p \cl \cBc_{\Gamma} \rightarrow \cB_{\Gamma_{-1}}$ on the level of base spaces. Thus, we have a canonical isomorphism $\pi^{-1}(x) \cong p^{-1}(x)$. The proof of Lemma~3.11 in \cite{ST16} now yields the equality $$\pi_*(evi_1^\Gamma)^*\gamma_1 = \int_\beta \gamma_1$$ of currents. As the space of differential forms embeds into the space of currents by Poincar\'e duality, the result follows.
\end{proof}

\begin{proposition}[Top degree]
	\label{top degree property}
	If $\delta_n$ denotes the (cohomological) degree $n$ part of $\delta \in \Omega^*(L;R)$, then $$\mathfrak{q}^\beta_{k,\ell}(\alpha;\gamma)_n = 0$$ for $(k,\ell,\beta) \notin \{ (1,0,\beta_0), (0,1,\beta_0), (2,0,\beta_0)  \}$.
\end{proposition}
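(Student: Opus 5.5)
The plan is to follow the proof of \cite[Proposition~3.12]{ST16}, applying it separately to each summand $\Gamma\in\del\mfa$ in the definition~\eqref{eq: q operation defi}. The key observation is that the degree $n$ part of a pushforward $(evb_0^\Gamma)_*\xi$ along a proper submersion only sees the degree of $\xi$ relative to the fibre dimension. Moreover, since $\mathrm{rel.dim}(evb_0^\Gamma) = \dim \cTc_\Gamma - n$, a degree $n$ output forces the integrand $\xi$ to have top degree on $\cTc_\Gamma$. So the goal is to show that the form
\[
\xi_\Gamma = \bigwedge_{j}(evi_j^\Gamma)^*\gamma_j\wedge \bigwedge_i (evb_i^\Gamma)^*\alpha_i\wedge \obs_\Gamma^*\eta_\Gamma
\]
has vanishing top-degree component, or equivalently that $\xi_\Gamma$ is pulled back along a map to a space of strictly smaller dimension.

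The steps are as follows.

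\textbf{Step 1.} By Proposition~\ref{prop:cubical-cobordisms}\eqref{forgetful-cobordisms} and Lemma~\ref{lem:thom-form-and-forgetful}, I would factor $\xi_\Gamma$ through the forgetful map that forgets the outgoing boundary marked point $0$. This works because $\obs_\Gamma^*\eta_\Gamma$ is pulled back from the chart $\cKc_{\Gamma'}$ with that point forgotten, and so are the evaluation maps $evb_i$ for $i\ge1$ and $evi_j$. Thus $\xi_\Gamma = \pi^*\xi_{\Gamma'}$, where $\pi\cl\cTc_\Gamma\to\cTc_{\Gamma'}$ forgets the output and has one-dimensional fibres.

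\textbf{Step 2.} A form pulled back along a map with positive-dimensional fibres has no component of degree $\dim\cTc_\Gamma$. Hence the pushforward has no degree $n$ part.

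\textbf{Step 3.} Step 1 requires the forgetful map to exist, i.e.\ $\Gamma'$ must still be stable. This fails precisely when some vertex carrying marking $0$ becomes unstable. For $\beta\neq\beta_0$ no vertex is unstable, because nonconstant discs are stable on their own. For $\beta=\beta_0$ we have $\Gamma=\mfa$ with the trivial chart, and Proposition~\ref{energy zero property} handles this case directly. The exceptional triples $(1,0,\beta_0)$, $(0,1,\beta_0)$ and $(2,0,\beta_0)$ are exactly the unstable-after-forgetting cases, and they are excluded in the statement. Any other energy zero case is zero anyway.

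The main obstacle is Step 1 for trees $\Gamma$ with several vertices. When $\beta\neq\beta_0$, a $\beta_0$-vertex containing marking $0$ can become unstable after forgetting, for example a ghost disc carrying $0$ and two nodes. In that case I would instead forget marking $0$ at the level of the base space $\cBc_\Gamma$, where the thickening is pulled back from $\cBc$ by the construction in Theorem~\ref{thm:gkc-equivariant-existence}. There, forgetting $0$ and stabilising still gives a map with one-dimensional fibres through which all evaluation maps other than $evb_0$ factor, and the Thom form factors as well by Lemma~\ref{lem:thom-form-and-forgetful}. With this, the fibre-dimension argument of Step 2 applies verbatim, and summing over $\Gamma$ completes the proof.
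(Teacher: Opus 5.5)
This is essentially the paper's proof: the paper transports the argument of \cite{ST16} (the integrand is pulled back along the map forgetting the output boundary point, so it has no top-degree component) to each $\cKc_\Gamma$ via Proposition~\ref{prop:cubical-cobordisms}\eqref{forgetful-cobordisms} and Lemma~\ref{lem:thom-form-and-forgetful}, exactly as in your Steps~1--2, and your use of Proposition~\ref{energy zero property} for $\beta=\beta_0$ is a harmless shortcut. The only imprecision is in your last paragraph: for a ghost vertex carrying the output point and only two other boundary special points, forgetting $0$ and stabilising identifies $\Mbar_\Gamma$ with $\Mbar_{\Gamma'}$, so the lost dimension comes from collapsing the cube factor $I^{E(\Gamma)}$ onto $I^{E(\Gamma')}$ rather than from the base stratum; this collapse is what the forgetful-compatibility of the cubical cobordisms and Thom forms must supply, and the paper likewise invokes that compatibility without further comment.
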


\begin{proof}
	The proof in \cite[Proposition~3.3]{ST16} carries over as the system of cubical cobordisms we construct are compatible with the forgetting boundary marked points, Proposition~\ref{prop:cubical-cobordisms}\eqref{forgetful-cobordisms} and Lemma~\ref{lem:thom-form-and-forgetful}.
\end{proof}

We now turn to the structural relations the $\fq$-operations satisfy. For this, let  $\gamma \in \Omega^*(X;Q)^{\otimes\ell}$ and $I \sqcup J = \{ 1, \dots, \ell\}$ be a partition. Then, we define $\sign^\gamma(I,J)$ by the equation 
\begin{equation*}
	\bigwedge_{i\in I} \gamma_i \wedge \bigwedge_{j\in J} \gamma_j = (-1)^{\sign^\gamma(I,J)}\bigwedge_{s\in [\ell]} \gamma_s, 
\end{equation*}
where $\sign^\gamma(I,J) = \sum_{\substack{i \in I, j \in J\\j<i}} |\gamma_i||\gamma_j|$.

\begin{proposition}[Structure equation for $\q_{k,\ell}$]\label{prop:q structure}
	For any $k,\ell \geq 0$ as well as $\alpha \in \Omega^*(L;R)^{\otimes k}$ and $\gamma \in \Omega^*(X;Q)^{\otimes\ell}$, we have
	\begin{align*}
		0 &= \sum_{\substack{ P \in S_3[k] \\ (2:3) = \{ j \}}}  (-1)^{|\gamma^{(1:3)}| + 1}\mathfrak{q}_{k,\ell}(\alpha;\gamma^{(1:3)} \otimes d\gamma_j \otimes \gamma^{(3:3)})\\
		&\quad + \sum_{\substack{ P \in S_3[k] \\ I \sqcup J = [\ell]}} (-1)^{i(\alpha, \gamma, P, I)} \mathfrak{q}_{k_1+1+k_3,|I|}(\alpha^{(1:3)}\otimes \mathfrak{q}_{k_2,|J|}(\alpha^{(2:3)};\gamma^{J})\otimes \alpha^{(3:3)};\gamma^I),
	\end{align*}
	where 
	\begin{equation*}
		i(\alpha,\gamma, P, I) \coloneqq  (|\gamma_J| + 1)\epsilon_1 + |\gamma_I| + sign^\gamma(I,J).
	\end{equation*}
\end{proposition}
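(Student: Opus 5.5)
The plan is to apply Stokes' theorem (in the form of the generalised Stokes formula for pushforward of forms along the proper submersion $evb_0$ on a manifold with corners) to each cubical chart $\cKc_\Gamma$ with $\Gamma\in\del(\mfa)$, $\mfa=(k+1,\ell,\beta)$, and then sum over $\Gamma$. Fix $\beta$ and consider the form
\[
\xi_\Gamma := \bigwedge_{j} (evi^\Gamma_j)^*\gamma_j \wedge \bigwedge_i (evb^\Gamma_i)^*\alpha_i\wedge \obs_\Gamma^*\eta_\Gamma .
\]
Since $\eta_\Gamma$ is closed and $evb_0$ is a submersion (Theorem~\ref{thm:gkc-equivariant-existence}\eqref{gkc-submersive}), we have $d(evb_0)_*\xi_\Gamma = (evb_0)_* d\xi_\Gamma \pm (evb_0|_{\del})_*\xi_\Gamma$. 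The term $d(evb_0)_*$ together with $d$ applied to the $\alpha_i$ produces exactly the summands of the second sum in which the inner operation is $\fq_{1,0}^{\beta_0}=d$, or in which the outer operation is $\fq^{\beta_0}_{1,0}$; $d$ applied to the $\gamma_j$ produces the first sum. The energy-zero cases (Proposition~\ref{energy zero property}) should account for the remaining unstable terms $\fq_{0,1}^{\beta_0}$, $\fq^{\beta_0}_{2,0}$ exactly as in \cite[Proposition~2.4]{ST16}.

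The main step is the analysis of the codimension-one boundary of $\cTc_\Gamma$. This boundary decomposes into three kinds of faces: (a) genuine boundary strata of the moduli space corresponding to disc bubbling not yet recorded as an edge of $\Gamma$; (b) faces $\{t_e=0\}$ for edges $e$; (c) faces $\{t_e=1\}$. Boundary collapse faces do not occur for $k+1\ge1$ boundary markings, which is why only the chart restriction \eqref{eq:restricted-e-1} with $Y=L$ enters. By Proposition~\ref{prop:cubical-cobordisms}\eqref{cubical-0-boundary} and the compatibility \eqref{eq:compatible-0}, a face of type (b) for $\Gamma$ is identified (with degree $d_1$) with a face of type (a) in $\cKc_{\Gamma_e}$, and the Thom forms match. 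So, summing over $\del(\mfa)$, these contributions cancel in pairs once the orientation signs agree; the opposite orientation comes from the outward normal of $t_e$ at $0$. The remaining faces, of type (c), are by \eqref{eq:restricted-e-1} and \eqref{eq:compatible-1} equal to fibre products $\cKc_{\Gamma_1}\times_L\cKc_{\Gamma_2}$ with product Thom form. By the fibre-product/pushforward compatibility (projection formula and the pushforward along a fibre product over $L$), these give $(evb_0)_*$ of the composition, i.e.\ $\fq_{k_1+1+k_3,|I|}(\dots\fq_{k_2,|J|}(\dots)\dots)$. Here we sum over all $\Gamma=\Gamma_1*_e\Gamma_2$ with $\Gamma_i\in\del(\mfa_i)$, which reproduces the sums defining the two $\fq$'s. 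Stabilisation and group enlargement do not change integrals of Thom forms, so the equivalences $\stackrel{\sim\,}{\longrightarrow}$ are harmless.

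Signs are the expected obstacle. I would fix the orientation of $\cKc_\Gamma$ as $I^{E(\Gamma)}\times$(fibre product orientation) and track the sign \eqref{h3-sign} together with $\epsilon(\alpha)$, the boundary orientation convention, and the Koszul signs from reordering $\gamma^I,\gamma^J$ (giving $\sign^\gamma(I,J)$). I would then check that the result reduces exactly to the sign computation of \cite[Proposition~2.4]{ST16}, since \eqref{h3-sign} coincides with the sign in their boundary formula. A secondary issue is the coefficients: the degree $d_1(\Gamma,\Gamma_e)$ of the embedding in \eqref{eq:restricted-e-0} must match multiplicities in the sum over $\del(\mfa)$. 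Since all coefficients $c_\Gamma$ equal $1$ for $k\ge0$ and there are no nontrivial automorphisms of trees with ordered boundary markings, I expect this to be automatic. Finally, I would multiply by $T^\beta$ and sum over $\beta$, using that only finitely many $\beta$ of bounded energy contribute (Gromov compactness), so that the identity holds in $\Omega^*(L;R)$.
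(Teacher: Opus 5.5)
Your outline is the paper's proof. Apply Stokes on each $\cKc_\Gamma$; your faces (a), (b), (c) are the paper's types (iii), (i), (ii). The $\{t_e=0\}$ faces cancel against the new bubbling strata of $\cKc_{\Gamma_e}$ after summing over $\del(\mfa)$, and the $\{t_e=1\}$ faces give the compositions, with signs taken from \cite[Proposition~2.4]{ST16}. The paper does not do the cancellation by hand but quotes \cite[Lemma~3.32]{HH26}. That lemma also handles the degrees $d_1(\Gamma,\Gamma_e)$, which you only expect to be trivial.

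One step fails as you set it up: the orientation. The orientation of $\cKc_\Gamma$ is part of the definition of $\fq$, not a choice made during the proof. The paper orients $\cTc_\Gamma$ ``in the same way as $\cT^\beta_{k,\ell}$'', via the collar of $\del_\Gamma$. What this must amount to is $I^{E(\Gamma)}$ times the \emph{boundary} orientation of the stratum $\del_\Gamma$, with $t_e$ increasing away from $\cKc_{\Gamma_e}$ (the spiky-chart picture). Suppose you instead take $I^{E(\Gamma)}\times$(fibre product orientation). Then $\{t_e=0\}$ carries minus the fibre product orientation. Meanwhile $\del_\Gamma\cKc_{\Gamma_e}$ carries its boundary orientation, which by \eqref{h3-sign} is $(-1)^{n+\ell_1(\ell_0-i+1)+i-1}$ times the fibre product orientation. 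So your (a)/(b) terms cancel only when this sign is $+1$ and add up otherwise. Moreover, the $\{t_e=1\}$ faces would produce the compositions without the factor \eqref{h3-sign}, which the computation of \cite{ST16} needs.

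With the correct orientation both problems disappear. The face $\{t_e=0\}$ is the stratum with reversed orientation, which is your outward-normal argument. The face $\{t_e=1\}$ is the stratum with its boundary orientation, i.e.\ \eqref{h3-sign} times the fibre product, so the signs are literally those of \cite[Proposition~2.4]{ST16}.

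A minor point: $\fq^{\beta_0}_{2,0}$ and $\fq^{\beta_0}_{0,1}$ are not unstable terms. They enter through ordinary $\{t_e=1\}$ faces with an energy-zero vertex, whose chart is trivial. Only $\fq^{\beta_0}_{1,0}=d$ comes from the Stokes differential terms.
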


\begin{proof}
	The proof combines the computation of \cite[Proposition~2.4]{ST16} with \cite[Lemma~5.16]{HH25}. Recall that $\fq_{k,\ell}^\beta$ is the sum of operations $\fq_{\Gamma}$ as in Equation~\eqref{eq: q operation defi}, where $\Gamma$ ranges over the stable map trees where any vertex that is unstable after forgetting the degree is a disc. By Stokes' Theorem,
	\[d\fq_\Gamma(\alpha;\gamma) = (-1)^{|\gamma|}\fq_{\Gamma}(d\alpha;\gamma)+ \fq_{\Gamma}(\alpha;d\gamma) + (\eva^\Gamma_0|_{\del\cTc_\Gamma})_*\lbr{\evai^*\gamma\wedge \evab^*\alpha\wedge\obs^*\eta_\Gamma|_{\del\cTc_\Gamma}}.\]
	The operations given by this last term are of three types, depending on the boundary strata of $\cTc_\Gamma$ as discussed in \S\ref{subsec:cubical-cobordisms}. Concretely, writing $\Gamma_e$ for the tree obtained from $\Gamma$ by contracting the edge $e$ and writing $\Gamma'_e$ and $\Gamma''_e$ for the trees obtained by cutting the edge $e$, we obtain (up to sign) the operations
	\begin{enumerate}[label=\roman*),leftmargin=20pt,ref=\roman*]
		\item\label{first} $\fq_{\Gamma_e}(\alpha;\gamma)$ for $t_e = 0$,
		\item\label{second} $\fq_{\Gamma_e'}(\alpha',\fq_{\Gamma'_e}(\alpha'';\gamma'');\gamma')$ for $t_e = 1$,
		\item\label{third} $\fq_{\Gamma'}(\alpha;\gamma)$ for the boundary stratum $\del_{\Gamma'}\cTc_{\Gamma'}$, where $\Gamma'$ admits an edge $e'$ so that $\Gamma = \Gamma'_{e'}$. 
	\end{enumerate}
	Since we sum over trees with only boundary edges, these are all the operations we obtain. It is shown in \cite[Lemma~3.32]{HH26} that when summing over all $\Gamma \in \del(\beta,k,\ell)$, the operations of~\eqref{first} cancel with those of~\eqref{third}. Since we orient $\cTc_\Gamma$ in the same way we orient $\cT_{k,\ell}^\beta$, the orientation signs in front of~\eqref{second} are exactly as in the proof of \cite[Proposition~2.4]{ST16}. Thus, the computation there yields the claim.
\end{proof}

\begin{corollary}
The operations $\{\fm_k \coloneqq  \q_{k,0}\}_{k \geq 0}$ define a curved cyclic unital $A_\infty$ structure on $\Omega^*(L;\Lambda_L)$.
\end{corollary}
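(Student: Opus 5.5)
The plan is to specialise the preceding propositions to $\ell = 0$ and read off each of the three properties (curved $A_\infty$, cyclic, unital) in turn.

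\emph{$A_\infty$ relations.} Set $\ell = 0$ in Proposition~\ref{prop:q structure}. The first sum in that structure equation involves the differentials $d\gamma_j$ of interior inputs. With no interior inputs it is empty. In the second sum the only partition is $I = J = \emptyset$, so $|\gamma_I| = |\gamma_J| = 0$ and $\sign^\gamma(I,J) = 0$. The sign then reduces to $i(\alpha,\gamma,P,I) = \epsilon_1$, where $\epsilon_1$ is the Koszul-type sign $\sum_{j\in(1:3)}(|\alpha_j|+1)$ used in \cite{ST16}. This gives
\[
0 = \sum_{P\in S_3[k]} (-1)^{\epsilon_1}\,\fm_{k_1+1+k_3}\bigl(\alpha^{(1:3)}\otimes \fm_{k_2}(\alpha^{(2:3)})\otimes\alpha^{(3:3)}\bigr),
\]
which is the curved $A_\infty$ relation in the sign convention of \cite{ST16}. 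The curvature term $\fm_0 = \q_{0,0}$ is allowed to be nonzero, since $\q^{\beta}_{0,0}$ may be nonzero for $\beta\neq\beta_0$. On the other hand $\q^{\beta_0}_{0,0} = 0$, so the curvature has positive energy and the structure is filtered. Convergence over $\Lambda_L$ follows from Gromov compactness: for each energy bound only finitely many $\beta$ have nonempty moduli spaces.

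\emph{Cyclicity.} Apply Proposition~\ref{cyclic symmetry} with $\ell = 0$. This gives cyclicity of $\fm_k$ with respect to $\langle\cdot,\cdot\rangle_L$. We also need the pairing to be nondegenerate on cohomology and graded symmetric up to the stated sign; both are standard facts about Poincar\'e duality on the closed manifold $L$.

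\emph{Unitality.} Take the constant function $1 \in \Omega^0(L)$ and apply Proposition~\ref{unit property} with $f = 1$. Then $\fm_1(1) = d1 = 0$ and $\fm_2(1,\alpha) = \alpha$. For the other order, $\fm_2(\alpha,1) = (-1)^{|\alpha||1|'}\alpha = (-1)^{|\alpha|}\alpha$, which is the strict unit condition in this sign convention. Every other $\fm_k$ with $1$ among its inputs vanishes.

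The main obstacle is purely bookkeeping: checking that the sign $i(\alpha,\gamma,P,I)$ at $\ell=0$, together with the convention $\epsilon(\alpha)$ built into the definition of $\q$, matches the $A_\infty$ sign convention of \cite{ST16}. I would settle this by quoting \cite{ST16}, where the same formulas give the same specialisation. All genuine geometric input, such as cancellation of the internal boundary faces of the cubical cobordisms, is already contained in Proposition~\ref{prop:q structure}.
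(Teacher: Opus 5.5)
Your proposal is correct and follows the same approach as the paper, which states this corollary without a separate proof. It is read off from Proposition~\ref{prop:q structure} at $\ell=0$, where the first sum is empty and the sign reduces to $\epsilon_1$, together with Propositions~\ref{cyclic symmetry} and~\ref{unit property}, with the sign conventions taken from \cite{ST16}. Your remarks that $\fm_0$ has positive energy and that the sum over $\beta$ converges by Gromov compactness are correct points the paper leaves implicit.
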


\noindent
Similar to \cite[Proposition~2.5]{ST16} there also exists a structure equation for the $\q_{-1}$ operation, using Equation~\eqref{e-sign} for the required sign computation.

\begin{proposition}[Structure equation for $\q_{-1,\ell}$]
	\label{prop:q-1-structure}
	For any $\ga \in \Omega^*(X;Q)^{\otimes\ell}$ we have
	\begin{align*}
		0 &= \sum_{\substack{ P \in S_3[k] \\ (2:3) = \{ j \}}}  (-1)^{|\gamma^{(1:3)}| + 1}\mathfrak{q}_{-1,\ell}(\gamma^{(1:3)} \otimes d\gamma_j \otimes \gamma^{(3:3)})\nonumber\\
		&\quad + \frac{1}{2}\sum_{\substack{I \sqcup J = [\ell]}} (-1)^{i(\gamma, I)} \langle \mathfrak{q}_{0,|I|}( \gamma^I) , \mathfrak{q}_{0,|J|}(\gamma^{J}) \rangle_L + (-1)^{|\gamma| + 1} \int_L i^*\mathfrak{q}_{\emptyset, l}(\gamma),
	\end{align*}
	where $i(\gamma,I) \coloneqq |\ga^I| + sign^\ga(I,[\ell]\setminus I)$.
\end{proposition}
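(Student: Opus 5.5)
The plan mirrors the proof of Proposition~\ref{prop:q structure}, with \cite[Proposition~2.5]{ST16} in place of \cite[Proposition~2.4]{ST16}. Fix $\beta$ and apply Stokes' theorem, for each $\Gamma\in\del(0,\ell,\beta)$, to the top-degree form
\[\bigwedge_j (\evai_j^\Gamma)^*\gamma_j\wedge \obs_\Gamma^*\eta_\Gamma\]
on the compact oriented thickening $\cTc_\Gamma$. Since $\eta_\Gamma$ is closed and compactly supported in the fibre direction, $\int_{\cTc_\Gamma} d(\cdots)$ equals the integral over $\del\cTc_\Gamma$. The interior term $d(\bigwedge \evai^*\gamma)$ produces, after the usual Koszul reordering, the first sum in the statement, weighted by $c_\Gamma$.

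The boundary contributions then split as in the proof of Proposition~\ref{prop:q structure}.
\begin{enumerate}[label=\roman*),leftmargin=20pt]
\item Faces $\{t_e=0\}$, which are identified with the stratum $\del_\Gamma\cTc_{\Gamma_e}$ by \eqref{eq:restricted-e-0}.
\item Faces $\{t_e=1\}$ for boundary edges, which are identified with fibre products $\cKc_{\Gamma_1}\times_L\cKc_{\Gamma_2}$ by \eqref{eq:restricted-e-1}.
\item Faces $\{t_e=1\}$ where $e$ is a boundary collapse, which are identified with $L\times_X\cKc_{\Gamma'}$ by \eqref{eq:restricted-e-2}.
\item The genuine boundary strata of $\cTc_\Gamma$ itself.
\end{enumerate}
Types i) and iv) cancel in pairs after summing over all $\Gamma$, by \cite[Lemma~3.32]{HH26}. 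The degrees $d_1(\Gamma,\Gamma_e)$ are absorbed by the coefficients $c_\Gamma$; this is exactly why those coefficients appear in~\eqref{eq:q-with-no-boundary-markings}.

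For type ii), a disc with no boundary markings breaks into two discs, each with one boundary marking, glued at a boundary node. Using the compatibility \eqref{eq:compatible-1} of the Thom system and the projection formula, the integral becomes $\int_L$ of the product of the pushforwards along $evb_0$. After the orientation comparison \eqref{h3-sign} with $k_0=k_1=0$, this is $\pm\langle\fq_{0,|I|}(\gamma^I),\fq_{0,|J|}(\gamma^J)\rangle_L$. Summing over $I\sqcup J$ and over the graphs $\Gamma_1,\Gamma_2$ counts each unordered splitting twice, because the two nodal branches are interchangeable and no boundary marking distinguishes them. Dividing by $|\Aut|$ gives the factor $\tfrac12$; I would verify this through the relation between $c_\Gamma$ and the automorphism swapping the two disc vertices.

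For type iii), \eqref{eq:compatible-12} and the sign \eqref{e-sign} identify the integral with
\[(-1)^{n+w_\fs(\beta)}\int_{L\times_X\cTc_{\Gamma'}}(\cdots)=\pm\int_L i^*(\eva_0)_*(\cdots).\]
The factor $(-1)^{w_\fs(\beta)}$ is built into the definition of $\fq_{\emst,\ell}$, so this term is $(-1)^{|\gamma|+1}\int_L i^*\fq_{\emst,\ell}(\gamma)$.

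The main obstacle is the sign bookkeeping: checking that the boundary orientations of the cubical faces, the Koszul signs from reordering $\evai^*\gamma$ past $\obs^*\eta_\Gamma$, and the signs $\mathrm{sign}^\gamma(I,J)$ combine to $i(\gamma,I)$ and $(-1)^{|\gamma|+1}$. I would reduce this to the computation in \cite[Proposition~2.5]{ST16}. The point is that the signs \eqref{h3-sign} and \eqref{e-sign} coincide with those of the regular case, and $\cTc_\Gamma$ is oriented like $\cT_{0,\ell}^\beta$. Hence the calculation there transfers verbatim, with the energy-zero cases handled by the special definitions of $\fq_{-1,\ell}^{\beta_0}$.
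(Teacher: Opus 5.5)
Your proposal follows essentially the same route as the paper. The paper proves this by running the argument of Proposition~\ref{prop:q structure}: Stokes on each $\cTc_\Gamma$, then cancellation of the $\{t_e=0\}$ faces against the genuine boundary strata via \cite[Lemma~3.32]{HH26}. It otherwise defers to \cite[Proposition~2.5]{ST16}, with the new boundary-collapse face handled by the sign \eqref{e-sign}, exactly as in your type iii).

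One small slip: \eqref{h3-sign} as stated only covers clutchings that glue an outgoing boundary point to an incoming one, so it does not literally apply to $\Mbar_{1,I}(\beta_1)\times_L\Mbar_{1,J}(\beta_2)\to\Mbar_{0,\ell}(\beta)$. That orientation sign, and the factor $\tfrac12$, should instead be taken from the computation in \cite{ST16}, as you ultimately propose.
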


\subsection{Bulk-deformations}\label{section: bulk-deformations and bounding cochains}
For a $\ZZ$-graded $\RR$-vector space $U$, let $\RR[[U]]$ be the ring of formal functions on the completion of $U$ at the origin. Explicitly, let $\{v_i\}_{i \in I}$ be a homogeneous basis for $U$, and $\{v_i^*\}_{i \in I}$ the dual basis for $U^*$. Letting $\{t_i\}_{i \in I}$ be formal variables of degree $-|v_i|$, there exists an isomorphism 
\begin{align*}
	\label{isomorphism coefficient ring}
	\RR[[\{t_i\}_{i \in I}]] &\xra{\cong} \RR[[U]],\notag \\
	t_i \;&\,\mapsto\;v_i^*
\end{align*}
of graded vector spaces. Each formal vector field $v \in \RR[[U]] \otimes U$ on $U$ can be viewed as a derivation $\partial_v\cl \CC[[U]] \rightarrow \CC[[U]]$. In coordinates, if $v = \sum_i f_i v_i$, for some $f_i \in \RR[[U]]$, then $\partial_v = \sum_i f_i \partial_{t_i}$.\par Define the vector fields 
\begin{equation*}
	\Gamma_U \coloneqq  \sum_i t_i \partial_{t_i}\qquad \text{ and } \qquad E_U \coloneqq  \sum_i \frac{deg(t_i)}{2}t_i\partial_{t_i}.
\end{equation*}
They are independent of the chosen basis. For $\ell \in \mathbb{Z}$, let $U[\ell]$ denote the graded vector space with $U[\ell]^i = U^{i+\ell}$. Let $S$ be another graded $\RR$-vector space and set
\begin{equation}\label{eq:coefficient-rings}
	Q \coloneqq  \Lambda_c[[U[2]]]\qquad \text{and} \qquad R \coloneqq  \Lambda_L[[U[2] \oplus S[1]]]
\end{equation}
Following \cite{ST16}, define the valuation $\nu\cl R \rightarrow \RR_{\geq 0}$ by\begin{equation}
	\label{valuation on coefficient ring}
	\nu \lbr{\sum_{j = 0}^{\infty} a_jQ^{\lambda_j} \prod_{i \in I} t_i^{\ell_{ij}} \prod_{k \in K} s_k^{p_{kj}}} = \inf_{\substack{j \\ a_j\neq 0}} (\lambda_j + \sum_{i \in I} \ell_{ij} + \sum_{k \in K} p_{kj}).
\end{equation} 
Let $\mathcal{I}Q = \{ f \in Q | \nu(f) > 0 \} \subset Q$ and define $\mathcal{I}R$ similarly.

\begin{definition}
	A \emph{bulk-deformation parameter} over $U$ is an element $\gamma \in \Omega^*(X;\mathcal{I}Q)$ with $d\gamma = 0$, $|\gamma| = 2$ and $[\gamma] = \Gamma_U \in Q \otimes U$. A \emph{bulk-deformation pair} over $U$ is a pair $(b,\ga)$, where $\gamma$ is a bulk-deformation parameter over $U$ and $b \in \Omega^*(L,\mathcal{I}R)$ has $|b| = 1$.
\end{definition}

\noindent
As in \cite[\textsection3.4]{ST16} bulk-deformed $\q$-operations are defined by
\begin{equation*}
	\mathfrak{q}^{b,\gamma}_{k,\ell}(\al_1, \dots, \al_k; \eta_1, \dots, \eta_\ell) \coloneqq  \sum_{\substack{s,t \geq 0\\s_0 + \dots + s_k = s}} \frac{1}{t!}\mathfrak{q}_{k+s,\ell+t}(b^{\otimes s_0}, \al_1 , \dots , \al_k , b^{\otimes s_k}; \eta_1 , \dots , \eta_\ell , \gamma^{\otimes t}).
\end{equation*}
for any bulk-deformation pair $(b,\gamma)$, while
\begin{equation*}
	\mathfrak{q}^{\gamma}_{\emptyset,\ell}(\eta_1, \dots, \eta_\ell) = \sum_t \frac{1}{t!}\mathfrak{q}_{\emptyset,\ell+t}(\eta_1 , \dots , \eta_\ell , \ga^{\otimes t})
\end{equation*}
and
\begin{equation*}
	\qb_{-1,\ell}(\eta_1, \dots, \eta_\ell) \coloneqq  \sum_{k,t} \frac{1}{t!(k+1)} \langle \q_{k,\ell+t}(b^{\otimes k};\otimes^\ell_{j = 1}\eta_j , \ga^{\otimes \ell}), b \rangle_L + \sum_{t} \frac{1}{t!}\q_{-1,t+\ell}(\otimes^\ell_{j = 1}\eta_j , \ga^{\otimes \ell}).
\end{equation*}

\begin{definition}
	Given any bulk-deformation pair $(b,\gamma)$, define the bulk-deformed $A_\infty$ algebra on $\Omega^*(L;\Lambda)$ by $\fm^{b,\gamma}_k \coloneqq  \fq^{b,\ga}_{k,0}$. 
\end{definition}

\begin{lemma}This is a  cyclic, unital curved $A_\infty$ algebra.\qed
\end{lemma}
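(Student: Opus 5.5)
We derive each property of $\fm^{b,\gamma}_k \coloneqq \fq^{b,\ga}_{k,0}$ from the corresponding property of the undeformed operations $\fq_{k,\ell}$ in \S\ref{sec:props-of-q-ops}, following \cite[\textsection3.4]{ST16}. The first step is convergence. Since $b\in\Omega^*(L;\mathcal{I}R)$ and $\gamma\in\Omega^*(X;\mathcal{I}Q)$ have positive valuation, and $\fq^\beta$ carries $Q^\beta$ with $\omega(\beta)\to\infty$ by Gromov compactness, the sums over $s,t,\beta$ converge in the $\nu$-adic topology. Hence $\fm^{b,\gamma}_k$ is well defined, and it is $R$-multilinear. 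We also check the degree: $|b|=1$ gives $|b|'=0$, and $|\gamma|=2$ gives $|\gamma|-2=0$. Proposition~\ref{degree property} then shows that inserting copies of $b$ and $\gamma$ does not change degrees, so $\fm^{b,\gamma}_k$ has degree $2-k$ as required.

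The second step is the $A_\infty$ relations. We apply Proposition~\ref{prop:q structure} to inputs $(b^{\otimes s_0},\alpha_1,\dots,\alpha_k,b^{\otimes s_k};\gamma^{\otimes t})$, weight by $1/t!$, and sum over all $s_i,t$. The first term vanishes because $d\gamma=0$. In the quadratic term, each partition $I\sqcup J$ of the $t$ copies of $\gamma$ contributes $\binom{t}{|I|}$ equal summands, and
\[
\frac{1}{t!}\binom{t}{|I|}=\frac{1}{|I|!\,|J|!}.
\]
Proposition~\ref{symmetry} guarantees these summands are equal. The signs $\operatorname{sign}^\gamma(I,J)$ and $|\gamma_I|,|\gamma_J|$ are even since $|\gamma|=2$. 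The $b$-insertions redistribute into the inner and outer $\fq$, and their signs drop out of $\epsilon_1$ since $|b|'=0$. The result is exactly the curved $A_\infty$ relation for $\fm^{b,\gamma}$. Because $(b,\gamma)$ is only a bulk-deformation pair, not a bounding pair, $\fm_0^{b,\gamma}$ need not vanish, so the algebra is curved.

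The third step is cyclicity and unitality. For cyclicity we apply Proposition~\ref{cyclic symmetry} to $\fq_{k+s,t}(b^{\otimes s_0},\alpha_1,\dots,b^{\otimes s_k};\gamma^{\otimes t})$ paired with $\alpha_{k+1}$. Cyclically rotating the inputs permutes the blocks $b^{\otimes s_i}$. After summing over all $(s_0,\dots,s_k)$, the rotated expression reindexes to $\langle\fm^{b,\gamma}_k(\alpha_{k+1},\alpha_1,\dots,\alpha_{k-1}),\alpha_k\rangle_L$. The sign is unchanged because $|b|'=0$. For unitality we use Proposition~\ref{unit property}. When the constant function $1$ is inserted, all terms with $s+t>0$ or $\beta\neq\beta_0$ vanish, except the $(2,0,\beta_0)$ and $(1,0,\beta_0)$ terms. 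For the $(1,0,\beta_0)$ term, $d1=0$. So the only survivors are $\fm^{b,\gamma}_2(1,\alpha)$ and $\fm^{b,\gamma}_2(\alpha,1)$, which equal $\alpha$ up to the standard sign, and all higher $\fm^{b,\gamma}_k$ with $1$ as an input vanish.

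The main obstacle is the sign and combinatorial bookkeeping in the second step. One must check that the sign $i(\alpha,\gamma,P,I)$, when specialised to inputs that include copies of $b$, reduces to the standard $A_\infty$ sign with no residual dependence on where the $b$'s sit. I would carry this out by quoting the identical computation in \cite[Lemma~3.12 / \S3.4]{ST16}. It applies verbatim, because it uses only the formal properties in Propositions~\ref{prop:q structure}, \ref{symmetry}, \ref{cyclic symmetry} and~\ref{unit property}, which have all been established above.
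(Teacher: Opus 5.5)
Your proposal is correct and follows the same route as the paper. The paper gives no separate argument: the bulk-deformed operations inherit convergence, the curved $A_\infty$ relations, cyclic symmetry and unitality from Propositions~\ref{prop:q structure}, \ref{cyclic symmetry} and~\ref{unit property}, exactly as in \cite[\S3.4]{ST16}.

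Two bookkeeping slips should be fixed:
\begin{itemize}
\item In the unit check, $\fm^{b,\gamma}_1(1)$ also receives the two surviving $(2,0,\beta_0)$ terms $\fq^{\beta_0}_{2,0}(1,b)+\fq^{\beta_0}_{2,0}(b,1)=b-b$. These cancel only because $|b|$ is odd, so $d1=0$ alone does not give $\fm^{b,\gamma}_1(1)=0$.
\item In the cyclicity step you must apply Proposition~\ref{cyclic symmetry} $s_k+1$ times. Each move of a $b$ into the pairing slot costs no sign since $|b|'=0$.
\end{itemize}
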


\begin{definition}
	\label{def:weak-bounding-pair}
	A \emph{weak bounding pair} over $U$ is a bulk-deformation pair $(b,\ga)$ over $U$ such that \begin{equation}\label{mc-equation}
		\mb_{0} = c \cdot 1
	\end{equation}
	for some $c \in \mathcal{I}R$. We then say that $b$ is a \emph{bounding cochain} for $\ga$ and that $L$ is \emph{weakly unobstructed} over $U$. We call $c$ the \emph{Lagrangian superpotential}. The weak bounding pair $(b,\gamma)$ is \emph{point-like} if furthermore $S$ is one-dimensional and $\int_L b = s$, for the single coordinate $s$ on $S$.
\end{definition}

\begin{remark}
	To make sense of Definition \ref{def:weak-bounding-pair}, one first has to fix auxiliary data to define the $\q$ operations. However, it is shown in \cite{Fu10} that a pseudoisotopy of curved  $A_\infty$ algebra induces a quasi-equivalence of curved $A_\infty$ algebras, and hence an isomorphism on the space of bounding cochains. So the concept of $L$ being weakly unobstructed (over $U$) is independent of the choice of auxiliary data.
\end{remark}

\noindent
Following {\cite[Definition~2.23]{ST23}}, we introduce the following equivalence relation on weak bounding pairs.

\begin{definition}
	Two weak bounding pairs $(b_0,\gamma_0)$ for $\scA_0$ and $(b_1,\gamma_1)$ for $\scA_1$ are \emph{gauge equivalent} if there exists a pseudo-isotopy $\wt\scA$ from $\scA_0$ to $\wt\scA_1$ and a weak bounding pair $(\wt b,\wt\gamma)$ for $\wt \scA$ so that $j_i^*(\wt b,\wt\gamma) = (b_i,\gamma_i)$. We call $(\wt b,\wt\gamma)$ a \emph{pseudo-isotopy} from $(b_0,\gamma_0)$ to $(b_1,\gamma_1)$.
\end{definition}

\noindent
In general, the classification (or even existence) of bounding cochains is a difficult question. When $L$ has spherical cohomology, Solomon-Tukachinsky {\cite[Theorem~2]{ST21}} prove 
\begin{theorem}[]
	\label{thm:spherical cohomology implies unobstructed}
	Suppose $L$ satisfies $H^*(L;\RR) \cong H^*(S^n;\RR)$. Then the map 
	\begin{align}
		\rho: \{\text{weak bounding pairs}\}/\sim &\rightarrow \left(\hat{H}^*(X,L;\mathcal{I}Q)\right)_2 \oplus (\mathcal{I}R)_{n-1}\\
		\nonumber
		(b,\gamma) &\mapsto ([\gamma], \int_L b)
	\end{align}
	is a bijection. In particular, $L$ is weakly unobstructed over $W = \hat{H}(X,L;\RR)$.
\end{theorem}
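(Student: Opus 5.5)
The plan is to follow the obstruction-theoretic argument of \cite[Theorem~2]{ST21}. It uses only the algebraic properties of the $\fq$-operations established in \S\ref{sec:props-of-q-ops}: the structure equations (Propositions~\ref{prop:q structure} and~\ref{prop:q-1-structure}), unit, degree, energy zero, top degree and fundamental class. It then adds a pseudo-isotopy version of these properties to handle gauge equivalence. The original argument works at the level of $A_\infty$ operations and never touches moduli spaces, so the only new input is that our cubical-cobordism $\fq$-operations satisfy the same identities. It also needs the pseudo-isotopy operations $\wt\fq$ on $\Omega^*([0,1]\times L)$, obtained by running the construction of \S\ref{subsec:q-operations} on families of global Kuranishi charts over $[0,1]$, to satisfy the analogous identities.

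\textbf{Surjectivity.} Given a closed $\gamma$ representing a class in $(\hat H^*(X,L;\mathcal{I}Q))_2$ and $c\in(\mathcal{I}R)_{n-1}$, build $b=\sum_j b_{(j)}$ inductively over a discrete sequence of valuations $\nu$ exhausting the filtration of $R$. Start with $b_{(0)}$ a representative of $c$ times a normalized volume form. At each step, the error $o_j=\fm^{b_{<j},\gamma}_0 - c_j\cdot 1$ in the lowest unsolved valuation is $\fm_1$-closed by the structure equation, using unit and energy zero to identify $\fm_1$ modulo higher valuation with $d$. By the degree property, $o_j$ has even degree $2$ plus the degree of its coefficients. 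Since $H^*(L)\cong H^*(S^n)$, the only obstruction classes are in degree $0$, which are absorbed into $c_j\cdot 1$, and in degree $n$. The degree-$n$ component is the main obstacle. I would kill it as ST do: pair with $1$ and use cyclic symmetry, the top-degree property, and Proposition~\ref{prop:q-1-structure}, together with the condition $\gamma\in\ker\int_L$ built into $\hat H^*$. These show that $\int_L o_j=0$. Exactness of $o_j$ then lets us choose $b_{(j)}$ with $db_{(j)}=-o_j$ and $\int_L b_{(j)}=0$, which preserves $\int_L b=c$. Convergence follows from the valuation increasing.

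\textbf{Well-definedness and injectivity.} Well-definedness follows because gauge equivalence preserves $[\gamma]$, and because $\int_L b$ is preserved along pseudo-isotopies by the top-degree and energy-zero properties of $\wt\fq$. For injectivity, take two pairs with the same image. First join $\gamma_0,\gamma_1$ by $\wt\gamma=\gamma_0+t(\gamma_1-\gamma_0)+dt\wedge\xi$, where $d\xi=\gamma_1-\gamma_0$; such a $\xi$ exists because the classes agree in $\hat H^*$. Then construct $\wt b$ on $[0,1]\times L$ inductively with boundary values $b_0,b_1$. The obstruction at each step is a relative class in $H^*([0,1]\times L,\{0,1\}\times L)\cong H^{*-1}(L)$. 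The same degree argument reduces it to degree $n+1$, and there it is detected by $\int_L b_1-\int_L b_0=0$.

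\textbf{Final statement.} The statement that $L$ is weakly unobstructed over $W=\hat H(X,L;\RR)$ is surjectivity applied with $U=W$ and $\gamma$ representing $\Gamma_U$.
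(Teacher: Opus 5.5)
Your route is the paper's. Its proof just notes that the argument of \cite[Theorem~2]{ST21} uses only the algebraic identities of the $\fq$-operations. Your existence step is a faithful sketch of that argument: obstructions sit in degrees $0$ and $n$, degree $0$ is absorbed into $c_j$, and $\int_L o_j=0$ by the top-degree property and $\int_L\gamma=0$.

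\textbf{Gap in injectivity.} The gap is where you write that ``the same degree argument reduces it to degree $n+1$''. The group $H^*([0,1]\times L,\{0,1\}\times L)\cong H^{*-1}(L)$ lives in degrees $1$ and $n+1$. The degree-one part, represented by $g(t)\,dt\cdot 1$, is not removed by any degree count, because $R$ generally has elements of degree $1$ (e.g.\ $t_i$ with $|\Delta_i|=1$, or $t_0s$ when $n=2$). It can only be absorbed if the constant along the path is allowed to be a closed form on the interval, $\wt c=c+g\,dt\in(\mathcal{I}R\otimes\Omega^*([0,1]))_2$ with $d\wt c=0$. This is Solomon--Tukachinsky's convention for gauge equivalence, and you never invoke it.

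With a constant $\wt c\in\mathcal{I}R$, as the definition in this paper is worded, the step is false. Take $r\in(\mathcal{I}R)_1$. The unit property shows that $(b,\gamma)$ and $(b+r\cdot 1,\gamma)$ are weak bounding pairs with the same $c$ and the same image under $\rho$. Along the trivial pseudo-isotopy, the path $b+tr$ gives $\wt\fq^{\wt b,\wt\gamma}_{0,0}=c\cdot 1\pm r\,dt$. Now suppose $r$ involves $s$ and all nonconstant discs have area larger than $\nu(r)$. Integrate $\wt\fq^{\wt b,\wt\gamma}_{0,0}\wedge\pi_L^*\mathrm{pt}$ over $[0,1]\times L$, where $\mathrm{pt}$ is a closed top form on $L$ of integral one.
\begin{itemize}
\item The term $d\wt b$ contributes $\pm r$ through its boundary values.
\item $\wt b\wedge\wt b$ vanishes, and $\wt\gamma$ has $s$-free coefficients.
\item $\wt c\cdot 1$ contributes nothing.
\end{itemize}
So no path with constant $\wt c$ exists.

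\textbf{Well-definedness.} The same issue of conventions affects this step. Integrate the degree-$(n+1)$ part of $\wt\fq^{\wt b,\wt\gamma}_{0,0}=\wt c\cdot 1$ over $[0,1]\times L$, using the top-degree and energy-zero properties and $\wt b\wedge\wt b=0$. This gives $\int_L b_1-\int_L b_0=\pm\int_{[0,1]\times L}\wt\gamma$. So invariance of $\int_L b$ needs the normalisation $\int_{[0,1]\times L}\wt\gamma=0$. When $[L]=0$, the equality $[\gamma_0]=[\gamma_1]$ in $\hat H^*(X,L)$, rather than merely in $H^*(X)$, needs it too, since the natural primitive $\int_{[0,1]}\wt\gamma$ must lie in $\ker\int_L$. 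This normalisation is part of the ST21 setup and is not imposed by the definition here. It is satisfied automatically by the $\wt\gamma$ you build from $\xi\in\ker\int_L$ in your injectivity step.

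Once both conventions are stated explicitly, the rest of your sketch goes through. In particular, control of the degree-$(n+1)$ relative obstruction by $\int_L b_1-\int_L b_0=0$ is correct.
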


 Their proof carries over without change, as our $\q$-operations satisfy the same properties.

\subsection{Open Gromov-Witten invariants}
\label{sec:OGW construction}
In the previous subsection, we considered the $\q$-operations for a single Lagrangian. Now we extend this definition to multiple Lagrangians, noting that we do not require the Lagrangians to intersect transversely.

\begin{definition}
	A \emph{generalised Lagrangian brane} is a tuple $$\LL = (\gamma, (L_1, \dots, L_m), b = (b_1, \dots, b_m)),$$ 
	where
	\begin{itemize}[leftmargin=20pt]
		\item $L_1, \dots, L_m$ are oriented Lagrangians, all of which are relatively spin with the same background class
		\item $\gamma$ is a bulk-deformation parameter over some vector space $U$;
		\item each $L_i$ is weakly unobstructed for the bulk deformation $\ga$ with bounding cochain $b_i$.
	\end{itemize}
\end{definition}

\noindent
Given a generalised Lagrangian brane $\LL$, consider the Novikov ring
\begin{equation*}
	\Lambda_\LL = \left\{  \sum_{i = 0}^{\infty} a_iQ^{\beta_i} | a_i \in \mathbb{R}, \; \beta_i \in H_2(X, \cup_i L_i), \; \lim_{i\to \infty } \omega(\beta_i) = \infty \right\}.
\end{equation*}
There are canonical inclusions $\Lambda_{L_i} \hookrightarrow \Lambda_\LL$. Let $R$ be the ring constructed in \eqref{eq:coefficient-rings}, using the Novikov ring $\Lambda_\LL$. Then, one can consider the combined $\q_{-1}$-operation
\begin{equation*}
	\q^{b,\ga}_{-1,\ell} \coloneqq  \sum_i \q^{b_i,\ga}_{-1,\ell}\cl \Omega^*(X;Q)^{\ell} \rightarrow R.
\end{equation*}
Here, $\q^{b_i,\ga}_{-1,\ell}$ denotes the $\q$-operation for the Lagrangian $L_i$, with bounding cochain $b_i$. One can check that the structure equations for the $\q_{-1}$ operation still holds. The construction of open Gromov-Witten invariants in \cite{ST21} only depends on the algebraic properties of the $\q$-operations, and can thus be adapted easily to our setup.\par 
We recapitulate the construction. Let $\LL$ be a generalised Lagrangian brane and define
\begin{equation}
	\begin{split}
		\mathfrak{i}\cl \Omega^*(X;Q) &\rightarrow R[-n]\\
		\eta \mapsto & (-1)^{n+ |\eta|} \int_{\LL} \eta
	\end{split}
\end{equation}
where $\int_\LL \eta \coloneqq  \sum_i \int_{L_i} \eta$. Let $C(\fii)$ denote the cone of $\fii$, equipping $R[-n]$ with the trivial differential. Furthermore, define $\hat{H}^*(X,\bL) = H^*(\ker(\mathfrak{i}))$.

\begin{definition}[{\cite[\textsection1.3.1]{ST23}}]
	\label{de:tensor-potential}
	Given the required auxiliary data and a weak bounding pair $(b,\gamma)$, the associated  \emph{relative potential} is the class 
	\begin{equation*}
		\Psi(\gamma,b) \coloneqq  [\q_{\emptyset,0}^\gamma, (-1)^{n} \q_{-1,0}^{\gamma,b}] \in H^*(C(\mathfrak{i})).
	\end{equation*}
	The \emph{tensor potential} $\nun =\cl H^*(C(\fii)) \rightarrow H^*(C(\fii))$ is the map induced on cohomology by 
	\begin{equation}\label{eq:tensor-potential}
		(\eta, a) \mapsto (\q_{\emptyset,1}^\ga(\eta), (-1)^{n+1} \qb_{-1,1}(\eta) - c \cdot a),
	\end{equation}
	where $c$ is given by Equation~\eqref{mc-equation}.
\end{definition}

\begin{remark}\label{} It follows from the properties of the $\fq$-operations that $\Psi(\gamma,b)$ and $\nun$ are well-defined. In the case of the relatively potential, this is \cite[Lemma~4.8]{ST23}, while the proof of \cite[Theorem~4]{ST23} shows that~\eqref{eq:tensor-potential} is a chain map.\end{remark}

\begin{definition}[{\cite[\textsection1.3.6]{ST23}}]\label{de:rel-quantum-conn}
	The \emph{relative quantum connection} is given by 
	\begin{equation*}
		\nabla_u(\Upsilon) \coloneqq  \partial_u \Upsilon + \partial_u\nun(\Upsilon)
	\end{equation*}
	for $u \in Q \otimes U$ and $\Upsilon \in H^*(C(\fii))$.
\end{definition}
	
Flatness of the relative quantum connection will follow from Lemma \ref{lem:tensor-potential-for-owdvv}.

\noindent
Fix a left inverse $$P_{\RR}\cl H^*(C(\fii_\RR)) \rightarrow\coker(\fii_\RR)$$ 
of the connecting map $\coker(\fii_\RR) \rightarrow  H^*(C(\fii_\RR))$ of the long exact sequence of the cone. Since $H^*(C(\fii_\RR)) \cong \hat{H}^*(X,L;\RR)$, this is equivalent to the map $\hat{H}(X,L;\RR) \rightarrow \RR$ in the introduction. As shown in \cite[\textsection4.4]{ST23}, this in turn induces a map 
\begin{equation}\label{}P\cl H^*(C(\fii)) \rightarrow\coker(\fii) \end{equation}
 The \emph{enhanced superpotential} is given by 
 \begin{equation}\label{} 
 	\cc \Omega(\gamma,b)\coloneqq  P\Psi(\gamma,b) \in\coker(\fii).
 \end{equation}

\noindent
This a priori depends on choices of auxiliary data, even though we omit them from the notation.

\begin{definition}[{\cite[\S1.3.3]{ST23}}]
	Suppose that $L$ admits a point-like weak bounding pair over $W \subset \hat{H}^*(X,L;\RR)$. Then define the \emph{open Gromov-Witten invariants} $\ogw$ to be the coefficients of the power series $\ov{\Omega}(\gamma,b)$ so that
	\begin{equation}
		\cc \Omega(\gamma,b) = \sum_{\substack{\beta\in H^2(X,L)\\k\ge 0\\r_i\ge 0}}\frac{T^\beta s^k \prod_{i}t_i^{r_i}}{k! \prod_{i}r_i!} \ogw_{\beta,k}(\otimes_{i}[\gamma_i]^{\otimes r_i}).
	\end{equation}
\end{definition} 

\noindent
Given any two choices $P_{\RR}, P_{\RR}'$, there exists a map $Q_\RR\cl H^*(X;\RR) \rightarrow \RR$ such that 
$$Q_\RR \circ \rho = P_\RR - P_\RR',$$ 
where $\rho\cl H^*(C(\fii_\RR))\rightarrow H^*(X;\RR)$ is the natural map.

\begin{proposition}[{\cite[Proposition~8.1]{Hollands_2025}}]
	For any $A_1, \dots, A_\ell \in \hat{H}^*(X,L)$, we have
	\begin{multline*}
		\normalfont\text{OGW}_{\beta,0}(A_1, \dots, A_\ell) - \normalfont\text{OGW}_{\beta,0}'(A_1, \dots, A_\ell)\\ = \sum_{\substack{\overline{\beta} \in \phi^{-1}(\beta)\\ i,j}} (-1)^{w_\mathfrak{s}(\beta)} g^{ij}\normalfont\text{GW}_{\overline \beta,\ell+1}(\Delta_i, \rho(A_1), \dots, \rho(A_\ell))\,Q_\RR(\Delta_j),
	\end{multline*}
	where $\phi\cl H_2(X) \rightarrow H_2(X,L)$ is the natural map, $(\Delta_i)_i$ is a basis for $H^*(X)$, and $g^{ij}$ is the inverse to the matrix given by $g_{ij} = \langle \Delta_i, \Delta_j \rangle_X$.
\end{proposition}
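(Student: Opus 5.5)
The plan is to reduce the statement to the linearity of $P\Psi(\gamma,b)$ in the choice of $P_\RR$. With all other auxiliary data fixed, the relative potential $\Psi(\gamma,b)\in H^*(C(\fii))$ does not depend on $P_\RR$. Hence
\[\cc\Omega(\gamma,b)-\cc\Omega'(\gamma,b) = (P-P')\Psi(\gamma,b).\]
By hypothesis $P_\RR - P'_\RR = Q_\RR\circ\rho$, where $\rho\cl H^*(C(\fii))\to H^*(X)$ forgets the $R[-n]$-component. I would first check that this identity survives the extension of scalars of \cite[\textsection4.4]{ST23}. That extension is $Q$- and $R$-linear, so it should give $P-P' = Q_\RR\circ\rho$, extended linearly, composed with the natural map $Q\to \coker(\fii)$. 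Therefore
\[\cc\Omega-\cc\Omega' = Q_\RR\bigl(\rho\,\Psi(\gamma,b)\bigr) = Q_\RR\bigl([\q^\gamma_{\emptyset,0}]\bigr).\]
In this expression only closed-sphere contributions survive, and all disc terms $\q^{\gamma,b}_{-1,0}$ drop out. This already explains why $P$ only affects the invariants with $k=0$: the closed part carries no $s$-dependence.

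Next I would identify $[\q^\gamma_{\emptyset,0}]$ with the genus-zero Gromov--Witten invariants. Write $\q^\gamma_{\emptyset,0}=\sum_t \frac1{t!}\q_{\emptyset,t}(\gamma^{\otimes t})$ and expand $\gamma=\sum_i t_i\gamma_i$ with $[\gamma_i]$ a basis of $W$. The coefficient of $T^{\beta}\prod t_i^{r_i}/\prod r_i!$ is $(-1)^{w_\fs(\bar\beta)}(ev_0)_*(\dots)$, summed over $\bar\beta\in H_2(X)$ with $\phi(\bar\beta)=\beta$. Here the Novikov variable for $\bar\beta$ maps to $T^{\phi(\bar\beta)}$ in $\Lambda_L$.

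The key input is that the cohomology class of $(ev_0)_*\bigl(\bigwedge ev_j^*\gamma_j\wedge\obs^*\eta\bigr)$, summed over $\Gamma\in\del(\ell,\bar\beta)$, equals $\sum_{i,j} g^{ij}\,\mathrm{GW}_{\bar\beta,\ell+1}(\Delta_i,\rho(A_1),\dots)\,\Delta_j$. For closed curves, $\del(\ell,\bar\beta)$ consists only of the trivial tree, since sphere bubbling is codimension two. So this is the global Kuranishi chart definition of GW invariants, and pairing against $\Delta_i$ via Poincaré duality gives the stated formula. Applying $Q_\RR$ then turns $\Delta_j$ into $Q_\RR(\Delta_j)$.

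Finally I would track the sign $(-1)^{w_\fs}$ and the $(-1)^n$ conventions in $\Psi$ and $\fii$. I expect this bookkeeping, together with verifying that the extension-of-scalars definition of $P$ in \cite{ST23} genuinely commutes with the difference $Q_\RR\circ\rho$, to be the main obstacle. To handle it, I would redo the construction of $P$ from $P_\RR$ degree by degree and compare with \cite[Proposition~8.1]{Hollands_2025}, whose proof uses only these algebraic properties.
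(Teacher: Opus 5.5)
Your proposal is correct and is the same reduction the paper sets up before deferring to \cite[Proposition~8.1]{Hollands_2025}. Since $\Psi(\gamma,b)$ does not depend on $P_\RR$ and $P_\RR-P'_\RR=Q_\RR\circ\rho$, the difference of enhanced superpotentials is $Q_\RR([\q^\gamma_{\emptyset,0}])$, and Poincaré duality on the single-vertex closed charts turns this into the stated sum of Gromov--Witten invariants; the points you flag (linearity of the coefficient extension of $P$ in $P_\RR$, the $g^{ij}$ and sign bookkeeping, and the fact that only $[\gamma]$ matters because $\q_{\emptyset,\ell}$ commutes with $d$) are routine.
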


\noindent
Solomon-Tukachinsky prove certain properties for their open Gromov-Witten invariants, \cite[Proposition~4.19]{ST23} and \cite[Theorem~4]{ST21}. Since the $\fq$ operations defined in \S\ref{subsec:q-operations} satisfy the same relations as their operations, the proof caries over verbatim.

\begin{proposition}\label{prop:OGW-axioms}
 The invariants $\ogw$ satisfy the following axioms for $A_1,\dots,A_\ell\in W$.
	\begin{enumerate}[\normalfont 1),leftmargin= 20pt,ref=\arabic*]
		\item (Symmetry) For any permutation $\sigma \in S_\ell$ we have
		$$\ogw_{\beta,k}(A_1,\ldots,A_\ell)=(-1)^{s_\sigma(A)}\ogw_{\beta,k}(A_{\sigma(1)},\ldots,A_{\sigma(\ell)}),$$
		where $s_\sigma(A)\coloneqq  \sum_{\substack{i>j\\ \sigma(i)>\sigma(j)}} |A_{\sigma(i)})|\;|A_\sigma(j))|$.
		\item (Degree) We have
		$\ogw_{\beta,k}(A_1,\ldots,A_\ell)=0$ unless
		\begin{equation*}\label{ax_deg}
			n-3+\mu(\beta)+k+2\ell = kn+ \sum_{j=1}^\ell |A_j|.
		\end{equation*}
		\item (Fundamental class)
		\begin{equation*}\label{ax_unit}
			\ogw_{\beta,k}(1,A_{1},\ldots,A_{\ell-1})=
			\begin{cases}
				-1, & (\beta,k,\ell)=(\beta_0,1,1),\\
				P_\RR(A_1), & (\beta,k,\ell) = (\beta_0,0,2),\\
				0, & \textit{otherwise}.
			\end{cases}
		\end{equation*}
		\item (Zero) If $\beta= \beta_0$, then
		\begin{equation*}\label{ax_zero}
			\ogw_{\beta_0,k}(A_1,\ldots,A_\ell)=
			\begin{cases}
				-1, & (k,\ell)=(1,1)\text{ and } A_1=1,\\
				P_\RR(A_1 \smallsmile A_2), & (k,\ell) = (0,2),\\
				0, & \textit{otherwise}.
			\end{cases}
		\end{equation*}
		\item (Divisor)
		If $|A_\ell| = 2,$ then
		\begin{equation*}\label{ax_divisor}
			\ogw_{\beta,k}(A_1,\ldots,A_{l})=\int_\beta A_\ell \cdot\ogw_{\beta,k}(A_1,\ldots,A_{\ell-1}).
		\end{equation*}
	\end{enumerate}
\end{proposition}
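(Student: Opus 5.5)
The plan is to deduce each axiom from the corresponding property of the $\fq$-operations established in \S\ref{sec:props-of-q-ops}. We then follow the algebraic arguments of \cite[Proposition~4.19]{ST23} and \cite[Theorem~4]{ST21}, which use only these properties and never the regularity of the moduli spaces. Recall that $\ogw_{\beta,k}$ are the coefficients of $\cc\Omega(\gamma,b) = P\Psi(\gamma,b)$, where $\Psi = [\q^\gamma_{\emptyset,0},(-1)^n\q^{\gamma,b}_{-1,0}]$. Differentiating $\cc\Omega$ in the variables $t_i$ and $s$ therefore expresses $\ogw_{\beta,k}(A_1,\dots,A_\ell)$ through $\q_{\emptyset,\ell}$ and $\q^{b,\gamma}_{-1,\ell}$. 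The key input is that, since $b$ is point-like, $\partial_s b$ is a form with $\int_L\partial_s b = 1$. Hence each boundary insertion behaves like a point constraint, after using the structure equations (Propositions~\ref{prop:q structure} and \ref{prop:q-1-structure}) together with the Maurer--Cartan equation~\eqref{mc-equation} to show that the result is independent of representatives.

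The axioms then follow one at a time. \emph{Symmetry} follows from Proposition~\ref{symmetry}, together with the graded-commutativity of the formal variables $t_i$ of degree $-|\gamma_i|+2$. \emph{Degree} follows from Proposition~\ref{degree property} and its closed analogue, plus the fact that $\cc\Omega$ is homogeneous of fixed degree in $R$. Here the term $kn$ arises because each $s$-derivative corresponds to inserting $\partial_s b$, which has degree $n$. \emph{Fundamental class} and \emph{Zero} use Proposition~\ref{fundamental class property} and Proposition~\ref{energy zero property}. The value $-1$ for $(\beta_0,1,1)$ comes from $\q^{\beta_0}_{0,1}(1) = -1$ paired with $\partial_s b$. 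The terms $P_\RR(A_1)$ and $P_\RR(A_1\smallsmile A_2)$ arise from the energy-zero part of $\q_{\emptyset,2}$ and $\q_{\emptyset,3}$ (the cup product), pushed through $P$; the classical energy-zero closed operations are unchanged because the charts in energy zero are trivial. \emph{Divisor} is Proposition~\ref{divisor property} together with the closed divisor axiom, noting that $\int_\beta A_\ell$ is well defined because $A_\ell\in\hat H^2(X,L)$.

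Before any of this, I would check that the invariants are independent of the chosen representatives $\gamma_i$ and of the gauge class of $(b,\gamma)$. This is the content of \cite[Lemma~4.8]{ST23} and the invariance part of Theorem~\ref{intthm:OGW}, which rely only on the structure equations. The main obstacle I expect is sign bookkeeping in the Fundamental class and Zero axioms. These involve the $(-1)^n$ twist in $\Psi$, the sign in $\fii$, and the orientation sign~\eqref{e-sign} entering $\q_{-1}$, so each of these conventions must be checked to agree with those of \cite{ST23}. I would verify this by comparing our orientation conventions for $\cTc_\Gamma$ with those of $\cT^\beta_{k,\ell}$; they agree by construction, as noted in the proof of Proposition~\ref{prop:q structure}.
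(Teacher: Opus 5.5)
Your proposal follows the same route as the paper, which simply observes that the $\fq$-operations of \S\ref{sec:props-of-q-ops} satisfy the same properties and structure equations as those of Solomon--Tukachinsky, so the algebraic proofs of \cite[Proposition~4.19]{ST23} and \cite[Theorem~4]{ST21} carry over verbatim. One minor slip: both $P_\RR$-terms in the Fundamental class and Zero axioms come from the energy-zero part of $\fq_{\emptyset,2}$ (with inputs $(1,A_1)$, respectively $(A_1,A_2)$), not from $\fq_{\emptyset,3}$, whose energy-zero part vanishes.
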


	\subsection{Open WDVV equations}\label{subsec:owdvv}
The WDVV equations are recursion relations derived from the Splitting Axiom of classical GW theory. Suppose now $\bL$ is a generalised Lagrangian brane as in the previous subsections. Its open Gromov--Witten invariants satisfy an analogous recursion relation called the open WDVV equation. This was first proven in \cite{ST23} for the invariants of \cite{ST21}. In contrast to the properties of Proposition~\ref{prop:OGW-axioms}, the open WDVV equation does not follow directly from the results in \textsection\ref{sec:props-of-q-ops}, but, in our generality, requires \cite[\textsection 3]{HH26}, recalled below.

\medskip

\noindent
As in \cite{ST23b}, we need the following data to state the open WDVV equations
\begin{enumerate}
	\item a subspace $U \sub H^*(X;\bR)$ such that $U\otimes Q_U$ is a Frobenius subalgebra of $\text{QH}_U(X)$,
	\item a bounding pair $(\gamma_W,b)$ over $W = \rho\inv(U) \sub \wh{H}^*(X,L;\bR)$, where $b$ is point-like and $\rho \cl \wh H^*(X,L;\bR)\to H^*(X;\bR)$ is the natural map.
\end{enumerate}
Given this, we can define $W' \coloneqq \ker(P_\bR|_W) \sub W$ to be the image of the connecting homomorphism $y\cl \RR \rightarrow \hat{H}^*(X,L)$ of the long exact sequence induced by $\int_L\cl \Omega^*(X)\to \bR$. 
The restriction $\rho|_{W'}$ is injective, so we may choose index sets $I_{W'}\sub I_U$, a bases $\{\Delta_i\}_{i\in I_U}$ of $U$ and $\{\Gamma_i\}_{i\in I W'}$ of $W'$ respectively such that $\rho(\Gamma_i) = \Delta_i$ for $i \in I_{W'}$. Denote by 
\begin{equation*}\label{}
	\del_i\cl Q_U \to Q_U\qquad \text{and}\qquad\del_i \cl R_W\to R_W
\end{equation*}
the derivations corresponding to $\Delta_i$ with $i \in I_U$, respectively $\Gamma_i$ with $i \in I_{W'}$. 
Let
$$g_{ij}\coloneqq \int_X\Delta_i \cdot\Delta_j$$ 
for $i,j\in I_U$ and define $(g^{ij})_{i,j}\coloneqq (g_{ij})\inv$ to be the inverse matrix, which exists by assumption (1). Abbreviate
$\Phi \coloneqq \Phi_U \in Q_U$ and $\ov\Omega\coloneqq \ov\Omega(\gamma_W,b)\in R_W$. Let $\rho^*\cl Q_U \to Q_W$ be the induced ring homomorphism. 

\begin{theorem}[Open WDVV equations]\label{thm:owdvv-equation}
	Let $c$ be the coefficient of the Maurer-Cartan equation~\eqref{mc-equation} for the bounding pair $(\gamma_W,b)$ and fix $u,v\in W\oplus S$ and $w \in W$. Denote by $u_W$ and $v_W$ their projections to $W$ and write $\ov{w} = \rho(w)$, $\ov{u} = \rho(u_W)$ and $\ov{v} = \rho(vW)$. Then, the enhanced superpotential $\ov\Omega = P(\Psi(\gamma,b))$ and $\Phi$ satisfy
	\begin{equation}\label{eq:owdvv} 
		\del_u c\cdot\del_w\del_v\ov\Omega \,-\,\s{\substack{i\in I_{W'}\\j\in I_U}}{\del_u\del_i\ov\Omega \cdot g^{ij}\cdot \rho^*\del_j\del_{\ov{v}}\del_{\ov w}\Phi} 
		\;=\;\del_u\del_w\ov\Omega\cdot\del_v c\, -\, \s{\substack{i\in I_U\\j\in I_{W'}}}{\rho^*\del_{\ov u}\del_{\ov w}\del_i\Phi \cdot g^{ij}\cdot \del_j\del_{{v}}\ov\Omega}. 
	\end{equation}
\end{theorem}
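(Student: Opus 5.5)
Following \cite{ST23}, the plan is to derive \eqref{eq:owdvv} from relations among the bulk-deformed $\fq$-operations that come from moduli spaces of discs with \emph{constrained} marked points. The $\fq$-operations alone cannot see these relations. In \cite{ST23} the key geometric input is the comparison of two boundary divisors in $\Mbar_{k+1,\ell}$: the preimage, under the forgetful map to $\Mbar_{1,2}\cong[0,1]$ (respectively to $\Mbar_{3,0}$, or a mixed space with one boundary and one interior point), of the two endpoints. In our generality these preimages are replaced by the open-closed DMFT operations of \cite[\S3]{HH26}. These are built from global Kuranishi charts pulled back along forgetful maps to Deligne--Mumford spaces, using the forgetful compatibility of Proposition~\ref{prop:cubical-cobordisms}\eqref{forgetful-cobordisms} and Lemma~\ref{lem:thom-form-and-forgetful}. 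They come with horocycle/degeneration relations: the $\fq$-operation with a forgetful constraint pulled back from one point of $\Mbar_{1,2}$ equals, up to homotopy (an $A_\infty$-type correction), the corresponding operation at the other point. The splitting at each endpoint is expressed through compositions of $\fq_{k,\ell}$ with either another $\fq_{k',\ell'}$ (disc bubbling) or with $\fq_{\emptyset,\ell'}$ inserted through the diagonal of $X$, which is where the expansion $\sum g^{ij}\Delta_i\otimes\Delta_j$ enters.

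The steps are as follows. First, I will import from \cite[\S3]{HH26} the operations $\fq^{\mathrm{c}}_{k,\ell}$ with the cross-ratio constraint, together with their structure equation. That equation is the analogue of Proposition~\ref{prop:q structure}, with the extra boundary terms at the two ends of $[0,1]$ described by the splitting axiom. Second, I will bulk-deform by $(b,\gamma_W)$ and use the weak Maurer--Cartan equation \eqref{mc-equation}, together with the unit property (Proposition~\ref{unit property}), to kill the $A_\infty$ correction terms. I will then pair with $b$ via $\langle\cdot,\cdot\rangle_L$ and integrate, as in the definition of $\qb_{-1,\ell}$. This yields an identity of the form \cite[Theorem~5/Proposition~5.x]{ST23} relating $\partial_u c$, $\partial_u\partial_w\qb_{-1,0}$ and $\qb_{-1,1}$ composed with $\fq^\gamma_{\emptyset,2}$. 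Third, I will translate derivatives of $\qb_{-1}$ and $\fq^\gamma_{\emptyset}$ into derivatives of $\ov\Omega$ and $\Phi$ using the definitions of $\Psi$ and $P$. The diagonal of $X$ restricted to $U$ decomposes as $\sum g^{ij}\Delta_i\otimes\Delta_j$ by the Frobenius assumption (1). Through $\rho$ and the choice of bases with $\rho(\Gamma_i)=\Delta_i$, only indices in $I_{W'}$ contribute on the open side, which explains the index ranges in \eqref{eq:owdvv}. This step is purely algebraic and carries over verbatim from \cite[\S5]{ST23}, because our operations satisfy the same properties as in \S\ref{sec:props-of-q-ops}.

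The main obstacle is the first step: in our setting the constrained moduli spaces are fibre products of cubical cobordism charts with Deligne--Mumford strata. I must check two things. The pulled-back Thom forms must restrict at the two endpoints of $\Mbar_{1,2}$ to the product Thom forms, via \eqref{eq:compatible-1} and \eqref{eq:compatible-12}. The interior-node splitting must also produce exactly the closed operation $\fq_{\emptyset}$ with the sign $(-1)^{w_\fs(\beta)}$ of \eqref{e-sign}. Both checks use the cancellation of the $t_e=0$ faces against the corresponding boundary strata, as in the proof of Proposition~\ref{prop:q structure} (\cite[Lemma~3.32]{HH26}), and I would verify the signs by comparing the orientations with those in \cite{ST23}.
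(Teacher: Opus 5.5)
Your outline follows the paper's route. Constrained operations coming from the open-closed DMFT of \cite{HH26} supply the homotopy of \cite[Theorem~5]{ST23}, i.e.\ Lemma~\ref{lem:tensor-potential-for-owdvv}. The computation in \cite[Theorem~3 and \S5.3]{ST23} then yields \eqref{eq:owdvv}.

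Your first step, however, would not go through as described. The geometry is misidentified: $\Mbar_{1,2}$ has real dimension $2$, and $\Mbar_{3,0}$ and $\Mbar_{1,1}$ are points, so there is no interval whose two endpoints you could pull back. The actual constraint is the codimension-one geodesic condition $\chi_w^{-1}([0,1])$ for a cross ratio $\chi_w\colon\Mbar_{k,\ell}\to\bP^1$, i.e.\ the spaces $[0,1]\times_{\bP^1}\cMc_\Gamma$. The homotopy is the operation attached to this whole one-parameter family, not a comparison of two point constraints. Indeed, the explicit $H_{uv}$ of \cite[p.~45]{ST23} is a sum of such constrained operations: one closed, and several open ones with different patterns of constrained marked points.

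More seriously, the obstacle you single out is not the real one. The stabilisation map $\cTc_\Gamma\to\cMc_{\ov\Gamma}$ is not a submersion. Hence the fibre product of a cubical cobordism chart with the geodesic locus is in general not a global Kuranishi chart, and you cannot simply pull back $\eta_\Gamma$ and push forward; the paper flags exactly this. The paper proceeds as follows:
\begin{enumerate}
\item It resolves these singular fibre products (\cite[\S A]{HH26}).
\item It triangulates analytically to obtain chains $\sigma_{\Gamma,\chi}$ in the chain model $\dmc_*$.
\item It uses that these chains are independent of choices up to degenerate chains, whose operations vanish.
\item It applies Theorem~\ref{thm:closed-open-algebra}: the $(b,\gamma)$-deformed operations form an algebra over the coloured operad $\dmc_*$.
\end{enumerate}
This operadic structure delivers all the boundary relations of \cite[\S3.2]{ST23} at once. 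So the Thom-form restrictions and $t_e=0$ cancellations you planned to check by hand come for free. What does need an extra choice is the orientation of half-edges towards the root (for $\fq_{-1}$, towards the starting point of the geodesic), as in Remark~\ref{rem:right-trees}, to avoid overcounting trees.
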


To see the reason for its name, recall that the WDVV equations in usual GW theory can be expressed as saying that the GW potential $\Phi$ satisfies the equation
	\begin{equation}\label{eq:wdvv} 
		\s{i,j}{\del_u\del_v\del_i\Phi\cdot g^{ij}\cdot\del_j\del_{w}\del_{z}\Phi} \;=\; 	\s{i,j}{\del_u\del_w\del_i\Phi\cdot g^{ij}\cdot\del_j\del_{v}\del_{z}\Phi}
	\end{equation}

%
%

	As in the closed case, the OWDVV equation is due to the boundary structure of certain moduli spaces with a certain geodesic constraint, cf \cite[\S3]{ST23}.Concretely, these `geodesic operations', denoted $\fq_{k, \ell; \chi}$, are associated to moduli spaces of stable discs, where three marked points are required to lie on a geodesic. We will outline the case with two interior and one boundary marked point; the other case is similar. Consider the moduli space \[
\Mbar_{k+1,\ell; 1,2}(\beta) := [0,1] \times_{\Mbar_{1,2}}\Mbar_{k+1, \ell}(\beta),
\]
where the map $\Mbar_{k+1,\ell}(\beta) \rightarrow \Mbar_{1,2}$ is the stabilisation map, which forgets the map to $X$ and the required number of marked points. The image of the analytic map $\sigma\cl [0,1] \rightarrow \Mbar_{1,2}$ is given by the locus of points where the boundary marked point and the two interior marked points lie on a geodesic in a specified order. Then, $\fq^{\beta}_{k, \ell; 1,2}$ is defined by the pullback and pushforward of differential forms as in \ref{subsec:q-operations}, using the correspondence given by $\Mbar_{k+1,\ell;1,2}(\beta)$.

\begin{remark}
	The fibre product $\Mbar_{k+1,\ell}(\beta)$ need not be a smooth orbifold with corners, as the stabilisation map is not a submersion. This means that one cannot directly use them to define a pushforward of differential forms. In all the cases where Solomon--Tukachinsky apply the open WDVV equation, the spaces $\Mbar_{k+1,\ell}(\beta)$ are actually real algebraic, and it is thus possible to apply a real algebraic/analytic resolution of singularities to $\Mbar_{k+1,\ell}(\beta)$ to define the operations. This is outlined more precisely below.
\end{remark}
Using \cite{HH26}, we associate an operation (on differential forms) to any analytic chain in $\Mbar_{k,\ell}$, using the resolution of singularities (\S A op cit) mentioned above. In order to achieve strict compatibility of these operations with respect to the natural clutching maps on the moduli spaces $\Mbar_{k,\ell}$, \cite{HH26} constructs a specific chain model on the space $\Mbar_{k,\ell}$. This already appeared in \S\ref{subsec:q-operations} when we defined the $\fq$-operations. We recall the set-up briefly.

\begin{remark}\label{} 
	 In contrast to \cite{HH26}, we will phrase the compatibility here in terms of algebras over (coloured) operads. One can directly see from the construction that the compatibility conditions agree. See \cite[\S 1.4 and Lemma~2.40]{HH26} for a similar discussion in the case of cohomological field theories and $A_\infty$ algebras.
\end{remark}

By a \emph{stable tree} we mean the combinatorial data associated to a nodal genus $0$ Riemann surface with marked points and at most one boundary component. To each such stable tree $\Gamma$ we associate the product moduli space
\begin{equation}
	\Mbar_{\Gamma} =  \prod_{v \in V(\Gamma)} \Mbar_{\mfa_v}.
\end{equation}
It admits a canonical map to $\Mbar_{\mathsf{a}_\Gamma}$, which descends to an isomorphism 
\[
\del_\Gamma\Mbar_{\mathsf{a}_\Gamma} \cong \modulo{\Mbar_{\Gamma}}{\Aut(\Gamma)}.
\]
We abbreviate
$$\cMc_\Gamma\coloneqq I^{E(\Gamma)}\times\Mbar_\Gamma.$$
When $\Gamma$ only has boundary edges, then $\cMc_\Gamma$ obtains a canonical orientation via the open embedding $\cMc_\Gamma \to \Mbar_{\mathsf{a}}$ given by a collar neighbourhood of the boundary stratum $\del_\Gamma\Mbar_{\mathsf{a}}$. These moduli spaces together with the clutching maps form a coloured operad with object set $\bN^2$ in the category of smooth manifolds with corners. In \cite[\textsection 3.1]{HH26}, we construct a special chain model for the singular homology of these moduli spaces that allows us to obtain strict operations. Similar to how we used the cubical cobordisms in the definition of the $\fq$-operations, the idea is to `thicken' the usual singular chains on moduli spaces of stable curves by incorporating (real) analytic chains on the moduli spaces $\Mbar_\Gamma$. Observe that a clutching map induces a well-defined surjection 
\begin{equation}\label{eq:induced-by-clutching} 
	\cMc_{\Gamma}|_{\{t_E = 0\}}\to \cMc_{\Gamma'}|_{\{t_{E'} = 0\}}
	\end{equation}  
exactly if there exists a contraction $\Gamma\to \Gamma'$ that contracts exactly the edges in $E\sm E'$. Since these maps are real analytic, the chain complex
$$\dmc_*(\mfa) \coloneqq  \varinjlim\limits_{(\Gamma,E)}C^{an}_*(\cMc_\Gamma|_{\{t_E = 0\}}),$$
\noindent where the structure maps are given by the pushforwards $$C^{an}_*(\cMc_\Gamma|_{\{t_E = 0\}})\to C^{an}_*(\cMc_{\Gamma'}|_{\{t_{E'} = 0\}})$$ along the maps~\eqref{eq:induced-by-clutching}, is well-defined

\begin{definition}[{\cite[Definition~11.2.1]{Yau16}}]\label{de:coloured-operads}
	Recall that an \emph{$\fC$-coloured operad} $\{\cO(c_1,\dots,c_n;c)\}_{c_i,c\in \fC}$ (in a symmetric monoidal category $(\cC,\otimes)$) consists of a set $\fC$ of colours and for each $(n+1)$-tuple $(c_1,\dots,c_n;c)$ of colours an object $\cO(c_1,\dots,c_n;c)$ of $\cC$ together with maps 
	\begin{equation}\label{eq:operad-composition} 
		\cO(d_1,\dots,d_n,d)\otimes \cO(c^{(1)}_1,\dots,c^{(1)}_{k_1};d_1)\otimes \dots \otimes \cO(c^{(n)}_1,\dots,c^{(n)}_{k_n};d_n)\to \cO(c^{(1)}_1,\dots,c^{(n)}_{k_n};d)
	\end{equation}
	satisfying analogues of the operad axioms. A coloured operad is \emph{symmetric} if there exists for each $c_1,\dots,c_n,c\in \fC$ and any permutation $\sigma\in S_n$ an isomorphism $$\sigma^*\cl\cO(c_1,\dots,c_n;c)\to \cO(c_{\sigma(1)},\dots,c_{\sigma(n)};c)$$
	and these isomorphisms are compatible with the composition maps~\eqref{eq:operad-composition}. It is \emph{unital} if for each $c \in \fC$ there exists a map $1_\cC\to \cO(c;c)$ satisfying the unity axioms on \cite[p.178]{Yau16}.
\end{definition}

\noindent
Refer to \cite[Chapter~11]{Yau16} for more details on coloured operads.

\begin{lemma} The collection $\set{\dmc_*(\mathsf{a})}_{\mathsf{a}\in \ov\cA}$ defines a unital symmetric coloured operad $\dmc_*$ in the category of chain complexes.\qed 
\end{lemma}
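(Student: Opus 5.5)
To show that $\dmc_*$ is a unital symmetric coloured operad, I would build all of the structure maps on the individual pieces $C^{an}_*(\cMc_\Gamma|_{\{t_E = 0\}})$ and then check that they respect the transition maps of the colimit. That reduces the operad axioms to identities already true for the geometric clutching maps of the underlying moduli spaces.

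\textbf{Composition.} Fix an operadic composition in which outputs are glued to inputs. Take pairs $(\Gamma_0,E_0),(\Gamma_1,E_1),\dots,(\Gamma_n,E_n)$, where the output colour of $\Gamma_i$ matches the $i$-th input colour of $\Gamma_0$. Graft the trees to obtain $\Gamma = \Gamma_0 * (\Gamma_1,\dots,\Gamma_n)$. This tree has $n$ new edges $e_1,\dots,e_n$, which are boundary or interior according to the colours. Since $\cMc_\Gamma = I^{E(\Gamma)}\times\prod_v\Mbar_{\mfa_v}$, there is a canonical real analytic isomorphism
\[\textstyle\prod_i \cMc_{\Gamma_i}|_{\{t_{E_i}=0\}} \times \{t_{e_1}=\dots=t_{e_n}=1\} \cong \cMc_\Gamma|_{\{t_{E}=0,\; t_{e_j}=1\}},\qquad E=\bigsqcup E_i .\]
The composition is the cross product of analytic chains, followed by pushforward along this isomorphism composed with the inclusion into $\cMc_\Gamma|_{\{t_E=0\}}$. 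This choice mirrors the $t_e=1$ faces in Proposition~\ref{prop:cubical-cobordisms}\eqref{cubical-1-boundary}.

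The cross product of analytic chains is a chain map: the boundary of $\sigma\times\tau$ is $\partial\sigma\times\tau\pm\sigma\times\partial\tau$. Pushforward is also a chain map, so the composition is one as well. Compatibility with the colimit follows because contracting an edge of $\Gamma_i$ commutes with grafting. Concretely, the maps~\eqref{eq:induced-by-clutching} for $\Gamma_i\to\Gamma_i'$ and for $\Gamma\to\Gamma'$ form a commuting square with the grafting isomorphisms. The new edges $e_j$ are never contracted there, because they lie in $E(\Gamma)\setminus E$ with $t_{e_j}=1$.

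\textbf{Operad axioms.} Associativity holds because iterated grafting is associative on trees and the cross product is associative on analytic chains. The only care needed is with Koszul signs, and these match once the orientation of $I^{E(\Gamma)}$ is ordered compatibly with the grafting. The symmetric structure comes from relabelling marked points. This relabelling induces real analytic isomorphisms of the $\cMc_\Gamma$ that commute with both grafting and contraction, so it descends to the colimit. For the unit, take the class of the point in the degree $0$ colour $(\mfa;\mfa)$: for boundary colours this is the unstable strip $\Mbar_{1,0}$ with its two boundary points, and similarly for interior colours. One then checks that grafting with it is the identity after passing to the colimit. This works because grafting with a unit vertex produces the tree with an unstable vertex, which is identified with the original tree under~\eqref{eq:induced-by-clutching}.

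\textbf{Main obstacle.} The step I expect to be hardest is well-definedness of the composition on the colimit, because the colimit only identifies chains after pushforward along the \emph{surjections} $t_{E}=0$. I therefore have to check that pushforward commutes with cross products along these non-injective analytic maps. My first approach would be to use the functoriality of analytic chains under proper analytic maps from \cite[\textsection 3.1]{HH26}, together with the fact that the maps~\eqref{eq:induced-by-clutching} are products of a clutching map with projections of cube factors. This should reduce the required commutativity to the corresponding statement for products of maps.
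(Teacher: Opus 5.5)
Your overall route is the paper's. The paper takes colours $\{i,\bdr\}$ and sets $\cO(\bdr^{\otimes k},i^{\otimes \ell};\bdr)=\dmc_*(k,\ell)$, $\cO(i^{\otimes\ell};i)=\dmc_*(\emst,\ell)$, and zero for an interior output with boundary inputs. You should fix this colour set explicitly instead of speaking of a ``colour $(\mfa;\mfa)$''. The paper then asserts that compositions induced by clutching satisfy the axioms tautologically.

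Placing the new edges at $t_e=1$ is the right way to make ``induced by clutching'' precise. That is the face on which the Thom system splits as a product, cf.~\eqref{eq:compatible-1}, and this is what later makes the $\fq$-operations strictly operadic. What you call the main obstacle is routine. The cross product of chains is natural under pushforward along arbitrary maps, and tensor products commute with colimits in each variable, so well-definedness on $\dmc_*$ is immediate.

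The step that fails as written is unitality. With your uniform recipe, $\sigma\circ_j 1$ is $\sigma\times\{t_e=1\}$ on a graph with an extra edge $e$ ending at a strip (or two-pointed sphere) vertex. That graph is not a stable tree, so it does not index a term of the colimit. Even if you admitted it, the maps~\eqref{eq:induced-by-clutching} only identify the face $\{t_e=0\}$ with the contracted graph. You would therefore obtain $\sigma\circ_j 1=\sigma$ only up to the homotopy $\sigma\times[0,1]$, not on the nose.

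The paper's appeal to the \emph{coarse} moduli spaces of the strip and of the sphere with two marked points is exactly what handles this. These spaces are points. Gluing such a component is the clutching map $\Mbar_{\mfa}\times\mathrm{pt}\to\Mbar_{\mfa}$, which is the identity after stabilisation and introduces no edge and no collar coordinate. So you should exclude unit inputs from your grafting construction and set $1\circ\sigma=\sigma=\sigma\circ_j 1$ by definition. The remaining operad axioms involving units then hold trivially.
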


\begin{proof} Take the set of colours to be $\fC\coloneqq\{i,\bdr \}$, corresponding to the division of marked points into boundary and interior marked points. Then, we define 
	\begin{equation}\label{}\cO(\bdr^{\otimes k},i^{\otimes \ell};x) \coloneqq
		\begin{cases}
			\dmc_*(k,\ell) & \quad \text{if }x = b\\
			\dmc_*(\emst,\ell) & \quad \text{if } x = i, k = 0\\
			0 &\quad \text{otherwise}
	\end{cases}\end{equation} 
	and extend it in the obvious way to arbitrary finite words in $\partial$ and $i$. The composition maps are induced by clutching maps between moduli spaces of (open) stable curves, whence the compatibility relations are tautologically true. Unitality follows from the fact that we use the coarse moduli spaces of spheres/discs with one incoming and one outgoing (boundary) marked point.
\end{proof}

\begin{definition}[{\cite[Definition~13.2.3]{Yau16}}]\label{de:coloured-operad-algebra} 
	An \emph{algebra over a coloured operad} $\cO$ in $(\cC,\otimes)$ with set of colours $\fC$ is a set $X = \{X_c\mid c \in \fC\}$ of objects of $\cC$ together with the data of maps $$\cO(c_1,\dots,c_n;c)\otimes \bigotimes\limits_{i =1}^n X_{c_i}\to X_c$$ that are compatible with the compositions maps of $\cO$.
\end{definition}

\noindent
Given an admissible simplex $\sigma \cl \Delta^m \to \cMc_{\ov\Gamma}$ and a stable map graph $\Gamma$ with stabilisation $\Gamma^{\stb} =\ov\Gamma$, we define
\[
\cK_{\Gamma,\sigma} \coloneqq\cKc_{\Gamma} \times_{\cMc_{\ov\Gamma}}\Delta^m = (\cTc_{\Gamma} \times_{\cMc_\Gamma}\Delta^m,p_\sigma^*\cEc_\Gamma,p_\sigma^*\obs_\Gamma),
\]
\noindent where $p_\sigma \cl \cT_{\Gamma,\sigma} := \cTc_{\Gamma} \times_{\cMc_\Gamma}\Delta^m\to \cTc_\Gamma$ is the canonical projection and  the map 
\begin{equation}
	\label{eq:chain-fibre-product}
	\cTc_{\Gamma}\to \cMc_{\ov{\Gamma}}
\end{equation}
is given by the stabilisation map $\cTc_{\Gamma}\to \Mbar_{\ov{\Gamma}}$ and the composition 
$\cTc_{\Gamma}\to I^{E(\Gamma)} \rightarrow I^{E(\cc{\Gamma})}.$\par
Since the map~\eqref{eq:chain-fibre-product} is not a submersion, $\cK_\sigma$ is not a global Kuranishi chart in general. However, the singular points of~\eqref{eq:chain-fibre-product} lie in strata of codimension at least $1$. In \cite[\textsection A]{HH26}, we replace $\cK_\sigma$ by a derived orbifold constructed by resolving the singularities of \eqref{eq:chain-fibre-product}. The projection $p_\sigma$ induces a rel--$C^\infty$ morphism of derived orbifolds to $\cKc_\Gamma$. For ease of notation, we denote the resolved orbifold also by $\cK_{\Gamma,\sigma}$ and the map to $\cKc_\Gamma$ by $p_\sigma$. 

\begin{definition}\label{de:q-operations-intro} The $\fq$-operation associated to an admissible simplex $\sigma\cl \Delta^m \to \cMc_{\cc{\Gamma}}$ is 
	\begin{equation*}\label{eq:q-operation}
		\fq_\sigma\cl \Omega^{*}(L;\Lambda)^{\otimes k_{\ov\Gamma}}\otimes \Omega^{*}(X;\Lambda)^{\otimes \ell_{\ov\Gamma}}\to \Omega^{*}(Y_\sigma;\Lambda)
	\end{equation*}
	defined by
	\begin{equation*}
		\label{} \fq_\sigma(\alpha) \coloneqq \sum_{\substack{\Gamma \in \stb^{-1}(\cc{\Gamma}) }}
		{(\eva^\Gamma_{0,\sigma}\g p_\sigma)_*\lbr{p_\sigma^*(\eva^\Gamma)^*\alpha \wedge p_\sigma^*\obs^*\eta_{\Gamma}}}\,Q^{\beta(\Gamma)},
	\end{equation*}
	where $Y_\sigma = X$ if $\ov\Gamma$ encodes a closed curve and $Y_\sigma = L$ if $\ov\Gamma$ encodes an open curve.
\end{definition}

\noindent
We extend the definition of the operations $\fq_\sigma$ linearly to admissible chains. By \cite[Lemma~3.23]{HH26}, these operations descend to $\dmc_*$. However, they will generally not satisfy $d\fq_{\sigma}-\fq_\sigma d = \fq_{\del\sigma}$, just as the $A_\infty$ algebra defined in the previous section might be curved. Thus, we deform these operations by a bounding cochain. 
For this, we have to define the lift of a chain $\sigma \cl \Delta^k \to \cMc_{\Gamma}$. Let $\Gamma^+$ be a stable graph which contracts to $\Gamma$ after forgetting a single incoming boundary marked point. Then, by taking an analytic triangulation of the resolution of the fibre product $\Delta^k\times_{\cMc_{\Gamma}}\cMc_{\Gamma^+}$, one obtains a chain $\pi^*_{\Gamma^+} \sigma$. The same construction works when $\Gamma_+$ contracts to $\Gamma$ after forgetting multiple interior and boundary marked points.

\begin{definition}\label{} The \emph{deformed $\fq$-operation} associated to $\sigma \in \dmc_n$ is 
	\begin{equation}\label{} 
		\fq^{b,\gamma}_{\sigma}(\alpha) = \s{\Gamma_+}{\frac{1}{\ell!}\fq_{\pi_{\Gamma_+}^*\sigma}(b^{i_0},\alpha_1,b^{i_1},\dots,\alpha_k,b^{i_k};\gamma^{\otimes \ell})}
	\end{equation}
	while 
	\begin{equation}\label{} 
		\fq^{b,\gamma}_{1,0}(\alpha) = \fq_{1,0}(\alpha) + \s{\Gamma_+}{\frac{1}{\ell!}\fq_{\pi_{\Gamma_+}^*D_1}(b^{i_0},\alpha,b^{i_1}, \gamma^{\otimes \ell})}
	\end{equation}
	where $\Gamma_+$ contracts to $\sigma$ (or to the disc with one incoming and one outgoing marked point), after forgetting the marked points which take in $b$ or $\gamma$.
\end{definition}

\begin{theorem}
	\label{thm:closed-open-algebra}
	Given a bounding pair $(b,\gamma)$, the operations $\fq^{b,\gamma}$ of Definition~\ref{de:q-operations-intro} form the structure maps of an algebra $\scA$ over $\dmc_*$ with 
	$$\scA(o) = (\Omega^*(L;\Lambda),\fq_{1,0}^\fb)\qquad \text{and}\qquad\scA(i) = (\Omega^*(X;\Lambda),d).$$ 
	Moreover, the action of $\dmc_*$ is compatible with the forgetful maps.
\end{theorem}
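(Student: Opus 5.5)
The proof reduces to two facts: the undeformed operations $\fq_\sigma$ define an algebra over $\dmc_*$ up to the curvature term $\fq_{0,0}$, and deforming by a weak bounding pair turns this into a genuine algebra.

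\textbf{Step 1: the undeformed operations descend and are compatible with composition.} By \cite[Lemma~3.23]{HH26}, $\fq_\sigma$ depends only on the class of $\sigma$ in the colimit defining $\dmc_*$. The degree formula of Definition~\ref{de:q-operations-intro} under a structure map~\eqref{eq:induced-by-clutching} reduces, after summing over $\stb^{-1}(\cc\Gamma)$, to the compatibility~\eqref{eq:compatible-0} of the Thom system with $t_e=0$ restrictions.

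Compatibility with operadic composition is the content of~\eqref{eq:compatible-1} and~\eqref{eq:compatible-12}. Suppose $\sigma$ is a product chain $\sigma_1\times\sigma_2$ lying over $\{t_e=1\}$. Then $\cK_{\Gamma,\sigma}$ is equivalent, up to stabilisation and group enlargement, to the fibre product $\cK_{\Gamma_1,\sigma_1}\times_Y\cK_{\Gamma_2,\sigma_2}$ by Proposition~\ref{prop:cubical-cobordisms}\eqref{cubical-1-boundary}. The pushforward along the fibre product then factors by the projection formula and Fubini for fibre products of submersions; $evb_0$ is a submersion by Theorem~\ref{thm:gkc-equivariant-existence}. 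This gives $\fq_{\sigma_1\circ\sigma_2}=\fq_{\sigma_1}\circ_e\fq_{\sigma_2}$.

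The remaining structure requires less work. Symmetry follows from permutation-invariance of the Thom forms (Lemma~\ref{lem:thom-form-and-forgetful}). Unitality follows from the Energy zero and Unit properties (Propositions~\ref{energy zero property} and~\ref{unit property}). Compatibility with forgetful maps follows from Proposition~\ref{prop:cubical-cobordisms}\eqref{forgetful-cobordisms}, together with the pullback property of the $\eta_\Gamma$, exactly as in the proof of Proposition~\ref{divisor property}.

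\textbf{Step 2: the chain-map property for undeformed operations.} Apply Stokes' theorem on the resolved derived orbifold $\cK_{\Gamma,\sigma}$, as in the proof of Proposition~\ref{prop:q structure}. The boundary of $\cT_{\Gamma,\sigma}$ has four kinds of pieces:
\begin{itemize}
\item the part lying over $\partial\sigma$, which contributes $\fq_{\partial\sigma}$;
\item the $t_e=0$ faces, which cancel against boundary strata of $\cTc_{\Gamma'}$ after summing over graphs, by \cite[Lemma~3.32]{HH26};
\item the $t_e=1$ faces over the interior of $\sigma$, which yield disc bubbles $\fq_{\sigma'}(\dots,\fq_{k_2,|J|}(\dots),\dots)$ with the second factor unconstrained;
\item boundary collapse, which yields $\fq_{\emptyset}$ restricted to $L$.
\end{itemize}
The resolution of singularities of~\eqref{eq:chain-fibre-product} only modifies codimension $\ge1$ strata. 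It therefore contributes no extra boundary once we use that the resolution is compatible with boundary faces \cite[\S A]{HH26}. The outcome is
\[ d\fq_\sigma-\fq_\sigma d=\fq_{\partial\sigma}+(\text{terms with an inserted free disc } \fq_{k',\ell'}). \]

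\textbf{Step 3: deformation by $(b,\gamma)$ and the main obstacle.} Insert $b$ at all boundary inputs and $\gamma$ at all interior inputs via the lifts $\pi^*_{\Gamma_+}\sigma$; the lifting is compatible with $\partial$ and with clutching. In the resulting relation, the free-disc terms from Step 2 reassemble into $\fq^{b,\gamma}_{k',0}$ inserted at boundary inputs, including $k'=0$ with all inputs equal to $b$. The Maurer--Cartan equation~\eqref{mc-equation} gives $\mb_0=c\cdot1$. By the Unit property together with forgetful compatibility, inserting $1$ into $\fq_{\pi^*\sigma}$ gives zero unless the disc is the unstable $(2,0,\beta_0)$ disc. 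Those surviving terms telescope with the $\fq^{b,\gamma}_{1,0}$ differential on $\scA(o)$, because $c$ is central. Interior $\gamma$ is closed, so no $d\gamma$ terms appear.

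The main obstacle is this bookkeeping: matching the combinatorial factors $1/\ell!$ and the degree coefficients $d_1(\Gamma,\Gamma_e)$, and checking that the orientation signs~\eqref{h3-sign} and~\eqref{e-sign} agree with the Koszul signs of the operadic composition. I would do this first for the generators $D_1$ and the geodesic chains, following \cite[Prop.~2.4]{ST16} and the analogous argument in \cite[\S3]{HH26} for the $A_\infty$ case, and then extend by linearity and the operad axioms.
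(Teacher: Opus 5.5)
Your outline has the right architecture, but it is not how the paper argues. The paper verifies none of the algebra axioms directly. It observes that for $\gamma=0$ the operations $\fq^{b,0}_\sigma$ are the restriction of the open-closed DMFT $\scF^b_L$ of \cite[Corollary~3.2]{HH26} to the genus-zero, single-output part $\dmc_*$. Since $\scF^b_L$ is operadic, its restriction is a symmetric monoidal dg functor, i.e.\ exactly an algebra, with chain maps, strict compatibility with clutching, equivariance and forgetful compatibility all inherited. Bulk-deforming by $\gamma$ is asserted to go through the same way.

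You instead re-derive the needed special case of that corollary from its geometric inputs:
\begin{itemize}
\item descent from the $t_e=0$ compatibility~\eqref{eq:compatible-0};
\item strict composition from~\eqref{eq:compatible-1}, with composites placed at $t_e=1$;
\item the chain-map property from Stokes plus \cite[Lemma~3.32]{HH26};
\item removal of curvature via $\mb_0=c\cdot 1$ and forgetful compatibility.
\end{itemize}
This makes explicit which input produces which axiom, and exactly why a \emph{weak} bounding pair suffices. However, what you defer as bookkeeping is precisely what the citation supplies: signs, the multiplicities $d_1(\Gamma,\Gamma_e)$, the behaviour of the resolutions of \cite[\S A]{HH26} on boundary faces, submersivity of $\eva_0\circ p_\sigma$, and matching the sum over $\stb^{-1}$ of a composite with the product of sums when nonconstant unstable discs sit at the node.

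Two statements in your sketch should be corrected.

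First, the lift is not compatible with $\partial$ on the nose. Forgetting boundary or interior points of a disc has fibres with boundary: forgotten points can collide with other boundary points, or interior points can reach $\partial D$. So $\partial\pi^*\sigma\neq\pi^*\partial\sigma$. You need the actual boundary formula for $\sum_{\Gamma_+}\pi^*_{\Gamma_+}\sigma$. Besides $\pi^*\partial\sigma$, it contains faces where a disc splits off carrying only $b$- and $\gamma$-inputs and at most one original input. By your Step~1 these contribute terms $\fq(\dots,\fq_{k'',\ell''}(\dots),\dots)$. Only with them do the bubble terms assemble into full $\mb_{k'}$ insertions. The unstable-domain bubbles of your Step~2 alone give only $\fq_{0,0}$ and $\fq_{1,0}$ insertions.

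Second, unitality of the action does not follow from Proposition~\ref{unit property}. That proposition concerns inserting the function $1$. It is what kills the $c\cdot 1$ terms for stable $\sigma$ and makes them cancel in pairs (rather than ``telescope'') in $(\fq^{b,\gamma}_{1,0})^2$. The operadic unit axiom is instead the convention that the point of the coarse moduli space in $\cO(\bdr;\bdr)$ acts by the identity.
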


\begin{proof}
	When $\gamma = 0$, this follows by observing that the operations $\fq^{b,0}$ are the restrictions of the open-closed DMFT $\scF^b_L$ of \cite[Corollary~3.2]{HH26}. Since $\scF^b_L$ is operadic, its restriction to $\dmc_n$ (as defined here) is a symmetric monoidal dg functor, which exactly yields the statement. The same statement holds after additionally bulk deforming by the insertions $\gamma$.
\end{proof}

\begin{proof}[Proof of Theorem~\ref{thm:owdvv-equation}]
We now discuss how this can be used in conjunction with \cite[\S 5.3]{ST23} to prove Theorem~\ref{thm:owdvv-equation}. By computation in the proof of \cite[Theorem~3]{ST23}, the equality in~\eqref{eq:owdvv} reduces to showing the following property of the map~\eqref{eq:tensor-potential}, which we also denote by $\nun$.

\begin{lemma}\label{lem:tensor-potential-for-owdvv} Given formal vector fields $u,v \in Q_W\otimes R\oplus R_W\otimes S$, the map $\del_u\nun\g \del_v\nun$ is chain homotopic to $(-1)^{|u||v|}\del_v\nun\g \del_u\nun$.
\end{lemma}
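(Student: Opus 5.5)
We follow the strategy of \cite[\S5.3]{ST23}, replacing their geodesic moduli spaces by analytic chains in $\dmc_*$ and using Theorem~\ref{thm:closed-open-algebra} to turn chain-level relations in $\dmc_*$ into relations between operations. First we rewrite $\del_u\nun$ and $\del_v\nun$ at chain level. Differentiating the bulk-deformed operations in~\eqref{eq:tensor-potential} with respect to a formal vector field inserts one extra interior input (the class $\del_u\gamma$) or one extra boundary input ($\del_u b$, which for point-like $b$ is determined by $\int_L\del_u b$). Hence each component of $\del_u\nun$ is a deformed operation $\fq^{b,\gamma}_\sigma$ attached to an explicit cycle $\sigma$: the fundamental chain of $\Mbar_{\emst,3}$, $\Mbar_{0,2}$, $\Mbar_{1,1}$, and so on. The terms $c\cdot a$ come from the unit and fundamental class properties, Propositions~\ref{unit property} and~\ref{fundamental class property}. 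Composing two such operations is operadic composition in $\dmc_*$. The composite $\del_u\nun\g\del_v\nun$ is therefore $\fq^{b,\gamma}$ applied to a sum of boundary-stratum chains, namely nodal curves with inputs $\del_v\gamma$ (or $\del_v b$), $\del_u\gamma$, the argument, and the output.

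Second, in each relevant moduli space ($\Mbar_{\emst,4}$ for the closed component, $\Mbar_{1,3}$, $\Mbar_{2,2}$, $\Mbar_{0,3}$ for the open one, and similarly with boundary inputs) we construct an analytic chain $H$ whose boundary is the difference of the two composite chains. For $\Mbar_{\emst,4}\cong\bP^1$ we take a path between the two boundary points, which gives the classical WDVV homotopy. For the disc spaces we take the geodesic chains of \cite[\S3]{ST23}: the locus where the boundary output and two interior points lie on a hyperbolic geodesic in a fixed order, parametrised by the real-analytic map $\sigma\cl[0,1]\to\Mbar_{1,2}$ and pulled back to the bigger spaces along forgetful maps. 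Then $\partial H$ equals the difference of the two clutched composites, plus possibly extra boundary pieces. These extra pieces are disc bubbles carrying only deformation inputs. They contribute $\fm^{b,\gamma}_0=c\cdot 1$ insertions, and these are handled by the $-c\cdot a$ correction and the unit property. All of these chains are admissible and lie in $\dmc_*$. Moreover, the lifts $\pi^*_{\Gamma_+}H$ are compatible with forgetting the $b,\gamma$ inputs.

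Third, because $\scA$ is an algebra over $\dmc_*$ and the differential on $\scA(o)$ is $\fq^{b,\gamma}_{1,0}$ (by Theorem~\ref{thm:closed-open-algebra}), we have
\[
\fq^{b,\gamma}_{\partial H}=[d,\fq^{b,\gamma}_H].
\]
Therefore $h:=\fq^{b,\gamma}_H$, assembled into a map on $C(\fii)$, satisfies $\del_u\nun\g\del_v\nun-(-1)^{|u||v|}\del_v\nun\g\del_u\nun=dh\pm hd$. Here the Koszul sign comes from swapping the two interior labels, using Proposition~\ref{symmetry}. We must also check that $h$ respects the cone structure. This means checking that the component $\Omega^*(X)\to R$, obtained by capping the open output with $\langle\cdot,b\rangle_L$ or integrating, is compatible with $\fii$, and we do this using the structure equation for $\fq_{-1}$ (Proposition~\ref{prop:q-1-structure}).

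The main obstacle is the second step. We must identify exactly the codimension-one boundary of the geodesic chains after deformation, including the boundary-collapse strata that produce the $\int_L\fq_\emst$ term in the cone. We must also verify the signs match \cite[\S5.3]{ST23}. We plan to do this by checking that our orientation conventions for $\cMc_\Gamma$ agree with those of $\Mbar_{k,\ell}$, which is the case because they are induced by collar embeddings. With that agreement, their sign computations transfer verbatim.
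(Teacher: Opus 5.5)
Your proposal is correct and follows essentially the same route as the paper. Like the paper, you take the homotopy of \cite[Theorem~5]{ST23} built from geodesic (cross-ratio) constrained operations, realise the constrained loci as resolved and triangulated analytic chains in $\dmc_*$, and use Theorem~\ref{thm:closed-open-algebra} to turn the boundary relations of \cite[\S3.2]{ST23} into the statement that $H_{uv}$ is a chain homotopy between $\del_u\nun\g\del_v\nun$ and $(-1)^{|u||v|}\del_v\nun\g\del_u\nun$; the paper is only more explicit about the four geodesic operations entering $H_{uv}$ and about restricting to trees whose half-edges point towards the root so as not to overcount, points your sketch leaves implicit.
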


\noindent
This is \cite[Theorem~5]{ST23} and the homotopy is given (on p.45 op cit) by
\begin{align*}
	H_{uv}(\eta,\xi)=
	\lbr{(-1)^{1+\star} \fq^{\gamma}_{\emptyset,3;\chi_0}(\del_u\gamma,\eta,\del_v\gamma),
	&(-1)^{\star} \fq^{\gamma,b}_{-1,3;0,3}(\del_u\gamma,\eta,\del_v\gamma)\\
	&\;+(-1)^{(|\eta|+1)(|u|+|v|+1)} \langle\fq^{\gamma,b}_{0,2;1_0,2}(\eta,\del_u\gamma),\del_vb\rangle\\
	&\;+(-1)^{|\eta|+(|\eta|+1)(|u|+|v|)+|u||v|} \langle\fq^{\gamma,b}_{0,2;1_0,2}(\eta,\del_v\gamma),\del_ub\rangle\\
	&\;+(-1)^{|\eta|+(|\eta|+1)(|u|+|v|)+|u||v|} \langle\fq^{\gamma,b}_{1,1;2_{0,1},1}(\del_vb;\eta),\del_ub\rangle},
\end{align*}
where $\star = |\eta|+|u|+|v|+|\eta||v|$ and the $\fq$-operations are defined by bulk-deforming with $\gamma$ via the operations associated to certain chains. For this, \cite{ST23} observe that given any $k\ge -1$ and $\ell\ge 0$ in the stable range and any three marked points $w_1,w_2,w_3$ with $w_2$ interior there exists a cross ratio map 
\begin{equation}\label{} 
	\chi_{w}\cl \Mbar_{k,\ell}\to \bP^1 = \bC \cup \{\infty\} : \frac{(w_3-\cc{w_2})(w_2-w_1)}{(w_3-w_1)(w_2-\cc{w_2})}
\end{equation} 
Its existence follows from a similar arguments in \cite[Appendix~D]{MS12}, where it is shown how to extend cross ratios from $\cM_{\emst,\ell+1}$ to the compactification.
\begin{itemize}[leftmargin=20pt]
	\item To define $\fq_{-1,3+\ell;0,3}$, with three selected interior marked points (the others take as input $\gamma$) we take $w_1,w_2,w_3$ to be these selected interior marked points to be constrained to lie on a geodesic; that is, consider the fibre product $[0,1]\times_{\bP^1}\Mbar_{-1;3+\ell}$.
	\item For $\fq_{0,2+\ell;1_0,2}$, we take $w_1$ to be the outgoing boundary marked point and let $w_2$ and $w_3$ be the selected interior marked points, the others taking as input $\gamma$.
	\item For $\fq_{1,1+\ell;2_{0,1},1}$, we take $w_1$ to be the outgoing boundary marked point, $w_3$ to be the unique incoming boundary marked point and let $w_2$ be a fixed selected interior marked point. 
	\item To define $\fq_{\emptyset,3+\ell;\chi_0}$, let $\Mbar_{\emst,4;\chi_0}$ be the preimage of $[0,1]\sub \bP^1$ under the usual cross ratio of the marked points, and let $\Mbar_{\emst,4+\ell;\chi_0}$ be its preimage under the map forgetting the additional $\ell$ marked points.
\end{itemize}
In either case, given a stable tree $\Gamma$ contracting onto $\Mbar_{i,j}$ in this list, we define the fibre products $\cMc_{\Gamma,\chi} = [0,1]\times_{\bP^1}\cMc_\Gamma$ and similarly for $\cMc_{\Gamma,\chi_0}$. 

 \begin{remark}\label{rem:right-trees} 
 	There is a slight subtlety here in the choice of graphs we allow. In \cite{HH25} and \cite{HH26}, the half-edges of the graphs $\Gamma$ are equipped with an orientation sign, indicating whether it is associated to an incoming or outgoing marked point. If we allowed for all possible combinations of these orientation signs here, it would lead to over counting of the operations. To avoid that, we declare that in the case with one outgoing marked point, be it on the boundary or interior, the orientation signs of the half-edges have to point towards that `root'; in particular, there is no edge whose half-edges have the same orientation sign. In the case of $\fq_{-1,3+\ell}$, comparing ordering of the interior marked points on the geodesic with their labelling indicates which is the `starting' point of the geodesic. We then only consider the graphs $\Gamma$ where all half-edges are oriented towards the disc carrying that starting point.
 \end{remark}
 
 For each tree as in Remark~\ref{rem:right-trees}, we can resolve the fibre product using \cite[\S A]{HH26}. Choosing any real analytic triangulation, we obtain a chain $\sigma_{\Gamma,\chi}\in C^{an}_*(\cMc_\Gamma)$. By the proof of \cite[Lemma~2.14]{HH26}, this chain is independent up to degenerate chains of the choice of resolution and the choice of triangulation. Since push-pull operations defined using degenerate chains vanish by \cite[Proposition~3.14]{HH26}, we can define 
 \begin{equation}\label{} 
 	\fq_{-1,3+\ell;0,3} \,\coloneqq\, \s{\beta,\Gamma}{\fq^\beta_{\sigma_{\Gamma,\chi}}Q^\beta}
 \end{equation}
 where we are summing over all stable trees as in Remark~\ref{rem:right-trees} labelling the boundary strata of $\Mbar_{-1,3+\ell}$. The other operations appearing in the definition of $H_{uv}$ are defined analogously. 
Theorem \ref{thm:closed-open-algebra} then implies that these operations satisfy the same relations as those in \cite[\S3.2]{ST23}, that is, the boundary structure of the respective moduli spaces (shown in Figure~8 op cit) translates into the desired algebraic relation. In particular, $H_{uv}$ is indeed a homotopy between $\del_u\nun\g \del_v \nun$ and $(-1)^{|u||v|}\del_u\nun\g \del_v \nun$. The remaining arguments in \cite[\S5.3]{ST23} now carry over verbatim and yield the proof.
\end{proof}

	\section{Lagrangian cobordisms}\label{sec:lag-cob}

For notational convenience define $\bC_1 := \{z\in \bC\mid \Re(z) \in [-1,1]\}$.

\begin{definition}
	\label{def:Lag cob} Suppose $(X,\omega)$ is a closed symplectic manifold and $L_1^\pm,\dots,L_{m_\pm}^\pm \sub X$ are embedded Lagrangians.  A \emph{Lagrangian cobordism} $\wt L$ from $(L_1^-,\dots,L_{m_-}^-)$ to $(L_1^+,\dots,L_{m_+}^+)$ is a Lagrangian submanifold of $(X\times \bC_1,\omega\oplus \omega_{\text{std}})$ so that
	\begin{equation}\label{eq:negative-end}
		\wt L \cap (X\times [-1,-1+\epsilon)\times i\bR = (L_1^- \sqcup \dots \sqcup L^-_{m_-}) \times [-1,-1+\epsilon)
	\end{equation}
	and
	\begin{equation}\label{eq:positive-end}\wt L \cap (X\times (1-\epsilon,1]\times i\bR) = (L^+_1 \sqcup \dots \sqcup L^+_{m_+}) \times (1-\epsilon,1]\end{equation}
	for some $0 < \epsilon < \frac13$. We write $\wt L \cl L_*^- \leadsto L^+_*$ and say they are \emph{Lagrangian cobordant}.
	
	A \emph{Lagrangian pseudoisotopy} is a Lagrangian cobordism $\wt L$ from a Lagrangian $L$ to itself which is diffeomorphic to $L \times [-1,1]$.
\end{definition}

Even though there is no interesting change in topology, Lagrangian pseudoisotopies are already very useful for proving invariance statements; see Proposition~\ref{thm:trivialisable cobordism induces quasi-equivalence}. Hamiltonian isotopies induce Lagrangian pseudoisotopies.

\begin{example}\label{ex:suspension-hamiltonian-isotopy} Given a Lagrangian $L\sub (X,\omega)$ and a Hamiltonian $H_t$ on $X$ with $H_t \equiv H_{\pm 1}$ on $[-1,-1+\epsilon]$, respectively $[1-\epsilon,1]$, let $\phi^t_H$ be the associated Hamiltonian isotopy. Then the image $L_H$ of the embedding
	\begin{equation}\label{}L \times [-1,1]\hkra X\times\bC : (x,t)\mapsto \lbr{\phi_H^t(x),t + \text{i}H_t(\phi_H^t(x))}\end{equation}
	is a Lagrangian isotopy from $\phi_H^{-1}(L)$ to $\phi^1_H(L)$, called the \emph{suspension of the Hamiltonian isotopy}.
\end{example}

\subsection{Global Kuranishi charts in the case of Lagrangian cobordisms}\label{subsec:gkc-lag-cobordism}
Let $\wt J$ be an $\wt\omega$ tame almost complex structure on $X\times\bC_1$ which is of the form $J_\pm \times\text{i}$ over $X\times [-1,-1+\epsilon)\times\bR$, respectively $X\times (1-\epsilon,1]\times\bR$ for some $J_\pm\in \cJ_\tau(X,\omega)$. Let $\wt X = X \times \CC_1$. We call $\wt J$ \emph{cylindrical} if it is of the form 
$$\wt{J}_{(x,s+\text{i}t)} = J_s \times\text{i}\qquad \quad (x,s+\text{i}t)\in \wt X.$$ 
By \cite[Lemma~4.2.1]{BC13}, the image of any compact $\wt J$-holomorphic curve $u \cl (C,\del C)\to (\wt X,\wt L)$ under the projection map $\pr_\bC\cl X\times\bC_1\to \bC_1$ is contained in a compact subset and constant if it meets $([-1,-1+\epsilon)\times \bR \cup [-1,-1+\epsilon)\times \bR$.

Fix now $\epsilon > 0$ so that \eqref{eq:negative-end} and \eqref{eq:positive-end} hold for $2\epsilon$ and let $j_\epsilon \cl X\hkra \wt X : x\mapsto (x,\epsilon)$ be the inclusion. Let $\wt\beta \in H_2(\wt X,\wt L;\bZ)$ be arbitrary and fix $g,h,k\geq 0$ and $\ell_1,\dots,\ell_h \geq 0$. We equip the moduli space $\Mbar_{g,h;k,\ell}^J(\wt X,\wt L,\wt\beta)$ with the Gromov topology.\par 
Given $L_i^\pm$, define 
$$\fB^\pm_i := \{\beta \in H_2(X,L_i^\pm;\bZ) \mid {j_\epsilon}_*\beta = \wt\beta\}$$
and define 
$$\Mbar_{k,\ell}(\wt\beta;L^\pm_*) := \djun{i = 1}^{m_\pm} \djun{\beta \in \fB^\pm_i}\Mbar_{k,\ell}(\beta,L^\pm_i) .$$

\begin{lemma}\label{lem:cobordism-moduli-compact} The moduli space $\Mbar_{k,\ell}(\wt\beta) $ of stable spheres or discs with boundary on $\wt L$ is compact.
\end{lemma}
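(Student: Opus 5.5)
The plan is to reduce compactness to Gromov compactness on a compact symplectic manifold. The only source of non-compactness is that $\wt X = X\times\bC_1$ is not compact: the strip $\bC_1$ is unbounded in the imaginary direction.

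\textbf{Step 1: a priori $C^0$ bound.} First I will show that every $\wt J$-holomorphic stable map in $\Mbar_{k,\ell}(\wt\beta)$ has image in a fixed compact subset $K\sub\wt X$, depending only on $\wt\beta$. For a single component $u$, the projection $v := \pr_\bC\g u$ is holomorphic wherever $\wt J$ splits as $J_\pm\times\text{i}$, so over the ends. Since $\wt L$ is cylindrical over the ends, $v$ there has boundary on the horizontal lines $\Re z\in[-1,-1+\epsilon)\cup(1-\epsilon,1]$ with $\Im z = $ const. By \cite[Lemma~4.2.1]{BC13}, quoted above, the image of $v$ lies in a compact subset, and $v$ is constant if it meets the ends.

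In the middle region $[-1+\epsilon,1-\epsilon]\times i\bR$ the curve $v$ need not be holomorphic. There I will bound $|\Im v|$ by two facts. First, $\wt L$ is compact in the middle, since it is a compact manifold with cylindrical ends. Second, energy is bounded by $\wt\omega(\wt\beta)$ by tameness, because symplectic area is topological. If $\wt J$ is arranged to be cylindrical outside a compact set, the uniform bound is then a monotonicity/open-mapping argument: any holomorphic piece of $v$ escaping to large $|\Im z|$ outside a compact set would have to be constant, by the maximum principle, as $\Im v$ is harmonic there with no boundary constraint far away. Energy can then be used to bound how far a connected tree of components can travel. Since the number of components with positive energy is bounded, and constant components do not move the image, $K$ can be taken uniform.

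\textbf{Step 2: Gromov compactness.} With the image confined to $K$, I will replace $\wt X$ by a compact symplectic manifold containing $K$, or simply use Gromov compactness for curves with uniformly bounded image and energy. The relevant version holds for tame almost complex structures on manifolds that are possibly noncompact but have bounded geometry near $K$, with $\wt L\cap K$ compact and totally real. I will quote the standard Gromov compactness theorem for stable discs and spheres with Lagrangian boundary and bounded energy, as in \cite{FOOO18} or \cite[Chapter~4--5]{MS12}. Every sequence then has a Gromov-convergent subsequence whose limit again lies in $\Mbar_{k,\ell}(\wt\beta)$. Homology class and marked points are preserved in the limit, and the limit still has image in $K$, which is closed. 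Since the Gromov topology is metrizable (Hausdorff, second countable), sequential compactness gives compactness.

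\textbf{Main obstacle.} The main obstacle is Step 1 in the middle region where $\wt J$ is not split. I would first try to reduce to the case where $\wt J$ is of the split form $J\times\text{i}$ outside $X\times[-R,R]\times i[-1,1]$. In that case $\Im(\pr_\bC\g u)$ is subharmonic away from a compact set and has no boundary condition there, since $\wt L$ lies over a compact set in the middle and over horizontal lines at the ends. The maximum principle then gives the bound directly.
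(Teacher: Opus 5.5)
Your proposal is correct and is the paper's argument: the proof there just combines the Biran--Cornea confinement lemma \cite[Lemma~4.2.1]{BC09b}, i.e.\ the maximum principle for $\pr_\bC\g u$ on the region where $\wt J$ is split, with Gromov compactness for bounded-energy curves with compact Lagrangian boundary \cite{Ye94}; as you observe, this requires $\wt J$ to be split outside a compact subset of $\bC_1$, e.g.\ cylindrical. Only your last paragraph is needed, with two corrections: the unbounded direction of $\bC_1$ is the imaginary one, so the compact region is $[-1,1]\times\text{i}[-R,R]$ with $R$ large enough to contain $\pr_\bC(\wt L)$, and components with constant $\bC$-projection are confined by connectedness of the domain to a disc component, not by an energy or monotonicity bound (this is also why the argument needs boundary on $\wt L$: for closed curves, $\wt J$-holomorphic spheres in a slice $X\times\{s+\text{i}t\}$ occur for all large $|t|$).
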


\begin{proof} This follows from \cite[Lemma~4.2.1]{BC09b} and Gromov compactness for compact curves with Lagrangian boundary condition, \cite{Ye94}.
\end{proof}

We need one more definition to properly describe the relative smoothness of the global Kuranishi chart we construct for $\Mbar_{k,\ell}(\wt\beta)$.

\begin{definition}\label{} A \emph{rel--$C^\infty$ manifold with boundary} $Y/T$ is a continuous map $p \cl Y\to T$ of topological spaces, so that $Y$ admits an open cover $\{U_i\}\iI$, where for each $i \in I$, the image $p(U_i)$ is open in $T$ and there exists a homeomorphism $\varphi_i$ fitting into the commutative diagram
	\begin{center}\begin{tikzcd}
			U_i \arrow[rr,"\varphi_i"] \arrow[dr,"p"]&&p(U_i)\times \Omega\arrow[dl,"\pr_1"]\\ & p(U_i) \end{tikzcd} \end{center}
	for some open subset $\Omega \sub \{x \in \bR^n\mid x_n \geq 0\}$, so that $$\varphi_i\g(\varphi_j|_{\varphi_j(U_i\cap U_j)})\inv\cl\varphi_j(U_i\cap U_j)\to \varphi_i(U_i\cap U_j)$$ 
	extends to a rel--$C^\infty$ diffeomorphism between neighbourhoods of domain and target.
\end{definition}

\begin{remark}
	In the above definition, the fibres of $Y$ over $T$ are allowed to be manifolds with boundary. In \cite[Definition~C.8]{HH25}, the rel--$C^\infty$ manifolds are such that the base $T$ is allowed to have corners, so that, informally, `all the boundary comes from the base'.
\end{remark}

\begin{lemma}\label{thm:cobordism-global-chart-existence} Let $k,\ell\geq 0$, or $k = \emst$, and, $\wt \beta \in H_2(\wt X,\wt L;\bZ)$, respectively $\wt\beta \in H_2(\wt X;\bZ)$.
	\begin{enumerate}[\normalfont 1),leftmargin=20pt,ref=\arabic*]
		\item $\Mbar_{k,\ell}(\wt\beta)$ admits an oriented smooth global Kuranishi chart 
		$$\wt \cK=  (\wt G,\wt \cT/\wt \cB,\wt \cE,\wt \obs)$$ with corners that has the expected virtual dimension.
		\item The boundary evaluation maps $\evab_j\cl \wt\cT \to  \wt L$ are submersions.
		\item\label{i:trivialisable} If $\wt L = L \times [-1,1]$ for a possibly disconnected Lagrangian $L \sub X$, then $\wt \cT$ admits a $G$-invariant submersion to $[-1,1]$ so that the fibre over each point is a rel--$C^\infty$ global Kuranishi chart for $\Mbar_{k,\ell}^{\,J_s}(\beta)$.
		\item\label{i:equivalence-lag-cob} Any two global Kuranishi charts given by our construction are oriented rel--$C^\infty$ equivalent.
		\item\label{i:cobordism-lag-cob} If $\{J_s'\}_{s\in [-1,1]}\sub \cJ_\tau(X,\omega)$ is another asymptotically constant smooth family of almost complex structures and we define $\wt J'$ as before, then the global Kuranishi charts $\wt \cK$ for $\Mbar_{k,\ell}^{\,J}(\wt\beta)$ and $\wt \cK'$ for $\Mbar_{k,\ell}^{J'}(\wt\beta)$ can be chosen to be oriented cobordant.
	\end{enumerate} 
\end{lemma}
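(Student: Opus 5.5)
The plan is to run the construction of \cite{HH25}, recalled in the proof of Theorem~\ref{thm:gkc-equivariant-existence}, with $(X,L)$ replaced by $(\wt X,\wt L)$. The only new feature is that $\wt X = X\times\bC_1$ is not closed. Compactness of the moduli space is supplied by Lemma~\ref{lem:cobordism-moduli-compact}, and \cite[Lemma~4.2.1]{BC13} confines every $\wt J$-holomorphic curve to $X\times K$ for a compact $K\sub\bC_1$. I will therefore work throughout on a compact neighbourhood of $X\times K$. On it I choose the auxiliary datum $\wt\alpha = (\conn^{\wt X},\cO_{\wt X}(1),\cU,\lambda,r)$ exactly as in~\eqref{eq:aux-dat}:
\begin{itemize}[leftmargin=20pt]
\item $\cO_{\wt X}(1)$ is the pullback of an $L^\pm$-adapted polarisation on $X$, tensored with a trivial line bundle carrying a connection of curvature $\omega_{\text{std}}$. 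Its curvature is then $\wt J$-invariant and tames $\wt J$, since $\wt J$ splits near the ends and, after shrinking $\epsilon$, is only a small perturbation elsewhere. Exactness of $\Omega|_{\wt L}$ is inherited from the ends, because $H^2(\wt L)$ is generated by classes coming from these ends up to the image of $\omega$ on $\wt L$, which vanishes.
\item $\conn^{\wt X}$ is a $\wt J$-invariant connection preserving $T\wt L$, obtained by patching a product connection on the ends.
\end{itemize}
Unobstructedness of a choice of $(\cU,\lambda,r)$ then follows verbatim from \cite[\S2]{HH25}, since that argument only uses compactness of the moduli space and finite-dimensional approximation on compact sets.

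Items (1), (2), (4) and (5) are then transcriptions of the corresponding statements of Theorem~\ref{thm:gkc-equivariant-existence}.
\begin{itemize}[leftmargin=20pt]
\item For (1), smoothness comes from \cite[Theorem~C.29]{HH25} and the orientation from the relative spin structure on $\wt L$.
\item For (2), I would add to the obstruction space finitely many sections moving the image of the outgoing boundary point in all directions of $T\wt L$, as in \cite{HH25}. This uses that $\wt L$ is a manifold, possibly noncompact but effectively compact near the curves.
\item For (4), equivalence of two charts is proved as in \cite{HH25}: one takes the product datum and stabilises.
\item For (5), a path of cylindrical structures $\wt J_\tau$ interpolating between $\wt J$ and $\wt J'$ produces a one-parameter family. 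Its moduli space is still compact by \cite[Lemma~4.2.1]{BC13} applied uniformly in $\tau$, and the family construction of \cite{HH25} yields the oriented cobordism.
\end{itemize}

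Item~\eqref{i:trivialisable} is the main point. Assume $\wt L = L\times[-1,1]$ and that $\wt J$ is cylindrical. Any disc or sphere has projection to $\bC_1$ that is holomorphic with boundary on the real interval $[-1,1]$, so by the maximum principle and the open mapping theorem it is constant, equal to some $s\in[-1,1]$. Consequently $\Mbar_{k,\ell}(\wt\beta)\cong \bigsqcup_s \Mbar_{k,\ell}^{J_s}(\beta)$.

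The composite $\wt\cT\to\wt X\to\bR$, with last map $\Re\circ\pr_\bC$ and evaluated at any marked point, is $\wt G$-invariant. It is well defined on the thickening only after arranging that perturbed solutions also have constant $\bC$-component. I would achieve this by choosing the obstruction bundle $E_r$ in $\Hom(T\bC^d, TX)\otimes\dots$ only, that is, perturbing the $X$-component and not the $\bC$-component. Then \eqref{eq:perturbed-cr-equation} splits, and its $\bC$-part forces constancy exactly.

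Surjectivity of the linearised operator in the $\bC$-direction holds automatically: the $\bar\partial$ operator for a trivial bundle with totally real boundary condition $\bR$ on a disc has cokernel zero and kernel $\bR$ (the constants). The fibre over $s$ is then precisely the chart built from $(X,L,J_s)$, and the projection is a submersion because the kernel $\bR$ maps isomorphically to $T[-1,1]$.

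I expect the main obstacle to be checking that this restricted obstruction space still yields transversality for the $X$-component uniformly in $s$, together with the correct rel--$C^\infty$ structure over $[-1,1]$. For this I would take the family version of the \cite{HH25} construction over the parameter space $[-1,1]$.
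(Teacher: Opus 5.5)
One step fails as written: the construction of the polarisation for a general cobordism. Exactness of $\Omega|_{\wt L}$ is not inherited from the ends. The restriction $H^2(\wt L;\bR)\to H^2(\del\wt L;\bR)$ has kernel the image of $H^2(\wt L,\del\wt L;\bR)$, which need not vanish. So nothing you say prevents $(\pi_X|_{\wt L})^*[\Omega_X]\neq 0$ for a polarisation on $X$ that is exact on every $L^\pm_i$.

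Your taming argument is also wrong. Away from the ends, $\wt J$ is an arbitrary $\wt\omega$-tame structure, not a small perturbation of a split one, and shrinking $\epsilon$ does not change this. A product form $\pi_X^*\Omega_X+\omega_{\text{std}}$ is not $\wt J$-invariant unless $\wt J$ splits.

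The correct observation is that $\pi_X^*\omega|_{\wt L}=-\omega_{\text{std}}|_{\wt L}$ is exact, because $\wt L$ is Lagrangian and $\omega_{\text{std}}$ is exact on $\bC_1$. Hence $[\omega]$ lies in the rational subspace $\ker\bigl((\pi_X|_{\wt L})^*\colon H^2(X;\bR)\to H^2(\wt L;\bR)\bigr)$. You should choose $[\Omega_X]$ to be a rational class in this kernel close to $[\omega]$. This gives exactness on $\wt L$, and on the ends automatically. Tameness on the compact region containing all curves then follows because $\Omega$ is $C^0$-close to $\wt\omega$ there and tameness is an open condition. For any further requirements in the definition of an adapted polarisation, do what the paper does: apply \cite[Lemma~2.7]{HH25} directly to $(\wt X,\wt L)$ and homotope the result to be constant near $\del\wt X$. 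In the product case of item~(3) we have $H^2(\wt L)=H^2(L)$, so your argument is fine there.

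Otherwise your plan coincides with the paper's. You reduce to \cite{HH25} via compactness, orient via the relative spin structure on $\wt L$, and prove (4) and (5) as in \cite[\S2.5]{HH25}. For item~(3) you are more explicit than the paper. The paper pulls back a polarisation from $X$ taming every $J_s$ and takes $r$ large so that the datum is unobstructed for every $J_s$. It then asserts that the canonical map to $[-1,1]$ is a relatively smooth submersion and smooths it with \cite[Theorem~C.29]{HH25}, without explaining why perturbed solutions have constant $\bC$-component. Restricting $E_r$ to the $TX$-directions supplies exactly this. For cylindrical $\wt J$ the $\bC$-part of the perturbed equation is the unperturbed $\delbar$ with real boundary values, so $\wt\cT$ is the union of the slice thickenings.

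One precision is needed. The linearisation is block-triangular, with coupling from the $\bC$-direction into the $X$-row (e.g.\ $\tfrac12(\del_sJ_s)\,du\circ j$). So the fact that the full kernel surjects onto $T[-1,1]$ uses surjectivity of the $X$-block at each fixed $s$, which is your uniform-in-$s$ transversality. It does not follow merely from the $\bC$-block having kernel $\bR$.
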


\begin{proof} The proof is very similar to the one of Theorem \ref{thm:gkc-equivariant-existence}, so we will only sketch it and point out the differences. We may assume as there that $k = \ell = 0$. By \cite[Lemma~2.7]{HH25}, we can find an $\wt L$-adapted polarisation $\cO_{\wt X}(1)$. Using a homotopy, we may assume it is constant near $\del\wt X$. Thus, we can define a base space $\wt \cB_d = \Mbar^{\,J_0,d}(\bC P^d,\bR P^d)$ as in \textsection\ref{subsec:construction-review}. Fix a $\wt J$-linear connection $\conn^{\wt X}$ on $\wt X$. As the moduli space is compact, we can choose the integers $p$ and $k$ in the definition of the auxiliary datum \eqref{eq:aux-dat} sufficiently large so that $$\alpha = (\conn^{\wt X},\pr_1^*\cO_X(1),\cU,\lambda,r)$$ 
	defines an unobstructed auxiliary datum for $\Mbar_{k,\ell}(\wt\beta)$, where $\cU$ is the obvious generalisation of a good covering to moduli spaces of maps with boundary on a Lagrangian cobordism. We may, moreover, choose $r$ sufficiently large so that $\alpha$ restricts to an unobstructed auxiliary datum for each $\Mbar_{g,h}^{J_s,\beta}(X,\wt L|_{\{s\}})$ whenever $|1-|s||< 2\epsilon$. In the case of \eqref{i:trivialisable}, we may choose $\cO_{\wt X}(1)$ to be the pulback of an $L$-adapted polarisation that tames each $J_s$. Furthermore, we can find $r$ so that $\alpha$ induces an unobstructed auxiliary datum for any $s \in [-1,1]$, which is well-defined in this case.\par
	Given such an unobstructed auxiliary datum $\alpha$, the arguments of \cite[\textsection 2.2]{HH25} now carry over verbatim as does \cite{HH25}, which shows that $\wt\eva$ can be chosen to be a relatively smooth submersion. Then, Theorem~C.29 op cit allows us to equip the chart with a smooth structure that makes the evaluation map and the map to the base space smooth. Choosing a mollification of the obstruction section as in \cite[Lemma~2.26]{HH25} completes the proof of the first two claims. Since $\bR\sub \bC$ is spin, any choice of relative spin structure $\wt V$ for $\wt L$ restricts to a relative spin structure of $L^\pm_i$. Thus, the claim about orientations follows from \cite[\textsection2.3]{HH25}.
	 
	In the Lagrangian pseudoisotopy case, \eqref{i:trivialisable}, our additional assumption on $\alpha$ implies that the canonical map to $[-1,1]$ is a relatively smooth submersion. By \cite[Theorem~C.29]{HH25}, we can choose the smooth structure so that the map to $[-1,1]$ becomes a smooth submersion.
	The assertions in \eqref{i:equivalence-lag-cob} and \eqref{i:cobordism-lag-cob} are shown as in \cite[\textsection2.5]{HH25}.
\end{proof}

Suppose now $\wt\cK_* =\{\wt \cK^{\wt \beta}_{k,\ell}\}$ is a system of global Kuranishi charts as in Lemma~\ref{thm:cobordism-global-chart-existence}. By the same proof as in \cite[Theorem~4.1]{HH25}, it can be extended to a system of cubical cobordisms $\{\wt\cKc_\Gamma\}_\Gamma$, ie, a system of global Kuranishi charts for the moduli spaces $\Mbar_\Gamma(\wt X,\wt L)$ with the same properties as described in~\S\ref{subsec:cubical-cobordisms}, replacing $L$ by $\wt L$ and with $\wt\cKc_\Gamma$ restricting to a system of cubical cobordisms for $L^-$ respectively $L^+$ over $-1$, respectively $1$. 

\begin{corollary}\label{cor:thom-system-lag-cob} Given any Thom system $\{\eta^{\pm}_{\Gamma}\}$ for the cubical cobordisms $\{\cK^{\scale{\square}{0.6},\pm}_\Gamma\}$, there a system of Thom forms $\{\wt\eta_\Gamma\}$ extending the system $\eta^\pm$ as in Lemma~\ref{lem:thom-form-and-forgetful}.
\end{corollary}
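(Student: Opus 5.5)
The plan is to run the inductive construction of Thom systems from \cite[Theorem~4.32]{HH25} again, now as a relative extension problem: the Thom forms are already prescribed on the ends over $s=\pm1$ and must be extended over the whole cobordism. First I fix the combinatorics. The stable map trees $\Gamma$ for $(\wt X,\wt L)$ are ordered by energy $\wt\omega(\wt\beta)$ and then by the number of vertices and edges. By Gromov compactness (Lemma~\ref{lem:cobordism-moduli-compact}) only finitely many trees lie below any energy bound, so induction along this order is possible. For each $\Gamma$ the thickening $\wt\cTc_\Gamma$ has boundary faces of three kinds: the faces $\{t_e=0\}$ and $\{t_e=1\}$ from Proposition~\ref{prop:cubical-cobordisms}, and the new ends over $s=\pm1$. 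On the ends, $\wt\cKc_\Gamma$ restricts to $\djun{}\cK^{\scale{\square}{0.6},\pm}_{\Gamma'}$, where $\Gamma'$ runs over trees of $(X,L^\pm)$ with ${j_\epsilon}_*\beta(\Gamma')=\wt\beta(\Gamma)$. By \cite[Lemma~4.2.1]{BC13} every curve meeting the cylindrical ends lies in a single slice, so these restrictions are well defined, and I prescribe $\wt\eta_\Gamma$ there as $\eta^\pm_{\Gamma'}$.

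In the inductive step all trees below $\Gamma$ already carry Thom forms satisfying \eqref{eq:compatible-0}, \eqref{eq:compatible-1} and \eqref{eq:compatible-12}. Those equations, together with the end condition, determine a form $\eta_\partial$ on a neighbourhood of the whole boundary $\del\wt\cTc_\Gamma$, after pulling back along the stabilisation maps $\Pi_0$ and $\Pi_1$. The first thing to check is that these prescriptions agree on codimension-two corners. Between two edge faces this is \cite[Theorem~4.32]{HH25}. Between an edge face and an end face it holds because the systems $\eta^\pm$ are themselves Thom systems, that is, they already satisfy \eqref{eq:compatible-0}--\eqref{eq:compatible-12}, and the restriction of $\wt\cKc_\Gamma$ to $s=\pm1$ is compatible with clutching.

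With the boundary data consistent, I extend $\eta_\partial$ to a Thom form on all of $\wt\cTc_\Gamma$. Take any Thom form $\eta'$ for $\wt\cEc_\Gamma$. The difference $\eta_\partial-\eta'|_\partial$ is exact, with a vertically compactly supported primitive, because a Thom form is unique up to exact forms with vertical compact support. I extend that primitive using a collar, multiply by a bump function supported in the collar, and add its differential to $\eta'$. This is the standard relative extension argument of \cite[\S4]{HH25}.

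Forgetful compatibility (Lemma~\ref{lem:thom-form-and-forgetful}) is handled by performing the construction only on trees without marked points and defining the remaining forms by pullback along the forgetful maps of Proposition~\ref{prop:cubical-cobordisms}\eqref{forgetful-cobordisms}. This uses that the $\eta^\pm$ were themselves chosen as such pullbacks, so the end data are compatible with this choice.

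I expect the main obstacle to be the corner consistency where an end face meets the boundary-collapse faces \eqref{eq:compatible-12}. A sphere in $\wt X$ that collapses onto $\wt L$ near an end has to correspond to the collapse onto $L^\pm$ under the product structure of the chart there. If direct verification is hard, I would first arrange that $\wt\cKc_\Gamma$ near $s=\pm1$ is literally a product $[1-\epsilon,1]\times\cK^{\scale{\square}{0.6},+}_{\Gamma'}$, using the cylindrical choice of $\wt J$ and Lemma~\ref{thm:cobordism-global-chart-existence}\eqref{i:trivialisable}, and then take $\wt\eta_\Gamma$ there to be the pullback of $\eta^+_{\Gamma'}$, which makes the compatibility automatic.
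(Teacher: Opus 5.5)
Your proposal is correct and takes the same route as the paper. The paper gives no separate argument: it applies the extension result for Thom systems \cite[Theorem~4.32]{HH25}, with \cite[Proposition~4.12]{HH25} for forgetful compatibility, to the system $\{\wt\cKc_\Gamma\}$, which restricts to $\cK^{\scale{\square}{0.6},\pm}_\Gamma$ over the ends. Your relative inductive extension, with forgetful compatibility handled by pullback from trees without marked points, is that argument written out.

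One point to correct: \eqref{eq:compatible-0}--\eqref{eq:compatible-12} are pushforward conditions along $\Pi_0,\Pi_1$. Near those faces they constrain $\wt\eta_\Gamma$ rather than determine it, so you must choose compatible lifts, and nothing is imposed on the moduli-space boundary strata $\del_{\Gamma'}\wt\cTc_\Gamma$. This does not affect your extension step.
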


	\subsection{Operations and pseudo-isotopies from Lagrangian cobordisms}
Given a Lagrangian cobordims $\wt L$, we define a generalisation of the $\fq$-operations; see also \cite[\textsection 4]{ST16}. The relevant Novikov ring is given by:
\begin{equation}
	\Lambda_{\wt L} := \left\{  \sum_{i = 0}^{\infty} a_iQ^{\beta_i} | a_i \in \mathbb{R}, \; \beta_i \in H_2(\wt X, \wt L), \; \lim_{i\to \infty } \omega(\beta_i) = \infty \right\}\\  
\end{equation}
For the Novikov ring keeping track of closed counts, we still take the ring $\Lambda_c$, noting that there is a canonical isomorphism $H_2(\wt X) \cong H_2(X)$.
Let $f_\pm\cl \Lambda_{\LL^\pm} \rightarrow \Lambda_{\wt L}$ denote the ring homomorphisms induced by the maps $(X,\LL^\pm) \hookrightarrow (\wt X, \wt L)$.

Assume from now on that $\wt J$ is cylindrical, ie, that $\wt J$ is invariant under translation in the imaginary direction of $\CC$. Fix a Thom system $\{\wt \eta_{k,\ell}^\beta\}$ for $\Mbar(\wt X,\wt L)$ (Corollary \ref{cor:thom-system-lag-cob}).
The same construction as in \textsection\ref{subsec:q-operations} then yields operations \begin{equation*}
	\wt\fq^{\wt X}_{k,\ell}\cl \Omega^*(\wt L;\Lambda_{\wt L})^{\otimes k} \otimes \Omega^*(\wt X;\Lambda_{\wt L})^{\otimes \ell} \rightarrow \Omega^*(\wt L;\Lambda_{\wt L})
\end{equation*}
and 
\begin{equation*}
	\wt\fq^{\wt X}_{-1,\ell}\cl \Omega^*(\wt X; \Lambda_{\wt L})^{\otimes \ell} \rightarrow \Lambda_{\wt L}
\end{equation*}
and closed operations
\begin{equation*}
	\wt{\mathfrak{q}}^{\wt X}_{\emptyset, \ell} \cl \Omega^*(\wt X;\Lambda_c)^{\otimes \ell} \rightarrow \Omega^*(\wt X;\Lambda_c).
\end{equation*}
We change the domain of the operations $\wt \q^{\wt X}$ to get operations $\wt \q$, which are better suited to prove our results. 
Let $\pi\cl \wt X \rightarrow X \times [-1,1]$ be the projection. 
Then, define for $\alpha \in \Omega^*(\wt L)^{\otimes k}$ and $\gamma \in \Omega^*(X \times [-1,1])^{\otimes \ell}$ the maps
	\begin{align}
		\wt \fq_{k,\ell}(\alpha;\gamma) &:= \fq_{k, \ell}^{\wt X}(\alpha; \pi^* \gamma) \qquad
	\end{align}
	for $k \ge -1$ and the operation 
	$$\wt\fq_{\emst,\ell}\cl \Omega^*(X\times [-1,1];\Lambda)^{\otimes\ell}\to \Omega^*(X\times [-1,1];\Lambda)$$ 
	by using the families $\Mbar_{\emst,\ell}^{\{J_s\}_s}(\beta) = [-1,1]\times_{\bC_1}\Mbar_{\emst,\ell}^{\wt J}(i_*\beta)$ of moduli spaces, where $\beta\in H_2(X;\bZ)$.

\begin{remark}
	Note that our $\wt \q_{-1}$ operations differ from those of Solomon-Tukachinsky. In their case they take refined operations which land in $\Omega^*([-1,1], \Lambda)$, which they can do as they only consider Lagrangian pseudoisotopies. By composing with the integration map $\Omega^*([-1,1], \Lambda) \rightarrow \Lambda$ one obtains our operations from theirs.
\end{remark}

Let $j_\pm \cl L_*^\pm \hkra \wt L$ be the inclusions of $L_*^\pm \times \{\pm 1\}$ and let $i_\pm$ denote the inclusions $X \hookrightarrow X \times \bR$ as $X\times\{\pm 1\}$.
By \cite[Lemma~4.2.1]{BC09b}, all holomorphic curves which intersection $\{\pm1 \}\times i\RR$ are necessarily constant in the $\CC$ direction. Therefore we have

\begin{lemma}\label{lem:pullback-intertwines} Let $\q^\pm$ denote the $\q$-operations for $\bL_\pm$. Then, for any 
	integers $k \geq -1$ and $\ell\geq 0$, we have 
	\begin{equation*}
		j_\pm^*\wt \q_{k,\ell}(\alpha,\eta) \;=\; f_\pm\q^\pm_{k,\ell}(j_\pm^*\alpha,i_\pm^*\eta)\qquad \qquad
		i_\pm^*\wt \q_{\emptyset,\ell}(\wt \eta) \;=\; \q_{\emptyset,\ell}^{J_\pm}(i_\pm^*\wt \eta),
	\end{equation*}
	where we indicate the complex structures $J_\pm$ used in the closed operation.\qed
\end{lemma}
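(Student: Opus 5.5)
The plan is to reduce both identities to a statement about the thickenings of the cubical cobordisms $\wt\cKc_\Gamma$ for $\wt L$ over the slice $\{\Re z = \pm 1\}$. Then I would use base change for pushforward along a submersion. Fix a stable map tree $\Gamma$ in $\del(k+1,\ell,\wt\beta)$. The operation $j_\pm^*\wt\fq_\Gamma(\alpha;\eta)$ is the restriction to $L^\pm_*\times\{\pm1\}$ of $(\evab_0^\Gamma)_*$ applied to the integrand $\evai^*\pi^*\eta\wedge\evab^*\alpha\wedge\obs^*\wt\eta_\Gamma$. Since $\evab_0$ is a submersion (Lemma~\ref{thm:cobordism-global-chart-existence}(2)), pullback along $j_\pm$ commutes with pushforward. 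Concretely, $j_\pm^*(\evab_0)_*\omega = (\evab_0|_{\evab_0^{-1}(L^\pm_*\times\{\pm1\})})_*(\omega|)$, with the orientation convention on fibres inherited as in \cite{ST16}. So it suffices to identify the preimage $\evab_0^{-1}(L^\pm_*\times\{\pm1\})\subset\wt\cTc_\Gamma$, together with the restricted obstruction bundle, section and Thom form.

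For this identification I would use the cited \cite[Lemma~4.2.1]{BC09b}. Any $\wt J$-holomorphic stable disc whose boundary meets $\{\pm1\}\times i\bR$ projects to a constant in $\bC_1$, because $\wt J$ is cylindrical near the ends. Hence the portion of $\Mbar_\Gamma(\wt X,\wt L)$ over the slice is $\bigsqcup\Mbar_{\Gamma_\beta}(X,L^\pm_i)$, where the union runs over lifts $\beta$ with $f_{\pm*}\beta=\wt\beta$. On the chart level I would arrange, as in the proof of Lemma~\ref{thm:cobordism-global-chart-existence}, that the auxiliary datum $\alpha$ restricts near the ends to the auxiliary data for $L^\pm$. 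Then the slice of $\wt\cTc_\Gamma$ is the thickening $\cTc^\pm_\Gamma$, and the Thom forms restrict to $\eta^\pm_\Gamma$ by Corollary~\ref{cor:thom-system-lag-cob}. This uses the statement that $\wt\cKc_\Gamma$ restricts over $\pm1$ to the systems for $L^\pm$. The evaluation maps restrict as well, and $\evai^*\pi^*\eta$ restricts to $\evai^*i_\pm^*\eta$ because the interior points of such curves lie in $X\times\{\pm1\}$. Summing over $\Gamma$ and over the lifts $\beta$, and recording $Q^\beta\mapsto Q^{f_\pm\beta}$, gives $f_\pm\fq^\pm_{k,\ell}(j_\pm^*\alpha;i_\pm^*\eta)$.

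The remaining details are bookkeeping. First, the sign $\epsilon(\alpha)$ depends only on degrees, which are preserved by pullback. Second, the coefficients $c_\Gamma$ agree. Third, for $k=-1$ the map $j_\pm^*$ is replaced by the analogous slice integral; in that case the target is $\Lambda$, and the statement should be read through the slice at $s=\pm1$. For the closed operations, $\wt\fq_{\emst,\ell}$ is defined via the family $[-1,1]\times_{\bC_1}\Mbar^{\wt J}_{\emst,\ell}$, whose fibre over $\pm1$ is $\Mbar^{J_\pm}_{\emst,\ell}$. The evaluation to $X\times[-1,1]$ is transverse to $X\times\{\pm1\}$ via the projection to $[-1,1]$, so the same base change gives $\fq^{J_\pm}_{\emst,\ell}$.

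I expect the main obstacle to be showing that the slice of the chart, and not merely of the moduli space, is exactly the $L^\pm$ chart. Near the ends the thickening contains perturbed solutions of \eqref{eq:perturbed-cr-equation}, which need not project to constants. To handle this, I would first try choosing the obstruction spaces $E_r$ pulled back from $X$ and the connection of product form near the ends. The $\bC$-component of a perturbed solution then satisfies the unperturbed equation with boundary on $\{\pm1\}\times i\bR$ near the slice, and the open mapping argument of \cite{BC13} again forces it to be constant.
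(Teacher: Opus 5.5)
Your proposal is correct and follows the paper's route: the paper obtains the lemma directly from \cite[Lemma~4.2.1]{BC09b} (curves meeting $\{\pm1\}\times i\bR$ are constant in the $\bC$-direction), together with a property built into the construction, namely that the cubical cobordisms $\wt\cKc_\Gamma$ and the Thom system of Corollary~\ref{cor:thom-system-lag-cob} restrict over the ends to those of $\bL^\pm$. Your base-change argument along the submersion $\evab_0$, and your point that this restriction must hold at the level of thickenings and not only of moduli spaces, make explicit what the paper leaves implicit; your fix is harmless near the ends, since the $\bC$-part of the linearised operator at $\bC$-constant maps is already surjective there, so perturbations valued in $TX$ suffice.
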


\begin{proposition}[Structure equation for $\wt \q_{k,\ell}$]
	\label{boundary of  wt q operation}
	For any $\wt \al$ and $\wt \gamma$ and $k \geq 0$ we have:
	\begin{align*}
		0 = &\sum_{\substack{ P \in S_3[k] \\ (2:3) = \{ j \}}}  (-1)^{|\wt \gamma^{(1:3)}| + 1} \wt\q_{k,\ell} (\wt \alpha;\wt \gamma^{(1:3)} \otimes d \wt \gamma_j \otimes \wt \gamma^{(3:3)})\\
		+&\;\;\sum_{\substack{ P \in S_3[k] \\ I \sqcup J = [\ell]}} (-1)^{i(\wt \alpha, \wt \gamma, P, I)} \wt \q_{k_1+1+k_3,|I|}(\wt \alpha^{(1:3)}\otimes \wt \q_{k_2,|J|}(\wt \alpha^{(2:3)};\wt \gamma^{J})\otimes \wt \alpha^{(3:3)};\wt \gamma^I).
	\end{align*}
\end{proposition}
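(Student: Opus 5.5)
The plan is to reduce the statement to the structure equation for the untilded operations $\fq^{\wt X}_{k,\ell}$ on $(\wt X,\wt L)$, i.e.\ to Proposition~\ref{prop:q structure} applied verbatim in the cobordism setting, and then to use that $\pi^*$ is a dga map. First I check that the proof of Proposition~\ref{prop:q structure} goes through for $\wt L\sub \wt X$. By Lemma~\ref{cobordism-moduli-compact} the moduli spaces are compact. By Lemma~\ref{thm:cobordism-global-chart-existence} we have oriented smooth global Kuranishi charts with $\evab_0$ a submersion. These extend to a system of cubical cobordisms $\{\wt\cKc_\Gamma\}$, and Corollary~\ref{cor:thom-system-lag-cob} supplies a compatible Thom system satisfying \eqref{eq:compatible-0}--\eqref{eq:compatible-12}.

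The one new feature is that $\wt L$ and the thickenings $\wt\cTc_\Gamma$ have additional boundary over $\Re z=\pm1$, so Stokes' theorem for the pushforward along $\evab_0$ needs care. Since $\evab_0\cl \wt\cTc_\Gamma\to\wt L$ is a submersion and the boundary of $\wt\cTc_\Gamma$ lying over $\del\wt L$ is exactly $\evab_0^{-1}(\del \wt L)$, this part of the boundary is vertical-free: the fibrewise boundary of $\evab_0$ consists only of the strata coming from disc bubbling and boundary collapse. I expect to justify this using \cite[Lemma~4.2.1]{BC13}. Curves meeting the ends are constant in the $\bC$-direction, so near $\Re z=\pm1$ each chart is a product of a chart for $L^\pm$ with the collar. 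Hence the fibre pushforward formula $d(\evab_0)_* = (\evab_0)_*d \pm (\evab_0|_{\del^{v}})_*$ involves only the vertical boundary. This is the step I regard as the main obstacle. It requires choosing the smooth structures and Thom forms to be of product type near the ends, and I would arrange this when extending the system in Corollary~\ref{cor:thom-system-lag-cob}.

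With that in place, the three kinds of boundary terms (i)--(iii) arise exactly as in the proof of Proposition~\ref{prop:q structure}. The terms of types (i) and (iii) cancel by \cite[Lemma~3.32]{HH26}, and the terms of type (ii) carry the same orientation signs as in \cite[Proposition~2.4]{ST16}, since $\wt\cTc_\Gamma$ is oriented like the moduli space. This yields the structure equation for $\fq^{\wt X}_{k,\ell}$ with interior inputs $\pi^*\wt\gamma_j$.

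Finally, I substitute $\pi^*\wt\gamma$ for the interior inputs. Because $d\pi^*\wt\gamma_j=\pi^*d\wt\gamma_j$ and $|\pi^*\wt\gamma_j|=|\wt\gamma_j|$, the signs $|\gamma^{(1:3)}|$, $\sign^\gamma(I,J)$ and $i(\alpha,\gamma,P,I)$ are unchanged. Moreover, the inner operation $\wt\fq_{k_2,|J|}(\wt\alpha^{(2:3)};\wt\gamma^J)$ is by definition $\fq^{\wt X}_{k_2,|J|}(\wt\alpha^{(2:3)};\pi^*\wt\gamma^J)$. Rewriting each term via the definition of $\wt\fq$ therefore gives exactly the claimed identity.
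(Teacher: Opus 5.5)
Your proposal is correct and is essentially the paper's argument. The paper's proof is just the observation that the vertical boundary of $\evab_0^\Gamma\cl\wt\cTc_\Gamma\to\wt L$ consists of the same strata as for $\evab_0^\Gamma\cl\cTc_\Gamma\to L$, so the proof of Proposition~\ref{prop:q structure} carries over. Your point that the extra boundary over $\Re z=\pm1$ is $\evab_0^{-1}(\del\wt L)$, and hence horizontal rather than vertical, together with the formal substitution of $\pi^*\wt\gamma$, simply spells this out (for this identity you need only the product structure of the chart and of $\evab_0$ near the ends, not product-type Thom forms).
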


\begin{proof}
	Since the vertical boundary of the evaluation map $\wt\eva_0^\Gamma \cl \wt\cTc_\Gamma \to \wt L$ is given by the analogous terms as the vertical boundary of $\eva_0^\Gamma \cl \cTc_\Gamma \to L$, the proof of Proposition~\ref{prop:q structure} carries over.
\end{proof}

\begin{cor}
	The operations $\wt\fq_{k,0}$ equip $\Omega^*(\wt L;\Lambda_{\wt L})$ with an $A_\infty$-structure.
\end{cor}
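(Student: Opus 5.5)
The corollary should follow from Proposition~\ref{boundary of  wt q operation} specialised to $\ell=0$, exactly as the analogous corollary after Proposition~\ref{prop:q structure} did for a single Lagrangian. Set $\wt\fm_k \coloneqq \wt\fq_{k,0}$. With no interior inputs, the first sum in the structure equation (the terms containing $d\wt\gamma_j$) is empty. In the second sum the only partition is $I = J = \emst$, so $|\wt\gamma_J| = |\wt\gamma_I| = 0$ and $\sign^\gamma(\emst,\emst)=0$. The sign therefore reduces to $i(\wt\alpha,\emst,P,\emst) = \epsilon_1$, which is the usual Koszul-type sign $\sum_{j\le k_1}|\wt\alpha_j|'$ from \cite{ST16}. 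What remains is
\[
0=\sum_{P\in S_3[k]}(-1)^{\epsilon_1}\,\wt\fm_{k_1+1+k_3}\bigl(\wt\alpha^{(1:3)}\otimes\wt\fm_{k_2}(\wt\alpha^{(2:3)})\otimes\wt\alpha^{(3:3)}\bigr),
\]
and this is precisely the curved $A_\infty$ relation in the sign conventions of \cite{ST16}.

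Three further checks are needed.

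\emph{Gradings and the unstable cases.} $\wt\fm_1^{\beta_0}=d$ and $\wt\fm_0^{\beta_0}=0$ by the same special-case definitions as in \S\ref{subsec:q-operations}. The degree statement (shifted degree $2-\mu(\beta)$ together with the Novikov grading $|Q^\beta|=\mu(\beta)$) follows as in Proposition~\ref{degree property}, because the Thom forms $\wt\eta_\Gamma$ of Corollary~\ref{cor:thom-system-lag-cob} have rank equal to that of the obstruction bundle.

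\emph{Convergence and filtration.} The sum $\sum_\beta Q^\beta\wt\fm_k^\beta$ must define a map into the completed tensor product over $\Lambda_{\wt L}$. This holds because Gromov compactness for $\wt L$ (Lemma~\ref{lem:cobordism-moduli-compact}) implies that, for each energy bound, only finitely many $\beta$ have nonempty moduli spaces. It also implies $\wt\fm_0\in \cI$ of the energy filtration.

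\emph{Well-definedness of the pushforward.} The operations are defined by pushforward along $\wt\eva_0^\Gamma\cl\wt\cTc_\Gamma\to\wt L$, and $\wt L$ is noncompact. Proper pushforward is nevertheless well defined, since $\wt\eva_0$ is a submersion (Lemma~\ref{thm:cobordism-global-chart-existence}(2)) and is proper by compactness of the thickening over compact pieces.

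The one step needing care is making sure Stokes' theorem in the proof of Proposition~\ref{boundary of  wt q operation} produces no extra boundary contributions from the ends of $\wt L$ at $\Re z=\pm1$. I would handle this by noting that such boundary is horizontal: it lies over $\del\wt L$ in the base of $\wt\eva_0$. Fibrewise integration therefore only sees the vertical boundary, whose terms are those listed in the proof of Proposition~\ref{prop:q structure}, and the proposition already accounts for this. Granting that, the corollary is the $\ell=0$ specialisation, and unitality and cyclicity need not be claimed.
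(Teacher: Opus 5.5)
Your proposal is correct and follows the paper's argument: the corollary is the $\ell=0$ case of Proposition~\ref{boundary of  wt q operation}. The paper proves that proposition using your observation that the ends $L^\pm\times\{\pm1\}$ are horizontal boundary for $\wt\eva_0^\Gamma$, so only the same vertical strata as in Proposition~\ref{prop:q structure} enter the fibrewise Stokes formula; the ends contribute only once one integrates over $\wt L$, as in Proposition~\ref{prop: structure equation bulk deformed q -1 tilde}.

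One side remark is slightly off. In this setting $\wt L\subset X\times\bC_1$ is a compact manifold with boundary, not a noncompact one. Also, the pushforward is well defined because $\obs_\Gamma^*\wt\eta_\Gamma$ has compact support, not because $\wt\eva_0$ is proper on the thickening, which is noncompact.
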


By the same proof as in \cite{ST16}, we have
\begin{proposition}[{\cite[Proposition~4.19]{ST16}}]
	For $\wt \eta \in \Omega^*(X\times [-1,1])^{\otimes \ell}$ we have $
		\wt \q_{\emptyset,\ell}(d \eta) = d \wt \q_{\emptyset,\ell}$.
\end{proposition}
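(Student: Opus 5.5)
The strategy is to realise $\wt\q_{\emptyset,\ell}$ as a push-pull along the family of global Kuranishi charts over $[-1,1]$, and then apply Stokes' theorem fibrewise. The only boundary terms that could spoil the chain-map property are the vertical ones, and the point is to show they cancel or vanish. By definition, $\wt\q_{\emptyset,\ell}$ is built from the closed moduli spaces $\Mbar_{\emst,\ell+1}^{\{J_s\}_s}(\beta)=[-1,1]\times_{\bC_1}\Mbar^{\wt J}_{\emst,\ell+1}(i_*\beta)$. Since $\wt J$ is cylindrical, these are modelled by the charts $\wt\cKc_\Gamma$ for closed stable map trees $\Gamma$, which carry a submersion to $[-1,1]$ (Lemma~\ref{thm:cobordism-global-chart-existence}). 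Accordingly, I would write
\[
\wt\q^\beta_{\emst,\ell}(\eta)=(-1)^{w_\fs(\beta)}\sum_{\Gamma}(\eva_0\times \pi_{[-1,1]})_*\lbr{\textstyle\bigwedge_j \eva_j^*\eta_j\wedge \obs_\Gamma^*\wt\eta_\Gamma},
\]
with target $X\times[-1,1]$, and sum over trees $\Gamma\in\del(\ell,\beta)$ for closed curves.

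First I would apply Stokes' theorem for pushforward along the proper map $\eva_0\times\pi_{[-1,1]}$:
\[
d\,p_*\xi = p_*d\xi \pm (p|_{\del^v})_*\xi,
\]
where $\del^v$ is the vertical boundary, i.e.\ the part of $\del\wt\cTc_\Gamma$ not lying over $\{\pm1\}$. The boundary over $\{\pm1\}$ is a boundary of the target $X\times[-1,1]$, so it contributes no correction term for forms on a manifold with boundary. Since $\eta_j$ is closed under $d$ except through $d\eta_j$, and the Thom forms are closed, $p_*d\xi$ gives exactly $\wt\q_{\emst,\ell}(d\eta)$ with the Koszul signs, as in Proposition~\ref{prop:q structure}.

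Next I would analyse the vertical boundary. For sphere moduli spaces the evaluation-map fibres have no codimension-one boundary from bubbling, since nodal spheres form codimension-two strata. The only vertical boundary therefore comes from the cube factors $I^{E(\Gamma)}$ of the cubical cobordisms. The faces $\{t_e=0\}$ and $\{t_e=1\}$ are identified via Proposition~\ref{prop:cubical-cobordisms}(\ref{cubical-0-boundary}),(\ref{cubical-1-boundary}), and the Thom system satisfies \eqref{eq:compatible-0} and \eqref{eq:compatible-1}.

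I expect the main obstacle to be checking that these face contributions cancel, including orientations and the degree factors $d_1(\Gamma,\Gamma_e)$. I would handle it in two parts. For the $t_e=0$ faces, I would pair them with the codimension-two stratum $\del_\Gamma\wt\cKc_{\Gamma_e}$, using the same combinatorial argument as \cite[Lemma~3.32]{HH26} invoked in the proof of Proposition~\ref{prop:q structure}. For the $t_e=1$ faces, they become fibre products over $X$ of lower charts. Because the interior edge glues along a diagonal of codimension $2n$, these contributions are pushforwards of forms of too low degree relative to the fibre dimension, or they cancel pairwise between the two orientations of the edge. If the degree argument fails, I would instead argue as in \cite[Proposition~4.19]{ST16}: there the closed moduli spaces are regular without boundary, and the relation $\wt\q_{\emptyset,\ell}(d\eta)=d\wt\q_{\emptyset,\ell}(\eta)$ follows from the fact that the spiky chart $\bigsqcup_\Gamma\wt\cKc_\Gamma$ glues to a chart without vertical boundary.
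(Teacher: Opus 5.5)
Your overall plan (Stokes' theorem for the pushforward to $X\times[-1,1]$, with the boundary over $\{\pm1\}$ not counting as vertical) is the argument of \cite[Proposition~4.19]{ST16} that the paper invokes. The gap is in your vertical-boundary analysis, at the step ``the only vertical boundary comes from the cube factors $I^{E(\Gamma)}$''.

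For closed curves there are no cube factors, because $\del(\ell,\beta)$ contains only the edgeless tree. An interior node is a codimension-two phenomenon. So for a closed tree $\Gamma$, the chart $\wt\cKc_\Gamma$ for $I^{E(\Gamma)}\times\Mbar_\Gamma$ has virtual dimension $|E(\Gamma)|$ lower than the chart for $(\ell,\beta)$, and the definition of $\del(\mfa)$ excludes it. Hence $\wt\q^\beta_{\emst,\ell}$ is the push--pull along a single chart. Its thickening submerses onto $[-1,1]$ with fibres the thickenings for $\Mbar^{J_s}_{\emst,\ell+1}(\beta)$. These fibres have no boundary, since the base space consists of regular stable maps to $\bC P^d$, where nodal loci are divisors rather than boundary. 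So $\del\wt\cT$ lies entirely over $\{\pm1\}$ and the vertical boundary is empty. Stokes' theorem, together with closedness of $\obs^*\wt\eta$, then gives the identity at once.

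The cancellation mechanisms you propose would also fail if such trees were present.
\begin{itemize}
\item For an interior edge, $\del_\Gamma\wt\cKc_{\Gamma_e}$ is a codimension-two stratum of $\wt\cKc_{\Gamma_e}$, not a boundary face. Nothing in the sum can therefore cancel the codimension-one face $\{t_e=0\}$, whose contribution (a push--pull along a nodal stratum) is in general nonzero. The mechanism of \cite[Lemma~3.32]{HH26} applies only to disc-bubbling and boundary-collapse edges, where $\del_\Gamma\cKc_{\Gamma_e}$ is a genuine face of $\cKc_{\Gamma_e}$.
\item The faces $\{t_e=1\}$ produce composites of closed operations glued along the diagonal of $X$. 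These do not vanish for degree reasons in general (the diagonal constraint is already part of the fibre-product dimension), and nothing in the sum pairs them off.
\item Your fallback, that the spiky chart ``glues to a chart without vertical boundary'', is asserted rather than shown.
\end{itemize}
Including such trees would in fact change the operation and break the chain-map property. Once you observe that the sum has a single term, none of this analysis is needed.
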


Combining this with Stokes' theorem yields the following.
\begin{lemma}
	Let $\pi_X\cl X \times [-1,1] \rightarrow X$ denote the projection. Then,
	\begin{equation}
		d (\pi_X)_*\wt \q_{\emptyset,\ell}(\wt \eta) - (\pi_X)_*\wt \q_{\emptyset,\ell}(d \wt\eta)= (-1)^{|\wt \eta|+1} \left( \q^{J_+}_{\emptyset,\ell}(i_+^*\wt \eta) - \q^{J_-}_{\emptyset,\ell}(i_-^*\wt \eta) \right).
	\end{equation}
	for any $\wt \eta \in \Omega^*(X \times [-1,1])$.
\end{lemma}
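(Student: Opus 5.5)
The identity is fibrewise Stokes' theorem for the proper submersion $\pi_X\cl X\times[-1,1]\to X$, whose fibres are intervals, combined with the chain-map property $\wt\q_{\emptyset,\ell}(d\wt\eta)=d\wt\q_{\emptyset,\ell}(\wt\eta)$ of the preceding proposition. Write $\xi\coloneqq\wt\q_{\emptyset,\ell}(\wt\eta)\in\Omega^*(X\times[-1,1];\Lambda_c)$. For any form $\xi$ on $X\times[-1,1]$, the fibre-integration version of Stokes' theorem reads
\begin{equation*}
	d(\pi_X)_*\xi \;=\; (\pi_X)_*d\xi \,+\, (-1)^{|\xi|+1}\bigl(i_+^*\xi - i_-^*\xi\bigr),
\end{equation*}
where the sign depends on our convention that the fibre is integrated on the left and on orienting $[-1,1]$ in the standard way (boundary $\{+1\}-\{-1\}$). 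I would first verify this sign in local coordinates $(x,s)$ by writing $\xi = a(x,s) + ds\wedge b(x,s)$ and computing $(\pi_X)_*\xi = \int_{-1}^1 b\,ds$ (up to the convention sign), so that the $s$-derivative of $a$ integrates to $a|_{s=1}-a|_{s=-1}$.

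Applying this with $(\pi_X)_*d\xi=(\pi_X)_*\wt\q_{\emptyset,\ell}(d\wt\eta)$ reduces the claim to identifying the boundary restrictions $i_\pm^*\wt\q_{\emptyset,\ell}(\wt\eta)$. Lemma~\ref{lem:pullback-intertwines} gives exactly $i_\pm^*\wt\q_{\emptyset,\ell}(\wt\eta)=\q^{J_\pm}_{\emptyset,\ell}(i_\pm^*\wt\eta)$; this rests on the family moduli space $\Mbar^{\{J_s\}}_{\emptyset,\ell}(\beta)=[-1,1]\times_{\bC_1}\Mbar^{\wt J}_{\emptyset,\ell}$ and its global Kuranishi chart and Thom forms restricting over $s=\pm1$ to those used for $J_\pm$ (Lemma~\ref{thm:cobordism-global-chart-existence}\eqref{i:trivialisable} and Corollary~\ref{cor:thom-system-lag-cob}). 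Substituting gives the stated formula.

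One remaining check concerns the degree in the sign. Since $|\xi|\equiv|\wt\eta|$ modulo $2$ by the degree property (the shift is even, as the fibre of $\pi_X$ is one-dimensional only for the pushforward, and closed operations shift by $2-2c_1(\beta)+\sum(|\eta_j|-2)$), I may replace $(-1)^{|\xi|+1}$ by $(-1)^{|\wt\eta|+1}$.

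I expect the main obstacle to be this sign bookkeeping: matching the pushforward convention of the paper (fibre first or last) with the orientation of $[-1,1]$. If the convention yields $(-1)^{|\xi|}$ instead, I would recheck the orientation of $\del[-1,1]$ induced by the outward-normal-first rule.
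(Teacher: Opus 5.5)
Your proposal is correct and follows the paper's own one-line argument: fibrewise Stokes for $\pi_X$, combined with the chain-map property $\wt\q_{\emptyset,\ell}(d\wt\eta)=d\wt\q_{\emptyset,\ell}(\wt\eta)$ and the boundary identification of Lemma~\ref{lem:pullback-intertwines}. There are two harmless slips: the closed operations shift degree by $4-2c_1(\beta)+\sum_j(|\eta_j|-2)$, not $2-\dots$ (only the parity matters), and your displayed Stokes formula is the one for the convention $(\pi_X)_*(\pi_X^*\alpha\wedge\beta)=\alpha\wedge(\pi_X)_*\beta$, so in the local check write $\xi=a+b\wedge ds$ rather than $a+ds\wedge b$, since the latter gives $d(\pi_X)_*\xi+(\pi_X)_*d\xi=i_+^*\xi-i_-^*\xi$ instead.
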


\smallskip
The structure equation for the operations $\wt q_{-1,\ell}$ is different from those for $\q_{-1,\ell}$, as the integration map $\int\cl \wt \cT_{0,\ell} \rightarrow \RR$ has an additional vertical boundary stratum given by $\cT^{\pm}_{0,\ell}$.
\begin{proposition}
	\label{prop: structure equation bulk deformed q -1 tilde} We have
	\begin{multline*}
		\wt \q_{-1,\ell}(d \wt \gamma) =  (-1)^{n+|\wt\gamma| + 1}(f_+\q_{-1,\ell}^{J_+}(i_+^*\wt\gamma) - f_-\q_{-1,\ell}^{J_-}(i_-^*\wt\gamma))\\
		+\frac{1}{2}\sum_{I\sqcup J=\{1,\ldots,\ell\}}
		(-1)^{\iota(\wt \gamma;I)}
		\langle \wt \q_{0,|I|}(\wt \gamma_I),\wt \q_{0,|J|}(\wt \gamma_J)\rangle_{\wt L}
		+ (-1)^{|\wt\gamma|+1}\int_{\wt L} \pi_I^* \wt \q_{\emptyset,\ell}(\wt \gamma).
	\end{multline*}
\end{proposition}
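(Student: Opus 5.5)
The plan is to apply Stokes' theorem to each cubical chart $\wt\cTc_\Gamma$ with $\Gamma\in\del(0,\ell,\wt\beta)$, in the same way as for Proposition~\ref{prop:q-1-structure}. The difference is that the boundary of $\wt\cTc_\Gamma$ now has one further kind of face, coming from the ends of the cobordism. Since $\wt\gamma_j$ is pulled back along $\pi\cl\wt X\to X\times[-1,1]$, the integrand
\[
\bigwedge_j (\evai_j^\Gamma)^*\pi^*\wt\gamma_j\wedge\obs_\Gamma^*\wt\eta_\Gamma
\]
has compact support. Its exterior derivative is $\pm$ the corresponding expression with $d\wt\gamma$ inserted, because Thom forms are closed. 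So Stokes reduces $\wt\q_{-1,\ell}(d\wt\gamma)$ to a sum of boundary integrals over $\del\wt\cTc_\Gamma$, weighted by $c_\Gamma$.

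I would sort the codimension-one boundary of $\wt\cTc_\Gamma$ into four types.
\begin{enumerate}
\item[(a)] Faces $\{t_e=0\}$ and interior-of-stratum faces $\del_\Gamma\wt\cTc_{\Gamma_e}$. These cancel pairwise after summing over $\Gamma$, exactly as in \cite[Lemma~3.32]{HH26} and the proof of Proposition~\ref{prop:q structure}. The weights $c_\Gamma$ must be tracked using the degree $d_1(\Gamma,\Gamma_e)$ from Proposition~\ref{prop:cubical-cobordisms}.
\item[(b)] Faces $\{t_e=1\}$ for a boundary edge. By \eqref{eq:restricted-e-1} and \eqref{eq:compatible-1}, the integral over these splits as a fibre product over $\wt L$ of two discs with no incoming boundary points. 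Summed over all $\Gamma$ and all partitions $I\sqcup J$, this produces the pairing term $\langle\wt\q_{0,|I|},\wt\q_{0,|J|}\rangle_{\wt L}$. The factor $\frac12$ comes from the symmetric swap of the two halves, which is encoded in $c_\Gamma$. The sign \eqref{h3-sign} together with the $\epsilon(\alpha)$ convention gives $\iota(\wt\gamma;I)$, as in \cite[Proposition~2.5]{ST16}.
\item[(c)] Boundary-collapse faces. By \eqref{eq:restricted-e-2}, \eqref{eq:compatible-12} and the sign \eqref{e-sign}, these give $\int_{\wt L}\pi_I^*\wt\q_{\emptyset,\ell}(\wt\gamma)$. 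The factor $(-1)^{w_\fs}$ in the definition of $\wt\q_\emptyset$ absorbs the spin-dependent sign, leaving $(-1)^{|\wt\gamma|+1}$.
\item[(d)] The new vertical boundary, where $\wt L$ itself has boundary $\LL^+\times\{1\}\sqcup\LL^-\times\{-1\}$ and the boundary marked points or the disc boundary land in $\del\wt L$.
\end{enumerate}

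For (d) I would use \cite[Lemma~4.2.1]{BC09b}. Any curve meeting $\del\wt L$ lies entirely in $X\times\{\pm1\}$, because $\wt J$ is cylindrical. The construction of Lemma~\ref{thm:cobordism-global-chart-existence} and the extension property in Corollary~\ref{cor:thom-system-lag-cob} show that the restriction of $\wt\cKc_\Gamma$ there is the union of the cubical charts for $\LL^\pm$, over classes $\beta_\pm$ with $f^\pm_*\beta_\pm=\wt\beta$, and that the Thom forms restrict to $\eta^\pm_\Gamma$. This is the same mechanism as in Lemma~\ref{lem:pullback-intertwines}, and it identifies the face integral with $f_\pm\q^{J_\pm}_{-1,\ell}(i_\pm^*\wt\gamma)$. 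The relative sign between the two ends comes from the outward normal: $+$ at $1$ and $-$ at $-1$. The extra factor $(-1)^{n}$ comes from comparing the orientation of $\wt L$ near its ends with $\LL^\pm\times[-1,1]$, together with moving $\partial_s$ past the $\ell$ interior evaluation forms. That accounts for $(-1)^{n+|\wt\gamma|+1}$.

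I expect the main obstacle to be (d) at the level of Kuranishi charts, not at the level of moduli spaces. One needs the thickening $\wt\cTc_\Gamma$ to be a manifold with corners whose corners from $\del\wt L$ are transverse to the cubical corners. One also needs the face over $\pm1$ to be exactly $\cKc^{\pm}_\Gamma$, with matching auxiliary data, orientations, and Thom forms. The construction only guarantees unobstructedness in the $2\epsilon$-collars. I would first handle this by the choice of $r$ in the proof of Lemma~\ref{thm:cobordism-global-chart-existence}, which makes the charts products with the collar there. Then I would verify that the boundary faces of the collars agree with the cubical faces, so that (d) does not interact with (a)–(c).
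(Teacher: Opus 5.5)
Your proposal is correct and follows the paper's route: Stokes' theorem on the cubical charts as in \cite[Proposition~2.5]{ST16}, where the only new contribution is the vertical boundary of $\wt\cTc_\Gamma$ over $\del\wt L$, identified with $f_\pm\q^{J_\pm}_{-1,\ell}(i_\pm^*\wt\gamma)$ by the mechanism of Lemma~\ref{lem:pullback-intertwines} via \cite[Lemma~4.2.1]{BC09b}. The paper gets the sign $(-1)^{n+|\wt\gamma|+1}$ more quickly from the degree count $|\wt\q_{-1,\ell}(\wt\gamma)|+1\equiv n+|\wt\gamma|+1$, or by integrating \cite[Proposition~4.4]{ST16} over $[-1,1]$; one small slip in your write-up is that compact support of the integrand comes from $\obs_\Gamma^*\wt\eta_\Gamma$ being supported near the zero locus, which is compact by Lemma~\ref{lem:cobordism-moduli-compact}, not from $\wt\gamma_j$ being pulled back along $\pi$ (such forms are invariant in the imaginary direction).
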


\begin{proof}
	The proof is similar to that of \cite[Proposition~2.5]{ST16}, except for the additional terms $f_+\q_{-1,\ell}^{J_+}(i_+^*\wt\gamma) - f_-\q_{-1,\ell}^{J_-}(i_-^*\wt\gamma)$, which come from the additional vertical boundary. The sign in front of this arises as $|\wt \q_{-1,\ell}(\wt \gamma)| + 1 \equiv n+ |\wt \gamma| +1$. Alternatively, one can compute the sign by taking \cite[Proposition~4.4]{ST16} and integrating over $[-1,1]$.
\end{proof}

Given a bulk deformation parameter $\wt \gamma \in \Omega^*(X\times [-1,1]; \mathcal{I}Q)$ and $\wt b \in \Omega^*(\wt L, \mathcal{I}R)$ one can define bulk-deformed $\wt \q$ operations, and bounding pairs exactly as in \textsection\ref{section: bulk-deformations and bounding cochains}. Lemma \ref{lem:pullback-intertwines} shows that the pullback of a bounding pair $(\wt \ga, \wt b)$ to $(X, L_*^\pm)$, denoted by $(\gamma_\pm, b_\pm)$ is again a bounding pair. Moreover, the Lagrangian superpotential satisfies $$f_\pm c^{\pm} = j^*_{\pm}\wt c.$$

\begin{remark}
	Technically $(\gamma_\pm,b_\pm)$ are only bounding pairs for the operations $f_\pm\q^{J_\pm}$ on $\Omega^*(\LL^\pm;\Lambda_{\LL_\pm})$. We will only need these operations, so will be content with this abuse of notation.
\end{remark} 

The bulk-deformed structure equations are then given as follows. 

\begin{proposition}\label{prop: bulk deformed q tilde operation} Given $\wt \eta \in \Omega^*(X \times [-1,1])$ we have
	\begin{equation*}\label{deformed q tilde empty}
		 d(\pi_X)_* \wt \q_{\emptyset,\ell}^{\,\wt \gamma}(\wt \eta) - (\pi_X)_*\wt \q_{\emptyset,\ell}^{\,\wt \gamma}(d \wt \eta) = (-1)^{|\wt \eta|+1} \left(\q_{\emptyset,\ell}^{\gamma_0}(i_1^* \wt \eta) - \q_{\emptyset,\ell}^{\gamma_1}(i_0^* \wt \eta)\right)
	\end{equation*}
	and
	\begin{multline*}\label{deformed q tilde -1} \wt \q_{-1,\ell}^{\,\wt \gamma, \wt b}(d\wt \eta) =  (-1)^{n+|\wt \eta|+1}\lbr{f_+\q_{-1,\ell}^{\gamma_+, b_+}(i_+^*\wt \eta) - f_-\q_{-1,\ell}^{\gamma_-, b_-}(i_-^*\wt \eta)} \\ +\frac{1}{2}\sum_{I\sqcup J=\{1,\ldots,\ell\}}
		(-1)^{\iota(\wt \eta;I)} \langle\, \wt \q_{0,|I|}^{\,\wt \gamma, \wt b}(\wt \eta_I),\,\wt \q_{0,|J|}^{\,\wt \gamma, \wt b}(\wt \eta_J)\,\rangle_{\wt L}
		+ (-1)^{|\wt\eta|+1}\int_{\wt L} \pi_I^* \wt \q_{\emptyset,\ell}^{\,\wt \gamma}(\wt \eta).
	\end{multline*}
\end{proposition}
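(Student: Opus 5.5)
The plan is to derive both identities from the undeformed structure equations by substituting the bulk-deformation pair, following \cite[Proposition~4.21 and Lemma~4.22]{ST16}; only the boundary terms at $s=\pm1$ need new input. For the closed operation, expand $\wt\q^{\,\wt\gamma}_{\emptyset,\ell}(\wt\eta)=\sum_t \frac{1}{t!}\wt\q_{\emptyset,\ell+t}(\wt\eta,\wt\gamma^{\otimes t})$ and apply the undeformed identity (the lemma preceding Proposition~\ref{prop: structure equation bulk deformed q -1 tilde}) term by term. Because $d\wt\gamma=0$ and $|\wt\gamma|=2$ is even, the terms where $d$ falls on a copy of $\wt\gamma$ vanish and no signs are introduced. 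The boundary term $\q^{J_\pm}_{\emptyset,\ell+t}(i_\pm^*\wt\eta, (i_\pm^*\wt\gamma)^{\otimes t})$ is identified with the summand of $\q^{\gamma_\pm}_{\emptyset,\ell}$ through Lemma~\ref{lem:pullback-intertwines}. Summing over $t$ gives the first formula, after matching the endpoint labels $0,1$ with $\mp$.

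For the second identity, write
\[
\wt\q^{\,\wt\gamma,\wt b}_{-1,\ell}=\sum_{k,t}\frac{1}{t!(k+1)}\langle\wt\q_{k,\ell+t}(\wt b^{\otimes k};\wt\eta,\wt\gamma^{\otimes t}),\wt b\rangle_{\wt L}+\sum_t\frac1{t!}\wt\q_{-1,\ell+t}(\wt\eta,\wt\gamma^{\otimes t}).
\]
\begin{enumerate}
\item Apply Proposition~\ref{prop: structure equation bulk deformed q -1 tilde} to the second sum. This produces the endpoint terms with $f_\pm\q^{J_\pm}_{-1,\ell+t}$, the quadratic pairing terms, and the closed term.
\item Treat the pairing terms. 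Stokes' theorem on $\wt L$ now produces an extra boundary contribution $\int_{\partial\wt L}$, equal to $\pm\big(\langle\cdot,\cdot\rangle_{L^+}-\langle\cdot,\cdot\rangle_{L^-}\big)$ of the $j_\pm^*$-restrictions. By Lemma~\ref{lem:pullback-intertwines} these restrictions equal $f_\pm$ of the corresponding pairings for $(\gamma_\pm,b_\pm)$.
\item Expand $d$ of the first sum using Proposition~\ref{boundary of  wt q operation} together with cyclic symmetry (Proposition~\ref{cyclic symmetry} holds for $\wt\q$ by the same proof). Reorganise as in \cite{ST16}, using the $\frac{1}{k+1}$ weights, the Maurer--Cartan equation $\wt\q^{\,\wt\gamma,\wt b}_{0,0}=\wt c\cdot 1$, and the unit property to kill the terms involving $\wt c$, since $\langle 1,\cdot\rangle$ paired with the relevant degree terms vanishes by top degree/unit. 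The result is the quadratic term $\tfrac12\sum\langle\wt\q^{\,\wt\gamma,\wt b}_{0,|I|},\wt\q^{\,\wt\gamma,\wt b}_{0,|J|}\rangle_{\wt L}$.
\end{enumerate}
The endpoint contributions from steps 1 and 2 then assemble exactly into $f_+\q^{\gamma_+,b_+}_{-1,\ell}-f_-\q^{\gamma_-,b_-}_{-1,\ell}$.

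The main obstacle is the sign and normalisation bookkeeping of step 2: checking that the Stokes boundary term on $\wt L$ carries the same sign $(-1)^{n+|\wt\eta|+1}$ as the vertical boundary term of Proposition~\ref{prop: structure equation bulk deformed q -1 tilde}, using $|\wt b|=1$ and the orientation of $\partial\wt L=L^+\sqcup -L^-$. I would first fix this sign by a degree count, $|\langle\cdot,\cdot\rangle|$ on $\wt L$ versus on $L$, and then cross-check it against \cite[Proposition~4.4]{ST16} integrated over $[-1,1]$, as in the proof of Proposition~\ref{prop: structure equation bulk deformed q -1 tilde}. All remaining manipulations are formal and carry over verbatim from \cite{ST16}.
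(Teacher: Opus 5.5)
Your overall route is the one the paper intends: expand the deformed operations, apply the undeformed identities term by term, and pick up an extra Stokes term on $\partial\wt L$ from the pairing part. The paper gives no separate proof and treats the statement as the formal bulk-deformed consequence of the undeformed equations, as in Solomon--Tukachinsky. Your first identity is fine as described.

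In the second identity, however, step~3 contains a wrong ingredient. The Maurer--Cartan equation plays no role here. The identity holds for an arbitrary bulk-deformation pair $(\wt\gamma,\wt b)$, and the paper imposes $\wt\q^{\wt\gamma,\wt b}_{0,0}=\wt c\cdot 1$ only \emph{after} applying it, in the proof of Theorem~\ref{thm: relative potential is preserved} and in the tensor-potential statement. The terms you propose to ``kill'' are the summands of $\tfrac12\sum\langle\wt\q^{\wt\gamma,\wt b}_{0,|I|},\wt\q^{\wt\gamma,\wt b}_{0,|J|}\rangle_{\wt L}$ with $I$ or $J$ empty, and they do not vanish. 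For $\ell=1$, the term $\langle\wt c\cdot 1,\wt\q^{\wt\gamma,\wt b}_{0,1}(\wt\eta)\rangle_{\wt L}$ sees the energy-zero contribution $\wt\q^{\beta_0}_{0,1}(\wt\eta)=-(\pi_I^*\wt\eta)|_{\wt L}$. This is one of the exceptions to the top-degree property, so the term equals $\pm\wt c\int_{\wt L}\pi_I^*\wt\eta$. It is exactly the term that becomes $-c\int_{\wt L}\pi_X^*\eta$ when the paper shows that $\Phi$ intertwines the tensor potentials. Dropping these terms therefore gives a false formula; keeping them makes the MC/unit step vacuous, so it should simply be deleted.

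The quadratic term comes from combinatorics alone. Pair the structure equation for $\wt\q_{k,\ell+t}(\wt b^{\otimes k};\wt\eta,\wt\gamma^{\otimes t})$ with $\wt b$ and weight it by $\frac{1}{t!\,(k+1)}$. Take an outer operation of arity $m$ and an inner one $X=\wt\q_{k_2}(\wt b^{\otimes k_2};\cdot)$. Cyclic symmetry turns each of the $m$ terms $\langle\wt\q_m(\wt b^{\otimes k_1},X,\wt b^{\otimes k_3};\cdot),\wt b\rangle_{\wt L}$, with $k_1+k_3=m-1$, into $\pm\langle\wt\q_m(\wt b^{\otimes m};\cdot),X\rangle_{\wt L}$, which produces the weight $\frac{m}{m+k_2}$. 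Symmetrising under $(m,I)\leftrightarrow(k_2,J)$ gives $\tfrac12\sum\langle\wt\q^{\wt\gamma,\wt b}_{0,|I|},\wt\q^{\wt\gamma,\wt b}_{0,|J|}\rangle_{\wt L}$ minus its $(m,k_2)=(0,0)$ part. That missing part is precisely the $\wt b$-free quadratic term produced in your step~1.

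Cyclic symmetry fails on $\wt L$ in exactly one case, the outer operation $\wt\q^{\beta_0}_{1,0}=d$. So your parenthetical claim that cyclic symmetry holds for $\wt\q$ ``by the same proof'' is false there. Its defect is exactly your Stokes term from step~2. By Lemma~\ref{lem:pullback-intertwines}, that term supplies the pairing terms $\langle\q_{k,\ell+t}(b_\pm^{\otimes k};\cdot),b_\pm\rangle_{L^\pm}$ of $f_\pm\q^{\gamma_\pm,b_\pm}_{-1,\ell}$, with the correct weights $\frac{1}{t!\,(k+1)}$. With step~3 replaced by this argument, the proof goes through, modulo the sign bookkeeping you already flagged.
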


 A special class of Lagrangian cobordism yields the invariance of the open Gromov--Witten invariants, Theorem~\ref{thm:invariance-ogw}, which is the main theorem of this subsection.
\begin{definition}\label{def:pseudoisotopy}
	A \emph{(cyclic unital) pseudo-isotopy} of (cyclic unital) curved $A_\infty$ algebra structures on $\Omega^*(L)$ is a (cyclic unital) curved $A_\infty$ algebra structure on $\Omega^*(L \times [-1,1])$.
\end{definition}

Fukaya showed that the notion of being pseudo-isotopic is stronger than being quasi-equivalent.

\begin{theorem}[{\cite[Theorem~8.2]{Fu10}}]
	\label{thm:pseudoisotopy induces QI}
	A (cyclic unital) pseudo-isotopy between two $A_\infty$ algebras $\scA_-$ and $\scA_+$ induces a (cyclic unital) quasi-equivalence $\scA_- \simeq \scA_+$.
\end{theorem}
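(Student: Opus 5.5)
The plan is to follow Fukaya's argument: write the pseudo-isotopy as a one-parameter family of $A_\infty$ structures on $\Omega^*(L)$ together with a family of "infinitesimal gauge" operations in the $dt$-direction, and then integrate the resulting flow to get an $A_\infty$ morphism from $\scA_-$ to $\scA_+$.

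First I would use that the structure maps $\wt\fm_k$ on $\Omega^*(L\times[-1,1])$ are, by assumption, pseudo-isotopic in Fukaya's sense, i.e.\ determined by forms on $L$ depending smoothly on $t\in[-1,1]$ and by pointwise restriction. I would decompose them uniquely as
\[
\wt\fm_k(\alpha_1,\dots,\alpha_k)(t)\;=\;\fm^t_k(\alpha_1(t),\dots,\alpha_k(t)) + dt\wedge \fc^t_k(\alpha_1(t),\dots,\alpha_k(t)),
\]
with the exterior derivative in $t$ contributing to $\wt\fm_1$. Here $\fm^t$ and $\fc^t$ are families of multilinear maps on $\Omega^*(L)$, and $\fm^{\pm1}$ recover $\scA_\pm$ after restricting along $j_\pm$. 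I would then expand the $A_\infty$ relations for $\wt\fm$ into components with and without $dt$:
\begin{itemize}
\item the $dt$-free part says each $\fm^t$ is a curved $A_\infty$ structure;
\item the $dt$-part says $\frac{d}{dt}\hat\fm^t=[\hat\fc^t,\hat\fm^t]$ on the bar coalgebra $B\Omega^*(L)$, with hats denoting coderivations.
\end{itemize}
Cyclicity of $\wt\fm$ with respect to $\int_{L\times[-1,1]}$ gives cyclicity of both $\fm^t$ and $\fc^t$ for $\langle\cdot,\cdot\rangle_L$. Unitality gives $\fm^t$ unital and $\fc^t$ vanishing whenever an input is the unit.

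Second, I would construct $\hat f^t\colon B\Omega^*(L)\to B\Omega^*(L)$ as the solution of the time-ordered equation
\[
\frac{d}{dt}\hat f^t=\hat\fc^t\circ\hat f^t,\qquad \hat f^{-1}=\ide,
\]
taking $\hat f^t$ to be a coalgebra map. Concretely, the components $f^t_k$ are given by the iterated-integral (Dyson) series $\sum_N\int_{-1<t_1<\dots<t_N<t}\hat\fc^{t_N}\circ\dots\circ\hat\fc^{t_1}$, projected to the cogenerators. The derivative formula then shows that $\hat\fm^t\circ\hat f^t-\hat f^t\circ\hat\fm^{-1}$ satisfies a linear ODE with zero initial value, so it vanishes identically. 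Hence $f\coloneqq f^{1}$ is an $A_\infty$ homomorphism $\scA_-\to\scA_+$.

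The main obstacle is convergence of this series: the curvature $\fc^t_0$ means infinitely many terms contribute to each $f_k$. I would handle this with the energy filtration on $\Lambda$. Every $\fc^t_k$ with $k\neq 1$, and the non-identity part of $\fc^t_1$, has positive valuation; this is the gapped condition, which matches the situation here since $\fq^{\beta_0}$ has the explicit form of Proposition~\ref{energy zero property}. Consequently only finitely many terms contribute below each energy level, and the limit exists in the completed tensor product. The same Grönwall-type uniqueness in each energy truncation justifies the vanishing argument in the second step.

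Third, I would check the properties of $f$. Its linear term is $f_1=\ide$ modulo $\Lambda_+$, and the reverse flow gives an inverse, so $f$ is a quasi-equivalence, in fact an isomorphism. Cyclicity of $\fc^t$ makes the flow preserve the pairing, so $f$ is cyclic. Vanishing of $\fc^t$ on unit insertions gives $f_k(\dots,1,\dots)=0$ for $k\geq2$ and $f_1(1)=1$, so $f$ is unital.
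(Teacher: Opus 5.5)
Your proposal is correct and is essentially the argument of \cite{Fu10} that the paper invokes without reproducing it. You split the pseudo-isotopy into the families $\fm^t$ and $\mathfrak{c}^t$, build the $A_\infty$ isomorphism $\scA_-\to\scA_+$ as the energy-convergent time-ordered flow of the coderivations $\hat{\mathfrak{c}}^t$ (equivalently, Fukaya's sum over time-ordered trees), and read off cyclicity and unitality from those of $\mathfrak{c}^t$. One interpretive point, which you already flag, is that the $t$-local decomposition and the cyclicity of each $\fm^t$ and $\mathfrak{c}^t$ must be taken from Fukaya's definition of pseudo-isotopy rather than the paper's looser one. In particular, they cannot literally follow from cyclicity with respect to $\int_{L\times[-1,1]}$, since the $dt\wedge\partial_t$ part of the differential picks up boundary terms there.
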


Our main result on independence from choices is a straightforward consequence.

\begin{proposition}\label{thm:trivialisable cobordism induces quasi-equivalence}
	The unital cyclic curved $A_\infty$-algebra $CF^*(L)$ is independent up to cyclic unital pseudo-isotopy of the choice of 
	\begin{enumerate}[\normalfont \arabic*),leftmargin=20pt, ref=\arabic*]
		\item\label{choice-thom-system} Thom system,
		\item\label{choice-auxiliary-data} unobstructed auxiliary data,
		\item\label{choice-isotopy} $L$ in its Hamiltonian isotopy class,
		\item\label{choice-acs} tame almost complex structure.
	\end{enumerate}
\end{proposition}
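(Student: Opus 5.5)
In each of the four cases I will produce a cyclic unital curved $A_\infty$ structure on $\Omega^*(L\times[-1,1];\Lambda)$ restricting to the two given structures at $\pm1$. The main tool is the trivialisable case of Lemma~\ref{thm:cobordism-global-chart-existence}\eqref{i:trivialisable}. Take $\wt L = L\times[-1,1]$ (or the suspension $L_H$ of Example~\ref{ex:suspension-hamiltonian-isotopy}), together with a cylindrical $\wt J$ built from a path $\{J_s\}$. Then the thickenings $\wt\cT$ carry $G$-invariant submersions to $[-1,1]$ whose fibres are global Kuranishi charts for $\Mbar^{J_s}_{k,\ell}(\beta)$. Extend this to a system of cubical cobordisms $\{\wt\cKc_\Gamma\}$ as in the paragraph preceding Corollary~\ref{cor:thom-system-lag-cob}, restricting at $\pm1$ to the two given systems. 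By Corollary~\ref{cor:thom-system-lag-cob}, choose a Thom system $\{\wt\eta_\Gamma\}$ extending the given $\eta^\pm_\Gamma$ and compatible with forgetful maps as in Lemma~\ref{lem:thom-form-and-forgetful}. The resulting operations $\wt\fq_{k,0}$ form an $A_\infty$ structure on $\Omega^*(\wt L)\cong\Omega^*(L\times[-1,1])$ by the corollary to Proposition~\ref{boundary of  wt q operation}. Lemma~\ref{lem:pullback-intertwines} gives $j_\pm^*\wt\fq_{k,0}=\fq^\pm_{k,0}\circ j_\pm^*$, so this is a pseudo-isotopy in the sense of Definition~\ref{def:pseudoisotopy}.

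For cyclicity and unitality I rerun the proofs of Propositions~\ref{unit property} and~\ref{cyclic symmetry} on the family charts. These proofs use only compatibility with forgetting boundary marked points and permutation invariance of the Thom forms, and both are built into the extended system. The one new ingredient is that the pairing must be taken fibrewise over $[-1,1]$, as in \cite[\S4]{ST16}. This is possible because $\evab_0$ is a submersion (Lemma~\ref{thm:cobordism-global-chart-existence}) compatible with the projection to $[-1,1]$.

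The four cases then reduce to this construction as follows.
\begin{itemize}
\item[\eqref{choice-acs}] Take a path $J_s$ joining the two tame almost complex structures, with $\wt L=L\times[-1,1]$.
\item[\eqref{choice-isotopy}] Take the suspension of the Hamiltonian isotopy, which is diffeomorphic to $L\times[-1,1]$. Transport forms along that diffeomorphism, using a fixed $J$, or a path $J_s$ adapted to the isotopy.
\item[\eqref{choice-auxiliary-data}] Use the constant path $J_s\equiv J$. By Lemma~\ref{thm:cobordism-global-chart-existence}\eqref{i:equivalence-lag-cob} and \eqref{i:cobordism-lag-cob}, together with Theorem~\ref{thm:gkc-equivariant-existence}\eqref{gkc-uniqueness}--\eqref{gkc-cobordant}, the family chart can be chosen to restrict at the two ends to the charts built from the two auxiliary data (up to stabilisation and group enlargement, which do not change the operations).
\item[\eqref{choice-thom-system}] Use the constant family on all data and apply Corollary~\ref{cor:thom-system-lag-cob} to interpolate between the two Thom systems.
\end{itemize}

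The main obstacle is step~\eqref{choice-auxiliary-data}. Unobstructed auxiliary data for different energies are chosen independently, so the family charts must interpolate between them while remaining compatible with every clutching map, i.e.\ the cubical structure of Proposition~\ref{prop:cubical-cobordisms} has to be built over $[-1,1]$. My first attempt is to take the family auxiliary datum with a polarisation pulled back from $X$, choose $r$ large enough to be unobstructed for every $s$, and run the inductive construction of \cite[Theorem~4.1]{HH25} over $[-1,1]$. I expect the existence of the relative Thom system to follow from the relative version of \cite[Theorem~4.32]{HH25}, since the ends are already fixed.
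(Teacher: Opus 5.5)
Your treatment of the Thom-system, almost-complex-structure and Hamiltonian-isotopy cases is the paper's argument. In each you realise the comparison by a trivial (or suspended) Lagrangian cobordism, extend the given Thom systems by Corollary~\ref{cor:thom-system-lag-cob}, and take the operations $\wt\fq$.

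The gap is in case~\eqref{choice-auxiliary-data}. Your ``first attempt'' uses one family auxiliary datum $(\conn^{\wt X},\pr_1^*\cO_X(1),\cU,\lambda,r)$ with $r$ large. Such a datum has a single polarisation, hence a single degree $d=\Omega(\beta)$, a single group $\text{PO}(d+1)$, a single framing space in $\Mbar(\bC P^d,\bR P^d)$, a single bundle $E_r$ and a single $\lambda$. Both ends of the resulting family are therefore charts built from (restrictions of) the \emph{same} datum. Two arbitrary unobstructed data $\alpha_\mfa,\alpha'_\mfa$ can have different polarisations (so $d\neq d'$ and different structure groups), different $r$ and different $\lambda$. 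No singly framed family can restrict to both, and taking $r$ large does not address this.

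The results you cite do not close the gap either. Theorem~\ref{thm:gkc-equivariant-existence}\eqref{gkc-cobordant} and Lemma~\ref{thm:cobordism-global-chart-existence}\eqref{i:cobordism-lag-cob} only say that \emph{some} choices of data give cobordant charts, and they concern varying $J$. Parts \eqref{gkc-uniqueness} and \eqref{i:equivalence-lag-cob} give an equivalence. An equivalence is not a chart over an interval, and it carries no compatibility with the clutching maps, which the $A_\infty$ relations for $\wt\fq$ require.

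The missing ingredient is the doubly framed chart of \cite[\S2.5]{HH25}, which is what the paper uses:
\begin{itemize}
\item the base consists of regular maps to $\bC P^{d_0}\times\bC P^{d_1}$;
\item the group is $G_\mfa\times G'_\mfa$;
\item the obstruction bundle is $H^0(\varphi_0^*E_0)\oplus\fp_{\mfa_0}\oplus H^0(\varphi_1^*E_1)\oplus\fp_{\mfa_1}$;
\item the thickening is cut out by $\delbar_J u+(1-t)\lspan{\eta_0}\g d\varphi_0+t\lspan{\eta_1}\g d\varphi_1=0$ for $t\in[0,1]$.
\end{itemize}
At $t=0$ the pair $(\varphi_1,\eta_1)$ does not enter the equation. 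So the fibre there is related to $\cK_{\alpha_\mfa}$ by stabilisation by the $E_1\oplus\fp$ summand and group enlargement by $G'_\mfa$; the same holds symmetrically at $t=1$. The interpolation in $t$ is exactly what makes the ends literal stabilisations and group enlargements. With both perturbations switched on at once, the chart is not directly a stabilisation of either singly framed chart.

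From there the argument runs as follows. The explicit construction in \cite[Theorem~4.1]{HH25} shows that the associated cubical cobordisms inherit this endpoint relation. You then extend Thom systems by Corollary~\ref{cor:thom-system-lag-cob}. At the ends, use Thom forms that are products with Thom forms of the stabilising summands, so that the end operations are the original ones. Finally take $\wt\fq$. With this replacement for your single-datum family, the rest of your proposal goes through.
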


\begin{proof}
 Given a system $\{\cK_{\mathsf{a}} = \cK_{\alpha_{\mathsf{a}}}\}_{\mathsf{a}}$ of global Kuranishi charts as in Proposition~\ref{prop:cubical-cobordisms} and two choices of Thom systems $\{\eta^\pm_{\mathsf{a}}\}$, the auxiliary datum $\alpha_{\mathsf{a}}$ is also an unobstructed auxiliary datum for $\Mbar_{\mathsf{a}}(\wt\beta)$, where $\wt L = L \times[-1,1]$ is the trivial Lagrangian cobordism and $\wt\beta$ is the image of $\beta$ in $H_2(\wt X,\wt L;\bZ)$. By Corollary \ref{cor:thom-system-lag-cob}, we may equip the associated cubical cobordism with a Thom system $\wt \eta$ which restricts to $\eta^\pm$ on the boundary. The operations $\wt \q$ yield the required pseudo-isotopy, proving \eqref{choice-thom-system}.\par
	Given two systems $\{\cK_{{\mathsf{a}}}= \cK_{\alpha_{\mathsf{a}}}\}$ and $\{\cK'_{\mathsf{a}}= \cK_{\alpha'_{\mathsf{a}}}\}$, we recall the construction of \cite[\S2.5]{HH25} of a global Kuranishi chart
	\begin{equation*} \wt\cK_{\mathsf{a}} = (G_{\mathsf{a}}\times G'_{\mathsf{a}},\wt\cT_{\mathsf{a}}/\wt\cB_{\mathsf{a}},\wt\cE_{\mathsf{a}},\wt\obs_{\mathsf{a}})\end{equation*}
	for $[0,1]\times\Mbar_{\mathsf{a}}$ that is a cobordism from $\cK_{\alpha_{\mathsf{a},0}}$ and $\cK_{\alpha_{\mathsf{a},0}}$ up to stabilisation and group enlargement. Let 
	$$\wt\cB_{\mathsf{a}}\sub \Mbar_{\lc{\mathsf{a}}}^{J_0\times J_0\, (d_0,d_1)}(\bC P^{d_0}\times \bC P^{d_1},\bR P^{d_0}\times \bR P^{d_1})$$
	be the locus of regular maps whose complex doubles have no non-trivial automorphisms. Write $\wt\cC_{\mathsf{a}}\to \wt\cB_{\mathsf{a}}$ for its universal family. Given $\wt\varphi \in \wt\cB_{\mathsf{a}}$, we write $\varphi_i$ for the composition of $\wt\varphi$ with the projection to the respective factor. Defining $\wt\cZ_{\mathsf{a}}$ just as we defined $\cZ_{\mathsf{a}}$ in \eqref{eq:family}, we let
	\begin{equation*}\wt\cT\sub \set{(t,\wt\varphi,u,\eta_0,\eta_1)\;\big|\; {(\wt\varphi,u)\in \wt\cZ_{\mathsf{a}}, \delbar_J u + (1-t)\lspan{\eta_0}\g d\varphi_0 + t\lspan{\eta_1}\g d\varphi_1 = 0}}\end{equation*}
	be the subset of curves satisfying that the linearisation of the perturbed Cauchy--Riemann equation is regular. The obstruction bundle $\wt\cE\to \wt \cT$ is 
	\begin{equation*}\wt\cE_{(\wt\varphi,u,\eta,\eta')} = H^0(C,\varphi_0^*E_0)\oplus \fp_{\mathsf{a}_0}\oplus H^0(C,\varphi_0^*E_1)\oplus \fp_{\mathsf{a}_1}\end{equation*}
	and the obstruction section is (a mollification of) the analogue of \eqref{eq:obstruction-section}. It is a straightforward verification to see that $\cK_{\mathsf{a}_i}$ is related to the fibre of $\wt\cK_{\mathsf{a}}$ over $i$ by group enlargement and stabilisation. It follows from the explicit construction of the cubical cobordisms in \cite[Theorem~4.1]{HH25} that the same remains true for the associated cubical cobordisms.
	Picking any Thom system $\eta^i_*$ for the cubical cobordism associated to the choices of auxiliary data, we may extend them to a Thom system for the cubical cobordism $\{\wt\cKc_\Gamma\}$ by Corollary~\ref{cor:thom-system-lag-cob}. The operations $\wt \q$ then yield the required pseudo-isotopy.
	
	By Example~\ref{ex:suspension-hamiltonian-isotopy} respectively the trivial Lagrangian cobordism where only the almost complex structure varies, we can describe Hamiltonian isotopies and isotopies of tame almost complex structures in terms Lagrangian cobordisms. As the space of tame almost complex structures is contractible, the result follows.
\end{proof}

\begin{remark}
	The pseudo-isotopy arising from the choice of Thom system for $\wt L$ is non-unique, but any two choices of such lead to pseudo-isotopic pseudo-isotopies and so forth.
\end{remark}

We adapt {\cite[Definition~2.23]{ST21} to our context. As before, $L$ is a relatively spin Lagrangian.
	
\begin{definition}
	 Two bounding pairs $(\gamma_\pm,b_\pm)$ for (possibly different) choices of auxiliary data, Thom systems, and tame almost complex structures $J_\pm$ are \emph{gauge equivalent} if there exist auxiliary data, Thom systems and an isotopy of tame almost complex structures for the Lagrangian $L \times [-1,1]$, and $\wt \ga \in \Omega^*(X \times [-1,1]; \mathcal{I}Q)$, $\wt b \in \Omega^*(L \times [-1,1]; \mathcal{I}R)$ with 
	\begin{equation*}
		i_\pm^*\wt \ga \; =\; \ga_\pm, \;\; j_\pm^*\wt b\; =\; b_\pm, \;\; d\wt \gamma = 0, \;\; |\wt \ga| = 2,
	\end{equation*}
	such that the associated $\wt \q$ operation satisfies 
	$\wt \q^{\wt b, \wt \ga}_{0,0}\; =\; \wt c \cdot 1$
	for some $\wt c \in \mathcal{I}R$ with $|\wt c| = 2$.
\end{definition}

The proof of Theorem~3.1 \cite{Fuk11} shows that pseudo-isotopies induce an isomorphism between the sets of bounding cochains up to gauge-equivalence.

\begin{lemma}
	\label{lem:pseudo-isotopy induces gauge equivalence}
	Let $A_\pm$ be pseudo-isotopic curved $A_\infty$ algebras, then a bounding cochain $b_-$ induces a gauge-equivalent bounding cochain $b_+$.
\end{lemma}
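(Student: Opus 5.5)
The plan is to follow Fukaya's argument from \cite[Theorem~3.1]{Fuk11}, phrased in terms of the pseudo-isotopy $\wt\scA$ on $\Omega^*(L\times[-1,1])$ with operations $\wt\fm_k$. Given a bounding cochain $b_-$ for $\scA_-$ with $\fm^{b_-}_0 = c_-\cdot 1$, I would construct $\wt b\in\Omega^*(L\times[-1,1];\mathcal{I}R)$ of degree $1$ with $j_-^*\wt b = b_-$ and $\wt\fm^{\wt b}_0 = \wt c\cdot 1$ for some constant $\wt c$. Then $b_+ := j_+^*\wt b$ is a bounding cochain for $\scA_+$, because the restriction maps $j_\pm^*$ intertwine $\wt\fm$ with $\fm^\pm$; this is the algebraic content of a pseudo-isotopy, cf. Lemma~\ref{lem:pullback-intertwines}. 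Moreover $(b_-,b_+)$ are gauge equivalent by the definition above, with $\wt b$ serving as the pseudo-isotopy.

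I would build $\wt b$ inductively over the discrete filtration given by the valuation $\nu$ on $R$. Since $R$ is complete, order the values $0<\lambda_1<\lambda_2<\dots$ of $\nu$ occurring in the relevant expressions (Gromov compactness gives discreteness). Start with $\wt b^{(0)} := \pi^*b_-$, where $\pi\colon L\times[-1,1]\to L$ is the projection. Suppose $\wt b^{(i)}$ satisfies $j_-^*\wt b^{(i)} = b_-$ and
\[
\wt\fm^{\wt b^{(i)}}_0 \equiv \wt c^{(i)}\cdot 1 \mod T^{\lambda_{i+1}}.
\]
Write the leading error as $o_{i+1}$, the coefficient of $T^{\lambda_{i+1}}$ in $\wt\fm^{\wt b^{(i)}}_0 - \wt c^{(i)}$.

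1. The $A_\infty$ relations (the Corollary following Proposition~\ref{boundary of  wt q operation}), together with the unit property, show that $o_{i+1}$ is $d$-closed modulo constants. More precisely, $d\,o_{i+1}$ lies in $\bR\cdot 1$ plus higher order terms, and by degree reasons $o_{i+1}$ has degree $2$.
2. The key step is to kill $o_{i+1}$ by adding $T^{\lambda_{i+1}}\wt x$ to $\wt b^{(i)}$ with $d\wt x = -o_{i+1} + \text{const}$, subject to the constraint $j_-^*\wt x = 0$. The restriction $j_-^*o_{i+1}$ equals the corresponding error for $b_-$, which vanishes up to constants. So the class of $o_{i+1}$ in the relative cohomology $H^2(L\times[-1,1], L\times\{-1\})=0$ vanishes, and a primitive $\wt x$ vanishing on $L\times\{-1\}$ exists, for instance via the standard homotopy operator $\int_{-1}^t$ along the interval.
3. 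Set $\wt b^{(i+1)} = \wt b^{(i)} + T^{\lambda_{i+1}}\wt x$ and pass to the limit $\wt b = \lim_i \wt b^{(i)}$, which converges in $\Omega^*(L\times[-1,1];\mathcal{I}R)$.

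The main obstacle I expect is step 2, and specifically handling the constant term in the weak case. The error $o_{i+1}$ may contain a component of the form $f(t)\cdot 1$ with $f$ nonconstant in the $[-1,1]$ direction, which cannot be absorbed into $\wt c$. The approach is to observe that the degree-$0$ part of $o_{i+1}$ is pulled back from $[-1,1]$. Cyclicity and the top-degree property (Proposition~\ref{top degree property}) then force its derivative to be exact in the relative complex, so it too can be absorbed into $\wt x$. If cyclicity is only needed for that step, a fallback is to work with the cyclic unital version, where Theorem~\ref{thm:pseudoisotopy induces QI} supplies a unital quasi-equivalence and Fukaya's transfer of Maurer--Cartan elements along it gives $b_+$ directly.
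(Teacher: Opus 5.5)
Your argument is correct, but it takes a different route from the paper. The paper does no obstruction theory. It invokes Fukaya's result that the pseudo-isotopy induces a family of quasi-isomorphisms $\phi_t\colon A_-\to A_t$, and sets $b(t)=(\phi_t)_*b_-$ using FOOO's push-forward of Maurer--Cartan elements along $A_\infty$ morphisms.

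You instead build the required $\wt b$ on $\Omega^*(L\times[-1,1])$ directly, one valuation level at a time. Your only inputs are:
\begin{itemize}
\item the relation $\wt\fm^{\wt b}_1\wt\fm^{\wt b}_0=0$;
\item unitality;
\item compatibility of $j_\pm^*$ with the operations;
\item vanishing of $H^*(L\times[-1,1],L\times\{-1\})$ in all degrees.
\end{itemize}
This is more elementary and needs neither cyclicity nor Fukaya's ODE construction of $\phi_t$. It also produces exactly the datum required by the paper's definition of gauge equivalence, including the $dt$-component of $\wt b$, which the paper's one-line argument leaves implicit. It gives $c_+=c_-$ for free. The paper's route ties $b_+$ canonically to the quasi-isomorphism $\phi_{+1}$ and to FOOO's Maurer--Cartan theory. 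Your $b_+$ depends on the chosen primitives, which is harmless for the existence statement of the lemma.

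The obstacle in your last paragraph does not arise, and your proposed remedy would not work if it did.
\begin{itemize}
\item Since $\wt\fm^{\wt b}_1=d+(\text{positive valuation})$ and $\wt\fm^{\wt b}_1(\wt c^{(i)}\cdot 1)=0$ by unitality, the leading error satisfies $d\,o_{i+1}=0$ exactly, not merely modulo constants.
\item Because $R$ is graded, $o_{i+1}$ has total degree $2$ but mixed form degree, and closedness holds in each form degree separately.
\item In particular, its function part is locally constant on $L\times[-1,1]$. On each component it therefore equals its restriction to $L\times\{-1\}$, which is the single constant by which $c_-$ and $\wt c^{(i)}$ differ at this order. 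Absorb this constant into $\wt c$.
\item What remains, $o'$, is closed with $j_-^*o'=0$. Let $\pi\colon L\times[-1,1]\to L$ be the projection and $K$ integration over $[-1,t]$, so that $dK+Kd=\mathrm{id}-\pi^*j_-^*$ and $j_-^*K=0$. Then $\wt x=-Ko'$ works in all form degrees at once.
\end{itemize}
Conversely, a nonconstant $f(t)\cdot 1$ could never be absorbed into $\wt x$, since $d\wt x$ has no form-degree-$0$ component. So cyclicity and the top-degree property play no role.

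One small point on the induction: discreteness of the valuations should come from the monoid generated by the gaps of the operations together with the valuations occurring in $b_-$, not from Gromov compactness alone.
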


\begin{proof}
	Let $A_t$ be the $A_\infty$ algebra at $t$. Let $\phi_t\cl A_- \rightarrow A_t$ be the quasi-equivalence induced by the pseudo-isotopy (constructed in \cite[Theorem~8.2]{Fu10}), and $(\phi_t)_*$ the induced map on spaces of bounding cochains (constructed in \cite[Section~4.3.2]{FOOO09}). Then, set $b(t) = (\phi_t)_*b_-$. This provides the required gauge equivalence.
\end{proof}

\begin{corollary}
	Let $\gamma, \gamma'$ be any bulk-deformation parameters with $[\gamma] = [\gamma'] \in H^*(X;Q)$. Then, if there exists a bounding cochain $b$ for $\gamma$ for some choice of auxiliary data, Thom system and tame almost complex structure, then, for any other choice of auxiliary data, Thom system and tame almost complex structure, there exists a bounding cochain $b'$ for $\gamma'$ so that $(b,\gamma)$ is gauge equivalent to $(b',\gamma')$.
\end{corollary}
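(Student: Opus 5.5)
The plan is to reduce the statement to Lemma~\ref{lem:pseudo-isotopy induces gauge equivalence} together with Proposition~\ref{thm:trivialisable cobordism induces quasi-equivalence}, after first handling the change of bulk parameter $\gamma \leadsto \gamma'$ on a fixed choice of data. Fix the data $\mathcal D$ (auxiliary data, Thom system, $J$) for which $b$ exists, and let $\mathcal D'$ be the other choice, with almost complex structure $J'$.

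\emph{Changing $\gamma$.} Since $[\gamma] = [\gamma']$, write $\gamma' - \gamma = d\xi$ with $\xi \in \Omega^1(X;\mathcal{I}Q)$. Set $\wt\gamma := \pi_X^*\gamma + d\bigl(\tfrac{t+1}{2}\,\pi_X^*\xi\bigr)$ on $X\times[-1,1]$. This form is closed, has degree $2$, and restricts to $\gamma$ at $t=-1$ and to $\gamma'$ at $t=1$.

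\emph{Choosing the pseudoisotopy data.} Use the trivial Lagrangian cobordism $\wt L = L \times [-1,1]$. Take a path of tame almost complex structures from $J$ to $J'$, which exists because $\cJ_\tau$ is contractible. Build the interpolating global Kuranishi charts and cubical cobordisms as in the proof of Proposition~\ref{thm:trivialisable cobordism induces quasi-equivalence}, so that they restrict to the systems of $\mathcal D$ and $\mathcal D'$ at the ends. Extend the two end Thom systems to a Thom system on the interpolation by Corollary~\ref{cor:thom-system-lag-cob}. The bulk-deformed operations $\wt\fq^{\,\cdot,\wt\gamma}$ then give a cyclic unital curved $A_\infty$ structure on $\Omega^*(L\times[-1,1];R)$. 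This is a pseudo-isotopy in the sense of Definition~\ref{def:pseudoisotopy}. By Lemma~\ref{lem:pullback-intertwines}, its restrictions are the $\gamma$-deformed algebra for $\mathcal D$ at $t=-1$ and the $\gamma'$-deformed algebra for $\mathcal D'$ at $t=1$.

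\emph{Transporting $b$.} Apply Lemma~\ref{lem:pseudo-isotopy induces gauge equivalence}, via Fukaya's construction in Theorem~\ref{thm:pseudoisotopy induces QI}. This gives the family $b(t) = (\phi_t)_* b$, whose endpoint $b' := b(1)$ is a bounding cochain for the $\gamma'$-deformed algebra of $\mathcal D'$. We then assemble $b(t)$ into a single form $\wt b \in \Omega^*(L\times[-1,1];\mathcal{I}R)$. It satisfies $j_-^*\wt b = b$ and $j_+^*\wt b = b'$, and we need $\wt\fq^{\wt b,\wt\gamma}_{0,0} = \wt c\cdot 1$. Obtaining this identity is exactly the content of the gauge-equivalence definition above, and it gives the conclusion.

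\emph{Main obstacle.} The hardest step is the weak (rather than strict) Maurer--Cartan equation, i.e.\ $\wt c \neq 0$. We must check that Fukaya's quasi-equivalence $\phi_t$ is unital, so that it carries weak bounding cochains to weak bounding cochains and sends $c\cdot 1$ to a multiple of the unit. This is where the unit property (Proposition~\ref{unit property}) for the $\wt\fq$-operations enters, together with the compatibility of our Thom systems with forgetful maps. We also have to verify that the $dt$-component of $\wt\fq^{\wt b,\wt\gamma}_{0,0}$ vanishes up to a multiple of $1$. For this we would follow \cite[\S4]{ST21}: choose $\wt b = b(t) + 0\cdot dt$, and solve for the $dt$-component of $\wt b$ order by order in the valuation $\nu$, as in the obstruction-theoretic argument there. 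The point is that any obstruction lands in $H^*(L\times[-1,1], L\times\{\pm1\})$ shifted to the relevant degrees, where it can be killed.
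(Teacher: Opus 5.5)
Your reduction is the same as the paper's. The paper gets a cyclic unital pseudo-isotopy between $\m^\gamma$ and $\m^{\gamma'}$ by combining Proposition~\ref{thm:trivialisable cobordism induces quasi-equivalence} with \cite[Theorem~2]{ST16}. The proof of the latter is exactly your interpolation $\wt\gamma=\pi_X^*\gamma+d(\tfrac{t+1}{2}\pi_X^*\xi)$ on the trivial cobordism. The paper then concludes directly from Lemma~\ref{lem:pseudo-isotopy induces gauge equivalence}. So your first three paragraphs already are the paper's proof.

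Your ``main obstacle'' paragraph, however, contains a step that fails as stated. You claim the obstructions to the weak Maurer--Cartan equation on $L\times[-1,1]$ lie in $H^*(L\times[-1,1],L\times\{\pm1\})$ and ``can be killed''. But this group is $H^{*-1}(L;\bR)$, which is nonzero in general. In \cite{ST21} such obstructions vanish only because of the hypothesis $H^*(L;\bR)\cong H^*(S^n;\bR)$ plus degree bookkeeping. Indeed, two \emph{prescribed} bounding cochains need not be gauge equivalent in general. Setting the $dt$-component of $\wt b$ to zero and then ``solving for'' it is also inconsistent.

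The point you are missing is that $b'$ is not prescribed. Only the end $t=-1$ is fixed, and $H^*(L\times[-1,1],L\times\{-1\})=0$, so an order-by-order construction relative to that single end is unobstructed.

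More directly, write the pseudo-isotopy in Fukaya's form $\m^t+dt\wedge\mathfrak{c}^t$. Then the $dt$-component of $\wt{\m}(e^{\wt b})=\wt c\cdot 1$ for $\wt b=b(t)$ is the flow equation $\partial_t b(t)=\pm\mathfrak{c}^t(e^{b(t)})$, which $b(t)=(\phi_t)_*b$ satisfies by construction. Since the $\mathfrak{c}^t$ are unital, $G(t):=\m^t(e^{b(t)})-c\cdot 1$ satisfies the linear equation $\partial_t G=\pm\mathfrak{c}^t(e^{b(t)},G,e^{b(t)})$ with $G(-1)=0$. Hence $G\equiv 0$ by induction on the valuation. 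So $\wt b=b(t)$ with vanishing $dt$-component and $\wt c=c$ works, and no obstruction theory is needed.
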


\begin{proof}
	Proposition \ref{thm:trivialisable cobordism induces quasi-equivalence} combined with \cite[Theorem~2]{ST16} shows that the $A_\infty$ algebras $\m^\gamma$ and $\m^{\gamma'}$ are related by a cyclic unital pseudo-isotopy. The result thus follows from Lemma \ref{lem:pseudo-isotopy induces gauge equivalence}.
\end{proof}

\begin{theorem}\label{thm:invariance-potentials}
	The tensor potential and the relative potential are invariant under gauge equivalence.
\end{theorem}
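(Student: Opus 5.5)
We follow the strategy of \cite[\S4]{ST21} and \cite[Lemma~4.8, Theorem~4]{ST23}, using the pseudo-isotopy operations $\wt\fq$ of the previous subsection. Fix a gauge equivalence $(\wt\gamma,\wt b)$ between $(\gamma_-,b_-)$ and $(\gamma_+,b_+)$ on the trivial cobordism $\wt L = L\times[-1,1]$, with $\wt\fq^{\wt b,\wt\gamma}_{0,0} = \wt c\cdot 1$. The aim is to build an explicit chain-level primitive on $C(\fii)$ whose coboundary is the difference of the two cocycles representing $\Psi(\gamma_\pm,b_\pm)$, respectively the two chain maps~\eqref{eq:tensor-potential}.

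For the relative potential, we consider the pair
\[
\wt\Psi \coloneqq \Bigl((\pi_X)_*\wt\fq^{\,\wt\gamma}_{\emst,0},\;(-1)^n\wt\fq^{\,\wt\gamma,\wt b}_{-1,0}\Bigr)\in C(\fii).
\]
We then apply the bulk-deformed structure equations of Proposition~\ref{prop: bulk deformed q tilde operation} with $\ell=0$. The first equation gives $d(\pi_X)_*\wt\fq^{\wt\gamma}_{\emst,0} = \pm(\fq^{\gamma_+}_{\emst,0}-\fq^{\gamma_-}_{\emst,0})$. For the second, the middle term is $\frac12\langle \wt c\cdot1,\wt c\cdot 1\rangle_{\wt L}$. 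This vanishes, because $\wt c$ is a constant and the integrand is a function of degree $0$ integrated over the $(n+1)$-dimensional $\wt L$; this is the top degree property. The last term $\int_{\wt L}\pi_I^*\wt\fq_{\emst,0}$ is exactly $\fii$ applied to $(\pi_X)_*\wt\fq_{\emst,0}$, up to sign. Hence the cone differential of $\wt\Psi$ equals $\pm(f_+\Psi_+ - f_-\Psi_-)$, which gives invariance of $\Psi$ after identifying $f_\pm$ with the identity, since $\wt L$ is trivial. The sign bookkeeping is the sign computation of \cite[\S4]{ST21}, verbatim.

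For the tensor potential, we define a chain homotopy
\[
K(\eta,a) \coloneqq \Bigl((\pi_X)_*\wt\fq^{\,\wt\gamma}_{\emst,1}(\pi_X^*\eta),\;(-1)^{n+1}\wt\fq^{\,\wt\gamma,\wt b}_{-1,1}(\pi_X^*\eta) - \wt c'\,a\Bigr),
\]
where $\wt c'$ is the integral over $[-1,1]$ of the $dt$-component of $\wt c$. We then check with Proposition~\ref{prop: bulk deformed q tilde operation} for $\ell=1$ that $dK\pm Kd$ equals the difference of the two maps~\eqref{eq:tensor-potential}. The quadratic term now gives $\langle \wt c\cdot 1,\wt\fq^{\wt\gamma,\wt b}_{0,1}(\eta)\rangle$ plus its symmetric counterpart. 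By the unit and top degree properties (Propositions~\ref{unit property}, \ref{top degree property}) together with Stokes on $[-1,1]$, these reduce to the terms $c_\pm\cdot a$ and a correction absorbed by $\wt c'$. The identity $f_\pm c^\pm = j_\pm^*\wt c$ is used here.

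The main obstacle is that the $\wt\fq_{-1}$ used here are integrated over $[-1,1]$, rather than being the form-valued operations of \cite{ST21}. The $c\cdot a$ term therefore needs care: we must ensure that $\wt c$ has no $dt$-part, or handle it as above. If the direct approach fails, the fallback is to refine $\wt\fq_{-1}$ to $\Omega^*([-1,1])$-valued operations, which is possible because $\wt\cT$ submerses onto $[-1,1]$ by Lemma~\ref{thm:cobordism-global-chart-existence}\eqref{i:trivialisable}. The proof of \cite{ST21} then applies literally.
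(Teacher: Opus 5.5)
Your proposal is correct and takes the same route as the paper. The paper's proof simply asserts that the arguments of \cite[Theorems~1 and~4]{ST23} carry over verbatim via the pseudo-isotopy operations $\wt\fq$ on $L\times[-1,1]$ and their bulk-deformed structure equations, which is exactly what you spell out; one correction to your bookkeeping is that gauge equivalence requires $\wt c\in\mathcal{I}R$ to be constant, so your $\wt c'$ is zero and $c_+=c_-=\wt c$, the $c\cdot a$ terms of the two tensor potentials cancel outright, and the quadratic terms give no $c_\pm\cdot a$ terms but a multiple of $\wt c\int_{L\times[-1,1]}\pi_X^*\eta$ (by the top degree and energy zero properties), which vanishes because $\pi_X^*\eta$ is pulled back from $L$, so neither a correction term nor the form-valued fallback is needed.
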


\begin{proof}  The proofs of Theorem 1 and Theorem 4 in \cite{ST23} carry over verbatim.
\end{proof}

This allows us to conclude with the following invariance statement. 

\begin{theorem}\label{thm:invariance-ogw}
	The open Gromov-Witten invariants defined in this paper are independent of 
	\begin{itemize}[leftmargin=20pt]
		\item  the choice of Thom system,
		\item  the choice of unobstructed auxiliary data used to construct the global Kuranishi charts,
		\item  the choice of almost complex structure,
		\item  the representative of $L$ in its Hamiltonian isotopy class,
		\item  the representative of the bounding pair in its gauge equivalence class.
	\end{itemize} 
\end{theorem}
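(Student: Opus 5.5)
The plan is to reduce every item in the list to Theorem~\ref{thm:invariance-potentials}, invariance of the relative potential $\Psi(\gamma,b)$ under gauge equivalence. Since $\ogw$ is defined as the coefficients of $\cc\Omega(\gamma,b)=P\Psi(\gamma,b)$, it is enough to show that any change of auxiliary choice yields a bounding pair gauge equivalent to the original. One must also check that the section $P$ is transported correctly.

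The last item is immediate from Theorem~\ref{thm:invariance-potentials}. Two gauge equivalent pairs have equal $\Psi$, hence equal $\cc\Omega$ once the same $P$ is used.

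For the other items, fix two sets of choices. These are Thom systems, unobstructed auxiliary data, almost complex structures $J_\pm$, or Hamiltonian isotopic Lagrangians $L_\pm$. Following the proof of Proposition~\ref{thm:trivialisable cobordism induces quasi-equivalence}, realise the interpolation as a Lagrangian pseudoisotopy. For Thom systems, auxiliary data and $J$, use the trivial cobordism $L\times[-1,1]$. For Hamiltonian isotopy, use the suspension of Example~\ref{ex:suspension-hamiltonian-isotopy}. Corollary~\ref{cor:thom-system-lag-cob} and Lemma~\ref{thm:cobordism-global-chart-existence} provide the auxiliary data and Thom system extending the given ones at the ends, and hence operations $\wt\fq$ restricting to $\fq^\pm$ by Lemma~\ref{lem:pullback-intertwines}.

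Given a weak bounding pair $(b_-,\gamma_-)$, extend $\gamma_-$ to the closed form $\wt\gamma=\pi_X^*\gamma_-$ on $X\times[-1,1]$. Then apply the Corollary following Lemma~\ref{lem:pseudo-isotopy induces gauge equivalence}, with the pseudo-isotopy furnished by Fukaya's Theorem~\ref{thm:pseudoisotopy induces QI}. This produces $\wt b$ on $\wt L$ with $\wt\fq^{\wt b,\wt\gamma}_{0,0}=\wt c\cdot 1$ and restriction a bounding pair $(b_+,\gamma_+)$. By definition, $(b_+,\gamma_+)$ is gauge equivalent to $(b_-,\gamma_-)$.

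Two extra points must be checked. First, point-likeness is preserved, i.e.\ $\int_{L_+}b_+=\int_{L_-}b_-=s$. The plan is to obtain this from Stokes on $\wt L$ together with the top degree property, Proposition~\ref{top degree property}, for $\wt\fq$. Second, the Hamiltonian case changes $L$, so the map $P$ must be identified. The isotopy gives a canonical isomorphism $H^*(C(\fii_-))\cong H^*(C(\fii_+))$, since $\int_{L_t}$ is constant on closed forms, and $P$ is transported along it.

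The main obstacle is that, in the Hamiltonian case, gauge equivalence is between pairs on different Lagrangians, so Theorem~\ref{thm:invariance-potentials} is not literally applicable. To handle this, I would compare $\Psi$ at the two ends directly by integrating Proposition~\ref{prop: bulk deformed q tilde operation} with $\ell=0,1$. The boundary terms $f_\pm\fq^{\gamma_\pm,b_\pm}_{-1}$ differ by an exact term plus the sum of $\langle\wt\fq_0,\wt\fq_0\rangle$ and $\int_{\wt L}\wt\fq_\emst$. The first of these reduces to a multiple of $\wt c$ and vanishes by degree and the unit property. The second is exactly the cone correction, realised by $(\pi_X)_*\wt\fq^{\wt\gamma}_{\emst,0}$, which is a primitive for the difference of closed potentials. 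Together these show $\Psi_+-\Psi_-$ is exact in the cone, compatibly with the identification of $P$. This is the cone-level analogue of \cite[Theorem~1]{ST23}, and I would mimic that argument with $\wt L$ in place of $L\times[-1,1]$.
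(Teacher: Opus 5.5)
Your route is the paper's. There, Theorem~\ref{thm:invariance-ogw} is read off from three ingredients: Proposition~\ref{thm:trivialisable cobordism induces quasi-equivalence}, which realises each change of choice as a pseudo-isotopy (the Hamiltonian case via the suspension of Example~\ref{ex:suspension-hamiltonian-isotopy}); Lemma~\ref{lem:pseudo-isotopy induces gauge equivalence} with the corollary after it, which gives gauge-equivalent bounding pairs; and Theorem~\ref{thm:invariance-potentials}. Your cone-level comparison for the Hamiltonian case is the computation of Theorem~\ref{thm: relative potential is preserved}. The paper does not verify point-likeness or the transport of $P$ at all. Adding those checks is right, but your planned point-likeness check fails for the suspension.

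\textbf{The point-likeness step.} Proposition~\ref{top degree property} exempts $(0,1,\beta_0)$, and by Proposition~\ref{energy zero property} that term is $-\wt\gamma|_{\wt L}$. So the degree-$(n+1)$ part of $\wt\fq^{\wt b,\wt\gamma}_{0,0}=\wt c\cdot 1$ is $(d\wt b)_{n+1}=(\wt\gamma|_{\wt L})_{n+1}$; the $(2,0,\beta_0)$ term vanishes because $\wt b$ is odd. Stokes then gives $\int_{L_+}b_+-\int_{L_-}b_-=\pm\int_{\wt L}\wt\gamma$.
\begin{itemize}
\item On $L\times[-1,1]$ with $\wt\gamma=\pi_X^*\gamma_-$ this vanishes, since the form is pulled back from $L$. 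So the Thom system, auxiliary data and $J$ cases are fine.
\item On the suspension, $\int_{\wt L}\pi_X^*\gamma_-$ is the integral of $\gamma_-$ over the trace $(x,t)\mapsto\phi^t_H(x)$, and this is not zero in general. For instance, replacing $\gamma_-$ by $\gamma_-+d\xi$ with $\xi$ vanishing near $L_-$ keeps its class relative to $L_-$ but shifts the integral by $\pm\int_{L_+}\xi$.
\end{itemize}
In the suspension case, $(b_+,\gamma_+)$ is then not point-like. When $[L]=0$, the class of $(\gamma_+,0)=(\gamma_-,0)$ in $H^*(C(\fii_+))$ also differs from $\Phi[(\gamma_-,0)]$ by the class of $(0,\int_{\wt L}\pi_X^*\gamma_-)$. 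Hence the coefficients of $\cc\Omega^+(\gamma_+,b_+)$ are not, by definition, the invariants of $L_+$ evaluated on the transported classes.

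\textbf{Fix.} Choose the closed extension so that $\int_{\wt L}\wt\gamma=0$. For example, take $\wt\gamma=\pi_X^*\gamma_-+d(\chi\,\pi_X^*\theta)$, where $\chi$ is a cutoff in the $[-1,1]$-variable ($0$ near $-1$, $1$ near $1$) and $\theta$ is an $n$-form with $\mathcal{I}Q$-coefficients whose integral over $L_+$ cancels $\int_{\wt L}\pi_X^*\gamma_-$. Then $\gamma_+=\gamma_-+d\theta$, $\int_{L_+}b_+=s$, and $[\gamma_+]=\Phi[\gamma_-]$, so your argument goes through. Alternatively, reduce the Hamiltonian case to a change of $J$ by pulling back along the Hamiltonian diffeomorphism; the cylinder is then trivial.

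\textbf{Identification of the cones.} The isomorphism $H^*(C(\fii_-))\cong H^*(C(\fii_+))$ is $\Phi$. It is built from the homotopy $\int_{\wt L}\pi_X^*$ and depends on the isotopy through its trace. Constancy of $\int_{L_t}$ on closed forms does not by itself determine it.
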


\subsection{Invariance}
Now, consider two generalised Lagrangian branes $\LL^\pm$ with bounding pairs $(\gamma^\pm, b^\pm)$ so that $\gamma^+ = \gamma^-$. Suppose that $\wt L$ is a Lagrangian cobordism with ends $\LL^\pm$ so that the bulk deformation $\pi_X^* \gamma$ is unobstructed by the bounding cochain $\wt b$ (with $i_\pm^* \wt b = b^\pm$), for a constant complex structure $J$. Consider the map 
$$\Phi\cl \normalfont\text{Cone}(\LL^-) \rightarrow \normalfont\text{Cone}(\LL^+)$$ 
given by 
$$\Phi(\alpha, a) \; =\; (\alpha, a + \int_{\wt L} \pi_X^* \alpha),$$ 
where $\pi_X$ is the projection $\wt X \rightarrow X$. An easy computation shows that $\Phi$ is an isomorphism of chain complexes.
This subsection is devoted to proving the comparison of open GW invariants of $\bL^+$ and $\bL^-$ as stated below. 

\begin{theorem}
	\label{thm: OGW are respected by cobordism}
	Let $\normalfont\text{OGW}^{\LL^+}$ be defined using a given map $P_{\RR}^+\cl H^*(C(\fii^+_\RR)) \rightarrow \coker(\fii^+_\RR)$. Then, for each $\beta \in H_2(\wt X, \wt L)$ and $k \geq 0$, we have
	\begin{equation*}
		\sum_{f_+(\beta_+)  =\beta} \normalfont\text{OGW}^{\LL^+}_{\beta_+,k,\ell} \; =\; \sum_{f_(\beta_-) =\beta}{\normalfont\text{OGW}}^{\LL^-}_{\beta_-,k,\ell}\cl H^*(X)^{\otimes \ell} \rightarrow \RR,
	\end{equation*}
	where the invariants $\normalfont\text{OGW}^{\LL^-}$ are defined using the map $P_{\RR}^- := P_\RR^+ \circ \Phi$.
\end{theorem}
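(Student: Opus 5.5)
The plan is to compare the relative potentials of $\LL^\pm$ through the cobordism $\wt L$, pushing them into a common target via $f_\pm$, and then to read off the invariants as coefficients of the enhanced superpotential.

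\textbf{Step 1: the primitive.} Since $\wt J$ is cylindrical and $\wt\gamma = \pi_X^*\gamma$ is pulled back from $X$, the closed operations $\wt\fq_{\emst,\ell}^{\,\wt\gamma}$ along the family $\{J_s\}$ restrict at both ends to $\fq_{\emst,\ell}^{\gamma}$ for $J_\pm$. Consider the element
\[
\wt\Psi \coloneqq \Bigl((\pi_X)_*\wt\fq^{\,\wt\gamma}_{\emst,0},\; (-1)^n\wt\fq^{\,\wt\gamma,\wt b}_{-1,0}\Bigr).
\]
Apply Proposition~\ref{prop: bulk deformed q tilde operation} with $\ell=0$ and $\wt\eta$ empty, and use the Maurer--Cartan equation $\wt\fq^{\,\wt\gamma,\wt b}_{0,0}=\wt c\cdot 1$. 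The pairing term reduces to $\wt c^{\,2}\int_{\wt L}1$, which vanishes by the degree (or top degree) property. The term $\int_{\wt L}\pi_I^*\wt\fq^{\,\wt\gamma}_{\emst,0}$ is exactly the $\fii$-component of the cone differential applied to $(\pi_X)_*\wt\fq_{\emst,0}$, once one identifies $\int_{\wt L}\pi_X^*$ with the correction appearing in $\Phi$.

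The upshot should be an identity
\[
f_+\Psi^+ - \Phi_*\bigl(f_-\Psi^-\bigr) = d_{C(\fii^+)}\bigl(\text{primitive built from } \wt\Psi\bigr)
\]
in $C(\fii^+)\otimes\Lambda_{\wt L}$. Here $f_\pm$ is applied to the Novikov coefficients, and the identity uses Lemma~\ref{lem:pullback-intertwines} together with $f_\pm c^\pm = j_\pm^*\wt c$ for the boundary contributions at $s=\pm1$.

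\textbf{Step 2: the cohomological identity.} Step 1 gives $f_+\Psi(\gamma,b^+) = \Phi\bigl(f_-\Psi(\gamma,b^-)\bigr)$ in cohomology. Applying $P^+$ and using $P^-=P^+\circ\Phi$ yields
\[
f_+\,\ov\Omega^{+} = f_-\,\ov\Omega^{-}.
\]
The pushforward of Novikov coefficients groups the classes $\beta_\pm$ with the same image $\beta$, which is exactly the sum in the statement.

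\textbf{Step 3: reading off coefficients.} For the coefficients with $k\ge1$, the $s$-variable must be handled. Because $\wt b|_{\LL^\pm}$ is point-like and $\int_{\wt L}$-boundary terms relate $\int_{L^\pm}b^\pm$, the coordinate $s$ is the same on both ends. One should check that the $s$-linear terms match; this is where point-likeness of both restrictions enters. Comparing coefficients of $T^\beta s^k\prod_i t_i^{r_i}$ then gives the theorem, and by multilinearity the equality extends to arbitrary inputs from $W$.

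\textbf{Main obstacle.} The delicate part is the sign and orientation bookkeeping in Step 1. Three things must be verified: that the vertical boundary of $\wt\cT_{0,\ell}$ at $s=\pm1$ carries opposite induced orientations, giving $f_+(\cdot)-f_-(\cdot)$ with the sign $(-1)^{n+|\wt\eta|+1}$; that the closed-sphere contribution $\int_{\wt L}\pi_I^*\wt\fq_{\emst}$ matches exactly the $\int_{\wt L}\pi_X^*\alpha$ term in $\Phi$ rather than its negative; and that disc bubbling off the ends does not produce extra terms, which is where cylindricity and \cite[Lemma~4.2.1]{BC09b} are used. I would first verify all three in the energy-zero case against Proposition~\ref{prop:OGW-axioms}, and then argue that the general case follows by the same orientation conventions as \cite[Proposition~4.4]{ST16} integrated over $[-1,1]$.
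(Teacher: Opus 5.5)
Your skeleton is the paper's. You deduce the theorem from $f_+\Psi^{\LL^+}(\gamma,b_+)=\Phi\bigl(f_-\Psi^{\LL^-}(\gamma,b_-)\bigr)$, which comes from the $\ell=0$ case of Proposition~\ref{prop: bulk deformed q tilde operation} with the pairing term killed by $\wt\fq^{\,\wt\gamma,\wt b}_{0,0}=\wt c\cdot 1$. You then apply $P^+$ with $P^-=P^+\circ\Phi$ and compare coefficients.

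The difference is the almost complex structure. The paper assumes $\wt J$ is constant, so $\pi_I^*\wt\fq^{\,\gamma}_{\emst,0}=\pi_X^*\fq^{\gamma}_{\emst,0}$. The sphere term $\int_{\wt L}\pi_I^*\wt\fq^{\,\gamma}_{\emst,0}$ is then literally the correction $\int_{\wt L}\pi_X^*\alpha$ built into $\Phi$, and the identity holds at chain level with no primitive. Indeed your $(\pi_X)_*\wt\fq_{\emst,0}$ vanishes there, being the fibre integral over $[-1,1]$ of a form pulled back from $X$.

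Your version allows $J_s$ to vary, which buys a slightly more general statement. But then your claim that $\int_{\wt L}\pi_I^*\wt\fq_{\emst,0}$ is the $\fii$-component of $d_{C(\fii^+)}$ of the primitive is not correct as stated. What is true is
\[
\int_{\wt L}\bigl(\pi_I^*\wt\fq_{\emst,0}-\pi_X^*\fq^{J_-}_{\emst,0}\bigr)\;=\;\pm\int_{\LL^+}(\pi_X)_*\wt\fq_{\emst,0}.
\]
This follows from the homotopy formula for the closed form $\wt\fq_{\emst,0}$ on $X\times[-1,1]$, together with Stokes on $\wt L$, whose boundary is $\LL^+$ minus $\LL^-$. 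Separately, $d(\pi_X)_*\wt\fq_{\emst,0}=\pm\bigl(\fq^{J_+}_{\emst,0}-\fq^{J_-}_{\emst,0}\bigr)$ takes care of the first component. The second component of your $\wt\Psi$ plays no role, since $R[-n]$ carries the zero differential.

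In Step 3 nothing actually needs checking. The $b^\pm$ are restrictions of a single $\wt b$ with coefficients in one ring $R$, so $s$ is the same formal variable on both ends by hypothesis. Stokes applied to $\int b^\pm$ gives a constraint, not this identification. Comparing coefficients of $T^\beta s^k\prod_i t_i^{r_i}$ after applying $f_\pm$ is then immediate.
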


\begin{remark}
	In particular, as $\omega(\beta) = \wt\omega(f_\pm \beta)$, we find that for each particular energy level, the open Gromov-Witten invariants agree.
\end{remark}

\begin{remark}
	Suppose that $f_+^{-1}(\{\beta\}) = \emptyset$. Then, $\sum_{f_-\beta_-  =\beta}{\text{OGW}}^{\LL^-}_{\beta_-,k} = 0$.
\end{remark}

Theorem \ref{thm: OGW are respected by cobordism} is a direct consequence of the following generalisation of \cite[Theorem~1]{ST23}.

\begin{theorem}
	\label{thm: relative potential is preserved}
	If the assumptions of Theorem \ref{thm: OGW are respected by cobordism} are satisfied, then 
	$$f_+ \Psi^{\LL^+}(\gamma,b_+) \; =\; \Phi \left( f_- \Psi^{\LL^-}(\gamma, b_-) \right).$$
\end{theorem}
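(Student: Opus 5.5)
We will prove Theorem~\ref{thm: relative potential is preserved} by exhibiting an explicit cochain in $\text{Cone}(\LL^+)$ whose coboundary is the difference of the two sides, following the proof of \cite[Theorem~1]{ST23}. The cochain will be built from the operations $\wt\fq$ on the cobordism. The natural candidate is
\[
\xi \coloneqq \Bigl(\,(\pi_X)_*\wt\fq^{\,\pi_X^*\gamma}_{\emst,0},\;(-1)^{n}\,\wt\fq^{\,\pi_X^*\gamma,\wt b}_{-1,0}\Bigr).
\]
Here $(\pi_X)_*$ integrates over the $[-1,1]$ factor of $X\times[-1,1]$. The second component lies in $R$ extended by $\Lambda_{\wt L}$.

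The first step is the closed component. By the bulk-deformed closed structure equation of Proposition~\ref{prop: bulk deformed q tilde operation}, applied with $\ell = 0$, the operator $d(\pi_X)_*\wt\fq^{\wt\gamma}_{\emst,0}$ equals, up to sign, $\fq^{\gamma}_{\emst,0}$ computed with $J_+$ minus the same operator computed with $J_-$. Since $\wt\gamma = \pi_X^*\gamma$ restricts to $\gamma$ at both ends, we have $i_\pm^*\wt\gamma = \gamma$. Hence the closed parts of the two relative potentials differ by an exact form. If $J$ is constant, as assumed, this difference even vanishes on the nose.

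The second step is the open component. Apply the bulk-deformed $\wt\fq_{-1,0}$ structure equation, again from Proposition~\ref{prop: bulk deformed q tilde operation}, with $\ell=0$. The left-hand side vanishes, because there is no input to differentiate. The right-hand side contains three kinds of terms.
\begin{itemize}
\item The boundary contribution $f_+\fq^{\gamma,b_+}_{-1,0}-f_-\fq^{\gamma,b_-}_{-1,0}$.
\item The pairing $\tfrac12\langle \wt\fq^{\wt\gamma,\wt b}_{0,0},\wt\fq^{\wt\gamma,\wt b}_{0,0}\rangle_{\wt L}$.
\item The term $\int_{\wt L}\pi^*\wt\fq^{\wt\gamma}_{\emst,0}$.
\end{itemize}
The pairing term needs the most care, since $\wt\fq_{0,0}^{\wt\gamma,\wt b} = \wt c\cdot 1$. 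As in \cite{ST21}, the $\fq_{-1}$ operation is defined with the correction term $\sum\frac{1}{k+1}\langle\fq_k(b^{\otimes k}),b\rangle$, and Stokes' theorem on $\wt L$ is applied to $\langle \wt c\cdot 1,\wt b\rangle$. The term $\wt c\int_{\wt L}d\wt b$ then reduces to the boundary integrals $c_\pm\int_{\LL^\pm}b_\pm$. These are exactly the pieces absorbed into the definition of $\fq^{\gamma,b_\pm}_{-1,0}$ at the ends.

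It remains to identify the closed term. Write $\int_{\wt L}\pi^*(\pi_X)_*$-type terms as $\int_{\wt L}\pi_X^*$ applied to the closed potential, plus the exact correction from the first step. This is precisely the shift $a\mapsto a+\int_{\wt L}\pi_X^*\alpha$ in $\Phi$. Combining the two steps then gives
\[
f_+\Psi^{\LL^+} - \Phi(f_-\Psi^{\LL^-}) = D\xi
\]
in $C(\fii^+)$, up to the image of $\Lambda_{\LL^\pm}\to\Lambda_{\wt L}$. This proves the equality in cohomology.

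The main obstacle will be the bookkeeping of signs and of the boundary pairing term. In particular, we must check that the $\int_{\wt L}$ of the closed operation matches the sign convention of $\fii$, namely $(-1)^{n+|\eta|}$. We must also check that the use of a non-pseudoisotopy cobordism, with the ends $\LL^\pm$ consisting of different Lagrangians, only changes the cone map by $\Phi$. I would first do the calculation with all bulk and bounding terms set to zero, following \cite[\S4]{ST23}, and then insert $(\wt\gamma,\wt b)$ using the bulk-deformed relations.
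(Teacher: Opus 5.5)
Your skeleton matches the paper's proof. You take the $\ell=0$ case of the bulk-deformed $\wt\fq_{-1}$ structure equation (Proposition~\ref{prop: bulk deformed q tilde operation}), whose left-hand side is empty, and you identify its closed term with the shift in $\Phi$. However, the step you single out as the crux is handled incorrectly. Once you substitute $\wt\fq^{\,\wt\gamma,\wt b}_{0,0}=\wt c\cdot 1$, the pairing term in that equation is $\tfrac12\langle \wt c\cdot 1,\wt c\cdot 1\rangle_{\wt L}=\pm\tfrac12\,\wt c^{\,2}\int_{\wt L}1$. It vanishes for the trivial reason that a $0$-form integrates to zero over the $(n+1)$-dimensional manifold $\wt L$. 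Your Stokes manipulation of $\langle\wt c\cdot 1,\wt b\rangle$ treats a term that does not occur in the equation. The claimed absorption of $c_\pm\int_{\LL^\pm}b_\pm$ into $\fq^{\gamma,b_\pm}_{-1,0}$ also does not match the definition: that operation contains $\sum_k\frac{1}{k+1}\langle\fq_k(b^{\otimes k}),b\rangle$, not $c\int b$. All bounding-cochain corrections are already built into the deformed structure equation, whose end terms are the full $f_\pm\fq^{\gamma,b_\pm}_{-1,0}$.

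The closed-term step is also misstated. The term in the structure equation is $\int_{\wt L}\pi_I^*\wt\fq^{\,\gamma}_{\emst,0}$, with no $(\pi_X)_*$ anywhere. What you need is the identity $\pi_I^*\wt\fq^{\,\gamma}_{\emst,0}=\pi_X^*\fq^{\gamma}_{\emst,0}$. It holds because $J$ is constant and $\wt\gamma=\pi_X^*\gamma$, and it turns this term into exactly the $\int_{\wt L}\pi_X^*$-shift in $\Phi$.

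With these two points fixed, $f_+\Psi^{\LL^+}=\Phi(f_-\Psi^{\LL^-})$ holds already at the cochain level, and your primitive $\xi$ plays no role. Its first component is the fibre integral over $[-1,1]$ of a form pulled back from $X$, hence zero. Its second component lies in $R[-n]$, which carries the zero differential, so it never contributes to $D\xi$. The first component of $\xi$ would be the right primitive if $J_s$ varied, via the homotopy formula on $X\times[-1,1]$ and Stokes on $\wt L$, but that is not the situation of this theorem.
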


\begin{proof} Under these assumptions, 
\begin{equation*}\label{} \pi_X^* \q_{\emptyset,0}^\ga \; =\; \pi_I^* \,\wt \q_{\emptyset,0}^{\,\gamma}, \end{equation*} 
where $\pi_I\cl \wt X \rightarrow X \times [-1,1]$ is the projection. Thus,
	\begin{align*}
		\Phi\left( f_- \Psi^{\LL^-}(\gamma, b_-) \right) &= \lbr{\q_{\emptyset,0}^\gamma, (-1)^{n+1}f_-\q^{\ga,b^-}_{-1,0} + \int_{\wt L} \pi_X^* \q_{\emptyset,0}^\ga}\\
		&= \lbr{\q_{\emptyset,0}^\gamma, (-1)^{n+1}f_-\q^{\ga,b^-}_{-1,0} + \int_{\wt L} \pi_I^* \wt \q_{\emptyset,0}^\ga}
	\end{align*}
	By Proposition \ref{prop: bulk deformed q tilde operation}, the integral is given by  \begin{align*}
		\int_{\wt L} \pi_I^* \wt \q_{\emptyset,0}^\ga &= (-1)^{n+1}\lbr{f_+\q_{-1,0}^{\gamma, b_+} - f_-\q_{-1,0}^{\gamma, b_-} + \frac{1}{2} \langle \,\wt \q_{0,0}^{\,\wt \gamma, \wt b},\,\wt \q_{0,0}^{\,\wt \gamma, \wt b}\,\rangle_{\wt L}}\\
		&= (-1)^{n+1}\lbr{f_+\q_{-1,0}^{\gamma, b_+} - f_-\q_{-1,0}^{\gamma, b_-} + \frac{1}{2} \langle \wt c \cdot 1,\wt c \cdot 1\rangle_{\wt L}}\\
		&=(-1)^{n+1}(f_+\q_{-1,0}^{\gamma, b_+} - f_-\q_{-1,0}^{\gamma, b_-}).
	\end{align*}
	Therefore,
	\begin{equation*}
		f_+ \Psi^{\LL^+}(\gamma,b_+) = (\q_{\emptyset,0}^\gamma, (-1)^{n+1} f_+\q^{\ga,b^+}_{-1,0}) = \Phi\left( f_- \Psi^{\LL^-}(\gamma, b_-) \right).
	\end{equation*}
\end{proof}
A similar computation shows the following generalisation of \cite[Theorem~4]{ST23}.

\begin{theorem}
	The map $\Phi$ intertwines the tensor potential, that is, 
	\begin{equation*}
		\begin{tikzcd}
			H^*(C(\fii^-)) \arrow[r, "\nun"] \arrow[d,"\Phi"] &H^*(C(\fii^-)) \arrow[d,"\Phi"]\\
			H^*(C(\fii^+)) \arrow[r, "\nun"] &H^*(C(\fii^+))
		\end{tikzcd}
	\end{equation*}
	commutes.
\end{theorem}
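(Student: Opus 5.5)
We check commutativity at chain level up to homotopy, mirroring the proof of Theorem~\ref{thm: relative potential is preserved} but with one interior input $\eta$. On a cycle $(\eta,a)\in C(\fii^-)$ (so $d\eta=0$, and $a$ arbitrary in $R[-n]$ with trivial differential), the top route gives
\[
\Phi\nun(\eta,a) = \Bigl(\q^\gamma_{\emptyset,1}(\eta),\,(-1)^{n+1}f_-\qb_{-1,1}(\eta) - c^- a + \int_{\wt L}\pi_X^*\q^\gamma_{\emptyset,1}(\eta)\Bigr),
\]
while the bottom route gives
\[
\nun\Phi(\eta,a) = \Bigl(\q^\gamma_{\emptyset,1}(\eta),\,(-1)^{n+1}f_+\qb_{-1,1}(\eta) - c^+\Bigl(a+\int_{\wt L}\pi_X^*\eta\Bigr)\Bigr).
\]
The $X$-components agree. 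Since $\gamma$ and $J$ are constant, $\pi_X^*\q^\gamma_{\emptyset,1}(\eta)=\pi_I^*\wt\q^{\,\gamma}_{\emptyset,1}(\pi_X^*\eta)$. It therefore suffices to show that the difference of the scalar components is zero, or at least exact in the cone; since $R[-n]$ has trivial differential, this really means equality modulo the image of $\fii^+$ applied to a correction in the $X$-component.

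The first step is to apply Proposition~\ref{prop: bulk deformed q tilde operation} with $\ell=1$ and $\wt\eta=\pi_X^*\eta$. Since $d\wt\eta=0$ and $i_\pm^*\wt\eta=\eta$, the left-hand side vanishes, and we get
\[
(-1)^{|\eta|}\int_{\wt L}\pi_I^*\wt\q^{\,\gamma}_{\emptyset,1}(\wt\eta) = (-1)^{n+|\eta|+1}\bigl(f_+\qb_{-1,1}(\eta)-f_-\qb_{-1,1}(\eta)\bigr) + \tfrac12\sum_{I\sqcup J=\{1\}}\pm\langle \wt\q^{\,\wt\gamma,\wt b}_{0,|I|},\wt\q^{\,\wt\gamma,\wt b}_{0,|J|}\rangle_{\wt L}.
\]
The second step evaluates the quadratic term. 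The two partitions each contribute $\langle \wt\q^{\,\wt\gamma,\wt b}_{0,1}(\wt\eta),\wt c\cdot 1\rangle_{\wt L}$ up to sign, because the weak Maurer--Cartan equation gives $\wt\q^{\,\wt\gamma,\wt b}_{0,0}=\wt c\cdot 1$. By the signed pairing, this equals $\pm\wt c\int_{\wt L}\wt\q^{\,\wt\gamma,\wt b}_{0,1}(\wt\eta)$, which only picks up the top-degree part. The unit, top-degree and energy-zero properties (Propositions~\ref{unit property}, \ref{top degree property}, \ref{energy zero property}, extended to $\wt\q$ via Proposition~\ref{boundary of  wt q operation} and the same forgetful compatibility) show that this top-degree part is exactly $-\wt\eta|_{\wt L}$, coming from the $(0,1,\beta_0)$ term. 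So the quadratic term contributes $\pm\wt c\int_{\wt L}\pi_X^*\eta$.

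The third step uses $\wt c$ as a scalar in $\mathcal IR$ with $j_\pm^*\wt c=f_\pm c^\pm$. Because $\wt c$ is a constant, this forces $f_+c^+=f_-c^-=\wt c$ after applying $f_\pm$, so the terms $c^-a$ and $c^+a$ match. The leftover terms $\wt c\int_{\wt L}\pi_X^*\eta$ and $c^+\int_{\wt L}\pi_X^*\eta$ then cancel. Comparing the two routes, the difference collapses to zero, after rearranging with the sign $(-1)^{n+1}$ exactly as in the proof of Theorem~\ref{thm: relative potential is preserved}.

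The main obstacle is the sign bookkeeping in the second step, that is, checking that the factor $\frac12$ times the two symmetric partition terms, with the signs $\iota(\wt\eta;I)$ and the signed pairing, yields precisely $-c^+\int_{\wt L}\pi_X^*\eta$ with the right sign, so that it cancels against the $\Phi$-correction. I would fix these signs by comparing with the corresponding computation in the proof of \cite[Theorem~4]{ST23} for pseudoisotopies, which is formally identical. A secondary point is justifying the top-degree identity for $\wt\q_{0,1}$ on the cobordism, which follows as in \S\ref{sec:props-of-q-ops} since the cubical cobordisms for $\wt L$ are compatible with forgetful maps.
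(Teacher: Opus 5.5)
Your proposal is correct and is essentially the paper's argument. It uses the same ingredients: $c^+=\wt c=c^-$ via Lemma~\ref{lem:pullback-intertwines}, the structure equation for $\wt\q^{\,\gamma,\wt b}_{-1,1}$ applied to $\pi_X^*\eta$, and the weak Maurer--Cartan equation together with the top-degree/energy-zero behaviour of $\wt\q_{0,1}$ to reduce the quadratic term to $-c\int_{\wt L}\pi_X^*\eta$. The only difference is that the paper keeps $\eta$ arbitrary and packages the leftover term $(-1)^{|\eta|}\wt\q^{\,\gamma,\wt b}_{-1,1}(d\pi_X^*\eta)$ into the chain homotopy $H(\eta,a)=(0,(-1)^{|\eta|+1}\wt\q^{\,\gamma,\wt b}_{-1,1}(\eta))$, whereas your restriction to closed $\eta$ kills that term and gives equality on cycles, which is enough for the statement on cohomology.
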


\begin{proof}	
	By Lemma \ref{lem:pullback-intertwines}, we have $c^+ = \wt c = c^- =: c$, so that
	\begin{align}
		\nun^+(\Phi(\eta,a)) - \Phi(\nun^-(\eta,a)) \; =\; (0,(-1)^{n+1}(\q_{-1,1}^{\ga,b^+} - \q_{-1,1}^{\ga,b^-}) - c \int_{\wt L} \pi_X^* \eta - \int_{\wt L} \pi_X^* \q_{\emptyset,1}^\ga(\eta) ).
	\end{align}
	Now observe that, by Proposition \ref{prop: structure equation bulk deformed q -1 tilde}, we have
	\begin{align*}
		(-1)^{|\eta|}(\wt{\q}_{-1,1}^{\ga,\wt b}(d\pi_X^*\eta) &= (-1)^{n+1}(\q_{-1,1}^{\ga,b^+} - \q_{-1,1}^{\ga,b^-}) + \frac{1}{2} \langle \,\wt \q_{0,1}^{\ga,\wt b}(\pi_X^*\eta),\,\wt \q_{0,0}^{\ga,\wt b} \,\rangle_{\wt L} \\&\qquad+\frac{(-1)^{|\eta|}}{2} \langle\, \wt \q_{0,0}^{\ga,\wt b},\,\wt \q_{0,0}^{\ga,\wt b}(\pi_X^*\eta)\, \rangle_{\wt L} - \int_{\wt L} \pi_X^* \q_{\emptyset,1}(\pi_X^* \eta)
	\end{align*}
	Using Proposition \ref{cyclic symmetry}, the bounding cochain equation, Proposition \ref{degree property}, and Proposition~\ref{energy zero property}, we deduce
	\begin{equation*}
		\frac{1}{2} \langle\, \wt \q_{0,1}^{\ga,\wt b}(\pi_X^*\eta),\,\wt \q_{0,0}^{\ga,\wt b} \,\rangle_{\wt L} +\frac{(-1)^{|\eta|}}{2} \langle\, \wt \q_{0,0}^{\ga,\wt b},\,\wt \q_{0,0}^{\ga,\wt b}(\pi_X^*\eta)\, \rangle_{\wt L}\; =\; -c\int_{\wt L}\pi_X^*\eta
	\end{equation*}
	Thus, let $H\cl C(\fii^-) \rightarrow C(\fii^+)$ be given by 
	$$H(\eta, a) = (0, (-1)^{|\eta| +1}\wt \q_{-1,1}^{\ga,\wt b}(\eta)).$$ 
	The above then shows that 
		$\nun^+(\Phi(\eta,a)) - \Phi(\nun^-(\eta,a)) = [d,H]$.
\end{proof}


The following result requires the additional observation that $\Phi$ respects the Euler grading.

\begin{theorem}\label{thm: cobordism induces isomorphism of TE-structures}
	The map $\Phi$ induces an isomorphism of TE-structures\begin{equation}
		(QH^*(X,\LL^-;Q\otimes_{\Lambda_{\LL^-}} \Lambda_{\wt L})[[u]], \nabla^{\LL^-}) \cong (QH^*(X,\LL^+;Q\otimes{\Lambda_{\LL^+}} \Lambda_{\wt L})[[u]], \nabla^{\LL^+}).
	\end{equation}
	When $\LL^-$ (and hence $\LL^+$) is null-homologous, $\Phi$ makes the following diagram commute:
	\begin{equation}
		\begin{tikzcd}
			0 \ar[r] &Q \otimes_{\Lambda_{\LL^-}} \Lambda_{\wt L}[[u]] \ar[d, equal] \ar[r] & QH^*_-(X, \LL^-; Q \otimes_{\Lambda_{\LL^-}} \Lambda_{\wt L} ) \ar[d, "\Phi"] \ar[r] & QH^*_-(X;Q \otimes_{\Lambda_c} \Lambda_{\wt L})  \ar[d,equal] \ar[r] &0\\
			0 \ar[r] &Q \otimes_{\Lambda_{\LL^-}} \Lambda_{\wt L}[[u]] \ar[r] & QH^*_-(X, \LL^+; Q \otimes_{\Lambda_{\LL^+}} \Lambda_{\wt L}) \ar[r] & QH^*_-(X;Q \otimes_{\Lambda_c} \Lambda_{\wt L})  \ar[r] &0
		\end{tikzcd}
	\end{equation}
\end{theorem}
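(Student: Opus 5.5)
The plan is to deduce the statement from the two preceding theorems: the intertwining of tensor potentials by $\Phi$, and the preservation of the relative potential. The extra ingredients are compatibility of $\Phi$ with the Euler grading, with the $u$-dependence, and with the long exact sequence of the cone.

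First, recall that the TE-structure on $QH^*(X,\LL;\dots)[[u]]$ consists of the relative quantum connection $\nabla$ of Definition~\ref{de:rel-quantum-conn} in the directions $\partial_u$, $u\in Q\otimes U$, together with the $u$-direction connection. The latter is built from the Euler vector field $E_U$, the grading operator and $\nun$, following \cite[\S1.3.6]{ST23}. After base change along $f_\pm$ to $\Lambda_{\wt L}$, the $\partial_u$-components are $\partial_u+\partial_u\nun$. Since $\Phi(\alpha,a)=(\alpha,a+\int_{\wt L}\pi_X^*\alpha)$ has no dependence on the formal variables, it commutes with each $\partial_u$. By the preceding theorem, $\nun^+\circ\Phi=\Phi\circ\nun^-$ on cohomology, and this identity holds as formal power series in the bulk coordinates. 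Differentiating gives $\partial_u\nun^+\circ\Phi=\Phi\circ\partial_u\nun^-$. Hence $\Phi$ intertwines $\nabla^{\LL^\pm}_u$.

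Next I will check the Euler grading. I need $\Phi$ to be degree-preserving with respect to the cone grading, where $R[-n]$ carries its shift. The map $\alpha\mapsto\int_{\wt L}\pi_X^*\alpha$ lowers form degree by $n+1=\dim\wt L$. This matches the shift in $\fii$, up to the extra $\int$ over the interval direction, once the sign convention $(-1)^{n+|\eta|}$ is accounted for. On the Novikov side, $\mu(f_\pm\beta_\pm)=\mu(\beta_\pm)$, so $f_\pm$ respect degrees. Therefore $\Phi$ commutes with the grading operator and with $E_U$. Combined with the intertwining of $\nun$, this shows $\Phi$ intertwines the $u$-connection, so we get an isomorphism of TE-structures. $\Phi$ is already a chain isomorphism, with inverse $(\alpha,a)\mapsto(\alpha,a-\int_{\wt L}\pi_X^*\alpha)$, so it is invertible after tensoring with $[[u]]$.

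For the diagram in the null-homologous case, I will use the short exact sequence of the cone, $0\to R[-n]\to C(\fii)\to\Omega^*(X)\to0$. This sequence splits on cohomology because $\fii$ vanishes in cohomology when $[\LL]=0$. $\Phi$ is the identity on the subcomplex $R$, since $(0,a)\mapsto(0,a)$. It is the identity on the quotient, since the first component is unchanged. So both squares commute at chain level, and hence after passing to $QH^*_-$, i.e.\ after the $[[u]]$-extension and base change.

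The main obstacle I expect is the degree bookkeeping: checking that the correction term $\int_{\wt L}\pi_X^*\alpha$ is homogeneous of the correct Euler degree. In particular, the $(-1)^{n+|\eta|}$ sign and the Maslov grading on $\Lambda_{\wt L}$ must be consistent under $f_\pm$. I would first verify this on degree-$(n+1)$ forms, where $\int_{\wt L}\pi_X^*$ is nonzero, and use $\mu(\beta)=\mu(f_\pm\beta)$.
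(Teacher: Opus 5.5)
Your proposal is correct and follows the same route as the paper. The paper's argument is very brief: it combines the preceding result $\nun^+\circ\Phi=\Phi\circ\nun^-$ with the observation that $\Phi$ respects the Euler grading. That is what you do: you differentiate the intertwining identity (using that $\Phi$ is constant in the formal variables), check that $\Phi$ is homogeneous of degree $0$ for the cone grading with $\mu(f_\pm\beta_\pm)=\mu(\beta_\pm)$, and note that $\Phi$ is the identity on the subcomplex and quotient of the cone. One small point: the preservation of the relative potential, which you list as an input, is never actually used in your argument.
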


	\bibliographystyle{amsalpha}
	\bibliography{bib-zero}
	
	\Addresses
	
\end{document}